\documentclass[11pt]{article}

\newif\ifsplit
\splitfalse

\usepackage{amsmath,amsfonts,amssymb,amsthm}
\usepackage{xcolor}
\usepackage{enumitem}
\usepackage{authblk}
\usepackage{makecell}
\usepackage{graphicx}

\usepackage{mathtools}

\RequirePackage[colorlinks,citecolor=blue,urlcolor=blue,linkcolor=red,filecolor=black]{hyperref}
\ifsplit
    \usepackage{xr-hyper}
\fi

\theoremstyle{plain}
\newtheorem{proposition}{Proposition}[section]
\newtheorem{theorem}[proposition]{Theorem}

\newtheorem{conjecture}[proposition]{Conjecture}
\theoremstyle{definition}
\newtheorem{lemma}[proposition]{Lemma}
\newtheorem{remark}[proposition]{Remark}
\newtheorem{definition}[proposition]{Definition}
\newtheorem{example}[proposition]{Example}
\newtheorem{examples}[proposition]{Examples}

\newtheorem{condition}[proposition]{Condition}
\newcommand{\nc}{\newcommand}
\nc{\md}{\mathrm{d}}
\nc{\I}{{\mathbf 1}}
\nc{\remove}[1]{}

\nc{\bU}{\mathbb{U}}
\nc{\cU}{\mathcal{U}}
\nc{\bN}{{\mathbf N}}
\nc{\bM}{{\mathbf M}}
\nc{\cB}{{\mathcal B}}
\nc{\cM}{{\mathcal M}}
\nc{\cC}{{\mathcal C}}
\nc{\cL}{{\mathcal L}}
\nc{\cF}{{\mathcal F}}
\nc{\cK}{{\mathcal K}}
\nc{\cA}{{\mathcal A}}
\nc{\R}{{\mathbb R}}
\nc{\Q}{{\mathbb Q}}
\nc{\C}{{\mathbb C}}
\nc{\M}{{\mathcal M}}
\nc{\N}{{\mathbb N}}
\nc{\cN}{{\mathcal N}}
\nc{\Z}{{\mathbb Z}}
\nc{\bF}{{\mathbf F}}
\nc{\bw}{{\mathbf w}}
\nc{\tC}{\tilde{C}}
\nc{\tc}{\tilde{c}}
\nc{\hC}{\hat{C}}
\nc{\hc}{\hat{c}}
\nc{\tphi}{\tilde{\phi}}
\nc{\tvphi}{\tilde{\varphi}}
\nc{\hvphi}{\hat{\varphi}}
\nc{\tPhi}{\tilde{\Phi}}
\nc{\Psif}{\Psi^{!}}
\nc{\Ff}{F^{!}}
\nc{\tbN}{\tilde{\mathbf{N}}}
\nc{\tx}{\tilde{x}}
\nc{\ty}{\tilde{y}}
\nc{\talpha}{\tilde{\alpha}}
\nc{\tf}{\tilde{f}}
\nc{\tGamma}{\tilde{\Gamma}}
\nc{\tmu}{\tilde{\mu}}

\DeclareMathOperator{\supp}{supp}
\nc{\BP}{\mathbb{P}}
\nc{\BE}{\mathbb{E}}
\nc{\BQ}{\mathbb{Q}}
\nc{\BX}{\mathbb{X}}

\DeclareMathOperator{\diam}{{diam}}
\DeclareMathOperator{\conv}{{conv}}
\DeclareMathOperator{\BV}{{\mathbb Var}}
\DeclareMathOperator{\BC}{{\mathbb Cov}}

\usepackage{soul}
\usepackage[normalem]{ulem}
\setstcolor{red}

\numberwithin{equation}{section}

\usepackage[backend=biber,style=numeric, maxbibnames=6, giveninits=true, backref=true, url=false, isbn=false, doi=false]{biblatex}
\AtEveryBibitem{\clearfield{note}}

\title{Persistence of asymptotic variance under transport:\\ from hyperfluctuation to stealthy hyperuniformity}

\author[1,2]{Luca Lotz}
\author[3,1,4]{Michael A. Klatt}

\affil[1]{German Aerospace Center (DLR), Institute of Frontier Materials on Earth
and in Space,\linebreak Functional, Granular, and Composite Materials, Linder Höhe, 51170
Cologne, Germany}
\affil[2]{Institute for Stochastics, Karlsruhe Institute of Technology, Englerstr. 2, 76131 Karlsruhe, Germany.}
\affil[3]{German Aerospace Center (DLR), Institute for AI Safety and Security,\linebreak Wilhelm-Runge-Str. 10, 89081 Ulm, Germany}
\affil[4]{Department of Physics, Ludwig-Maximilians-Universität München,\linebreak Schellingstr. 4, 80799 Munich, Germany}

\date{August 2026}

\begin{document}

\maketitle
\begin{abstract}\noindent
    We introduce $p$-uniformity to characterize the scaling of density fluctuations in spatial random systems in $\R^d$, ranging from hyperfluctuation to stealthy hyperuniformity. Our central theorem establishes sufficient conditions to preserve $p$-uniformity under transport. The first condition, a finite $(d+p)$-th moment of the transport distance, allows for a Taylor expansion of the transport. The second condition controls the corresponding terms. We thus solve a previously stated open problem; indeed we extend it, since our result applies to a general $p$-uniform source in any dimension, and the source and transport may be dependent. As an application, we construct new classes of point processes that are isotropic and $p$-uniform with arbitrarily high $p$, and that can be simulated in linear time. 
    We thus achieve three-dimensional isotropic hyperuniform samples with an unprecedented system size of $10^9$ points. 
    We conclude with an outlook on a converse statement.
\end{abstract}

{
\hypersetup{hidelinks}
\tableofcontents
}
\bigskip

\noindent
{\em Keywords:}  random measure, point process, asymptotic variance,
random transport, hyperuniformity, STIT tessellation

\section{Introduction}

Let $\Phi$ be a stationary random complex measure, e.g., a point process, on $\R^d$ that is locally square-integrable, i.e., $\BE[|\Phi(B)|^2]<\infty$ for all bounded Borel sets $B \subseteq \R^d$. Let $\lambda_d$ denote the Lebesgue measure on $\R^d$, let $f:\R^d\to[0,\infty)$ be a test function with $\|f\|_2=1$, and define $f_r(x):=f(\frac{x}{r})$ for $x\in\R^d, r>0$. Then we call
\begin{equation}\label{e: intro asym var}
    \sigma_\Phi^2:=\lim_{r\to\infty} \frac{\BV[\Phi(f_r)]}{r^d}
\end{equation}
the \textit{asymptotic variance} of $\Phi$ if it exists. 
A common example of $f$ is the indicator function of a ball normalized by its volume, where the ball acts as an `observation window'. Alternatively, $f$ can be a smooth function with compact support or at least with a sufficiently fast decay, which represents a `diffuse observation window'. The asymptotic variance $\sigma_\Phi^2$ quantifies the mass fluctuations of the random measure in the limit of large observation windows. For example, if $\Phi$ is a point process, $\sigma_\Phi^2$ is the limit of the variance of the number of points in the observation windows rescaled by volume. We are especially interested in the case $\sigma_\Phi^2=0$, where mass fluctuations are suppressed at large scales.
Such a random measure is called \textit{hyperuniform}~\cite{Torquato_Stillinger_2003, Torquato_2018}. 

Prominent examples of hyperuniform point processes include lattices and
quasicrystals~\cite{Torquato_Stillinger_2003, Oğuz_Socolar_Steinhardt_Torquato_2017, Björklund_Hartnick_2024}, 
as well as ergodic, isotropic, and locally disordered examples~\cite{Torquato_2018},
e.g., determinantal point processes~\cite{Soshnikov_2000} 
or the one-component plasma~\cite{leble_two-dimensional_2026}.
Hyperuniformity is of great interest to physics~\cite{gabrielli_glass-like_2002,
  Torquato_Stillinger_2003,
  Gabrielli_Jancovici_Joyce_Lebowitz_Pietronero_Sylos_Labini_2003,
  torquato_hyperuniformity_2016,
  Torquato_2018,
  haberko_transition_2020,
  rissone_long-range_2021,
  wilken_random_2021,
  galliano_two-dimensional_2023,
  Shih_Casiulis_Martiniani_2024,
  maire_hyperuniform_2025,
  vanoni_effective_2026},
materials science~\cite{gorsky_engineered_2019,
  zheng_disordered_2020,
  yu_engineered_2021,
  klatt_wave_2022}.
It also increasingly attracts interest in
mathematics~\cite{Ghosh_Lebowitz_2017, 
  Ghosh_Lebowitz_2018,
  Klatt_Last_Yogeshwaran_2020, 
  Mastrilli_Błaszczyszyn_Lavancier_2024,
  Dereudre_Flimmel_Huesmann_Leblé_2024, 
  Björklund_Hartnick_2024,
  Lachièze-Rey_Yogeshwaran_2024,
  Butez_Dallaporta_García-Zelada_2024,
  Lachièze-Rey_2025a,
  Flimmel_2025,
  Huesmann_Leblé_2026}.

Recently, a particular focus has lain on the construction and simulation
of large-scale samples of isotropic amorphous (i.e., non-crystalline) hyperuniform
point processes (for $d\geq2$)~\cite{Uche_Stillinger_Torquato_2004, Uche_Torquato_Stillinger_2006, Batten_Stillinger_Torquato_2008, Shih_Casiulis_Martiniani_2024, Thomassey_Lachièze-Rey_Shapira_2026}. Such algorithms are
especially important for the study of hyperuniformity because it is a
long-range property and hence takes strong advantage of large system
sizes. However, the results were so far mainly limited to numerical
studies. Of special interest are point processes with rapid decay of mass fluctuations 
in growing observation windows (which can be observed in~\eqref{e: intro asym
var} if $f$ is chosen to be smooth) because such processes exhibit a
strong reduction of fluctuations already at intermediate
window sizes. This behavior extends the range of the corresponding physical effects.

Seemingly independent, the theoretical question about the persistence of
hyperuniformity under transport has attracted considerable attention. A
random \textit{transport} $K$ moves mass from one random measure $\Phi$, the
\textit{source}, to another random measure $K\Phi$, the
\textit{destination}. Practically, one can think of a random transport
as a random movement of mass, e.g., as a displacement of points.
So far, there have been two complementary approaches to develop
conditions that guarantee the persistence of hyperuniformity under
transport. The first establishes the persistence of hyperuniformity
under a simple $d$-th moment condition on the transport distance, but is
limited to dimension $d\leq2$~\cite{Dereudre_Flimmel_Huesmann_Leblé_2024}. The second works in any
dimension, but is limited by a restriction in long-range dependence~\cite{Flimmel_2025, Klatt_Last_Lotz_Yogeshwaran_2025}. 
However, both approaches only control the asymptotic variance itself and cannot guarantee the persistence 
of the decay rate beyond a very limited degree.
A more comprehensive control of the decay rate is not only of
theoretical but also of practical interest, since it enables the
construction of non-crystalline hyperuniform point processes with
rapidly decaying number fluctuations.

Here, we fill this gap. We begin by formalizing an order in the asymptotic behavior of $\BV[\Phi(f_r)]$. Usually, this behavior is quantified using the scaling exponent~\cite[Section 5]{Torquato_2018}, also called the hyperuniformity exponent~\cite{Mastrilli_Błaszczyszyn_Lavancier_2024, Lachièze-Rey_2025b}. However, this scaling exponent is not well-defined for every stationary random measure $\Phi$. Hence, we introduce the more general but closely connected concept of $p$-uniformity in Section~\ref{s: p-uniformity}. Under our assumptions, there is a unique positive semi-definite signed measure $\beta_\Phi$, called the \textit{covariance measure}, such that
\begin{equation}
    \BC[\Phi(f), \Phi(g)] = \beta_\Phi(f\star g)
\end{equation}
for all bounded functions $f,g$ with bounded support, where $\star$ is the tilted convolution defined as in~\eqref{e: tilted convolution}. Its Fourier transform $\hat\beta_\Phi$ is a locally finite measure and called the \textit{Bartlett spectral measure} or diffraction measure~\cite{Bartlett_1963, Björklund_Hartnick_2024}. For $p\in[-d, \infty)$, we call $\Phi$ $p$\textit{-uniform} if, as $\varepsilon\to0$, 
\begin{equation}\label{e: uniform intro}
    \frac{\hat\beta_\Phi(B_\varepsilon)}{\varepsilon^d} = O(\varepsilon^p).
\end{equation}
If~\eqref{e: uniform intro} holds with $o$ instead of $O$, we call $\Phi$ \textit{beyond} $p$-\textit{uniform}. Moreover, we call $\Phi$ $\infty$-uniform if there is an $\varepsilon>0$ such that $\hat\beta_\Phi(B_\varepsilon)=0$. So $\infty$-uniformity coincides with stealthy hyperuniformity~\cite{Batten_Stillinger_Torquato_2008,Torquato_Zhang_Stillinger_2015}. A similar technique is used in~\cite{Oğuz_Socolar_Steinhardt_Torquato_2017}, where $\hat\beta_\Phi(B_\varepsilon)$ is denoted by $Z(\varepsilon)$. If $\Phi$ is $p$-uniform and $p<\infty$, then the fraction on the RHS (right-hand side) of~\eqref{e: intro asym var} can be bounded by $cr^{-p}$ for some $c>0$ given that $f$ is sufficiently smooth. If $p=\infty$, then it decays faster than any polynomial. Hence, $\Phi$ is hyperuniform, i.e., $\sigma_\Phi^2=0$, iff $\Phi$ is beyond $0$-uniform. By definition, if $\Phi$ is $p$-uniform, then it is also $q$-uniform for all $q\in[-d, p]$. 

More concretely, if we assume that $\hat\beta_\Phi$ has a density $S_\Phi$, called the \textit{spectral density}, and that $S_\Phi$ has a scaling exponent $p\in(-d,\infty)$, i.e., 
\begin{equation}
    S_\Phi(k) = a\|k\|^p + o(\|k\|^p),
\end{equation}
as $\|k\|\to0$, for some $a> 0$, then $\Phi$ is $q$-uniform iff $q\in[-d,p]$. If $\Phi$ is a point process, this spectral density, rescaled by the intensity, is also called the \textit{structure factor}.

Another benefit of the notion of $p$-uniformity is that we can connect it to the results from~\cite{Ghosh_Lebowitz_2017, Lachièze-Rey_2025a} to show that $(2p+d)$-uniformity implies $p$-rigidity for any $p\in\N_0\cup\{\infty\}$; see~\eqref{e: p-rigid} or~\cite{Ghosh_Peres_2017, Lachièze-Rey_2025a} for the definition of rigidity.

Next, we establish conditions under which $p$-uniformity is preserved under transports; see Section~\ref{s:main results}.
Hence, we also answer the previously mentioned open question about the persistence of rapid decay of mass fluctuations. We start with showing that a $d$-th moment condition on the transport distance guarantees that local square integrability persists under transport; see Subsection~\ref{ss:Square integrability of destinations} and partially also~\cite{Dereudre_Flimmel_Huesmann_Leblé_2024}. Then, in Subsection~\ref{ss:Persistence of uniformity under transport}, we show that a $(d+p)$-th moment condition on the transport distance allows for a form of Taylor expansion of the transport. Concretely, given the source $\Phi$, the transport $K$, and the smooth, compactly supported test function $f$, the destination $K\Phi$ evaluated at the scaled test function $f_r$ (defined as in \eqref{e: intro asym var}) admits the expansion
\begin{equation}\label{e: finite expansion intro}
    K\Phi(f_r) = \sum_{q=0}^{\left\lceil\frac{d+p}{2}\right\rceil-1} \frac{1}{q!} \Psi_q(f_r^{(q)}) + o\big(r^{\frac{d-p}{2}}\big)
\end{equation}
in $L^2$ as $r\to\infty$. The tensor-valued stationary random complex measures $\Psi_q$ have explicit random densities with respect to the source $\Phi$, i.e., 
\begin{equation}\label{e: Psi_q intro}
    \Psi_q(dy) = \bigg(\int (x-y)^{\otimes q} \, K(y, dx)\bigg)\, \Phi(dy)\quad y\in\R^d,
\end{equation}
where $\otimes$ denotes the tensor product.
Here, the random transport $K$ is identified with its transport kernel, and the measure $K(y, \cdot)$ dictates how mass of the source $\Phi$ at the location $y\in\R^d$ is relocated by the transport.
If an exponential moment of the transport distance is also finite, we even obtain a full Taylor series, i.e.,
\begin{equation}
    K\Phi(f) = \sum_{q=0}^{\infty} \frac{1}{q!} \Psi_q(f^{(q)})
\end{equation}
given that the Fourier transform of $f$ is smooth and compactly supported.
We exploit this new decomposition of transports in our main Theorem~\ref{t:main theorem}. There, we show that it suffices to control the first $\lceil \frac{d+p}{2}\rceil$ terms of this expansion to guarantee that $p$-uniformity persists under transport. While the theorem applies for any $d\in\N$ and $p\in[-d,\infty]$, we can specifically recover the results from~\cite{Dereudre_Flimmel_Huesmann_Leblé_2024} by choosing $d=1$ and $p\in\{0,1\}$ or $d=2$ and $p=0$, since the $0$-th order term of the expansion is trivial for a standard transport; see~\eqref{e: Psi_q intro}.

This connection is no coincidence. Some parts of the proofs even build upon~\cite{Dereudre_Flimmel_Huesmann_Leblé_2024}. However, we introduce multiple new crucial elements. The most important is the multiple-term expansion~\eqref{e: finite expansion intro}, whereas in~\cite{Dereudre_Flimmel_Huesmann_Leblé_2024} only the cases are considered where a single term is sufficient. This allows us to also formulate a condition for the persistence of hyperuniformity for $d\geq3$, which was an open question of theirs. Critically, we also go beyond hyperuniformity and can ensure the persistence of $p$-uniformity for higher $p$. Finally, we substantially broaden the applicability by allowing the source to be an arbitrary random complex measure and the transport to be not only spatially correlated with itself but also dependent on the source.

In Subsection~\ref{ss:Persistence of uniformity under spatial interpolation}, we expand on the concept of transport in a more physical way. We interpolate the source and destination spatially continuously. It turns out that our conditions ensuring the persistence of $p$-uniformity from the source directly to the destination also imply that $p$-uniformity is preserved along the way. Conversely, the persistence of $p$-uniformity under spatial interpolation almost implies these conditions.

In Section~\ref{s:applications}, we primarily apply our results to construct new classes of point processes. The first class, defined in Subsection~\ref{ss: pp from invariant partitions}, is the aforementioned class of non-crystalline point processes for $d\geq2$ that can be $p$-uniform for any $p$. They can even be simulated in linear time, and the algorithm parallelizes perfectly. Thus, we solve also this previously mentioned open problem. For the construction, we leverage \textit{fair tilings} like in~\cite{Klatt_Last_Lotz_Yogeshwaran_2025} to reduce the global to a local problem. A tiling is called fair if every cell has the same volume. We construct a new class that can be simulated in linear time. The technique expands on the STIT-tilings that are created by iteratively dividing cells using random hyperplanes~\cite{Nagel_Weiss_2003, Nagel_Weiss_2005}. We deviate by only dividing cells such that both newly created cells have the same volume. We call the invariant distribution of such a Markov chain on the space of fair tessellations a \textit{fair STIT}. If we choose the splitting direction uniformly, the resulting fair STIT is ergodic and isotropic, i.e., non-crystalline. In the second step, we apply an idea from~\cite{Gabrielli_Joyce_Torquato_2008}. Let $n,m\in\N$. We place (random) points $X_1,...,X_n$ in every cell $C$ such that the first $m$ moments of the cell and the points coincide, i.e.,
\begin{equation}\label{e: averaging set intro}
    \frac{1}{\lambda_d(C)} \int_C x^{\otimes q}\, dx = \frac{1}{n} \sum_{j=1}^n X_j^{\otimes q}, \quad q\in\{0,...,m-1\}.
\end{equation}
Then we apply our main Theorem \ref{t:main theorem} to show that the resulting point process is beyond $(2m-d)$-uniform; see Figure~\ref{fig:illustration}. This procedure can be simulated very efficiently, since each cell can be treated independently. The locations of the points in each cell can also be considered to be the nodes of an equal-weight quadrature formula of the cell. These are specifically called \textit{averaging sets} or $t$-designs~\cite{Seymour_Zaslavsky_1984, Delsarte_Goethals_Seidel_1977} and connect our construction to the theory of hyperuniformity in compact spaces~\cite{Brauchart_Grabner_Kusner_2019}. In Appendix~\ref{s: averaging sets}, we contribute to the theory of averaging sets and, e.g., show that for every $d\in\N, m\in\N_0$, there is an $n_0\in\N$ such that for every $n\geq n_0$ there is an averaging set of size $n$ and order $p$ for every bounded convex $C\subseteq \R^d$, showing that the previous construction is indeed possible for $n$ sufficiently large.

To demonstrate the practical feasibility and numerical stability of the proposed construction, we have implemented the algorithm and performed an extensive numerical study. Specifically, we have simulated samples containing $10^9$ points in dimensions $2$ and $3$ for different values of the parameter $m$. This system size is unprecedented in dimension $3$; see, e.g.,~\cite{Uche_Torquato_Stillinger_2006, Torquato_Kim_Klatt_2021, Shih_Casiulis_Martiniani_2024}. For these samples, we have also calculated the empirical structure factor and estimated the scaling exponent; see Figure~\ref{fig: empirical structure factor}. The obtained values are in agreement with our lower bound of $2m-d$ from Theorem~\ref{t:main theorem}. Indeed, they exceed this lower bound by up to $d$, as was expected. Both the implemented code and the resulting data sets will be made available in  open-access repositories.

\begin{figure}[p]
  \centering
  \includegraphics[width=\textwidth]{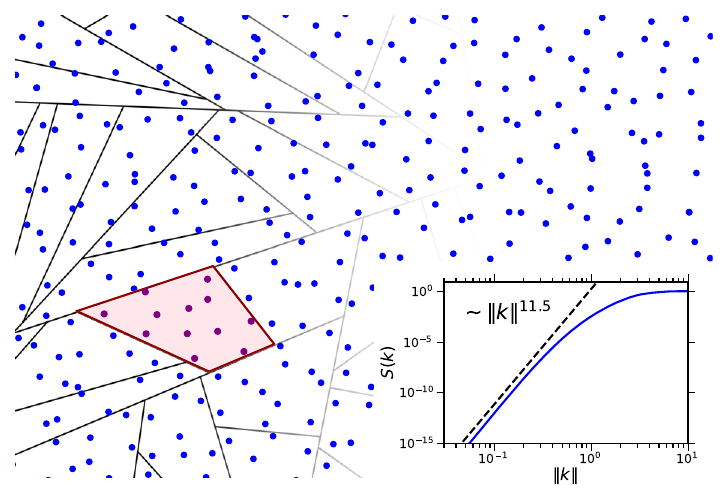}%
  \caption{Illustration of our construction of an isotropic point process
  that is hyperuniform with an exceptionally large scaling exponent. In
  black and fading out from left to right, we show a realization of a
  random fair tiling, i.e., a random division of space into cells of equal
  volume; specifically, a realization of a fair STIT; see
  Subsection~\ref{ss: pp from invariant partitions}. To obtain the point
  process shown in blue, we concentrate the mass that is uniformly
  distributed over each cell into $n$ points while preserving the first
  $p$ moments, i.e., satisfying~\eqref{e: averaging set intro}. Here,
  $n=12$ and $p=6$; see the cell highlighted in red. By our main
  Theorem~\ref{t:main theorem} about the persistence of $p$-uniformity
  under transport, this point process is beyond $(2p-d)$-uniform, i.e.,
  beyond $10$-uniform in this case. A simulation study with $50,000$
  samples of around $50,000$ points each yields the estimated structure
  factor shown in the inset. The data suggests that the point process is
  even \linebreak$11.5$-uniform. A point process with such a large scaling exponent
  is approximately stealthy within floating-point precision even up to
  intermediate wavenumbers $\|k\|$. Moreover, the linear scaling of the
  algorithm enables the generation of samples several orders of magnitude
  larger.}
  \label{fig:illustration}
\end{figure}

We also extend our method to other fair tilings. Under a moment condition, we show that the arbitrary placement of a singular point in each cell yields a beyond $(2-d)$-uniform point process, and the placement of a singular point at the centroid yields a beyond $(4-d)$-uniform point process. Hence, the former always yields a hyperuniform point process for $d\leq 2$ (see also~\cite{Dereudre_Flimmel_Huesmann_Leblé_2024}), and the latter for $d\leq4$. As a special case, we conclude that the centers of mass of the tiles of a pinwheel tiling form a beyond $2$-uniform, hence also class I hyperuniform, point process~\cite{Radin_1994}. For the remaining dimensions, we construct counterexamples in Appendix~\ref{s: Hyperfluctuating examples}; see partially also~\cite{Dereudre_Flimmel_Huesmann_Leblé_2024}. 

Our second class of new point processes, defined in Subsection~\ref{ss: pp from averaging sets}, is similar in nature, but its construction is based on a progenitor point process instead of a fair tiling. For every point of the progenitor point process, which is assumed to be $\tilde{p}$-uniform for some $\tilde{p}\geq p$, we independently place a random cluster of new points. Again, these clusters form averaging sets, but this time not of cells but of some arbitrarily chosen finite measure. In this way, the $\tilde{p}$-uniformity of the progenitor point process can be reduced by a controlled amount. While this construction is a lot simpler, this simplicity allows us to not only prove that the result is $p$-uniform for some $p\in 2\N_0$, but also to show that this value of $p$ is maximal. As examples for clusters, we bring up averaging sets of trigonometric manifolds and spherical $t$-designs~\cite{Delsarte_Goethals_Seidel_1977}. An example from~\cite{Lachièze-Rey_2025a} arises as a special case for $d=2$. By generalizing this principle---placing random finite measures instead of random point clusters---we further construct, to the best of our knowledge, the first nonnegative random field that is $\infty$-uniform, i.e., stealthy hyperuniform, and that also has a continuous spectral density.

In Subsection~\ref{ss: Persistence of uniformity under displacements}, we do not construct a new class of $p$-uniform point processes. Instead, we more generally specialize our main theorem to the case where the transport is a displacement field. We derive a mixing condition for the persistence of $p$-uniformity for $p\leq 2$ under displacement. Our conditions are strictly weaker than those from~\cite{Flimmel_2025} and practically also weaker than those from~\cite{Klatt_Last_Lotz_Yogeshwaran_2025}. Then, we prove a more specialized version for Gaussian displacements independent of the source. There, we also derive a condition that is strictly weaker than those in~\cite{Flimmel_2025, Klatt_Last_Lotz_Yogeshwaran_2025}. The conditions in our result can now be considered essentially sharp. We conclude the subsection by rigorously proving for $d=1$ that the destination of a perturbation by a Gaussian random field independent of the source cannot be beyond $4$-uniform under very basic assumptions. This statement was already formulated in~\cite{Gabrielli_2004}, but their argument was not mathematically rigorous and omitted the necessary conditions. We expect that the assertion also to holds for $d\geq2$ under suitable conditions, but the restriction to $d=1$ simplifies the proof significantly.

In Subsection~\ref{ss: Lattices at positive temperature}, we apply our results to make another statement from the physics literature mathematically rigorous for $d\geq3$. We show that, while a perfect lattice is $\infty$-uniform, i.e., stealthy hyperuniform, the lattice at positive temperature is not even beyond $0$-uniform, i.e., not hyperuniform. The proof generalizes to all materials with an asymptotically linear dispersion relation.

In the final Section~\ref{s: prospects}, we give an outlook on how our concepts can be extended in a different direction. Transports can always be associated with so-called transport costs. From this perspective, our main theorem implies that a transport that has a low cost in a certain sense preserves $p$-uniformity. In particular, when we fix the source to be the $\infty$-uniform Lebesgue measure (or lattice like in~\cite{Dereudre_Flimmel_Huesmann_Leblé_2024}), it is natural to also ask the reverse question. Does $p$-uniformity of a random measure also imply a certain form of low transport distance to the Lebesgue measure? Here, a low transport distance of two random measures is synonymous with the existence of a low-cost transport from one to another. If the statement is true, it will open the possibility to (almost) characterize $p$-uniformity in a transport sense. Actually, for $d=2$ and $p=0$, the question has already been answered in~\cite{Lachièze-Rey_Yogeshwaran_2024, Huesmann_Leblé_2026}, and in~\cite{Butez_Dallaporta_García-Zelada_2024} even more generally for $d\geq2$, $d+p\leq 2$. For the remaining cases, we introduce a new form of transport cost closely connected to the conditions in our main theorem. We think that this transport cost should be the key to answering the question conclusively for every dimension $d\in\N$ and every $p\in[-d,\infty)$. We support this claim by showing that at least all $\infty$-uniform, i.e., stealthy hyperuniform, random measures have this sort of low transport distance to the Lebesgue measure.

\section{Preliminaries}

In this section, we recall some foundational definitions and results on random measures and transport that we rely on throughout the paper. More specifically, we are interested in stationary random measures and their second moment properties, which also involves the Bartlett spectral measure; see~\cite{Bartlett_1963}. We allow for the random measures to be signed (and even complex-valued) since these objects come up naturally in Section~\ref{s:main results}, even when we apply the Theorems from there to nonnegative random measures. Further, the physics community has also been interested in the asymptotic properties of complex-valued random measures recently; see, e.g.,~\cite{Torquato_Kim_Klatt_Car_Steinhardt_2026}. Transports are mainly handled in the form of transport kernels. Most elements can also be found in the Preliminaries of~\cite{Klatt_Last_Lotz_Yogeshwaran_2025}, but some objects are treated in greater generality here. For a more in-depth look into the theory of random measures, we recommend~\cite{Kallenberg_2017}.

\subsection{Random complex measures}\label{ss: random complex measures}

Let $\BX$ be a metric space. Most of the time, we simply assume that $\BX=\R^d$ for some $d\in\N$. We define $\bM_+ := \bM_+(\BX)$ as the space of \textit{locally finite measures} on $\BX$. The space of all \textit{locally absolutely finite complex measures} on $\BX$ is then simply defined as
\begin{equation}
    \bM := \bM(\BX) := \{\mu_1-\mu_2 + i(\mu_3-\mu_4):\mu_1,\mu_2,\mu_3,\mu_4\in\bM_+(\BX)\}.
\end{equation}
Given $\mu\in \bM(\BX)$, its total variation measure is denoted by $|\mu|$. It is the smallest element of $\bM_+(\BX)$ such that $|\mu(B)|\leq |\mu|(B)$ for all $B\in\cB_b(\BX)$, where $\cB_b(\BX)$ is the set of all bounded Borel sets of $\BX$. If $\BX=\R^d$, we also use $\cB_b^d:=\cB_b(\R^d)$. If, e.g., $\mu\in \bM(\BX)$ has a density $f:\BX\to \C$ with respect to some $\nu\in\bM_+(\BX)$, then $|\mu|$ has the density $|f|$ with respect to $\nu$. For a measurable function $f:\BX\to\C$ which is integrable with respect to $|\mu|$, we extend the notation by
\begin{equation}
    \mu(f) := \int f(x)\, \mu(dx).
\end{equation}
Proceeding, we equip $\bM(\BX)$ with the sigma field $\cM:=\cM(\BX)$ generated by the mappings $\bM(\BX)\to\C; \mu\mapsto \mu(B)$ with $B\in\cB_b(\BX)$. We have $\bM_+(\BX)\in\cM(\BX)$, and further prominent examples are the \textit{counting measures} $\bN:=\bN(\BX):= \{\mu\in\bN(\BX):\mu(B)\in\N_0 \text{ for } B\in\cB_b(\BX)\}$ and \textit{simple counting measures} $\bN_s:=\bN_s(\BX):= \{\mu\in\bM(\BX):\mu(\{x\})\in\{0,1\} \text{ for } x\in\BX\}$; see, e.g.,~\cite[Subsection 2.1]{Last_Penrose_2018}.

A \textit{random complex measure} is a random element $\Phi$ of $\bM$ with respect to some fixed probability space $(\Omega, \cA, \BP)$, which is assumed to be rich enough to allow for the construction of all random elements we consider in this paper. In particular, a random element of $\bM_+$ is called \textit{(nonnegative) random measure}, a random element of $\bN$ is called \textit{point process}, and a random element of $\bN_s$ is called \textit{simple point process}. For convenience, we use the notation $\Phi(\omega, B):=\Phi(\omega)(B)$ and $\Phi(B):=\Phi(\cdot, B)$ for a random complex measure $\Phi$ and $\omega\in\Omega, B\in\cB_b$. We do the same for other integrable sets and functions. A convenient way to check if a measurable function $f:\BX\to[0,\infty]$ is integrable with respect to a (nonnegative) random measure $\Phi$ is to check if it is integrable with respect to its \textit{intensity measure} $\BE[\Phi]$, since then by the \textit{Campbell formula},
\begin{equation}\label{e: campbell}
    \BE[\Phi(f)] = \BE[\Phi](f),
\end{equation}
see, e.g.,~\cite[Proposition 2.7]{Last_Penrose_2018}.
For a random complex measure $\Phi$ and a complex-valued function $f$, the same formula holds if $|f|$ is integrable with respect to $\BE[|\Phi|]$.

\subsubsection*{Invariance}

Going forward, we are mostly interested in the case $\BX=\R^d$ and so-called stationary random complex measures. A random complex measure $\Phi$ on $\R^d$ is called \textit{stationary} if $\theta_x\Phi\overset{d}{=}\Phi$ for all $x\in\R^d$, where $\theta_x\Phi:=\Phi-x:=\Phi(\cdot+x)$. Sometimes, we also assume \textit{isotropy}, which is defined in the same way with the group of shift-operators replaced by the group of isometries of $\R^d$. To be able to handle both concepts at once, and to make sure that multiple objects are not only individually but also jointly stationary, we introduce invariance.

Suppose that $G$ is a group that acts measurably on $\BX$ and $\Omega$, and preserves $\BP$, i.e., $\BP(gA)=\BP(A)$ for $g\in G,A\in\cA$. Then a random complex measure $\Phi$ on $\BX$ is called \textit{invariant} (with respect to the group action) if
\begin{equation}
    \Phi(g\omega, gB)= \Phi(\omega, B),\quad g\in G, \omega\in\Omega, B\in\cB_b(\BX). 
\end{equation}
If $\BX=\R^d$, and $\Phi$ is invariant with respect to the shift-operators, then $\Phi$ is stationary, and if $\Phi$ is invariant with respect to the isometries on $\R^d$, then $\Phi$ is isotropic.
From now on, if not stated otherwise, we assume that $\BX=\R^d$, and that $G=\{\theta_x:x\in\R^d\}$ is the group of shift-operators with the natural group action on $\R^d$. Further, we do not construct the shift explicitly in most examples, as it is clear how to define it from context. For an example of a proper construction; see~\cite[Theorem 4.3]{Klatt_Last_Lotz_Yogeshwaran_2025}.

As a first-moment property of an invariant complex random measure $\Phi$ on $\R^d$ with locally finite $\BE[|\Phi|]$, we obtain that
\begin{equation}
    \BE[\Phi]=\gamma\lambda_d
\end{equation}
for some $\gamma\in\C$, which is called the \textit{intensity}. The measure $\lambda_d$ denotes the Lebesgue measure on $\R^d$. Therefore, $\Phi(f)$ is well defined for every absolutely integrable function $f\in L^1(\R^d, \C)$. Further, if $\Phi$ is almost surely nonnegative and not almost surely the zero measure, then $\gamma>0$. In this case, we can also define the \textit{first-order Palm probability measure} $\BP_0^\Phi$ by
\begin{equation}
    \BP_0^\Phi(A) = \frac{1}{\gamma}\BE\bigg[\int_{[0,1)^d} \I\{\theta_xA\}\, \Phi(dx)\bigg],\quad A\in\cA,
\end{equation}
and formulate the \textit{refined Campbell theorem}:
\begin{equation}\label{e: refined Campbell 1}
    \iint f(\omega, x)\, \Phi(\omega, dx)\, \BP(d\omega) = \gamma\iint f(\theta_{-x}\omega, x)\, \BP_0^\Phi(d\omega)\, dx
\end{equation}
for any measurable $f:\Omega\times\R^d\to [0,\infty]$; see, e.g.,~\cite[Appendix A]{Klatt_Last_Lotz_Yogeshwaran_2025}. For a complex-valued function, the equation holds if either side is finite with respect to $|f|$. If $\Phi$ is a simple point process, one can informally think about $\BP_0^\Phi$ as the conditional probability measure with the condition being that $\Phi$ has a point in $0$. Generally, if we write $\BE_0^\Phi$, we refer to the expectation with respect to the first-order Palm probability measure $\BP_0^\Phi$.

\subsubsection*{Second moment properties}

While the intensity suffices to analyze the first-order properties of an invariant random complex measure, we are mostly interested in second-order properties in this paper. To this end, the \textit{reduced second moment measure} is the second-order equivalent of the intensity. Suppose that $\Phi$ is an invariant random complex measure on $\R^d$ which is locally absolutely square-integrable, i.e., $\BE[|\Phi|(B_1)^2]<\infty$, where $B_1:=\{x\in\R^d:\|x\|<1\}$ is the open ball of radius $1$ around $0$. Then we can define its reduced second moment measure by
\begin{equation}
    \alpha_\Phi(B) := \BE\bigg[\iint \I\{x\in[0,1)^d, x-y\in B\}\, \Phi(dx)\, \overline\Phi(dy)\bigg],\quad B\in\cB_b.
\end{equation}
Under these assumptions, it is locally finite and even translation-bounded, i.e., there exists a $c>0$ such that $\alpha_\Phi(B_1+x)\leq c$ for all $x\in\R^d$.
If $\Phi$ is nonnegative almost surely, this definition also works if $\Phi$ is not locally absolutely square-integrable, but then $\alpha_\Phi$ is not locally finite. To formulate the connection of this measure to the second moment, let us introduce the \textit{convolution} and \textit{tilted convolution}:
\begin{align}
    (f\ast g)(y) &:= \int f(x)g(y-x)\, dx,\\
    (f\star g)(y) &:= \int f(x) \overline{g(x-y)}\, dx,\quad y\in\R^d,\label{e: tilted convolution}
\end{align}
where $f,g:\R^d\to\C$ such that the integrals above converge. If $f,g$ are nonnegative, we allow the value of $\infty$ and do not require the integrals to converge. In the same way, we can define these operations for locally absolutely finite complex measures $\mu,\nu$ on $\R^d$, i.e., if the following is well defined, then
\begin{align}
    (\mu \ast \nu)(B) &:= \int \nu(B-x) \, \mu(dx),\\
    (\mu \star \nu)(B) &:= \int \overline{\nu(-B+x)} \, \mu(dx),\quad B\in\cB_b^d.
\end{align}
With an application of Fubini's theorem, we obtain the \textit{second moment formula}
\begin{equation}\label{e: second moment formula}
    \BE\big[\Phi(f)\overline{\Phi(g)}\big] = \alpha_\Phi(f\star g),
\end{equation}
for $f,g\in L^1(\R^d, \C)$ such that either side is absolutely integrable. If $f,g, \Phi$ are nonnegative, the equality even holds if integrability is not given. Coming back to the integrable case, we can also derive the \textit{covariance formula} 
\begin{equation}\label{e: covariance formula}
    \BC[\Phi(f),\Phi(g)] = \beta_\Phi(f\star g),
\end{equation}
where the covariance of complex random variables $X,Y$ is defined by $\BC[X\overline{Y}]-\BE[X]\overline{\BE[Y]}$, and the \textit{covariance measure} $\beta_\Phi$ by
\begin{equation}
    \beta_\Phi := \alpha_\Phi - |\gamma|^2\lambda_d
\end{equation}
with $\gamma$ being the intensity of $\Phi$. Just like in the first-order case, there is also a \textit{refined Campbell theorem} available here if $\Phi$ is an invariant (nonnegative) random measure. Then there exists a family of \textit{second-order Palm probability measures} $(\BP_{0,y}^\Phi)_{y\in\R^d}$, which is uniquely defined $\alpha_\Phi$-almost everywhere such that
\begin{equation}\label{e: refined Campbell 2}
    \iint f(\omega, x, y)\, \Phi^2(\omega, d(x,y))\, \BP(d\omega) = \iiint f(\theta_{-x}\omega, x, x+y)\, \BP_{0,y}^\Phi(d\omega)\, \alpha_\Phi(dy)\, dx
\end{equation}
for any measurable $f:\Omega\times\R^d\times\R^d\to [0,\infty]$. For a complex-valued function, the equation holds if either side is finite with respect to $|f|$. Generally, if we write $\BE_{0,y}^\Phi$, we refer to the expectation with respect to the second-order Palm probability measure $\BP_{0,y}^\Phi$. From the combination of both refined Campbell theorems, we can also derive that 
\begin{equation}
    \alpha_\Phi=\gamma\BE_0^\Phi[\Phi].
\end{equation}
Proofs can be found, e.g., in~\cite{Klatt_Last_Lotz_Yogeshwaran_2025}.

\subsubsection*{Bartlett spectral measure}

Basically, the Bartlett spectral measure allows for an analysis of the second-order properties of a random complex measure in the Fourier domain. It is named after M. S. Bartlett since he brought it up in~\cite{Bartlett_1963}. To approach it, we first recall the Fourier transform of a function $f\in L^1(\R^d,\C)$ as
\begin{equation}
    \hat{f}(k) := \int e^{-i\langle k, x\rangle}f(x)\, dx, \quad k\in\R^d,
\end{equation}
with bounded $\hat{f}\in\cC(\R^d, \C)$.
Similarly, the Fourier transform can be defined for a complex measure $\mu$ with finite total variation, i.e., $|\mu|(\R^d)<\infty$, as
\begin{equation}
    \hat{\mu}(k) := \int e^{-i\langle k, x\rangle}\, \mu(dx),
\end{equation}
and again $\hat{\mu}\in\cC(\R^d, \C)$ is bounded. Further, the Fourier transform can be extended to so-called tempered distributions. However, we only need to extend it to positive semi-definite locally finite complex measures. A locally finite complex measure $\mu$ on $\R^d$ is called \textit{positive semi-definite} if
\begin{equation}
    \mu(f\star f) \geq 0
\end{equation}
for every bounded and compactly supported function $f:\R^d\to\C$. In this case, the Fourier transform $\hat\mu$ of $\mu$ is a locally finite and translation-bounded measure that satisfies
\begin{equation}
    \mu(f\star g) = (2\pi)^{-d}\hat\mu\big(\hat{f}\,\overline{\hat{g}}\big)
\end{equation}
for all bounded and compactly supported functions $f,g:\R^d\to\C$; see~\cite[Section 4]{Berg_Forst_2012}. A measure $\mu$ is called translation-bounded if there is a $c>0$ such that 
\begin{equation}\label{e: translation-bounded}
    \mu(B_1+x) \leq c, \quad x\in\R^d.
\end{equation}
As a consequence, if $f\in L^1(\R^d, [0,\infty))$ is decreasing in every coordinate direction, it is also $\mu$ integrable for every translation-bounded measure $\mu$. If $\mu$ has finite total variation and is positive semi-definite, the two notations for the Fourier transform clash. In this case, the bounded continuous \textit{function} $\hat\mu$ is the density of the locally finite \textit{measure} $\hat\mu$. However, it is always clear from context which object we are referring to.

Now, we can apply this Fourier transform to the reduced second moment measure $\alpha_\Phi$, and the correlation measure $\beta_\Phi$ of an invariant locally absolutely square-integrable random complex measure $\Phi$, as~\eqref{e: second moment formula} and~\eqref{e: covariance formula} yield that both are positive semi-definite. In particular, $\hat\beta_\Phi$ is called the \textit{Bartlett spectral measure} of $\Phi$. We also obtain the following relations:
\begin{align}\label{e: spectral second moment formula}
    \BE\big[\Phi(f)\overline{\Phi(g)}\big] &= (2\pi)^{-d}\hat\alpha_\Phi\big(\hat{f}\, \overline{\hat{g}}\big),\\
    \BC\big[\Phi(f),\Phi(g)\big] &= (2\pi)^{-d}\hat\beta_\Phi\big(\hat{f}\, \overline{\hat{g}}\big), \label{e: spectral covariance formula}
\end{align}
for every $f,g\in L^1(\R^d, \C)$ such that the integral on the RHS converges absolutely. If $f=g$ and the integral on the RHS is infinite, then we can also derive that $\BE[|\Phi(f)|^2]=\infty$. The measures $\hat\alpha_\Phi$ and $\hat\beta_\Phi$ are not only locally finite but also translation-bounded; see, e.g.,~\cite[Proposition 4.9]{Berg_Forst_2012}.
Hence, we obtain that $\BE[|\Phi(f)|^2]<\infty$ for every $f\in L^1(\R^d, \C)$ such that $|\hat{f}(k)|\leq (1+\|k\|)^{-\frac{d}{2}}\rho(\|k\|)^{-\frac{1}{2}}$, $k\in\R^d$, for some strongly log-dominating function $\rho$; see Definition~\ref{d:strongly log-dominating}. In particular, this second moment is finite if $\hat{f}$ has compact support.

If the Bartlett spectral measure $\hat\beta_\Phi$ has a density with respect to the Lebesgue measure $\lambda_d$, then we call this density the \textit{spectral density} $S_\Phi$. If $\Phi$ is a point process with intensity $\gamma>0$, the rescaled $\gamma^{-1} S_\Phi$ is also called the \textit{structure factor}. Given the spectral density $S_\Phi$, we can express the covariance formula as
\begin{equation} \label{e: sf covariance formula}
    \BC\big[\Phi(f),\Phi(g)\big] = \frac{1}{(2\pi)^d}\int \hat{f}(k)\overline{\hat{g}(k)} S_\Phi(k)\, dk
\end{equation}
for every $f,g\in L^1(\R^d, \C)$ such that the integral on the RHS converges absolutely. If the covariance measure $\beta_\Phi$ has finite total variation, i.e., $|\beta_\Phi|(\R^d)<\infty$, then the spectral density always exists and is a bounded continuous function. Hence, the covariance formula yields that $\BE[|\Phi(f)|^2]<\infty$ for every $f\in L^1(\R^d, \C)\cap L^2(\R^d, \C)$.

\subsubsection*{Tensor-valued random complex measures}

Since we apply Taylor's theorem to functions with $d$-dimensional domain in the later sections, we not only have to work with complex-valued measures but also tensor-valued measures. However, we limit ourselves to just handling these objects component-wise and do not derive analogues of the reduced second moment measure, covariance measure, and Bartlett spectral measure for the full objects, even though it would be possible; see, e.g.,~\cite[Section 20]{Yaglom_1987}.

We write $(\C^d)^{\otimes q}$, $q\in\N_0$, for the space of rank $q$ tensors of $\C^d$. We equip it with the scalar product and norm it inherits from the natural embedding into the space $\C^{d^q}$. The corresponding norm is called the Frobenius norm. Note that scalars are rank $0$ tensors, vectors are rank $1$ tensors, and matrices are rank $2$ tensors. For two tensors $x\in(\C^d)^{\otimes q_1}$, $y\in(\C^d)^{\otimes q_2}$ and $q_1,q_2\in\N_0$, we denote the tensor product by $x\otimes y$, which then is a rank $q_1+q_2$ tensor. Further, for a tensor $x$ and $q\in\N_0$, we write $x^{\otimes q}$ for the $q$-times tensor product of $x$ with itself. Note that we have $\|x\otimes y\|\leq \|x\|\|y\|$, $\|x^{\otimes q}\|\leq \|x\|^q$ for two tensors $x,y$ and $q\in\N_0$ by the submultiplicativity of the Frobenius norm. We access the components of the tensors via indices, i.e.,
\begin{equation}
    x_{j_1,...,j_q} := \langle x, e_{j_1}\otimes \cdots \otimes e_{j_q}\rangle , \quad x\in(\C^d)^{\otimes q}, j_1,...,j_q\in\{1,...,d\},q\in\N,
\end{equation}
where $e_j$ is the $j$-th unit vector of $\R^d$ for $j\in\{1,...,d\}$.
Given a bounded tensor valued function $f$, we simply define $\|f\|_\infty:=\|g\|_\infty$, where $g$ is the $[0,\infty)$-valued concatenation of $f$ and the Frobenius norm.

Given a $(\C^d)^{\otimes q}$-valued complex measure $\mu$ and a function $f:\C^d\to \C^{\otimes q}$, we integrate $f$ with respect to $\mu$ using the inner product, i.e.,
\begin{equation}\label{e: tensor valued measure}
    \mu(f) := \sum_{i_1,...,i_q\in\{1,...,d\}} \mu_{i_1,...,i_q}(f_{i_1,...,i_q}).
\end{equation}

\subsection{Transport kernels}

In the following sections, we analyze the effect of transports on specific second moment properties of invariant random complex measures. These transports are handled as so-called transport kernels. Let $(\BX, \mathcal{X})$ and $(\mathbb{Y}, \mathcal{Y})$ be measurable spaces. Then $K:\BX\times\mathcal{Y}\to\C$ is called a \textit{kernel} from $\BX$ to $\mathbb{Y}$ if $K(x,\cdot)$ is a complex measure for every $x\in\BX$ and $K(\cdot, B)$ is measurable for every $B\in\mathcal{Y}$. If $K(x,\cdot)$ has finite total variation for every $x\in\BX$, then we also call it a \textit{transport kernel}. If $K(x,\cdot)$ is a probability measure for every $x\in\BX$, then we also call it a \textit{probability kernel}. To simplify notation, we define $|K|(x, B) := |K(x)|(B)$ for $x\in\BX, B\in\mathcal{Y}$, and if $|K|(x)(\mathbb{Y})$ is bounded independent of $x\in\BX$, then we say that the kernel has \textit{uniformly bounded total variation}. Note that in~\cite{Klatt_Last_Lotz_Yogeshwaran_2025} transport kernels are always assumed to be probability kernels, which is not the case here. Besides that, most of the following is introduced there in the same way in~\cite{Last_Thorisson_2009}.

Now, if we suppose that $\phi$ is a locally finite complex measure on a metric space $\BX$, that $K$ is a transport kernel from $\BX$ to $\BX$, and that the integral in
\begin{equation}
    K\phi(B) := \int K(x, B)\, \phi(dx)
\end{equation}
converges for every $B\in\cB_b(\BX)$, then we call $K$ a \textit{transport kernel} of a \textit{transport} from the \textit{source}~$\phi$ to the \textit{destination} $K\phi$. If $K$ is a probability kernel, then one can think about this transport as taking the mass of $\phi$ from every location $x\in\BX$ and distributing it according to $K(x)$. Note that two kernels $K, L$ can also be concatenated by
\begin{equation}
    KL(x, \cdot) := \int K(y, \cdot)\, L(x, dy),
\end{equation}
and that the concatenation is associative. 
This concept can then be extended to the transport of random complex measures. Note that a random complex measure $\Phi$ on a metric space $\BX$ can be interpreted as a kernel from $\Omega$ to $\BX$. A \textit{(random) transport kernel} $K$ would then be a kernel from $\Omega\times\BX$ to $\BX$, and if
\begin{equation}
    |K||\Phi|(\omega, \cdot) := \int |K|(\omega, x, \cdot)\, |\Phi|(\omega, dx)
\end{equation}
is locally finite, then 
\begin{equation}
    K\Phi(\omega, \cdot) := \int K(\omega, x, \cdot)\, \Phi(\omega, dx)
\end{equation}
is a well-defined random complex measure on $\BX$. From now onward, we also use\linebreak $K_x:=K(x):=K(\cdot, x, \cdot)$ for $x\in\BX$ to simplify the notation. Moreover, operations that act on measures are applied $\omega$-wise to $K_x$. Let us again focus on the case $\BX=\R^d$. There, such (random) transport kernels can be obtained from the disintegration of a so-called transport, which is a random measure on $\R^d\times\R^d$; see, e.g.,~\cite[Proposition 2.2]{Klatt_Last_Lotz_Yogeshwaran_2025}. Elsewhere, a transport is also called coupling, see, e.g.,~\cite{Villani_2008}, or translocation; see, e.g.,~\cite{Kantorovitch_1958}. Alternatively, a transport kernel can be a random element of the Skorokhod space of all càdlàg-functions from $\R^d$ to the (metrized) space of finite complex measures; see~\cite[Subsection 5.3]{Klatt_Last_Lotz_Yogeshwaran_2025} for the special case that the complex measures are probability measures and also~\cite{Janson_2021}. The latter case is particularly important if one wants to assume that the transport kernel and the source are independent.

Just like with complex random measures, we are again interested in invariance as a form of (joint) stationarity or isotropy. A random transport kernel $K$ from $\R^d$ to $\R^d$ is called \textit{invariant} if 
\begin{equation}
    K(g\omega, gx, gB) = K(\omega, x, B)\quad g\in G, \omega\in\Omega, x\in\R^d, B\in\cB^d,
\end{equation}
where $G$ is the group that acts on $\Omega$ and $\R^d$ as before. Sometimes it is more convenient to work with the kernel $K^\ast$ defined by
\begin{equation}
    K^\ast(\omega, x, B) := K(\omega, x, B+x), \omega\in\Omega, x\in\R^d, B\in\cB^d,
\end{equation}
which describes the transport relative to the position $x$. Note that $K$ and $K^\ast$ uniquely determine each other, and that invariance of $K$ is equivalent to 
\begin{equation}
    K^\ast(g\omega, gx,B) = K^\ast(\omega, x, B)\quad g\in G, \omega\in\Omega, x\in\R^d, B\in\cB^d.
\end{equation}
Invariance is further compatible with concatenation, i.e., if $K, L$ are invariant transports and if $\Phi$ is an invariant random complex measure, then $LK$ and $K\Phi$ are also invariant. coming to the most important case that $G$ is the group of translations, then we also obtain that $K\Phi$ has a finite intensity if $\Phi$ has finite intensity and $K$ has uniformly bounded total variation. If $K$ is a probability kernel, then the intensity even persists under transport.

Coming to the second moment properties, if $K$ is a transport kernel and $\Phi$ is a (nonnegative) random measure such that $|K|\Phi$ is locally square-integrable, then we can leverage Palm theory and the refined Campbell theorems~\eqref{e: refined Campbell 1},~\eqref{e: refined Campbell 2} to obtain the formulas
\begin{align}
    \alpha_{K\Phi}(dz) &= \int \BE_{0,y}^\Phi\big[K_y\star K_0\big](dz)\, \alpha_\Phi(dy)\label{e: alpha transport formula}\\
    \beta_{K\Phi}(dz) &= \int \Big(\BE_{0,y}^\Phi\big[K_y\star K_0\big] - \big(\BE_{0}^\Phi\big[K_0\big]+y\big)\star \BE_{0}^\Phi\big[K_0\big]\Big)(dz)\, \alpha_\Phi(dy)\nonumber\\
    &\qquad + \Big(\big(\BE_0^\Phi[K_0]\star \BE_0^\Phi[K_0]\big) \ast \beta_\Phi\Big)(dz).\label{e: beta transport formula}
\end{align}
If $K$ and $\Phi$ are independent, then $\BE_{0,y}^\Phi$ and $\BE_0^\Phi$ can simply be replaced by $\BE$. In this case, the formula even holds when $\Phi$ is not assumed to be nonnegative. For a proof of the special case that $K$ is a probability kernel; see~\cite[Lemma 3.2]{Klatt_Last_Lotz_Yogeshwaran_2025}.

An important example for a transport kernel $K$ is one where $K_x$ is a Dirac measure for every $x\in\R^d$. Then $K$ is called a \textit{displacement kernel}. Another possibility is a transport where the mass is not preserved. For example, one does not move anything and simply adds a weight, i.e., $K_x= Z(x)\delta_x$ with $Z(x)\in\C$ for $x\in\R^d$. In this case, the transport is associated with the $\C$-valued \textit{random field} $Z$ on $\R^d$. Sometimes we identify $Z$ with the the random complex measure $Z\lambda_d:=K\lambda_d$. A typical example is a \textit{GRF (Gaussian random field)}, where every finite collection of random variables of the random field is distributed according to some multivariate normal distribution. An invariant GRF $Z$ is called centered if its mean $\mu:=\BE[Z(0)]$ is $0$. For measurability reasons, we require $Z$ to lie in the Skorokhod space defined earlier. Hence, it has continuous paths, and the covariance function $C(y):= \BE[Z(y)\overline{Z(0)}]$ is also continuous. On the other hand, one can construct a centered GRF for any covariance function $C$ which is Hölder continuous of some positive degree. Therefore, for any finite measure $\nu$ on $\R^d$ such that $k\mapsto\|k\|^\vartheta$ is integrable for some $\vartheta>0$, one can choose $Z$ in such a way that $\beta_Z=\nu$. Finally, we can extend these $\C$-valued random fields to $\C^d$- or even $(\C^{d})^{\otimes q}$-valued random fields just like we extended random complex measures to tensor-valued complex random measures. For a vector-valued random field, the expectation is a vector and the covariance is a matrix. We can handle a tensor-valued GRF in the same way if we consider that we can identify $(\C^{d})^{\otimes q}$ and $\C^{d^q}$. For more details; see, e.g.,~\cite[Section 5]{Khoshnevisan_2002}.

\section{Definition and properties of \textit{p}-uniformity} \label{s: p-uniformity}

In this section, we formalize a concept from physics, commonly referred to as the scaling exponent (see, e.g.,~\cite[Subsection 5.3]{Torquato_2018}), by introducing $p$-uniformity. This framework applies to all locally square-integrable invariant random complex measures and encompasses the concepts of hyperuniformity~\cite{Torquato_Stillinger_2003}, the three classes of hyperuniformity~\cite{Zachary_Torquato_2009, Torquato_2018}, its scaling exponent (also called hyperuniformity exponent in~\cite{Mastrilli_Błaszczyszyn_Lavancier_2024, Lachièze-Rey_2025b}), and stealthy hyperuniformity~\cite{Batten_Stillinger_Torquato_2008,Torquato_Zhang_Stillinger_2015}. Similar concepts were proposed in~\cite{Roca_2023, Lachièze-Rey_2025b}; however, their definitions are less well suited to our purposes, particularly in certain important edge cases.

We begin directly with the definition of $p$-uniformity, which relies on the Bartlett spectral measure as in~\cite{Oğuz_Socolar_Steinhardt_Torquato_2017}, where the latter is accessed through the function $Z$. While a definition only utilizing real space quantities would also be possible, it is more convenient to formulate the definition in Fourier space. Later, in Theorem~\ref{t:asymptotic variance test function}, we establish the corresponding real-space characterization.

\begin{definition}
Let $p\in[-d,\infty)$. Suppose that $\Phi$ is a locally square-integrable invariant random complex measure, i.e., $\BE[|\Phi(B_1)|^2]<\infty$. If
\begin{equation}
    \frac{\hat\beta_\Phi(B_\varepsilon)}{\varepsilon^d} = O(\varepsilon^p)
\end{equation}
as $\varepsilon\to 0$, then $\Phi$ is called $p$\textit{-uniform}.
Similarly, if
\begin{equation}
    \frac{\hat\beta_\Phi(B_\varepsilon)}{\varepsilon^d} = o(\varepsilon^p)
\end{equation}
as $\varepsilon\to 0$, then $\Phi$ is called \textit{beyond} $p$\textit{-uniform}.
Further, if $\hat\beta_\Phi(B_\varepsilon)=0$ for some $\varepsilon>0$, then $\Phi$ is called $\infty$\textit{-uniform with radius} $\varepsilon$. Note that $B_\varepsilon:=\{x\in\R^d:\|x\|<\varepsilon\}$ is the open ball of radius $\varepsilon$ around $0$.

For $p<\infty$, if $\Phi$ is $p$-uniform but not beyond $p$-uniform, then it is called \textit{solely} $p$\textit{-uniform}. Moreover, if $\Phi$ is $\infty$-uniform with radius $\varepsilon$ but not $\infty$-uniform with radius $\vartheta$ for any $\vartheta>\varepsilon$, then it is called $\infty$\textit{-uniform} \textit{with maximal radius} $\varepsilon$. 

Finally, a tensor-valued random complex measure is called (beyond) $p$-uniform (with radius $\varepsilon$) if all of its components are.
\end{definition}

It should immediately be noted that hyperuniformity, i.e., the property that
\begin{equation}
    \frac{\BV[\Phi(B_r)]}{r^d}\xrightarrow{r\to\infty} 0,
\end{equation}
is equivalent to beyond $0$-uniformity. For $p<0$, a solely $p$-uniform random measure is hyperfluctuating, i.e., the above limit is infinite. For $p>0$, $p$-uniformity is related to the three classes of hyperuniformity and the so-called hyperuniformity exponent. Moreover, $\infty$-uniformity is equivalent to stealthy hyperuniformity. Basically, if the Bartlett spectral measure $\hat\beta_\Phi$ has a density with respect to the Lebesgue measure, i.e., the spectral density $S_\Phi$ is well defined, and if for some $p>-d$ and $a\geq 0$, as $k\to 0$,
\begin{equation}
    S_\Phi(k) = a\|k\|^p + o(\|k\|^p),
\end{equation}
then $\Phi$ is $p$-uniform. Further, $\Phi$ is also beyond $p$-uniform iff $a=0$.
Details and proofs follow after we establish some basic properties. To get a better grasp on the concept of $p$-uniformity, one can also take a look at the Examples~\ref{ex: uniformity degrees}, where $p$-uniform point processes are given for various $p\in[-d,\infty]$.

\begin{remark}\label{r: uniformity order}
    The concept of $p$-uniformity admits an order, and $p$-uniformity for higher $p$ always implies $p$-uniformity for lower $p$ in the following sense.
    Let $p,q\in[-d, \infty]$ and $\varepsilon,\vartheta>0$. Suppose that $\Phi$ is a locally square-integrable invariant random complex measure.
    \begin{itemize}
        \item If $\Phi$ is beyond $p$-uniform, then $\Phi$ is also $p$-uniform.
        \item If $\Phi$ is $p$-uniform and $p>q$, then $\Phi$ is also beyond $q$-uniform.
        \item If $\Phi$ is $\infty$-uniform with radius $\varepsilon$ and $\varepsilon\geq \vartheta$, then $\Phi$ is also $\infty$-uniform with radius $\vartheta$.
    \end{itemize}
    This order also implies that if $\Phi$ is solely $p$-uniform, then $\Phi$ is $q$-uniform iff $q\leq p$. Moreover, if $\Phi$ is $\infty$-uniform with maximal degree $\varepsilon$, then $\Phi$ is $\infty$-uniform with degree $\vartheta$ iff $\vartheta\leq \varepsilon$.
\end{remark}

Further, one may wonder why $p$-uniformity is defined only for $p\geq -d$. The reason is that $(-d)$-uniformity is already trivial in the sense that every locally square-integrable invariant random complex measure is $(-d)$-uniform. By the ordered nature of the concept, $p$-uniformity remains trivial for $p<-d$. Still, the following remark gives an insight into beyond $(-d)$-uniformity, which turns out to be directly related to pseudo-ergodicity. A locally square-integrable invariant random complex measure is called \textit{pseudo-ergodic} if $\lim_{r\to\infty}\frac{\Phi(B_r)}{\lambda_d(B_r)}$ is deterministic. We choose to consider the $L^2$-limit here, but others are possible as well. Note that, by the $L^2$-ergodic theorem, ergodicity implies pseudo-ergodicity; see, e.g.,~\cite[Section 25]{Kallenberg_2021}.

\begin{proposition}\label{p:pseudo ergodic}
    Let $\Phi$ be a locally square-integrable invariant random complex measure. Then
    \begin{equation}
        \frac{\hat\beta_\Phi(\{0\})}{(2\pi)^d} = \lim_{r\to\infty}\frac{\BV[\Phi(B_r)]}{\lambda_d(B_r)^2} = \BV\bigg[\lim_{r\to\infty}\frac{\Phi(B_r)}{\lambda_d(B_r)}\bigg],
    \end{equation}
    where the limit in the variance is taken with respect to the $L^2$-norm and is well defined. In particular, $\Phi$ is always $(-d)$-uniform, and $\Phi$ is beyond $(-d)$-uniform iff $\Phi$ is pseudo-ergodic.
\begin{proof}
    First, we see that by~\eqref{e: spectral covariance formula},
    \begin{align*}
        \frac{\BV[\Phi(B_r)]}{\lambda_d(B_r)^2} &=(2\pi)^{-d}\kappa_d^{-2} r^{-2d} \int |\hat\I_{B_r}(k)|^2 \, \hat\beta_\Phi(dk)\\
        &= (2\pi)^{-d}\kappa_d^{-2} \int |\hat\I_{B_1}(rk)|^2 \, \hat\beta_\Phi(dk)\\
        &= \frac{\hat\beta_\Phi(\{0\})}{(2\pi)^d} + (2\pi)^{-d}\kappa_d^{-2} \int_{\R^d\setminus \{0\}} |\hat\I_{B_1}(rk)|^2 \, \hat\beta_\Phi(dk),
    \end{align*}
    where $\kappa_d:=\lambda_d(B_1)$ is the volume of the unit ball.
    Further, we know that $\hat\beta_\Phi$ is translation-bounded (recall \eqref{e: translation-bounded}), and $\hat\I_{B_1}(k) \leq c_2(1+\|k\|)^{-\frac{d+1}{2}}$ as $\|k\|\to\infty$, for some $c_2>0$; see Examples~\ref{ex: fourier smooth}. Since translation-bounded measures behave just like the Lebesgue measure with respect to decreasing functions, one can apply the theorem of dominated convergence. Therefore, 
    \begin{equation*}
        (2\pi)^{-d}\kappa_d^{-2} \int_{\R^d\setminus \{0\}} |\hat\I_{B_1}(rk)|^2 \, \hat\beta_\Phi(dk) \xrightarrow{r\to\infty} 0,
    \end{equation*}
    which yields the first equality. The second equality follows from the $L^2$-ergodic theorem, whereby $\frac{\Phi(B_r)}{\lambda_d(B_r)}$ converges in $L^2$ as $r\to\infty$; see, e.g.,~\cite[Section 25]{Kallenberg_2021}.
\end{proof}
\end{proposition}

Having treated the extreme case of $(-d)$-uniformity, we can come to the real-space characterization of $p$-uniformity for $p\in(-d,\infty)$. The case $p=\infty$ is treated in Theorem~\ref{t: stealthy characterization}. The connection between real space and Fourier space in the context of hyperuniformity was already explored in~\cite{Torquato_Stillinger_2003}, where hyperuniformity itself was first introduced. As seen in the following theorem, the characterization involves a test function. In these early versions, the test function was assumed to be an indicator function, in particular the indicator function of the unit ball. In this case, the relation only holds one-to-one for $p$-uniformity with $p\in(0,1)$, which is also called class III hyperuniformity; see~\cite[Subsection 5.3]{Torquato_2018}. For other indicator functions, the relation can break down even earlier; see, e.g.,~\cite{Kim_Torquato_2017}. More recently, a criterion for the decay of the Fourier transform of the indicator function was introduced, which guarantees that the characterization of $p$-uniformity with respect to this indicator function works up to a certain $p$; see~\cite[Theorem 3.7]{Björklund_Hartnick_2024}. Still, the characterization was limited to $p<1$, which comes from the fact that, at this point, only indicator functions were considered. The following theorem generalizes the connection between real space and Fourier space with respect to $p$-uniformity, building upon their proof. For higher degrees, smoother test functions are needed, but for lower degrees, indicator functions of nicely behaved sets are still sufficient. Similar generalizations were made in~\cite{Björklund_Byléhn_2024,Mastrilli_Błaszczyszyn_Lavancier_2024,Krishnapur_Yogeshwaran_2024,Björklund_Byléhn_2025, Lachièze-Rey_2025b}. It should also be noted that much earlier, while not explicitly stated, these techniques were already utilized in~\cite{Ghosh_Lebowitz_2018}. Our proof is inspired by~\cite[Theorem 3.6 and 3.7]{Björklund_Hartnick_2024} and requires us to make the following extension to a definition of theirs.

\begin{definition}[Fourier-smoothness]\label{d: fourier smoooth}
    Let $p\in[-d, \infty)$ and $f\in L^1(\R^d, \C)$. Suppose that there is a $c>0$ such that
    \begin{equation}\label{e: fourier smooth 1}
        |\hat{f}(k)| \leq c \frac{1}{(1+\|k\|)^{\frac{d+p}{2}}}, \quad k\in\R^d.
    \end{equation}
    If $p\leq 0$, then also suppose that, as $r\to\infty$,
    \begin{equation}\label{e: fourier smooth 2}
        \int_{B_2^c} \sup_{s\in B_1} |\hat{f}(r(k+s))|^2\, dk = O(r^{-(d+p)}).
    \end{equation}
    Then $f$ is called \textit{Fourier-smooth with exponent} $p$.
\end{definition}
Note that~\eqref{e: fourier smooth 2} also holds for $p>0$ since it is then implied by~\eqref{e: fourier smooth 1}.
The most commonly used $\I_{B_1}$ is Fourier-smooth with exponent $1$, and $\I_{[0,1)^d}$ is only Fourier-smooth with exponent $-d+2$; see Examples~\ref{ex: fourier smooth}. Schwartz functions, e.g., are Fourier-smooth with any exponent, i.e., in particular functions in $C_c^\infty(\R^d, \C)$. We provide a deeper insight in Appendix~\ref{s: Fourier-smoothness}.

Just like the indicator function was spread in Proposition~\ref{p:pseudo ergodic}, we spread the test functions using the following notation.

\begin{definition}\label{d: f_r}
    Let $f:\R^d\to\C$ be a function and $r>0$. Then $f_r:\R^d\to\C$ is defined by
    \begin{equation}
        f_r(x) := f(\tfrac{x}{r}),\quad x\in\R^d.
    \end{equation}
\end{definition}

This notation is consistent with indicator functions, since $f_r=\I_{rB}$ if $f=\I_B$, and in particular, $f_r=\I_{B_r}$ if $f=\I_{B_1}$.

\begin{remark}\label{r: f_r formulas}
    Let $f\in L^1(\R^d, \C), r>0$. Then
    \begin{equation}
        \hat{f_r}(k) = r^d \hat{f}(rk),\quad k\in\R^d.
    \end{equation}
    Further, if $f\in \cC^q(\R^d, \C)$ for $q\in\N$, then
    \begin{equation}
        f_r^{(q)}(x) = \frac{1}{r^q} f^{(q)}(\tfrac{x}{r}), \quad x\in\R^d,
    \end{equation}
    where $f^{(q)}$ denotes the (possibly tensor-valued) $q$-th derivative of $f$.
    Note that we always differentiate or take the Fourier transform after scaling $f$.
\end{remark}

\begin{theorem}\label{t:asymptotic variance test function}
Let $p\in[-d,\infty)$. Suppose that $\Phi$ is a locally square-integrable invariant random complex measure. Further, suppose that $f\in L^1$ is Fourier-smooth with exponent $p+\vartheta$ for some $\vartheta>0$. Then
\begin{equation}\label{e:asymptotic variance test function}
    \frac{\BV[\Phi(f_r)]}{r^d} = O(r^{-p})\quad \text{as } r\to\infty.
\end{equation}
if $\Phi$ is $p$-uniform. The converse holds under the additional assumption that $\lambda_d(f)\neq0$. In both cases, the same holds for beyond $p$-uniformity with $o$ instead of $O$.
\begin{proof}
    Remark~\ref{r: f_r formulas} and~\eqref{e: spectral covariance formula} yield that
    \begin{align}
        \frac{\BV[\Phi(f_r)]}{r^{d-p}} &= (2\pi)^{-d}r^{-d+p} \int |\hat{f}_r(k)|^2 \, \hat\beta_\Phi(dk) \nonumber\\
        &= (2\pi)^{-d}r^{d+p} \int |\hat{f}(rk)|^2 \, \hat\beta_\Phi(dk) \nonumber\\\label{e:linear statistic bound 1}
        &= (2\pi)^{-d}r^{d+p} \int_{B_2} |\hat{f}(rk)|^2 \, \hat\beta_\Phi(dk) + (2\pi)^{-d}r^{d+p} \int_{B_2^c} |\hat{f}(rk)|^2 \, \hat\beta_\Phi(dk)
    \end{align}
    for $r>0$. Since $\hat\beta_\Phi$ is translation-boundeds, see \eqref{e: translation-bounded}, and since $f$ is Fourier-smooth, we can apply \eqref{e: fourier smooth 2}. Hence, there is a $c_1>0$ such that, as $r\to\infty$,
    \begin{equation}
        r^{d+p} \int_{B_2^c} |\hat{f}(rk)|^2 \, \hat\beta_\Phi(dk) \leq c_1r^{d+p} \int_{B_2^c}\sup_{s\in B_1} |\hat{f}(r(k+s))|^2 \, dk = O(r^{-\vartheta}).
    \end{equation}
    Therefore, it only remains to treat the fist summand in \eqref{e:linear statistic bound 1}. We can apply the other Fourier-smoothness property~\eqref{e: fourier smooth 1} of $f$ to obtain that there is a $c_2>0$ such that
    \begin{align*}
        r^{d+p} \int_{B_2} |\hat{f}(rk)|^2 \, \hat\beta_\Phi(dk) &\leq c_2 r^{d+p} \int_{B_2} (1 + r \|k\|)^{-(d+p+\vartheta)} \, \hat\beta_\Phi(dk) \\
        &\leq c_2 r^{d+p} \sum_{j=1}^{2\lceil r\rceil} \int_{B_{\frac{j}{r}} \setminus B_{\frac{j-1}{r}}} (1 + r \|k\|)^{-(d+p+\vartheta)} \, \hat\beta_\Phi(dk) \\
        &\leq c_2 r^{d+p} \sum_{j=1}^{2\lceil r\rceil}  j^{-(d+p+\vartheta)} \big(\hat\beta_\Phi\big(B_{\frac{j}{r}}\big) - \hat\beta_\Phi\big(B_{\frac{j-1}{r}}\big) \big), \quad r>0.
    \end{align*}
    By reordering the sum and applying
    \begin{equation*}
        (j+1)^{-(d+p+\vartheta)} \geq j^{-(d+p+\vartheta)} - (d+p+\vartheta)j^{-(d+p+\vartheta+1)},\quad j>0,
    \end{equation*}
    we can further calculate that
    \begin{align}
        r^{d+p} \int_{B_2} |\hat{f}(rk)|^2 \, \hat\beta_\Phi(dk) &\leq c_2 r^{d+p} \sum_{j=1}^{2\lceil r\rceil}  (j^{-(d+p+\vartheta)} - (j+1)^{-(d+p+\vartheta)}) \hat\beta_\Phi\big(B_{\frac{j}{r}}\big)\nonumber\\
        &\leq c_2(d+p+\vartheta) r^{d+p} \sum_{j=1}^{2\lceil r\rceil}  j^{-(d+p+\vartheta+1)} \hat\beta_\Phi\big(B_{\frac{j}{r}}\big)\nonumber\\\label{e:linear statistic bound 2}
        &= c_2(d+p+\vartheta) \sum_{j=1}^{2\lceil r\rceil}  j^{-(1+\vartheta)} \big(\tfrac{j}{r}\big)^{-(d+p)} \hat\beta_\Phi\big(B_{\frac{j}{r}}\big), \quad r>0.
    \end{align}
    If $\Phi$ is $p$-uniform, then $\big(\tfrac{j}{r}\big)^{-(d+p)} \hat\beta_\Phi\big(B_{\frac{j}{r}}\big)$ is uniformly bounded for $r\geq 1$ and $j\in\{1,...,2\lceil r\rceil\}$. In combination with~\eqref{e:linear statistic bound 1},~\eqref{e:linear statistic bound 2} and the fact that $\sum_{j=1}^\infty j^{-(1+\vartheta)} <\infty$, we obtain~\eqref{e:asymptotic variance test function}. If $\Phi$ is also beyond $p$-uniform, the convergence of the bound~\eqref{e:linear statistic bound 2} to $0$ follows from the dominated convergence theorem, which concludes the first part of the proof also in this case.
    
    Now assume that $\lambda_d(f)\neq0$. Hence, $\hat{f}(0) = \lambda_d(f) \neq 0$, and because $\hat{f}$ is continuous, as $f\in L^1$, we can conclude that there exist $c_3, s>0$ such that $|\hat{f}(k)|^2\geq c_3$ for $k\in B_s$. By Remark~\ref{r: f_r formulas}, this bound results in $|\hat{f}_r(k)|^2 \geq c_3 r^{2d}$ for $k\in B_{\frac{s}{r}}$, $r>0$. Therefore,
    \begin{equation}\label{e: bound spectral by real}
        \frac{\hat\beta_\Phi\big(B_{\frac{s}{r}}\big)}{\big(\tfrac{s}{r}\big)^{d+p}} \leq s^{-(d+p)}c_3^{-1} \frac{\int |\hat{f}_r(k)|^2 \, \hat{\beta}_\Phi(dk)}{r^{d-p}} = s^{-(d+p)}(2\pi)^dc_3^{-1} \frac{\BV[\Phi(f_r)]}{r^{d-p}},\quad r>0.
    \end{equation}
    Choosing $\varepsilon:= \frac{s}{r}$ yields that assuming~\eqref{e:asymptotic variance test function} implies that $\Phi$ is $p$-uniform, and that assuming~\eqref{e:asymptotic variance test function} with $o$ instead of $O$ implies that $\Phi$ is beyond $p$-uniform.
\end{proof}
\end{theorem}

Under stricter assumptions on the test function, we can also calculate the limit variance.

\begin{proposition}\label{p:asymptotic variance test function}
    In the setting of Theorem~\ref{t:asymptotic variance test function}, assume $p>-d$. Also assume that either $d=1$, $f$ is isotropic, or $\Phi$ is isotropic. If further $\varepsilon^{-(d+p)}\hat\beta_\Phi(B_\varepsilon)$ converges as $\varepsilon\to0$, and $x\mapsto x_jf(x)$ is Fourier-smooth with exponent $p+2+\vartheta$ for every $j\in\{1,...,d\}$, then
\begin{equation}\label{e: limit var formula}
    \frac{\BV[\Phi(f_r)]}{r^{d-p}} \xrightarrow{r\to\infty}\frac{d+p}{d\kappa_d(2\pi)^d} \int|\hat{f}(k)|^2\|k\|^p\,dk\lim_{\varepsilon\to0}\frac{\hat\beta_\Phi(B_\varepsilon)}{\varepsilon^{d+p}} .
\end{equation}

\begin{remark}
    These additional conditions from Proposition~\ref{p:asymptotic variance test function} reduce to $p>-d$ if $f$ is assumed to be an isotropic Schwartz function. Further, the case $p=-d$ can be treated with the \linebreak $L^2$-ergodic theorem as in Proposition~\ref{p:pseudo ergodic}. Also note that if $p\in 2\N_0$ (or $p\in 4\N_0$ for the second equality), then
    \begin{equation}\label{e: laplace version}
        \frac{1}{(2\pi)^d}\int|\hat{f}(k)|^2\|k\|^p\,dk = \big\|f^{\big(\tfrac{p}{2}\big)}\big\|_2^2 = \big\| \Delta^{\tfrac{p}{4}}f\big\|_2^2,
    \end{equation}
    where $\Delta$ is the Laplace operator.
\end{remark}

\begin{proof}[Proof of Proposition~\ref{p:asymptotic variance test function}] 
    Assume that $p>-d$ and that $\varepsilon^{-(d+p)}\hat\beta_\Phi(B_\varepsilon)$ converges as $\varepsilon\to0$. Then, similarly to~\eqref{e:linear statistic bound 1}, we can show
    \begin{equation*}
        \frac{\BV[\Phi(f_r)]}{r^{d-p}} = (2\pi)^{-d}r^{d+p} \int_{B_1} |\hat{f}(rk)|^2 \, \hat\beta_\Phi(dk) + o(1)
    \end{equation*}
    as $r\to\infty$. Now we define $g:[0,\infty)\to[0,\infty)$ and the measure $\mu$ on $[0,\infty)$ by
    \begin{align*}
        g(s)&:= \int_{\partial B_1} |\hat{f}(sk)|^2\,\sigma_d(dk), \quad s\in[0,\infty),\\
        \mu(B)&:= \hat\beta_\Phi\big(\{x\in\R^d:\|x\|\in B\}\big),\quad B\in\cB([0,\infty)),
    \end{align*}
    where $\sigma_d$ is the unique rotation invariant probability measure on $\partial B_1$. Since we assume that $x\mapsto x_jf(x)$ is Fourier-smooth with exponent $p+2+\vartheta$ for each $j\in\{1,...,d\}$, we can conclude that $g\in C^1$, and we do not only obtain that there is a $c_4>0$ such that $g(s)\leq c_4 (1+|s|)^{-(d+p+\vartheta)}$ but also such that $g'(s)\leq c_4 (1+|s|)^{-(d+p+1+\vartheta)}$. If we assume that either $f$ or $\Phi$ is isotropic, then $\hat{f}$ or $\hat\beta_\Phi$ is isotropic and
    \begin{align*}
        (2\pi)^{-d}r^{d+p} \int_{B_1} |\hat{f}(rk)|^2 \, \hat\beta_\Phi(dk) &= (2\pi)^{-d}r^{d+p} \int_0^1 g(rs) \, \mu(ds)\\
        &= (2\pi)^{-d}r^{d+p} \bigg(\mu([0,1))g(r)- \int_0^1 rg'(rs) \mu(B_s)\, ds\bigg)\\
        &= (2\pi)^{-d}\mu([0,1))r^{d+p}g(r)- (2\pi)^{-d}\int_0^r g'(s) s^{d+p}\frac{\mu(B_{\frac{s}{r}})}{(\frac{s}{r})^{d+p}}\, ds,
    \end{align*}
    where we used partial integration and that $\mu(\{0\})=0$ by Proposition~\ref{p:pseudo ergodic}.
    \begin{align*}
        (2\pi)^{-d}r^{d+p} \int_{B_1} |\hat{f}(rk)|^2 \, \hat\beta_\Phi(dk) &= (2\pi)^{-d}r^{d+p} \bigg(\mu([0,1))g(r)- \int_0^1 rg'(rs) \mu(B_s)\, ds\bigg)\\
        &\xrightarrow{r\to\infty} -(2\pi)^{-d}\int_0^\infty g'(s)s^{d+p}\, ds \lim_{\varepsilon\to0}\frac{\hat\beta_\Phi(B_\varepsilon)}{\varepsilon^{d+p}}\\
        &= \frac{d+p}{(2\pi)^d} \int_0^\infty g(s) s^{d+p-1}\, ds \lim_{\varepsilon\to0}\frac{\hat\beta_\Phi(B_\varepsilon)}{\varepsilon^{d+p}}\\
        &= \frac{d+p}{d\kappa_d(2\pi)^d} \int_0^\infty |\hat{f}(k)|^2 \|k\|^p\, ds \lim_{\varepsilon\to0}\frac{\hat\beta_\Phi(B_\varepsilon)}{\varepsilon^{d+p}},
    \end{align*}
    where we applied the theorem of dominated convergence and partial integration. Hence, all assertions have been shown.
\end{proof}
\end{proposition}

Since the indicator function of the unit ball $\I_{B_1}$ is Fourier-smooth with exponent $1$, we can use it to rigorously establish the connection between our spectral definition of $p$-uniformity and the classical notion of hyperuniformity and hyperfluctuation; see the following remark. In contrast, the indicator function of the unit cube is only guaranteed to work for $p$-uniformity with $p<-d+2$, which is negative for $d\geq2$. Consequently, the limit in~\eqref{e: hyperuniform scaling ball} may not be $0$ if $d\geq 2$, even if $\Phi$ is hyperuniform. For $d\geq3$, it may even be $\infty$. This observation has first been made in~\cite{Kim_Torquato_2017}.

Further, when $\Phi$ is assumed to be a (weighted) point process of intensity $1$, Theorem \ref{t:asymptotic variance test function} reveals a connection to the theory of numerical integration. Since $\BE[\Phi(f)] = \lambda_d(f)$ for all $f\in L^1$ by \eqref{e: campbell}, $\Phi(f)$ is an unbiased estimator for $\lambda_d(f)$. In that sense, $\Phi$ is is $p$-uniform iif the mean squared error of the corresponding numerical integration scheme which evaluates in every $x\in\Phi$ and applies the weight $\Phi(\{x\})$ satisfies
\begin{equation}
    \BV[\Phi_r(f)] = O(r^{-(d+p)})\quad \text{as } r\to\infty
\end{equation}
for all $f\in L^1$, which are Fourier smooth with exponent $p+\vartheta$ for some $\vartheta>0$, and where $\Phi_r(B):= \frac{1}{r^d} \Phi(rB)$ for $r>0, B\in\cB^d$; see, e.g.,~\cite{Stehr_Kiderlen_2020}.

\begin{remark}
    Suppose that $\Phi$ is a locally square-integrable invariant random complex measure. Then $\Phi$ is hyperuniform, i.e.,
    \begin{equation}\label{e: hyperuniform scaling ball}
        \frac{\BV[\Phi(B_r)]}{r^d} \xrightarrow{r\to\infty} 0,
    \end{equation}
    iff $\Phi$ is beyond $0$-uniform. Further, $\Phi$ is not $0$-uniform iff
    \begin{equation}
        \limsup_{r\to\infty}\frac{\BV[\Phi(B_r)]}{r^d} = \infty.
    \end{equation}
    Hence, if $\Phi$ is hyperfluctuating, i.e.,
    \begin{equation}
        \frac{\BV[\Phi(B_r)]}{r^d} \xrightarrow{r\to\infty} \infty,
    \end{equation}
    then $\Phi$ is not $0$-uniform. Finally, if $\Phi$ is solely $0$-uniform, then one would typically say that $\Phi$ exhibits classical or Poisson-like behavior since the variance $\BV[\Phi(B_r)]$ scales like the volume $\lambda_d(B_r)$ as $r\to\infty$.
    These assertions follow directly from an application of Theorem~\ref{t:asymptotic variance test function} with $p=0$ and $f=\I_{B_1}$. 
\end{remark}

Going further, we can relate our notion of $p$-uniformity to the scaling exponent of the spectral density close to $0$ that is typically considered by the physics community; see, e.g.,~\cite[Subsection 5.3]{Torquato_2018}. In the mathematics community, this scaling exponent has also been called the hyperuniformity exponent if it is positive; see~\cite{Mastrilli_Błaszczyszyn_Lavancier_2024, Lachièze-Rey_2025b}. 

\begin{proposition}\label{p:mixing condition uniformity}
    Let $p\in(-d,\infty)$. Suppose that $\Phi$ is a locally square-integrable invariant random complex measure, and that $\hat\beta_\Phi$ admits a density $S_\Phi$ (close to $0$) satisfying
    \begin{equation}\label{e: sf asymptotics}
        S_\Phi(k) = a \|k\|^p + o(\|k\|^p)
    \end{equation}
    as $k\to0$ for some $a\geq0$.
    Then $\Phi$ is $p$-uniform. Furthermore, $\Phi$ is beyond $p$-uniform iff $a=0$. If one supposes that, for some $\vartheta>0$, $f\in L^1$ is Fourier-smooth with exponent $p+\vartheta$ and $x\mapsto x_jf(x)$ is Fourier-smooth with exponent $p+2+\vartheta$ for each $j\in\{1,...,d\}$, then
    \begin{equation}\label{e: limit var density}
        \frac{\BV[\Phi(f_r)]}{r^{d-p}} \xrightarrow{r\to\infty} \frac{a}{(2\pi)^d} \int|\hat{f}(k)|^2\|k\|^p\, dk.
    \end{equation}
    The limit terms can also be expressed without the Fourier transform as in~\eqref{e: laplace version} if $p\in2\N_0$.
    \begin{proof}
        Under the assumption that $\hat\beta_\Phi$ has the density $S_\Phi$ close to $0$, one obtains that
        \begin{equation}\label{e: limit var beta density}
            \frac{\hat\beta_\Phi(B_\varepsilon)}{\varepsilon^{d+p}} \xrightarrow{\varepsilon\to0} \frac{d\kappa_d}{d+p} a.
        \end{equation}
        Hence, $\Phi$ is $p$-uniform and beyond $p$-uniform iff $a=0$. To obtain~\eqref{e: limit var density}, we can almost apply~\eqref{e: limit var formula} from Proposition~\ref{p:asymptotic variance test function}. What is missing is that we do not assume that $\Phi$ is isotropic here. However,~\eqref{e: sf asymptotics} yields that the component of $\hat\beta_\Phi$ which contributes to the limit is isotropic.
    \end{proof}
\end{proposition}

We can give integrability conditions to guarantee a behavior as in~\eqref{e: sf asymptotics}. These can also be found in~\cite{Krishnapur_Yogeshwaran_2024}.
\begin{proposition}
    In the setting of Proposition~\ref{p:mixing condition uniformity}, if $p\in[0,\infty)\setminus2\N_0$, then one can guarantee that $S_\Phi$ exists and has an expansion of the form~\eqref{e: sf asymptotics} with $a=0$ if 
    \begin{equation}\label{e: correlation moment integrability}
        \int (\|y\|+1)^p\, |\beta_\Phi|(dy) < \infty,
    \end{equation}
    and if 
    \begin{equation}\label{e: correlation zero integral}
        \int y^{\otimes q}\, \beta_\Phi(dy) = 0
    \end{equation}
    for $q\in2\N_0$ with $q < p$. If $p\in2\N_0$ and if $\Phi$ is isotropic or $p=0$, then under the same conditions one obtains~\eqref{e: sf asymptotics} with
    \begin{equation}
        a = \frac{(-1)^{\frac{p}{2}}}{p!}\int y_1^p\, \beta_\Phi(dy).
    \end{equation}
    If $p\in2\N$ and $\Phi$ is not isotropic, then the coefficient, and even the leading order in the asymptotics of $S_\Phi$ can be direction-dependent. Still, $\Phi$ is beyond $p$-uniform iff~\eqref{e: correlation zero integral} holds also for $q=p$. If $p\in[-d, 0)$,~\eqref{e: correlation moment integrability} cannot guarantee that $\hat\beta_\Phi$ has a density, but it still implies that $\Phi$ is beyond $p$-uniform.
    \begin{proof}
         If~\eqref{e: correlation moment integrability} holds for $p\geq 0$, then $\hat\beta_\Phi$ has a continuous density. Further, since 
        \begin{equation*}
            e^{-i\langle k, y\rangle} = \sum_{q=0}^{\lfloor p \rfloor} \frac{(-i)^q}{q!} \langle k,y\rangle^q + o(|\langle k, y\rangle|^p), \quad k, y\in\R^d,
        \end{equation*}
        as $\langle k,y\rangle\to 0$, one can calculate that
        \begin{equation}
            S_\Phi(k) = \sum_{q=0}^{\lfloor p \rfloor} \frac{(-i)^q}{q!} \int y^{\otimes q}\, \beta_\Phi(dy)\,  k^{\otimes q} + o(\| k\|^p)
        \end{equation}
        as $k\to 0$. Hence, the assertion follows from the assumption of~\eqref{e: correlation zero integral}. Note that isotropy of $\Phi$ implies isotropy of $\beta_\Phi$, whereby $\int \langle k, x\rangle^p\, \beta_\Phi(dy)$ is constant for $k\in\partial B_\varepsilon$ for every $\varepsilon>0$.

        If~\eqref{e: correlation moment integrability} holds for $p\in[-d,0)$, then 
        \begin{equation*}
            \frac{\BV[\Phi(B_r)]}{r^{d-p}} = \int r^p\frac{\I_{B_r}\star \I_{B_r}(y)}{r^d} \, \beta_\Phi(dy)
            \xrightarrow{r\to \infty} 0
        \end{equation*}
        by the theorem of dominated convergence. Hence, $\Phi$ is beyond $p$-uniform by Theorem~\ref{t:asymptotic variance test function}.
    \end{proof}
\end{proposition}

Further, we can connect $p$-uniformity to the three classes of hyperuniformity; see~\cite{Zachary_Torquato_2009} and~\cite[Subsection 5.3]{Torquato_2018}. In Theorem~\ref{t:asymptotic variance test function}, one assumes that the test function is always smooth enough with respect to $p$. However, if the test function is not sufficiently smooth, the decay of the variance is limited even if the random complex measure is $p$-uniform for very hight $p$, as it can be seen in the following theorem. If one considers the test function $f=\I_{B_1}$, one obtains the three classes of uniformity: class III, when the $p$-uniformity of $\Phi$ is the limiting factor for the decay of the variance, i.e., if $\Phi$ is $0$-uniform but ont $1$-uniform; class I, when the smoothness of $\I_{B_1}$ is the limiting factor, i.e., if $\Phi$ is $p$-uniform for some $p>1$; and class II, where both factors (can) contribute at the transition, i.e., if $\Phi$ is solely $1$-uniform. The case that $\Phi$ is beyond $1$-uniform but not $p$-uniform for any $p>1$ is not covered by the established definitions. The real-space behavior of class I hyperuniform point processes is found in \eqref{e:class I hyperuniformity}, that of class II in \eqref{e:class II hyperuniformity}, and that of class III was already treated in Theorem \ref{t:asymptotic variance test function}. One can find similar results to the following theorem in the survey~\cite{Lachièze-Rey_2025b}, and in the special case that $f=\I_{B_1}$ in~\cite{Torquato_2018, Coste_2021}. In the special case that $p=1$ and that $\Phi$ is a point process, the lower bound, i.e., the last part of the following theorem, was first proven in~\cite{Beck_1987} and then generalized in \cite{Björklund_Byléhn_2024, Lachièze-Rey_2025b}.

\begin{theorem}\label{t:hyperunifomrity classes}
Suppose that $\Phi$ is a locally square-integrable invariant random complex measure. Further, let $p,q\in(-d,\infty)$, assume that $\Phi$ is $p$-uniform, and suppose that $f\in L^1$ is Fourier-smooth with exponent $q$. If $p>q$, then
\begin{equation}\label{e:class I hyperuniformity}
    \frac{\BV[\Phi(f_r)]}{r^d} = O(r^{-q})
\end{equation}
as $r\to\infty$. If $p=q$, then
\begin{equation}\label{e:class II hyperuniformity}
    \frac{\BV[\Phi(f_r)]}{r^d} = O(\log(r)r^{-q})
\end{equation}
as $r\to\infty$. 
Moreover, assume that $p=q>0$ and that there are $\tilde c, a\in(0,1)$ such that 
\begin{equation}\label{e: oscillation uniformer}
\int_a^1|\hat{f}(sk)|^2\, ds\geq \tilde{c}(1+\|k\|)^{-(d+q)}, \quad k\in \R^d.
\end{equation}
Then~\eqref{e:class I hyperuniformity} can only hold if $\Phi$ is beyond $p$-uniform, and it can only hold with $o$ instead of $O$ if $\Phi$ is $\lambda_d$ up to a deterministic constant factor.

\begin{remark}
   The additional property~\eqref{e: oscillation uniformer} is fulfilled if $f=\I_{B_1}$ and $q=1$, or if \linebreak $\lim_{\|k\|\to\infty}|\hat{f}(k)|(1+\|k\|)^{\frac{d+q}{2}}$ exists, is not $0$, and $\lambda_d(f)\neq0$.
\end{remark}

\begin{proof}[Proof of Theorem~\ref{t:hyperunifomrity classes}]
    Let $p,q\in(-d,\infty)$, and suppose that $f\in L^1$ is Fourier-smooth with exponent $q$. The proof is mostly analogous to that of Theorem~\ref{t:asymptotic variance test function}. Similar to~\eqref{e:linear statistic bound 1}, we obtain that
    \begin{equation}\label{e:lin stat bound 1}
        \frac{\BV[\Phi(f_r)]}{r^{d-q}} = (2\pi)^{-d}r^{d+q} \int_{B_2} |\hat{f}(rk)|^2 \, \hat\beta_\Phi(dk) + O(1)
    \end{equation}
    as $r\to\infty$. Similar to~\eqref{e:linear statistic bound 2}, we can further bound the asymptotically relevant summand by
    \begin{equation}\label{e:lin stat bound 2}
        (2\pi)^{-d}r^{d+q} \int_{B_2} |\hat{f}(rk)|^2 \, \hat\beta_\Phi(dk) \leq c_1(d+q) \sum_{j=1}^{2\lceil r\rceil}  j^{-1} \big(\tfrac{j}{r}\big)^{-(d+q)} \hat\beta_\Phi\big(B_{\frac{j}{r}}\big), \quad r>0,
    \end{equation}
    for some $c_1>0$. Additionally, we suppose that $\Phi$ is $q+\vartheta$-uniform for some $\vartheta\geq 0$, whereby there is some $c_2>0$ such that $\hat{\beta}_\Phi(B_r)\leq c_2 r^{d+q+\vartheta}, r\leq 3$. We can apply these bounds to obtain
    \begin{align*}
        \sum_{j=1}^{2\lceil r\rceil}  j^{-1} \big(\tfrac{j}{r}\big)^{-(d+q)} \hat\beta_\Phi\big(B_{\frac{j}{r}}\big) &\leq c_2r^{-\vartheta}\sum_{j=1}^{2\lceil r\rceil} j^{\vartheta-1}\\
        &\leq \I\{\vartheta>0\}c_2\vartheta^{-1} + \I\{\vartheta=0\} c_2(\log(r)+1 + \log(4)), \quad r>1.
    \end{align*}
    As $r\to\infty$, the bound is $O(1)$ if $\vartheta>0$, and $O(\log(r))$ if $\vartheta=0$, which, in combination with~\eqref{e:lin stat bound 1} and~\eqref{e:lin stat bound 2}, proves this part of the assertion.

    Now also assume that $q>0$ and that there is a $c_3>0$ such that $|\hat{f}(k)|\geq c_3 (1+\|k\|)^{-\frac{d+q}{2}}$ for $k\in \R^d$. The more general case follows afterwards. If $\Phi$ is not deterministic, there is $c_4>0$ such that $\hat\beta_\Phi(B_{c_4})>0$. Hence,
    \begin{align*}
        \frac{\BV[\Phi(f_r)]}{r^{d-q}} = (2\pi)^{-d}r^{d+q} \int |\hat{f}(rk)|^2\,\hat\beta_\Phi(dk) &\geq (2\pi)^{-d}c_3^2 r^{d+q} \int (1+\|rk\|)^{-(d+q)}\,\hat\beta_\Phi(dk)\\
        &\geq (2\pi)^{-d}2^{-1}c_3^2 c_4^{-(d+q)}\hat\beta_\Phi(B_{c_4}) \\
        &> 0,\quad r>c_4^{-1},
    \end{align*}
    whereby~\eqref{e:class I hyperuniformity} cannot hold with $o$ instead of $O$.
    
    For the other part, assume that~\eqref{e:class I hyperuniformity} holds. If $f$ were Fourier-smooth with a higher exponent, Theorem~\ref{t:asymptotic variance test function} would imply that $\Phi$ is $q$-uniform. However, examining~\eqref{e: bound spectral by real} in the proof, we see that the Fourier-smoothness of $f$ is irrelevant for this part of the assertion. It remains to show that $\Phi$ in this case is not only $q$-uniform but also beyond $q$-uniform. If we assume that this is not the case, then there must be a sequence $(\varepsilon_n)_{n\in\N}$ and a $c_5>0$ such that $\varepsilon_n\to 0$ and $\varepsilon_n^{-(d+q)}\hat\beta(B_{\varepsilon_n})\to c_5$ as $n\to\infty$. Without loss of generality, we can assume that there is a $c_6\in(0,1)$ such that $\varepsilon_{n+1}\leq c_6 \varepsilon_n$ for $n\in\N$. Then once can show that $\varepsilon_n^{-(d+q)}\hat\beta(B_{\varepsilon_n}\setminus B_{\varepsilon_{n+1}})$ is asymptotically lower bounded by $c_5(1-c_6)>0$ as $n\to\infty$. Thus, without loss of generality, there is a $c_7>0$ such that $\hat\beta(B_{\varepsilon_n}\setminus B_{\varepsilon_{n+1}})\geq c_7 \varepsilon_n^{d+q}$ for $n\in\N$. Hence, 
    \begin{align*}
        \frac{\BV[\Phi(f_r)]}{r^{d-q}} &\geq (2\pi)^{-d}c_3^2 r^{d+q} \int (1+\|rk\|)^{-(d+q)}\,\hat\beta_\Phi(dk)\\
        &\geq (2\pi)^{-d}c_3^2 \sum_{n=1}^\infty r^{d+q} (1+r\varepsilon_n)^{-(d+q)} \hat\beta(B_{\varepsilon_n}\setminus B_{\varepsilon_{n+1}})\\
        &\geq (2\pi)^{-d}c_3^2c_7 \sum_{n=1}^\infty \frac{1}{(1+r^{-1}\varepsilon_n^{-1})^{d+q}}\\
        &\xrightarrow{r\to\infty} \infty,
    \end{align*}
    which contradicts the assumption. Therefore, $\Phi$ must be beyond $q$-uniform. 
    
    Finally, it remains to show that the weaker assumption of~\eqref{e: oscillation uniformer} also suffices compared to the stronger assumption made in the previous part of the proof. For this fact note that
    \begin{align*}
        \frac{1}{r(1-a)}\int_{ar}^{r} \frac{\BV[\Phi(f_s)]}{s^{d-q}}\, ds &= \frac{1}{r(1-a)(2\pi)^d}\iint_{ar}^{r} s^{d+q} |\hat{f}(sk)|^2\, ds\,\hat\beta_\Phi(dk)\\
        &= \frac{1}{(1-a)(2\pi)^d}  r^{d+q} \iint_{a}^{1} s^{d+q} |\hat{f}(srk)|^2\, ds\,\hat\beta_\Phi(dk)\\
        &\geq \frac{\tilde{c}a^{d+q}}{(1-a)(2\pi)^d} r^{d+q}\int (1+\|rk\|)^{-(d+q)}\,\hat\beta_\Phi(dk),\quad r>0.
    \end{align*}
    For the lower bound, the previous methods apply, and imply that the integrand on the left-most side must be greater than or equal to the lower bound for some $s\in[ar, r]$, which concludes what was left to show.
    
    To see that one cannot expect more than the fact that~\eqref{e:class I hyperuniformity} does not hold in this case, e.g., a lower bound for the $\limsup$ which is analogous to the upper bound from~\eqref{e:class II hyperuniformity}, consider the case that $\hat\beta_\Phi=\sum_{n\in\N} 2^{-2n^a}(\delta_{2^{-n^a}} + \delta_{-2^{-n^a}})$ for $a>1$. Hence, not all random measures that lie between class III and class I hyperuniformity have to be class II hyperuniform as it is defined in~\cite[Subsection 5.3]{Torquato_2018}.
\end{proof}
\end{theorem}

Having covered the real-space implications of $p$-uniformity for $p<\infty$, let us come to a real-space characterization of $\infty$-uniformity as an analogue to Theorem~\ref{t:asymptotic variance test function}. The argument comes from~\cite{Ghosh_Lebowitz_2018}.

\begin{theorem}\label{t: stealthy characterization}
Suppose that $\Phi$ is a locally absolutely square-integrable invariant random complex measure. Then $\Phi$ is $\infty$-uniform iff there is a function $f\in L^1(\R^d, \C)$ with $\lambda_d(f)\neq0$ such that $\BV[\Phi(f)]=0$. Further, if $\hat{f}(k)\neq 0$ for all $k\in B_{\varepsilon}$ and some $\varepsilon>0$, then $\Phi$ is $\infty$-uniform with radius $\varepsilon$. Conversely, if $\Phi$ is $\infty$-uniform with radius $\varepsilon>0$, then $\BV[\Phi(f)]=0$ for all $f\in L^1(\R^d, \C)$ with $\hat{f}(k)=0$ for all $k\in B_\varepsilon^c$.
\begin{proof}
    Basically, the assertions follow directly from the definition of $\infty$-uniformity with radius $\varepsilon$ and the fact that by the covariance formula~\eqref{e: spectral covariance formula},
    \begin{equation}
        \BV[\Phi(f)] = \frac{1}{(2\pi)^d}\int |\hat{f}(k)|^2\, \hat\beta_\Phi(dk).
    \end{equation}
    For the first assertion, one should also note that if $\lambda_d(f)\neq 0$, then $|\hat{f}(0)|^2 > 0$, which stays true in a neighborhood of $0$ since $f\in L^1$, whereby $\hat{f}$ is continuous.
\end{proof}
\end{theorem}

For lattices, $f$ can be chosen as the indicator function of the unit cell. In general, one can always consider $f:=|\hat{\I}_{B_\frac{\varepsilon}{2}}|^2$ for $\varepsilon>0$. For example, for $d=1$ that yields
\begin{equation}
    f(x)=\varepsilon^2\frac{\sin(\tfrac{\varepsilon}{2}x)^2}{(\tfrac{\varepsilon}{2}x)^2},\quad x\in\R.
\end{equation}

While it can be seen in the previous statement that $\infty$-uniformity heavily restricts the randomness of complex random measures, the implications for nonnegative random measures are even stronger. The following upper bound was already derived in~\cite{Ghosh_Lebowitz_2018}, but they did not focus on the lower bound. Instead, they provided a bound for the hole size. A similar statement can be found in~\cite[Proposition 5.4]{Lachièze-Rey_2025b}.

\begin{proposition}\label{p: stealthy uniform bounds}
    There are dimensional constants $c_1,c_2,c_3>0$ such that for every locally absolutely square-integrable invariant random measure $\Phi$ with intensity $\gamma$, which is $\infty$-uniform with radius $\varepsilon>0$, it holds that
    \begin{equation}
        c_1\gamma r^d \leq \Phi(B_r) \leq c_2\gamma r^d,\quad r>\frac{c_3}{\varepsilon},
    \end{equation}
    almost surely.
    \begin{proof}
        For the upper bound, see~\cite{Ghosh_Lebowitz_2018}. For the lower bound assume that $\Phi$ is $\infty$-uniform with radius $\varepsilon>0$, and consider a function $f\in L^2(\R^d, (0,\infty))$ such that $\hat{f}(k)=0$ for $k\in B_1^c$ and $f(x)\leq c_4(1+\|x\|)^{-(d+1)}$ for $x\in\R^d$ for some $c_4>0$. To see that such a function $f$ exists consider the Fourier transform of $(\hat{\I}_{B_{\frac{1}{2}}}\star \hat{\I}_{B_{\frac{1}{2}}}) \hat{\I}_{B_1}$. Then, by Theorem~\ref{t: stealthy characterization} and the existence of the upper bound, for $r>\varepsilon^{-1}$,
        \begin{align*}
            c_4\Phi(B_r) \geq \Phi(f(\varepsilon_0\cdot)\I_{B_r}) &\geq \Phi(f(\varepsilon_0\cdot)) - c_4\int_{B_r^c} \frac{1}{(1+\varepsilon_0\|x\|)^{d+1}}\, \Phi(dx)\\
            &\geq \lambda_d(f) \gamma\varepsilon_0^{-d} -  c_5 \tfrac{1}{\varepsilon_0r} \gamma\varepsilon_0^{-d}\\
            &= \big(\lambda_d(f) -  c_5 \tfrac{1}{\varepsilon_0r}\big) \gamma\varepsilon_0^{-d}, \quad \varepsilon_0<\varepsilon,
        \end{align*}
        for some $c_5>0$.
        Hence, choosing $\varepsilon_0:=\frac{2c_5}{\lambda_d(f)r}$ yields the desired lower bound, since then,
        \begin{equation*}
            \Phi(B_r) \geq \frac{\lambda_d(f)^{d+1}}{2^{d+1}c_4c_5^{d}} \gamma r^d, \quad r > \frac{2c_5}{\lambda_d(f)\varepsilon}.
        \end{equation*}
    \end{proof}
\end{proposition}

One can also relate $p$-uniformity to rigidity. For $p\in\N_0$, a random complex measure $\Phi$ is called $p$-rigid if
\begin{equation}
    \int_B x^{\otimes q}\, \Phi(dx) \in \sigma(\Phi|_{B^c}) \quad \text{a.s., }q\in\{0,...,p\}, B\in\cB_b^d.\label{e: p-rigid}
\end{equation}
If $p=0$, $\Phi$ is called number rigid. Further, if 
\begin{equation}
    \Phi|_B \in \sigma(\Phi|_{B^c}),\quad \text{a.s., }B\in\cB_b^d,
\end{equation}
then $\Phi$ is called maximally (or strongly) rigid; see~\cite{Ghosh_Peres_2017, Lachièze-Rey_2025a}. Note that if $\Phi$ is $p$-rigid for every $p\in\N_0$, then it is also maximally rigid by a simple density argument. The foundational work that connects hyperuniformity and rigidity can again be found in~\cite{Ghosh_Lebowitz_2017, Ghosh_Lebowitz_2018}, on which the more recent and general work~\cite{Lachièze-Rey_2025a} builds upon. Our contribution in the following proposition is to show that $(2p+d)$-uniformity implies the sufficient conditions for $p$-rigidity from~\cite{Lachièze-Rey_2025a}. There, this connection was only established in special cases.

\begin{proposition}\label{p: rigidity}
    Let $p\in\N_0$. Suppose that $\Phi$ is a locally absolutely square-integrable invariant random complex measure. If $\Phi$ is $(2p+d)$-uniform, then $\Phi$ is $p$-rigid. Further, if $\Phi$ is $q$-uniform for all $q\in[-d,\infty)$, in particular, if $\Phi$ is $\infty$-uniform, then $\Phi$ is maximally rigid.
\end{proposition}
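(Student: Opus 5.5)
We reduce the statement to the sufficient criterion for $p$-rigidity from~\cite{Lachièze-Rey_2025a}. Following~\cite{Ghosh_Lebowitz_2017, Lachièze-Rey_2025a}, $p$-rigidity holds if there is a sequence of test functions $(g_n)_{n\in\N}$, each bounded with compact support, such that $g_n$ agrees on $B$ with a fixed polynomial of degree at most $p$ (e.g., a monomial $x^{\alpha}$, $|\alpha|\le p$) and $\BV[\Phi(g_n)]\to 0$. Indeed, $\Phi(g_n)-\BE[\Phi(g_n)]\to0$ in $L^2$ then gives $\int_B x^\alpha\,\Phi(dx)$ as an $L^2$-limit of $\sigma(\Phi|_{B^c})$-measurable variables, up to deterministic constants. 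Equivalently, in Fourier form, the criterion asks for $\int |\hat g_n|^2\,d\hat\beta_\Phi\to0$. So the task is to construct such $g_n$ and bound their variance via~\eqref{e: spectral covariance formula}, using only the small-ball bound $\hat\beta_\Phi(B_\varepsilon)\le c\,\varepsilon^{2d+2p}$.

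For the construction, fix $h\in C_c^\infty(\R^d,[0,1])$ with $h=1$ on $B_1$ and put $g_r(x):=x^\alpha h_r(x)$ with $|\alpha|=q\le p$. For $r$ large, $g_r=x^\alpha$ on $B$. Scaling gives $g_r(x)=r^q (x/r)^\alpha h(x/r)=r^q u_r(x)$ with $u:=x^\alpha h\in C_c^\infty$, so $\hat g_r(k)=r^{q+d}\hat u(rk)$ by Remark~\ref{r: f_r formulas}. Since $u$ is Schwartz, it is Fourier-smooth with any exponent, and $\hat u$ decays faster than any polynomial. Hence
\begin{equation*}
\BV[\Phi(g_r)]=(2\pi)^{-d}r^{2q}\,\BV[\Phi(u_r)] .
\end{equation*}
Theorem~\ref{t:asymptotic variance test function} with exponent $2p+d$ (plus $\vartheta$) gives $\BV[\Phi(u_r)]=O(r^{d-(2p+d)})=O(r^{-2p})$, so $\BV[\Phi(g_r)]=O(r^{2q-2p})$. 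This tends to $0$ only when $q<p$; for the top degree $q=p$ it is merely bounded.

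The top degree $q=p$ is the main obstacle. I would handle it with a logarithmic averaging trick, in the style of Ghosh--Lebowitz. Replace the single cutoff by $G_n:=\frac1n\sum_{j=1}^n g_{2^{j}r_0}$, which still equals $x^\alpha$ on $B$. Expanding the variance gives
\begin{equation*}
\BV[\Phi(G_n)] = n^{-2}\sum_{j,l}\BC\big[\Phi(g_{2^j r_0}),\Phi(g_{2^l r_0})\big].
\end{equation*}
In Fourier terms each summand is a weighted integral of $\hat u(2^j r_0 k)\overline{\hat u(2^l r_0 k)}$ against $\hat\beta_\Phi$. Here one must check that $\hat u$ vanishes to order $q$ at $0$. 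Moreover, since $|\hat u(k)|\lesssim\min(1,\|k\|^{-N})$ and $\hat\beta_\Phi(B_\varepsilon)\lesssim\varepsilon^{2p+2d}$, the off-diagonal terms decay geometrically in $|j-l|$. I would prove this through the dyadic-shell decomposition of the proof of Theorem~\ref{t:asymptotic variance test function}: the factor from the larger scale decays rapidly on shells where the smaller scale is not yet small, and the shells are controlled by the small-ball bound. This gives $\BV[\Phi(G_n)]=O(1/n)\to0$, which completes the case $q=p$. If the decorrelation estimate proves too delicate, I would instead try the alternative cutoff $h_r(x)=\eta(\log\|x\|/\log r)$, for which $\|\nabla^{p}\|$-type energies vanish in the critical dimension. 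Concretely, I would bound the variance by $\int|\hat g|^2\|k\|^{2p+d}\,dk$-type quantities via the small-ball bound and a partial integration as in Proposition~\ref{p:asymptotic variance test function}.

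Finally, if $\Phi$ is $q$-uniform for every $q$, it is $p$-rigid for every $p\in\N_0$. Maximal rigidity then follows from the density argument noted before the statement: polynomial moments over $B$ determine $\Phi|_B$. By Remark~\ref{r: uniformity order}, $\infty$-uniformity implies $q$-uniformity for all $q$, which covers the stated special case.
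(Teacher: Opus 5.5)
Your argument is correct, but it takes a different route from the paper.

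\textbf{The paper's route.} The paper builds no test functions. It inserts $(2p+d)$-uniformity, in the form
\[
\int_{B_\varepsilon}S_\Phi(k)\,dk\le\hat\beta_\Phi(B_\varepsilon)\le c\,\varepsilon^{2(d+p)},
\]
into the spectral criterion of \cite[Theorem 1]{Lachièze-Rey_2025a}. That criterion requires $\int_{B_\varepsilon}|Q|^2/S_\Phi\,dk=\infty$ for every $\varepsilon>0$ and every polynomial $Q$ with a nonzero coefficient of degree at most $p$. The paper first reduces, by induction via Remark~\ref{r: uniformity order}, to $Q$ whose coefficients of degree below $p$ vanish. A Cauchy--Schwarz inequality on each annulus $B_\varepsilon\setminus B_{\varepsilon/2}$ then shows that every dyadic annulus contributes at least a fixed positive constant, so the integral diverges.

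\textbf{Your route.} You prove rigidity directly in the Ghosh--Peres style, and your dyadic averaging does close the critical degree $q=p$. Split the frequency integral at $\|k\|=1/S$. On $\|k\|\le 1/S$ use $\|\hat u\|_\infty<\infty$ and $\hat\beta_\Phi(B_\varepsilon)\le c\,\varepsilon^{2d+2p}$ for $\varepsilon\le1$. On the dyadic shells $\|k\|\sim 2^m/S$ up to $\|k\|=1$, use the rapid decay of $\hat u(Sk)$ together with the same small-ball bound. For $\|k\|\ge1$, translation-boundedness suffices. This gives
\[
\big|\BC[\Phi(g_R),\Phi(g_S)]\big|\le C\,(R/S)^{p+d},\qquad 1\le R\le S,
\]
and hence $\BV[\Phi(G_n)]=O(1/n)$.

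\textbf{Corrections.} You do not need vanishing of $\hat u$ at $0$, and in general it fails. Indeed $\hat u(0)=\int x^\alpha h(x)\,dx\neq0$, for example when $\alpha=0$, or when all components of $\alpha$ are even and $h\ge0$. Drop that remark; the small-ball bound alone controls the low frequencies. Also, $\BV[\Phi(g_r)]=r^{2q}\BV[\Phi(u_r)]$ holds without the factor $(2\pi)^{-d}$.

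\textbf{Comparison.} Your approach is elementary and self-contained: it uses only the covariance formula, and it produces an explicit approximating sequence with rate $O(1/\log r)$ for support radius $r$. The paper's argument is shorter given the external theorem. It also yields the sharper Remark~\ref{r: rigidity}: only the absolutely continuous part of $\hat\beta_\Phi$ has to satisfy the small-ball bound. Your variance computation sees all of $\hat\beta_\Phi$, including any singular mass near the origin.
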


\begin{remark}\label{r: rigidity}
    In Proposition~\ref{p: rigidity}, one only has to consider the part of $\hat\beta_\Phi$ which is continuous with respect to the Lebesgue measure when checking $(2p+d)$-uniformity.
\end{remark}
    
\begin{proof}[Proof of Proposition~\ref{p: rigidity}]
    We only have to show the first part since the second part follows from the first.

    Let $p\in\N_0$ and suppose that $\Phi$ is a locally absolutely square-integrable invariant random complex measure which is $(2p+d)$-uniform. Let $S_\Phi$ be the density of (the part of) $\hat\beta_\Phi$ which is continuous with respect to the Lebesgue measure. Hence, there is a $c_1>0$ such that
    \begin{equation}\label{e: SF integral bound}
        \frac{\int_{B_\varepsilon}S_\Phi(k)\, dk}{\varepsilon^{2(d+p)}} \leq \frac{\hat\beta_\Phi(B_\varepsilon)}{\varepsilon^{2(d+p)}} \leq c_1, \quad \varepsilon\in(0,1).
    \end{equation}
    Since later on we only use the bound of the integral over $S_\Phi$, one can see here that, as stated in Remark~\ref{r: rigidity}, the part of $\hat\beta_\Phi$ which is singular to $\lambda_d$ can be ignored when checking the $(2p+d)$-uniformity of $\Phi$ in this context.
    Let $Q:\R^d\to \C$ be a multivariate polynomial, i.e., there exist $a_m\in\C$ for $m\in\N_0^d$ such that all but finitely many $a_m=0$ and $ Q(k) = \sum_{m\in\N_0^d} a_mk^m$, where $k^m:= k_1^{m_1}\cdots k_d^{m_d}$.
    To show that the sufficient condition in~\cite[Theorem 1]{Lachièze-Rey_2025a} for $p$-rigidity of $\Phi$ is fulfilled, we have to show that 
    \begin{equation}\label{e: infinite fraction integral}
        \int_{B_\varepsilon} \frac{|Q(k)|^2}{S_\Phi(k)}\, dk = \infty
    \end{equation}
    for any $\varepsilon>0$ if there is some $m\in\N_0^d$ such that $a_m\neq0$ and $\|m\|_1\leq p$. By Remark~\ref{r: uniformity order}, we can use induction to, without loss of generality, assume that $a_m=0$ for all $m\in\N_0^d$ with $\|m\|_1<p$. In this case, there exists a $c_2>0$ such that
    \begin{equation}\label{e: poly upper bound}
        |Q(k)| \leq c_2 \|k\|^p, \quad k\in B_1.
    \end{equation}
    As a final preparation, we can apply the Cauchy-Schwarz inequality to obtain 
    \begin{align*}
        \int_{B_\varepsilon\setminus B_{\frac{\varepsilon}{2}}} |Q(k)|^2 \, dk &= \int_{B_\varepsilon\setminus B_{\frac{\varepsilon}{2}}} \frac{|Q(k)|}{\sqrt{S_\Phi(k)}} |Q(k)|\sqrt{S_\Phi(k)} \, dk\\
        &\leq \sqrt{\int_{B_\varepsilon\setminus B_{\frac{\varepsilon}{2}}} \frac{|Q(k)|^2}{S_\Phi(k)}\, dk\int_{B_\varepsilon\setminus B_{\frac{\varepsilon}{2}}} |Q(k)|^2S_\Phi(k)\, dk},\quad \varepsilon>0.
    \end{align*}
    An inversion of this inequality yields
    \begin{align*}
        \int_{B_\varepsilon\setminus B_{\frac{\varepsilon}{2}}} \frac{|Q(k)|^2}{S_\Phi(k)}\, dk &\geq \bigg(\int_{B_\varepsilon\setminus B_{\frac{\varepsilon}{2}}} |Q(k)|^2 \, dk\bigg)^2 \bigg(\int_{B_\varepsilon\setminus B_{\frac{\varepsilon}{2}}} |Q(k)|^2S_\Phi(k)\, dk\bigg)^{-1}\\
        &\geq \bigg(\varepsilon^d\int_{B_1\setminus B_{\frac{1}{2}}} |Q(\varepsilon k)|^2 \, dk\bigg)^2 \bigg(c_2^2\int_{B_\varepsilon} \|k\|^{2p}S_\Phi(k)\, dk\bigg)^{-1}\\
        &\geq \bigg(\int_{B_1\setminus B_{\frac{1}{2}}} \Big|\frac{Q(\varepsilon k)}{\varepsilon^p}\Big|^2 \, dk\bigg)^2 \bigg(c_2^2 \frac{\int_{B_\varepsilon} S_\Phi(k)\, dk}{\varepsilon^{2(d+p)}}\bigg)^{-1},\quad\varepsilon\in(0,1),
    \end{align*}
    where we applied~\eqref{e: poly upper bound} in the second line. Since
    \begin{equation*}
        \frac{Q(\varepsilon k)}{\varepsilon^p} \xrightarrow{\varepsilon\to 0} \sum_{\substack{m\in\N_0^d\\ \|m\|_1 = p}} a_mk^m
    \end{equation*}
    uniformly on $B_1$, and by~\eqref{e: SF integral bound}, there exists a $c_3>0$ such that 
    \begin{equation*}
        \int_{B_\varepsilon\setminus B_{\frac{\varepsilon}{2}}} \frac{|Q(k)|^2}{S_\Phi(k)}\, dk \geq c_3,\quad \varepsilon\in(0,1),
    \end{equation*}
    if there is an $m\in\N_0^d$ with $a_m\neq0$ and $\|m\|_1=p$. This fact guarantees~\eqref{e: infinite fraction integral} for any $\varepsilon>0$, which concludes the proof.
\end{proof}

Having investigated the real-space implications of $p$-uniformity, where Theorem~\ref{t:asymptotic variance test function} played an important role, there is another direction in which this theorem can be further investigated. For the characterization of $p$-uniformity in Theorem~\ref{t:asymptotic variance test function} to hold, it was assumed that the test function $f$ satisfies $\lambda_d(f)\neq0$. However, in the upcoming Section~\ref{s:main results}, derivatives play a major role, which cannot satisfy this property. Therefore, if one aims to find the highest $p$ for which a random complex measure is $p$-uniform, it is crucial to find cases in which the characterization in Theorem~\ref{t:asymptotic variance test function} still holds even if $\lambda_d(f)=0$.

\begin{proposition}\label{p:asymptotic variance test function zero integral}
    In the setting of Theorem~\ref{t:asymptotic variance test function}, if $\Phi$ is pseudo-ergodic and $p>-d$, the assumption of $\lambda_d(f)\neq0$ can be weakened to $f\neq0$ under one of the following four additional assumptions: $d=1$, $f$ is isotropic, $\inf_{k\in\partial B_r} |\hat{f}(k)|>0$ for some $r>0$, or $\Phi$ is isotropic.    
    \begin{proof}
        Assume that we are in the setting of Theorem~\ref{t:asymptotic variance test function} with $f\neq0$. Then by the covariance formula~\eqref{e: spectral covariance formula} and Remark~\ref{r: f_r formulas}, we obtain
        \begin{equation}\label{e:variance formula remark zero integral}
            \frac{\BV[\Phi(f_r)]}{r^d} = (2\pi)^{-d}r^d \int |\hat{f}(rk)|^2 \, \hat{\beta}_\Phi(dk).
        \end{equation}
        If $d=1$, or if $f$ is isotropic, then $|\hat{f}|$ is isotropic. In the other hand, if $\Phi$ is isotropic, then $\hat{\beta}_\Phi$ is isotropic. In this case, one can replace $\hat{f}$ on the RHS by
        \begin{equation}\label{e: replacement function iso}
            k\mapsto \sqrt{\int_{\partial B_1} |\hat{f}(\|k\|s)|^2 \,\sigma_d(ds)} ,\quad k\in\R^d,
        \end{equation}
        without changing the value of the integral. The measure $\sigma_d$ refers to the unique rotation-invariant probability measure on the surface of the unit ball. This new function is also isotropic. Therefore, without loss of generality, we can assume that $\hat{f}$ is isotropic in this case as well. Now, as $f$ is integrable, $\hat{f}$ is continuous (which also holds for the replacement function~\eqref{e: replacement function iso}), and as $f\neq 0$, we can conclude that $\inf_{k\in\partial B_r} |\hat{f}(k)|>0$ for some $r>0$. Hence, we can work under this assumption for the remaining part of the proof. 

        Let us assume that, as $r\to\infty$,
        \begin{equation}\label{e:variance asymptotic remark zero integral}
            \frac{\BV[\Phi(f_r)]}{r^d} = O(r^{-p}).
        \end{equation}
        Therefore, by~\eqref{e:variance formula remark zero integral}, there are $r_0>0,c_1>0$ such that
        \begin{equation*}
            r^{d+p} \int |\hat{f}(rk)|^2 \, \hat{\beta}_\Phi(dk) \leq c_1, \quad r\geq r_0.
        \end{equation*}
        As $\inf_{k\in\partial B_r} |\hat{f}(k)|$ is continuous in $r>0$, the additional assumption implies that there are $c_2,c_3>0, a\in(0,1)$ such that $|\hat{f}|^2 \geq c_2 (\I_{B_{c_3}} - \I_{B_{a c_3}})$. Hence,
        \begin{equation*}
            c_2 r^{d+p} \big(\hat{\beta}_\Phi\big(B_{\frac{c_3}{r}}\big) - \hat{\beta}_\Phi\big(B_{\frac{ac_3}{r}}\big)\big) \leq c_1, \quad r\geq r_0.
        \end{equation*}
        By a change of variables, we obtain that there exists an $\varepsilon_0>0$ such that
        \begin{equation*}
             \hat{\beta}_\Phi(B_{a\varepsilon}) \geq \hat{\beta}_\Phi(B_{\varepsilon}) - \varepsilon^{d+p} \frac{c_1}{c_2 c_3^{d+p}},\quad \varepsilon\leq \varepsilon_0.
        \end{equation*}
        Now an induction over $n\in\N$ yields that, for $\varepsilon\leq \varepsilon_0$,
        \begin{align*}
            \hat{\beta}_\Phi(\{0\}) \xleftarrow{n\to\infty} \hat{\beta}_\Phi(B_{a^n\varepsilon}) &\geq \hat{\beta}_\Phi(B_{\varepsilon}) - \varepsilon^{d+p} \frac{c_1}{c_2 c_3^{d+p}} \sum_{j=0}^{n-1} a^{j(d+p)}\\
            &\xrightarrow{n\to\infty} \hat{\beta}_\Phi(B_{\varepsilon}) - \varepsilon^{d+p} \frac{c_1}{c_2 c_3^{d+p}(1-a^{d+p})}.
        \end{align*}
        As $\Phi$ is (pseudo-) ergodic, we have that $\hat{\beta}_\Phi(\{0\})=0$ by Proposition~\ref{p:pseudo ergodic}, whereby
        \begin{equation*}
            \hat{\beta}_\Phi(B_{\varepsilon}) \leq \varepsilon^{d+p} \frac{c_1}{c_2 c_3^{d+p}(1-a^{d+p})},\quad \varepsilon\leq\varepsilon_0,
        \end{equation*}
        which implies $p$-uniformity of $\Phi$. Hence, the proof is concluded for the $O$-case. If we assume that~\eqref{e:variance asymptotic remark zero integral} also holds with $o$ instead of $O$, then we can choose $c_1$ arbitrarily small by choosing $r_0$ sufficiently large, i.e., $\varepsilon_0$ sufficiently small, and obtain beyond $p$-uniformity of $\Phi$. 
    \end{proof}
\end{proposition}

Note that of these four alternative additional assumptions, only $d=1$ and isotropy of $\Phi$ can be fulfilled if $f$ is a (partial) derivative. Still, these two cases are important in applications. Further, the assumption that $\Phi$ is pseudo-ergodic is essential, as this is equivalent to the fact that $\Phi$ is beyond $(-d)$-uniform by Proposition~\ref{p:pseudo ergodic}, and for $(-d)$-uniformity, the situation is very different. There, as it can be seen in the following proposition, any test function $f$ with $\lambda_d(f)=0$ leads to a real-space behavior which is indistinguishable from that of a $(-d)$-uniform random measure.

\begin{proposition}\label{p:asymptotic variance test function 0}
Suppose that $\Phi$ is a locally square-integrable invariant random complex measure. Further, suppose that $f\in L^1$ is Fourier smooth with exponent $-d+\vartheta$ with $\lambda_d(f)=0$, where $\vartheta>0$. Then, as $r\to \infty$,
\begin{equation}
    \frac{\BV[\Phi(f_r)]}{r^d} = o(r^d)
\end{equation}
\begin{proof}
Choose $\Psi:= \Phi - \lim_{r\to\infty}\frac{\Phi(B_r)}{\lambda_d(B_r)} \cdot \lambda_d$. Note that the limit is taken with respect to the $L^2$-norm and is well defined by the $L^2$-ergodic theorem; see, e.g.,~\cite[Section 25]{Kallenberg_2021}. We can see that $\Phi(f_r)=\Psi(f_r)$, as $\lambda_d(f_r) = r^d\lambda_d(f)=0$, $r>0$. Further, by construction, $\Psi$ is pseudo-ergodic and therefore beyond $(-d)$-uniform by Proposition~\ref{p:pseudo ergodic}. Now the assertion follows from Theorem~\ref{t:asymptotic variance test function}.
\end{proof}
\end{proposition}

Another important implication of Theorems~\ref{t:asymptotic variance test function} and~\ref{t: stealthy characterization} is that $p$-uniformity persists under the linear combination of random measures, even if they are correlated.

\begin{proposition}\label{p: uniform linear combination}
    Let $p\in[-d, \infty], \varepsilon>0$. Suppose that $\Phi$ and $\Psi$ are locally absolutely square-integrable invariant random complex measures, which can be correlated, and let $a,b\in\C$. Assume that $\Phi$ and $\Psi$ are both (beyond) $p$-uniform (with radius $\varepsilon$). Then $a\Phi+b\Psi$ is also (beyond)\linebreak $p$-uniform (with radius $\varepsilon$).

    Further, if $\Phi$ is (beyond) $p$-uniform (with radius $\varepsilon$), $\Psi$ is not, and $b\neq 0$, then $a\Phi+b\Psi$ is not (beyond) $p$-uniform (with radius $\varepsilon$). In this case, if $p<\infty$, if either of the following limits exist, we have
        \begin{equation}\label{e:lin comb limit}
            \lim_{r\to\infty} \frac{\BV[(a\Phi+b\Psi)(f_r)]}{r^{d-p}} = \lim_{r\to\infty} b^2\frac{\BV[\Psi(f_r)]}{r^{d-p}},
        \end{equation}
        where $f$ is chosen as in Theorem~\ref{t:asymptotic variance test function}.
    \begin{proof}
        Let $p\in[-d, \infty)$, and suppose that $f\in\C_c^\infty(\R^d, [0,\infty))$ and $f\neq0$. Then, by the Cauchy-Schwarz inequality,
        \begin{align*}
            \Big(\sqrt{b\BV[\Psi(f_r)]} - \sqrt{a\BV[\Phi(f_r)]}\Big)^2 &\leq \BV[(a\Phi+b\Psi)(f_r)]\\ &\leq \Big(\sqrt{b\BV[\Psi(f_r)]} + \sqrt{a\BV[\Phi(f_r)]}\Big)^2
        \end{align*}
        for $r>0$. Now, the second inequality yields the first assertion, the first inequality yields the second assertion, and together they yield~\eqref{e:lin comb limit} with an application of Theorem~\ref{t:asymptotic variance test function}. For $p=\infty$, the assertions follow similarly from Theorem~\ref{t: stealthy characterization}.
    \end{proof}
\end{proposition}

We close this section by giving $p$-uniform examples for different $p\in[-d,\infty]$. This question is trivial when it comes to random signed measures, as a Gaussian random field that is solely $p$-uniform can be constructed easily for any $p\in[-d,\infty]$. Therefore, all given examples are nonnegative random measures and, in particular, even simple point processes.

\begin{examples}\label{ex: uniformity degrees}
    The following examples are ordered from $p$-uniform with low $p$ to high $p$. All but the first are ergodic.
    \begin{enumerate}[label=(\alph*)]
        \item Suppose that $N$ is a nondeterministic, nonnegative random variable such that $\BE[N^{-2d}]<\infty$. Further, suppose that $U\sim\cU([0,1)^d)$ is independent of $N$. Then the randomly scaled stationary lattice $\sum_{z\in\Z^d} \delta_{N(z+U)}$ is solely $(-d)$-uniform. This is a simple consequence of Proposition~\ref{p:pseudo ergodic} and the fact that the spatial mean of the point process is $N^{-d}$.
        \item A stationary Poisson hyperplane intersection process is solely $(-d+1)$-uniform. This fact can be simply derived from Theorem~\ref{t:asymptotic variance test function} and the asymptotics of the variance derived in~\cite{Lothar_Heinrich_Hendrik_Schmidt_Volker_Schmidt_2006}; see also~\cite{Klatt_Last_2022}.
        \item A stationary Poisson process is solely $0$-uniform, which is a direct consequence of the fact that the Bartlett spectral measure is a multiple of the Lebesgue measure in this case.
        \item The Ginibre point process is solely $2$-uniform. The same holds for many other hyperuniform determinantal point processes, and in general, nontrivial determinantal point processes cannot be beyond $2$-uniform; see, e.g.,~\cite[Subsection 3.3]{Lachièze-Rey_2025b}.
        \item The zeros of the planar Gaussian analytic function are solely $4$-uniform; see~\cite{Sodin_Tsirelson_2002}.
        \item The stationary lattice $\Z^d+U$ with $U\sim\cU([0,1)^d)$ is $\infty$-uniform with maximal radius $2\pi$. This can be directly derived from the fact that it has the Bartlett spectral measure $\sum_{z\in\Z^d\setminus\{0\}} \delta_{2\pi z}$, which is a consequence of the Poisson summation formula.
    \end{enumerate}
\end{examples}

This list leaves large gaps on the scale without proper examples. On the left end of the scale, we construct ergodic solely $p$-uniform point processes with an arbitrarily low $p<0$ in Appendix~\ref{s: Hyperfluctuating examples}.  However, our main interest lies in the construction of ergodic point processes which fill the gap between the degree of $4$ and $\infty$. A construction given in~\cite{Lachièze-Rey_2025a} which was proven to result in a solely $2p$-uniform point process for any $p$ that is prime for $d=2$. This construction can be artificially extended to higher dimensions. In contrast, see Subsection~\ref{ss: pp from averaging sets}. There, we explore a wider class of point processes which can be solely $2p$-uniform for any $p\in\N$ natively in any dimension. Moreover, since the construction in~\cite{Lachièze-Rey_2025a} is a subclass of the class explored in Subsection~\ref{ss: pp from averaging sets}, we also prove that the restriction to $p$ prime imposed there is not necessary. While the examples in Subsection~\ref{ss: pp from averaging sets} are a natural derivation from the results of the following Section~\ref{s:main results}, they are also strongly connected to an idea from~\cite{Gabrielli_Joyce_Torquato_2008}. Another class of point processes, that can be $p$-uniform for arbitrarily high $p$, and that is arguably larger and more relevant in applications, is explored in Subsection~\ref{ss: pp from invariant partitions}. The trade-off is that, when we obtain $p$-uniformity in the latter class of point processes, we usually cannot expect the process to be solely $p$-uniform. All of these examples are derived from the theory we build in the following section on the effect of transportation on $p$-uniformity.

\section{On \textit{p}-uniformity under transports}\label{s:main results}

The effect of transportation on $p$-uniformity can only be studies if both the source and destination have a locally finite second moment. While it is easy to see that a probability kernel preserves a locally finite first moment, the same does not hold for the second moment. Further complications arise if the transport kernel is not assumed to be a probability kernel. In the following subsection, we derive sufficient conditions for the preservation of a locally finite second moment under transportation. It turns out that these conditions even lay a foundation for the results on the preservation of $p$-uniformity in the subsections thereafter.

\subsection{Persistence of square integrability under transport}\label{ss:Square integrability of destinations}

There is no single function that makes the central Condition~\ref{c:condition square-integrable general} for our first Theorem~\ref{t:main square-integrable} sharp. However, the following class of functions does.

\begin{definition}[Strongly log-dominating]\label{d:strongly log-dominating}
Let $\rho:[0,\infty)\to [0,\infty)$ be an increasing function that satisfies
\begin{equation}\label{e:strongly log-dominating}
    \int_0^\infty \frac{1}{r \rho(r) + 1} \, dr < \infty.
\end{equation}
Then $\rho$ is called a \textit{strongly log-dominating} function.
\end{definition}

As suggested by the name, $\rho$ is not a strongly log-dominating function if $\rho(r)=\log(r+1)$, $r\geq0$. However, many other slowly increasing functions are admissible. For instance, $\rho$ is strongly log-dominating if $\rho(r)= r^\vartheta$ or even $\rho(r)=\log(r+1)^{1+\vartheta}, r\geq0$, for some $\vartheta>0$.

In the following, we work with two different integrability conditions. One is for the more general case, where the transport kernel $K$ can be non-mass-preserving and dependent on the random measure $\Phi$ it transports. The other is specifically adapted to the case that $K$ is a probability kernel (or of uniformly bounded total variation) and independent of $\Phi$. Both conditions can be considered sharp within their respective context, as it can be seen in Remark~\ref{r:square-integrable sharpness} and~\ref{r:ind square-integrable sharpness}. If not stated otherwise, we assume that $K$ is an invariant (random) transport kernel, and that $\Phi$ is a locally absolutely square-integrable invariant random complex measure. In this general setting, we use the following integrability condition.

\begin{condition}\label{c:condition square-integrable general}
    For some strongly log-dominating function $\rho$, it holds that the invariant random measure $\Psi_{d,\rho}$ defined by
    \begin{equation}\label{e: remainder process definition}
        \Psi_{d,\rho}(dy) := \bigg(\int \sqrt{\|x\|^d\rho(\|x\|) + 1}\, |K^\ast|(y, dx)\bigg)\, |\Phi|(dy), \quad y\in\R^d,
    \end{equation}
    is locally square-integrable.
\end{condition}
This condition limits how much mass can be transported over large distances. 
In some special cases, e.g., if $K$ is a probability kernel (i.e., mass-preserving) and independent of $\Phi$, we can also use the following condition, which is a natural extension of the condition used in~\cite{Dereudre_Flimmel_Huesmann_Leblé_2024}.
\begin{condition}\label{c:condition square-integrable Markov}
    Assume that $K$ has uniformly bounded total variation, that $K$ and $\Phi$ are independent, and that    \begin{equation}\label{e:condition square-integrable Markov}
        \BE\bigg[\int \|x\|^{d} \, |K_0|(dx) \bigg] < \infty.
    \end{equation}
\end{condition}
In this case, we can interpret the condition directly as a moment condition on the typical transport distance.

Now we are ready to state the main theorem of this subsection. Afterwards, we show how the general integrability conditions can be rephrased or simplified in certain contexts.

\begin{theorem}\label{t:main square-integrable}
Suppose that $\Phi$ is a locally absolutely square-integrable invariant random complex measure, and that $K$ is an invariant transport kernel. Further, assume that either Condition~\ref{c:condition square-integrable general} or Condition~\ref{c:condition square-integrable Markov} holds.
Then $K\Phi$ is locally absolutely square-integrable.
\begin{proof}
If Condition~\ref{c:condition square-integrable general} holds, the assertion follows from Lemma~\ref{l:remainder2case1}. If Condition~\ref{c:condition square-integrable Markov} holds, the assertion follows from Lemma~\ref{l:remainder2case2}. In both cases, we apply the lemmas with $p=0$. The lemmas each actually include a stronger statement, which we do not need here but in the following subsection. Also, while Lemmas~\ref{l:remainder2case1} and~\ref{l:remainder2case2} make the same statements under different conditions, the proofs differ completely. The proof of Lemma~\ref{l:remainder2case2} follows a technique that is adapted from a proof in~\cite{Dereudre_Flimmel_Huesmann_Leblé_2024}.
\end{proof}

\end{theorem}

As seen in Remark~\ref{r:square-integrable sharpness}, Condition~\ref{c:condition square-integrable general} is sharp in the sense that for any $\rho:[0,\infty)\to[0,\infty)$ with 
\begin{equation}
    \int_0^\infty \frac{1}{r\rho(r) + 1} \, dr = \infty,
\end{equation}
one can find $\Phi$ and $K$ such that Condition~\ref{c:condition square-integrable general} holds but $K\Phi$ is not locally square-integrable. Condition~\ref{c:condition square-integrable Markov} is sharp in a similar sense.

\begin{proposition}\label{p:independent square-integrable}
In the setting of Theorem~\ref{t:main square-integrable}, Condition~\ref{c:condition square-integrable general} is equivalent to the existence of an $\varepsilon>0$ such that
\begin{equation}
    \int_{B_{\varepsilon}} \BE_{0,y}^{|\Phi|}\bigg[ \int\sqrt{ \|x\|^d \rho(\|x\|)+1}\, |K^\ast|(y,dx) \int\sqrt{ \|x\|^d \rho(\|x\|)+1}\, |K^\ast|(0,dx) \bigg] \alpha_{|\Phi|}(dy)  < \infty.
\end{equation}
Further, this condition is implied by
\begin{equation}\label{e: condition sq int simplified}
    \BE_0^{|\Phi|}\bigg[|\Phi|(B_\varepsilon)\bigg(\int\sqrt{ \|x\|^d \rho(\|x\|)+1}\, |K_0|(dx) \bigg)^2\bigg] < \infty.
\end{equation}
\end{proposition}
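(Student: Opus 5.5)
The plan is to view $\Psi_{d,\rho}$ as the random measure $W(y)\,|\Phi|(dy)$ with random density
\begin{equation*}
    W(y):=\int\sqrt{\|x\|^d\rho(\|x\|)+1}\,|K^\ast|(y,dx)\in[0,\infty],
\end{equation*}
which is invariant because $K^\ast$ and $\Phi$ are jointly invariant. Since $\Psi_{d,\rho}$ is nonnegative, it is locally square-integrable iff $\BE[\Psi_{d,\rho}(B)^2]<\infty$ for one (equivalently every) bounded Borel set $B$ with nonempty interior. Every bounded set is covered by finitely many translates of $B$, and stationarity together with $(a_1+\dots+a_n)^2\le n\sum a_j^2$ transfers the bound to all bounded sets.

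For the first claim I will apply the refined Campbell theorem of second order, \eqref{e: refined Campbell 2}, to $|\Phi|$ and the nonnegative function
\begin{equation*}
    f(\omega,x,z)=\I\{x\in B\}\I\{z\in B\}W(\omega,x)W(\omega,z).
\end{equation*}
By invariance, $W(\theta_{-x}\omega,x)=W(\omega,0)$ and $W(\theta_{-x}\omega,x+y)=W(\omega,y)$. This gives
\begin{equation*}
    \BE[\Psi_{d,\rho}(B)^2]=\iint \I\{x\in B, x+y\in B\}\,\BE_{0,y}^{|\Phi|}[W(y)W(0)]\,\alpha_{|\Phi|}(dy)\,dx
    =\int \lambda_d(B\cap(B-y))\,\BE_{0,y}^{|\Phi|}[W(y)W(0)]\,\alpha_{|\Phi|}(dy).
\end{equation*}
Choose $B=B_{\varepsilon/2}$. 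The weight $\lambda_d(B\cap(B-y))$ vanishes outside $B_\varepsilon$ and is bounded above by $\lambda_d(B)$, so finiteness of the displayed integral over $B_\varepsilon$ implies Condition~\ref{c:condition square-integrable general}.

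For the converse, I need a lower bound on the weight, and this is the only slightly delicate step. With $B=B_{\varepsilon}$, the weight is bounded below by a positive constant for $y\in B_\varepsilon$, because $\lambda_d(B_\varepsilon\cap(B_\varepsilon-y))\ge\lambda_d(B_{\varepsilon/2})$ when $\|y\|\le\varepsilon$. Hence local square-integrability of $\Psi_{d,\rho}$ yields finiteness over $B_\varepsilon$, for any $\varepsilon$. The Palm expectations are defined only $\alpha_{|\Phi|}$-almost everywhere, but that is harmless here because everything is integrated against $\alpha_{|\Phi|}$. All quantities are nonnegative, so Tonelli applies throughout and no integrability needs to be checked beforehand.

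For the sufficient condition \eqref{e: condition sq int simplified}, I will bound $\BE[\Psi_{d,\rho}(B_{\varepsilon/2})^2]$ directly with the first-order refined Campbell theorem \eqref{e: refined Campbell 1}. Write
\begin{equation*}
    \Psi_{d,\rho}(B)^2=\iint_{B\times B}W(x)W(z)\,|\Phi|(dx)\,|\Phi|(dz)
\end{equation*}
and use $W(x)W(z)\le\tfrac12(W(x)^2+W(z)^2)$. By symmetry,
\begin{equation*}
    \Psi_{d,\rho}(B)^2\le\int_B W(x)^2\,|\Phi|(B)\,|\Phi|(dx)\le\int_B W(x)^2\,|\Phi|(B_\varepsilon+x)\,|\Phi|(dx),
\end{equation*}
where the last step uses $B\subseteq B_\varepsilon+x$ for $x\in B=B_{\varepsilon/2}$. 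Applying \eqref{e: refined Campbell 1} to $|\Phi|$, and using that the shift sends $W(x)$ to $W(0)=\int\sqrt{\cdot}\,|K_0|(dx)$ and $|\Phi|(B_\varepsilon+x)$ to $|\Phi|(B_\varepsilon)$, gives
\begin{equation*}
    \BE[\Psi_{d,\rho}(B)^2]\le\gamma_{|\Phi|}\,\lambda_d(B)\,\BE_0^{|\Phi|}\big[|\Phi|(B_\varepsilon)W(0)^2\big]<\infty.
\end{equation*}
If $|\Phi|$ has intensity zero, then $\Phi=0$ almost surely and the claim is trivial.
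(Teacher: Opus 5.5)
Your proposal is correct and is essentially the paper's proof. The second-order refined Campbell theorem produces the weight $\lambda_d(B\cap(B-y))$, and comparing $B=B_{\varepsilon/2}$ with $B=B_\varepsilon$ gives the equivalence. The sufficient condition then follows from symmetrizing $W(x)W(z)$ and applying the first-order refined Campbell theorem. Your bound $W(x)W(z)\le\tfrac12(W(x)^2+W(z)^2)$ only improves the paper's constant $2$ to $1$, and your explicit covering/stationarity argument fills in a step the paper leaves implicit.
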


\begin{remark}\label{r:independent square-integrable}
    In Proposition~\ref{p:independent square-integrable}, if $\Phi$ and $K$ are independent, then~\eqref{e: condition sq int simplified} simplifies to
    \begin{equation}
        \BE\bigg[\bigg(\int\sqrt{ \|x\|^d \rho(\|x\|)+1}\, |K_0|(dx) \bigg)^2\bigg] < \infty.
    \end{equation}
    Note that if we additionally assume that the total variation of $K$ is uniformly bounded, this condition is neither strictly stronger nor strictly weaker than Condition~\ref{c:condition square-integrable Markov}.
\end{remark}

\begin{proof}[Proof of Proposition~\ref{p:independent square-integrable}]
For convenience, let
\begin{equation*}
    F(y) := \int \sqrt{\|x\|^d\rho(\|x\|) + 1}\, |K^\ast|(y,dx), \quad y\in\R^d.
\end{equation*}
Then, by the refined Campbell theorem~\eqref{e: refined Campbell 2}, with $\Psi_{d, \rho}$ defined as in Condition~\ref{c:condition square-integrable general},
\begin{align*}
    \BE\big[\Psi_{d,\rho}(B_\frac{\varepsilon}{2})^2\big]&= \BE\bigg[ \bigg(\int_{B_\frac{\varepsilon}{2}} F(y) \, |\Phi|(dy)\bigg)^2  \bigg] \nonumber\\
    &= \int (\I_{B_\frac{\varepsilon}{2}} \star \I_{B_\frac{\varepsilon}{2}})(y) \BE_{0,y}^{|\Phi|}[ F(y) F(0) ] \, \alpha_{|\Phi|}(dy) \nonumber\\
    &\leq \frac{\kappa_d \varepsilon^d}{2^d} \int_{B_{\varepsilon}} \BE_{0,y}^{|\Phi|}[ F(y)F(0)] \, \alpha_{|\Phi|}(dy)\nonumber\\
    &\leq \int (\I_{B_\varepsilon} \star \I_{B_\varepsilon})(y) \BE_{0,y}^{|\Phi|}[ F(y) F(0) ] \, \alpha_{|\Phi|}(dy)\nonumber \\
    &= \BE\big[\Psi_{d,\rho}(B_\varepsilon)^2\big],
\end{align*}
which proves the first part. The second part follows with the refined Campbell theorem~\eqref{e: refined Campbell 1} from
\begin{align*}
    \BE\big[\Psi_{d,\rho}(B_\frac{\varepsilon}{2})^2\big]&= \BE\bigg[\int_{B_\frac{\varepsilon}{2}}\int_{B_\frac{\varepsilon}{2}} F(y)F(z) \, |\Phi|(dy)\, |\Phi|(dz)  \bigg] \nonumber\\
    &\leq 2 \BE\bigg[\int_{B_\frac{\varepsilon}{2}}|\Phi|(B_\frac{\varepsilon}{2})F(y)^2\, |\Phi|(dy)\bigg]\nonumber\\
    &= 2\gamma \int_{B_\frac{\varepsilon}{2}} \BE_0^{|\Phi|}[|\Phi|(B_\frac{\varepsilon}{2}-y)F(0)^2]\, dy,\nonumber\\
    &\leq \frac{\kappa_d \gamma \varepsilon^d}{2^{d-1}}\BE_0^{|\Phi|}[|\Phi|(B_\varepsilon)F(0)^2],
\end{align*}
where $\gamma\in[0,\infty)$ is the intensity of $|\Phi|$.
\end{proof}

Finally, we can also consider how the preservation of local square integrability is related to the Wasserstein distance, as was done in~\cite{Dereudre_Flimmel_Huesmann_Leblé_2024}. 
\begin{definition}\label{d: Wasserstein distance}
    Suppose that $\Phi, \Psi$ are invariant random (nonnegative) measures with the same finite intensity. For $p\in [1,\infty)$ define
    \begin{align}
        W_p(\Phi, \Psi) := \inf\bigg\{\bigg(\BE_0^\Phi\bigg[\int\|x\|^p\, K_0(dx)\bigg]\bigg)^{\frac{1}{p}}: K \text{ invariant proba} &\text{bility kernel} \nonumber\\
        &\text{with } \Psi=K\Phi\bigg\},
    \end{align}
    and for $p=\infty$ define
    \begin{align}
        W_\infty(\Phi, \Psi) := \inf\bigg\{\sup_{\substack{\omega\in A\\ x\in\supp(K_0(\omega))}} \|x\| : K &\text{ invariant probability kernel}\nonumber\\ &\text{with } \Psi=K\Phi,A\in \cF, \BP_0^\Phi(A)=1\bigg\},
    \end{align}
    then $W_p(\Psi, \Phi)$ is called the $p$- or $L^p$-Wasserstein distance.
\end{definition}

See~\cite{Huesmann_2016, Erbar_Huesmann_Jalowy_Müller_2025} for details like the fact that $W_p$ is a metric for $p\in[1,\infty]$.
Just like in~\cite{Dereudre_Flimmel_Huesmann_Leblé_2024}, we obtain the following relation of local square integrability and the Wasserstein distance to the Lebesgue measure.

\begin{remark}
    Suppose that $\Phi$ is an invariant random (nonnegative) measure with finite intensity $\gamma>0$. If $W_d(\Phi, \gamma\lambda_d)<\infty$, then $\Phi$ is locally square-integrable. This fact follows directly from Theorem~\ref{t:main square-integrable} with the application of Condition~\ref{c:condition square-integrable Markov}.
\end{remark}

Since we have also treated the persistence of local square integrability under transport for two arbitrary invariant random measures in Theorem~\ref{t:main square-integrable}, the question arises if one can simply replace $\lambda_d$ in the above remark by an arbitrary invariant random measure which is locally square-integrable. However, it turns out that a formulation of the conditions of Theorem~\ref{t:main square-integrable} in terms of the Wasserstein distance is quite inefficient in general. 

\begin{proposition}\label{p: wasserstein conditions general}
    Suppose that $\Phi, \Psi$ are invariant random (nonnegative) measures with the same finite intensity. Let $p\in[0,\infty)$. Assume that $\BE[\Phi(B_\varepsilon)^{2+p}]<\infty$ for some $\varepsilon>0$, and assume that $W_{d+\frac{d}{p}+\vartheta}(\Phi, \Psi)<\infty$ for some $\vartheta>0$. Then $\Psi$ is also locally square-integrable. The same holds if $\Phi(B_\varepsilon)$ is uniformly bounded for some $\varepsilon>0$, e.g., if $\Phi$ is $\infty$-uniform; see Proposition~\ref{p: stealthy uniform bounds}, and if $W_{d}(\Phi, \Psi)<\infty$.
    \begin{proof}
        For $p=0$, the assertion follows from the fact that
        \begin{equation*}
            \BE[\Psi(B_1)^2]\leq \BE[\Phi(B_{1+W_\infty(\Phi,\Psi)})^2]<\infty.
        \end{equation*}
        Hence, from now on we assume that $p\in(0,\infty)$, that $\BE[\Phi(B_\varepsilon)^{2+p}]<\infty$ for some $\varepsilon>0$, and that $W_{d+\frac{1}{p}+\vartheta}(\Phi, \Psi)<\infty$ for some $\vartheta>0$. Then there exists an invariant probability kernel $K$ such that $\Psi=K\Phi$ and 
        \begin{equation*}
            \BE_0^\Phi\bigg[\int\|x\|^{d+\frac{1}{p}+\vartheta}\, K_0(dx)\bigg] < \infty.
        \end{equation*}
        Also note that $\BE[\Phi(B_\varepsilon)^{2+p}]$ for some $\varepsilon>0$ implies that the same holds for every $\varepsilon>0$.
        Now, we can apply the Hölder inequality twice to obtain
        \begin{align*}
            &\BE_0^\Phi\bigg[\Phi(B_1)\bigg(\int \sqrt{\|x\|^{d+\vartheta\frac{p}{1+p}}+1}\, K_0(dx)\bigg)^2\bigg]\nonumber\\
            &\quad\ \ \leq \BE_0^\Phi\big[\Phi(B_1)^{1+p}\big]^{\frac{1}{1+p}}\BE_0^\Phi\bigg[\bigg(\int \sqrt{\|x\|^{d+\vartheta\frac{p}{1+p}}+1}\, K_0(dx)\bigg)^{2\frac{1+p}{p}}\bigg]^{\frac{p}{1+p}}\nonumber\\
            &\quad\ \ \leq \BE_0^\Phi\big[\Phi(B_1)^{1+p}\big]^{\frac{1}{1+p}}\BE_0^\Phi\bigg[\int \big(\|x\|^{d+\vartheta\frac{p}{1+p}}+1\big)^{\frac{1+p}{p}}\, K_0(dx)\bigg]^{\frac{p}{1+p}}\nonumber\\
            &\quad\ \ \leq \bigg(\frac{\BE\big[\Phi(B_{2})^{2+p}\big]}{\BE\big[\Phi(B_{\kappa_d^{-1}})\big]}\bigg)^{\frac{1}{1+p}}\BE_0^\Phi\bigg[2^{\frac{1}{p}}\int \|x\|^{d+\frac{d}{p}+\vartheta}\, K_0(dx)+2^{\frac{1}{p}}\bigg]^{\frac{p}{1+p}}\nonumber\\
            &\quad\ \ < \infty.
        \end{align*}
        Then, by choosing $\rho(r):= r^{\vartheta\frac{p}{1+p}}, r\geq0$, we can see that~\eqref{e: condition sq int simplified} from Proposition~\ref{p:independent square-integrable} holds, which allows the application of Theorem~\ref{t:main square-integrable} to show that $\Psi$ is locally square-integrable.

        For the last case that $\Phi(B_\varepsilon)$ is uniformly bounded for some $\varepsilon>0$, the assertion follows from the fact that one can use the technique from Appendix~\ref{ss:Reduction to Lebesgue} to find an invariant kernel $\tilde{K}$ which has uniformly bounded total variation such that $\Psi=\tilde{K}\lambda_d$ additionally to the invariant probability kernel $K$ which fulfills $\Phi=K\Psi$ and Condition~\ref{c:condition square-integrable Markov}. Then $K\tilde{K}, \lambda_d$ also fulfill \linebreak $K\tilde{K}\lambda_d=\Psi$ and Condition~\ref{c:condition square-integrable Markov}, whereby Theorem~\ref{t:main square-integrable} can be applied.
    \end{proof}
\end{proposition}

Further, this inefficiency in the formulation with respect to the Wasserstein distance is not caused by the methods we apply but is systemic. One can consider a random variable $R\geq 1$ such that $\BE[R^{\frac{2+p}{p}d}\log(R)]<\infty$ and $\BE[R^{\frac{2+p}{p}d}\log(R)^2]=\infty$, a random variable $U$ with $U\sim\cU([0,R)^d)$ conditioned on $R$, and define
\begin{align}
    \Phi(dy) &:= \frac{R^{\frac{1+p}{p}d}}{\|R\lfloor \frac{y-U}{R}\rfloor+U-y\|^d+1}\, dy,\\
    K(y) &:= \delta_{R\lfloor \frac{y-U}{R}\rfloor+U}, \quad y\in\R^d,
\end{align}
and prove similar to Remark~\ref{r:square-integrable sharpness} that the conditions of Proposition~\ref{p: wasserstein conditions general} are basically sharp.

\subsection{Persistence of \textit{p}-uniformity under transport} \label{ss:Persistence of uniformity under transport}

For the persistence of $p$-uniformity for $p>0$, we need stronger integrability conditions compared to the persistence of the locally finite second moment, but these conditions are natural extensions of the preceding Conditions~\ref{c:condition square-integrable general} and~\ref{c:condition square-integrable Markov}. The first two only have increased exponents and the third replaces the polynomial moment with an exponential one. Let $p\in[-d,\infty)$.
\begin{condition}\label{c:condition square-integrable general with p}
    For some strongly log-dominating function $\rho$, it holds that the invariant random measure $\Psi_{d+\max(p,0),\rho}$ defined by
    \begin{equation}\label{e: remainder process definition with p}
        \Psi_{d+\max(p,0),\rho}(dy) := \bigg(\int \sqrt{\|x\|^{d+\max(p,0)}\rho(\|x\|) + 1}\, |K^\ast|(y, dx)\bigg)\, |\Phi|(dy), \quad y\in\R^d,
    \end{equation}
    is locally square-integrable.
\end{condition}
\begin{condition}\label{c:condition square-integrable Markov with p}
    Assume that $K$ has uniformly bounded total variation, that $K$ and $\Phi$ are independent, and that 
    \begin{equation}\label{e:condition square-integrable Markov with p}
        \BE\bigg[\int \|x\|^{d+\max(p,0)} \, |K_0|(dx) \bigg] < \infty.
    \end{equation}
\end{condition}
If we instead want to preserve $\infty$-uniformity with radius $\varepsilon>0$, we assume the following condition, which is the strongest of the three.
\begin{condition}\label{c:condition square-integrable stealthy}
    It holds that the invariant random measure $\Psi_{\infty, \varepsilon}$ defined by
    \begin{equation}\label{e: remainder process definition stealthy}
        \Psi_{\infty,\varepsilon}(dy) := \bigg(\int e^{\varepsilon\|x\|}\, |K^\ast|(y, dx)\bigg)\, |\Phi|(dy), \quad y\in\R^d,
    \end{equation}
    is locally square-integrable.
\end{condition}
While we do not provide a version of this condition that assumes independence of $\Phi$ and $K$ and uniformly bounded total variation of $K$ analogous to Condition~\ref{c:condition square-integrable Markov with p}, this would be possible. Further, note that analogous to how Conditions~\ref{c:condition square-integrable general with p} and~\ref{c:condition square-integrable Markov with p} are polynomial moment assumptions on the typical transport distance, Condition \ref{c:condition square-integrable stealthy} is an exponential moment assumption. We can see this by rewriting and simplifying them analogous to Proposition~\ref{p:independent square-integrable} and Remark~\ref{r:independent square-integrable}.

Still, these integrability conditions alone do not suffice to preserve $p$-uniformity if $d+p>2$. In the special cases that $d=2$ with $p>0$ or $d\geq 3$ with $p=0$, this fact was already discovered in~\cite{Dereudre_Flimmel_Huesmann_Leblé_2024}. We close this gap by introducing a Taylor expansion of the transport; see Theorem~\ref{t:main theorem formula}. The terms of this expansion contain the random complex measures $\Psi_q$ defined in~\eqref{e:Psi_q definition}. In the following theorem, we show how conditions on these can ensure the persistence of $p$-uniformity for any $p\in[-d,\infty]$ and in any dimension $d\in\N$. Note that we also treat the case that $K$ and $\Phi$ are dependent, which has not been considered before to the best of our knowledge.

\begin{theorem}\label{t:main theorem}
Suppose that $\Phi$ is a locally absolutely square-integrable invariant random complex measure, and let $K$ be an invariant transport kernel. We distinguish the following two cases. 
\begin{enumerate}[label=(\roman*)]
    \item Let $p\in[-d,\infty)$, and assume that either Condition~\ref{c:condition square-integrable general with p} or Condition~\ref{c:condition square-integrable Markov with p} holds. Further, assume that, for $q\in\N_0$ with $q<\max(\tfrac{d+p}{2},1)$, the random complex (tensor-valued) measure $\Psi_q$ defined by
    \begin{equation}\label{e:Psi_q definition}
        \Psi_q(dy) := \bigg(\int x^{\otimes q} \, K^\ast(y, dx)\bigg)\, \Phi(dy), \quad y\in\R^d, 
    \end{equation}
    is (beyond) $(p-2q)$-uniform. Then $K\Phi$ is (beyond) $p$-uniform.
    
    \item Let $\varepsilon>0$, assume that Condition~\ref{c:condition square-integrable stealthy} holds, and assume that $\Psi_q$ is $\infty$-uniform with radius $\varepsilon$ for every $q\in\N_0$. Then $K\Phi$ is also $\infty$-uniform with radius $\varepsilon$.
\end{enumerate}
\begin{proof}
    The proof mainly relies on the more general result of the following Theorem~\ref{t:main theorem formula}. Let $n:= \max(\lceil\tfrac{d+p}{2}-1\rceil, 0)$, i.e., $n=\lfloor\tfrac{d+p}{2}\rfloor$ if $\tfrac{d+p}{2}\notin\N$, and $n=\tfrac{d+p}{2}-1$ if $\tfrac{d+p}{2}\in\N$. For the first part, i.e., when Condition~\ref{c:condition square-integrable general with p} or~\ref{c:condition square-integrable Markov with p} holds for some $p\in[-d,\infty)$, Theorem~\ref{t:main theorem formula} implies, with convergence in $L^2$ as $r\to\infty$, that
    \begin{equation}
        K\Phi(f_r) = \sum_{q=0}^{n}\frac{1}{q!} \Psi_q\big(f_r^{(q)}\big) + o\Big(r^{\tfrac{d-p}{2}}\Big),
    \end{equation}
    where $f\in\cC_c^\infty(\R^d, [0,\infty))$, $f\neq0$, and $f_r^{(q)}$ denotes the (possibly tensor-valued) $q$-th derivative of $f_r$. As defined in \eqref{e: tensor valued measure}, we evaluate the integral of a tensor-valued complex measure of a tensor-valued function using the inner product. We obtatin that, as $r\to\infty$,
    \begin{align}
        \frac{\BV[K\Phi(f_r)]}{r^{d-p}} &\leq (n+2)^2\sum_{q=0}^{n}\frac{\BV\big[\Psi_q\big(f_r^{(q)}\big)\big]}{q!^2r^{d-p}}  + o(1)\nonumber\\
        &\leq (n+2)^2\sum_{q=0}^{n}\frac{\BV\big[\Psi_q\big(f^{(q)}(\tfrac{\cdot}{r})\big)\big]}{q!^2r^{d-(p+2q)}}  + o(1),
    \end{align}
    where we also used Remark~\ref{r: f_r formulas}. Now the assertion follows from Theorem~\ref{t:asymptotic variance test function} since we assumed that $\Psi_q$ is (beyond) $(p-2q)$-uniform for $q\in\{0,...,n\}$.

    The second part, i.e., where we assume that Condition~\ref{c:condition square-integrable stealthy} holds for some $\varepsilon>0$, follows from the second statement in Theorem~\ref{t:main theorem formula}. Let $f\in L^1(\R^d,[0,\infty))$ be a function such that $\hat{f}\in\cC^\infty$ with $\supp(\hat{f})\subseteq B_\varepsilon$ and $\hat{f}(k)\neq0$ for all $k\in B_{\varepsilon-\vartheta}$ and some $\vartheta\in(0, \varepsilon)$. Then, by Theorem~\ref{t:main theorem formula}, with convergence in $L^2$,
    \begin{equation}\label{e:stealthy taylor}
        K\Phi(f) = \sum_{q=0}^{\infty}\frac{1}{q!} \Psi_q\big(f^{(q)}\big).
    \end{equation}
    Note that the Paley-Wiener theorem ensures that $f\in\cC^\infty$. Now Theorem~\ref{t: stealthy characterization} implies that the RHS of~\eqref{e:stealthy taylor} is deterministic, as we assumed that $\Psi_q$ is $\infty$-uniform with radius $\varepsilon$ for every $q\in\N_0$, and that $\hat{f}$, and therefore also $\widehat{f^{(q)}}$ has support in $B_\varepsilon$ for every $q\in\N_0$. Hence, the LHS (left-hand side) must also be deterministic, which implies that $K\Phi$ is $\infty$-uniform with radius $\varepsilon-\vartheta$ by Theorem~\ref{t: stealthy characterization}, as $\hat{f}(k)\neq 0, k\in B_{\varepsilon-\vartheta}$. Since $\vartheta$ can be chosen arbitrarily small, we obtain that
    \begin{equation}
        \hat\beta_{K\Phi}(B_\varepsilon) = \lim_{\vartheta\to 0} \hat\beta_{K\Phi}(B_{\varepsilon-\vartheta}) = 0,
    \end{equation}
    and therefore also that $K\Phi$ is $\infty$-uniform with radius $\varepsilon$.
\end{proof}
\end{theorem}

Note that, for the considered $q\in\N_0$, $\Psi_q$ is always well defined, invariant, and locally absolutely square-integrable under the integrability assumptions made in Theorem~\ref{t:main theorem}. Further, for $q\geq 1$, one can ignore the atom of $\hat\beta_{\Psi_q}$ in $0$ when checking $(p-2q)$-uniformity if it exists. This comes from the fact that derivatives always integrate to $0$, whereby $\widehat{f^{(q)}}(0) = 0$. Moreover, if $K$ is a probability kernel, then $\Psi_0=\Phi$, which is why we say that the theorem handles the persistence of $p$-uniformity. Concerning $\infty$-uniformity, we can weaken the assumption of Condition~\ref{c:condition square-integrable stealthy} with respect to $\varepsilon>0$ to just assuming it with respect to some arbitrarily small $\tilde\varepsilon>0$. However, we omit the proof since it necessitates replacing the Taylor expansion of $f$ by the Taylor expansion of $e^{\langle k, \cdot\rangle}f$ for some $k\in\R^d$, which makes the proof harder to read.

The following Theorem~\ref{t:main theorem formula} introduces the Taylor expansion of the transport and lays the foundation of the proof of the preceding Theorem~\ref{t:main theorem}. It is also of independent interest, as it allows for a more detailed analysis of the (asymptotic) behavior of the destination of the transport. The strongest version assumes Condition~\ref{c:condition square-integrable stealthy} and allows for representing the transport as a Taylor series. It would imply all of the preceding results if one were satisfied with the assumption of a finite exponential moment of the typical transport distance. However, under the weaker assumption of Condition~\ref{c:condition square-integrable general with p} or~\ref{c:condition square-integrable Markov with p}, one can still control the error of the finite expansion, which suffices for the analysis of $p$-uniformity for $p<\infty$. Recall Definition~\ref{d: f_r} and \eqref{e: tensor valued measure}.

\begin{theorem}\label{t:main theorem formula}
    Suppose that $\Phi$ is a locally square-integrable invariant random complex measure, and let $K$ be an invariant transport kernel. We distinguish the following two cases. 
    \begin{enumerate}[label=(\roman*)] 
        \item Let $p\in[-d,\infty)$, and assume that either Condition~\ref{c:condition square-integrable general with p} or Condition~\ref{c:condition square-integrable Markov with p} holds. Further, suppose that $f:\R^d\to\C$ is compactly supported and $\lfloor\tfrac{d+\max(p,0)}{2}+1\rfloor$-times continuously differentiable. Then, with convergence in $L^2$ as $r\to\infty$, we have
        \begin{equation}\label{e: decomposition polynomial moment}
            K\Phi(f_r) = \sum_{q=0}^{\big\lfloor\tfrac{d+p}{2}\big\rfloor}\frac{1}{q!} \Psi_q\big(f_r^{(q)}\big) + o\Big(r^{\tfrac{d-p}{2}}\Big)
        \end{equation}
        with $\Psi_q$ defined as in~\eqref{e:Psi_q definition} in Theorem~\ref{t:main theorem} and $f_r^{(q)}=r^{-q}f^{(q)}(\tfrac{\cdot}{r})$ being the (possibly tensor-valued) $q$-th derivative of $f_r$. If $\frac{d+p}{2}\in\N$, one can disregard the last summand.
        \item Assume that Condition~\ref{c:condition square-integrable stealthy} holds for some $\varepsilon>0$, and assume that $f\in L^1(\R^d, \C)$ is chosen such that $\hat{f}\in \cC_c^{d+1}$ with $\supp(\hat{f})\subseteq B_\varepsilon$. Then, with limits taken in $L^2$, we obtain
        \begin{equation}\label{e: decomposition exponential moment}
            K\Phi(f) = \sum_{q=0}^{\infty}\frac{1}{q!} \Psi_q\big(f^{(q)}\big).
        \end{equation}
    \end{enumerate}
    \begin{proof}
        We begin with the first part, where we assume that Condition~\ref{c:condition square-integrable general with p} or~\ref{c:condition square-integrable Markov with p} holds. We choose $f:\R^d\to\C$ such that it is compactly supported and $\lfloor\tfrac{d+\max(p,0)}{2}+1\rfloor$-times continuously differentiable. It can be easily seen that we can assume $ |f| \leq \I_{B_1}$ without loss of generality. Further, we define $\gamma:=\tfrac{d+\max(p,0)}{2}$. By Lemma~\ref{l:taylor}, there exists a mapping $R:(0,\infty)\times\R^d\times\R^d\to \C$ and a constant $c_1>0$ such that for all $r>0,x,y\in\R^d$,
        \begin{align}
            f_r(y+x) &= \sum_{k=0}^{\lfloor \gamma \rfloor} \frac{1}{q!} f_r^{(q)}(y) x^q + R(r,y,x),\\
            |R(r,y,x)| &\leq c_1 \frac{\|x\|^\gamma}{r^\gamma} g(r,x),\label{e:taylor2 proof}
        \end{align}
        where $g$ is bounded by $1$ and $g(r,x) \to 0$ as $r\to\infty$ for all $x\in\R^d$. Further, for all $r>0$, $x\in\R^d,y\in B_r^c$
        \begin{equation}
            R(r,y,x) = f_r(y+x).\label{e:taylor3 proof}
        \end{equation}
        An application of this decomposition of $f_r$ yields that
        \begin{align}
            K\Phi(f_r) &= \iint f_r(x+y)\, K^\ast(y,dx)\, \Phi(dy) \nonumber\\
            &= \iint \sum_{q=0}^{\lfloor \gamma\rfloor} \frac{1}{q!} f_r^{(q)}(y) x^{\otimes q} + R(r,y,x)\, K^\ast(y,dx)\, \Phi(dy) \nonumber\\
            &= \sum_{q=0}^{\lfloor \gamma \rfloor} \frac{1}{q!}\Psi_q(f_r^{(q)})  + \iint R(r,y,x)\, K^\ast(y,dx)\, \Phi(dy),\quad r>0.\label{e: incomplete decomposition polynomial moment}
        \end{align}
        We see that the sum on the RHS of~\eqref{e: decomposition polynomial moment} is already included in the sum on the right-most side of~\eqref{e: incomplete decomposition polynomial moment}. It remains to show that the remaining part goes to $0$ in $L^2$ as $r\to\infty$. By the triangle inequality, the convergence of each remaining summand can be treated individually.

        For $q\in\N$ with $\tfrac{d+p}{2}\leq q \leq \tfrac{d+\max(p,0)}{2}$, as $r\to\infty$, we obtain that
        \begin{equation*}
            \frac{1}{r^{d-p}} \BE\big[\Psi_q\big(f_r^{(q)}\big)^2\big] \leq \frac{1}{r^{2d}} \BE\big[\Psi_q\big(f^{(q)}(\tfrac{\cdot}{r})\big)^2\big] \to 0,
        \end{equation*}
        where we applied Remark~\ref{r: f_r formulas}, Proposition~\ref{p:asymptotic variance test function 0}, that derivatives always integrate to $0$, and that the Paley-Wiener theorem implies that all components of $f^{(q)}$ are Fourier-smooth with an exponent grater than $-d$. This derivation also proves that one can drop the last summand in~\eqref{e: decomposition polynomial moment} if $\frac{d+p}{2}\in\N$.

        The last remaining summand in~\eqref{e: incomplete decomposition polynomial moment} can be bounded further by splitting the outer integral. For $r>0$ it holds that
        \begin{align}
            &\bigg|\iint R(r,y,x)\, K^\ast(y,dx)\, \Phi(dy)\bigg| \nonumber\\
            &\quad \ \ \leq \int_{B_{2r}}\int |R(r,y,x)|\, |K^\ast|(y,dx)\, |\Phi|(dy) + \int_{B_{2r}^c}\int |R(r,y,x)|\, |K^\ast|(y,dx)\, |\Phi|(dy)\nonumber\\
            &\quad \ \ \leq \frac{c_1}{r^\gamma}\int_{B_{2r}}\int \|x\|^\gamma g(r, x)\, |K^\ast|(y,dx)\, |\Phi|(dy) + \int_{B_{2r}^c} |K|(y,B_r)\, |\Phi|(dy),
        \end{align}
        where we applied~\eqref{e:taylor2 proof} and~\eqref{e:taylor3 proof}.
        The first term can be treated using Lemma~\ref{l:condition reduction lebesgue}, as it shows that there exists an invariant nonnegative transport kernel $\tilde{K}$ such that
        \begin{equation*}
            \BE\bigg[\bigg(\int \|x\|^{\gamma}\, \tilde{K}_0(dx) \bigg)^2\bigg] < \infty
        \end{equation*}
        and
        \begin{equation*}
            \int_{B_{1}}\int \|x\|^\gamma g(r, x)\, |K^\ast|(y,dx)\, |\Phi|(dy) = \int \|x\|^\gamma g(r, x)\, \tilde{K}_0(dx).
        \end{equation*}
        Hence,
        \begin{align*}
            &\frac{1}{r^{d-p}}\BE\bigg[\bigg(\frac{c_1}{r^\gamma}\int_{B_{2r}}\int \|x\|^\gamma g(r, x)\, |K^\ast|(y,dx)\, |\Phi|(dy)\bigg)^2\bigg] \nonumber\\
            &\quad\ \ \leq c_1^2c_3\BE\bigg[\bigg(\int_{B_{1}}\int \|x\|^\gamma g(r, x)\, |K^\ast|(y,dx)\, |\Phi|(dy)\bigg)^2\bigg] \nonumber\\
            &\quad\ \ = c_1^2c_3\BE\bigg[\bigg(\int \|x\|^\gamma g(r, x)\, \tilde{K}_0(dx)\bigg)^2\bigg]\nonumber\\
            &\quad\ \ \xrightarrow{r\to\infty} 0,
        \end{align*}
        where $c_3$ is a dimensional constant that depends on how efficiently one can cover a larger ball by multiple smaller balls, and where we applied the theorem of dominated convergence in the last step, as $g(r, x) \leq 1$ and $g(r, x) \to 0$ as $r\to\infty$ for $r>0,x\in\R^d$. For the second term, we can directly apply Lemma~\ref{l:remainder2case1} or~\ref{l:remainder2case2} depending on the condition we assume. Hence, the first part of the assertion has been established, and we continue with the second part.

        Assume that Condition~\ref{c:condition square-integrable stealthy} holds with respect to $\varepsilon>0$, and suppose that $f\in L^1(\R^d, \C)$ such that $\hat{f}\in \cC_c^{d+1}$ with $\supp(\hat{f})\subseteq B_\varepsilon$ instead of what was assumed before. Then, as $B_\varepsilon$ is open, there also exists a $\varepsilon_0< \varepsilon$ such that $\supp(\hat{f})\subseteq B_{\varepsilon_0}$. By Lemma~\ref{l: taylor series}, we know that $f$ can be expanded as a Taylor series with infinite convergence radius, i.e., 
        \begin{equation}
            f(y+x) = \sum_{q=0}^\infty \frac{1}{q!} f^{(q)}(y)x^{\otimes q},\quad x,y\in\R^d,
        \end{equation}
        and at the same time, we have a bound on the $L^1$-norm of a modification of the derivatives, as there exists a $c_4>0$ such that
        \begin{equation}\label{e: l1 bound derivative}
            \int \sup_{u\in B_1}\|f^{(q)}(x+u)\|\, dx \leq c_4(q+1)^{d+1} \varepsilon_0^q,\quad q\in\N_0.
        \end{equation}
        We can directly insert the Taylor series into $K\Phi(f)$ to obtain a formula which is already very similar to~\eqref{e: decomposition exponential moment}. It holds that
        \begin{align}
            K\Phi(f) &= \iint f(x+y)\, K^\ast(y,dx)\, \Phi(dy) \nonumber\\
            &= \iint \sum_{q=0}^{\infty} \frac{1}{q!} f^{(q)}(y) x^{\otimes q}\, K^\ast(y,dx)\, \Phi(dy).
        \end{align}
        Hence, it only remains to show that one can change the order of integration. By Fubini's theorem, this works if the object is absolutely integrable. In parallel, we show that the series in~\eqref{e: decomposition exponential moment} converges absolutely in $L^2$, ensuring not only path-wise but also $L^2$-convergence since
        \begin{align}\label{e: abs conv bound}
            &\sqrt{\BE\bigg[\bigg(\iint \sum_{q=0}^{\infty} \frac{1}{q!} \|f^{(q)}(y)\| \|x\|^q\, |K^\ast|(y,dx)\, |\Phi|(dy)\bigg)^2\bigg]}\nonumber\\
            &\quad \ \ \leq \sum_{q=0}^{\infty} \frac{1}{q!}\sqrt{\BE\bigg[\bigg(\iint \|f^{(q)}(y)\| \|x\|^q\, |K^\ast|(y,dx)\, |\Phi|(dy)\bigg)^2\bigg]}.
        \end{align}
        Further, by Lemmas~\ref{l: simplification to lebesgue} and~\ref{l:condition reduction lebesgue}, there is an invariant nonnegative transport kernel $L$ such that $L(y)$ has support on $\overline{B_1}+y$ with $|\Phi|=L\lambda_d$, and such that the invariant nonnegative transport kernel $\tilde{K}:=|K|L$ satisfies
        \begin{equation}
            \BE\bigg[\bigg(\int e^{\varepsilon\|x\|}\, \tilde{K}_0(dx)\bigg)^2\bigg] < \infty.
        \end{equation}
        Hence, for $q\in\N_0$,
        \begin{align*}
            &\BE\bigg[\bigg(\iint \|f^{(q)}(y)\| \|x\|^q\, |K^\ast|(y,dx)\, |\Phi|(dy)\bigg)^2\bigg]\nonumber\\
            &\quad\ \ = \BE\bigg[\bigg(\iint \|f^{(q)}(z)\| \|x\|^q\, |K^\ast|(z,dx)\, L(y, dz)\, dy\bigg)^2\bigg]\nonumber\\
            &\quad\ \ \leq \BE\bigg[\bigg(\int \sup_{u\in B_1}\|f^{(q)}(y+u)\| \int \|x\|^q\, |K^\ast|(z,dx)\, L(y, dz) \, dy\bigg)^2\bigg]\nonumber\\
            &\quad\ \ = \BE\bigg[\bigg(\int\sup_{u\in B_1}\|f^{(q)}(y+u)\| \int \|x\|^q\, \tilde{K}^\ast(y,dx) \, dy\bigg)^2\bigg].
        \end{align*}
        We can further bound this quantity using the Cauchy-Schwarz inequality and~\eqref{e: l1 bound derivative}, whereby
        \begin{align*}
            &\BE\bigg[\bigg(\iint \|f^{(q)}(y)\| \|x\|^q\, |K^\ast|(y,dx)\, |\Phi|(dy)\bigg)^2\bigg]\nonumber\\
            &\quad\ \ \leq \iint \sup_{u,v\in B_1}\|f^{(q)}(y+u)\| \|f^{(q)}(z+v)\| \BE\bigg[\int \|x\|^q\, \tilde{K}^\ast(y,dx) \int \|x\|^q\, \tilde{K}^\ast(z,dx)\bigg] \, dy\,dz\nonumber\\
            &\quad\ \ \leq \bigg(\int \sup_{u\in B_1}\|f^{(q)}(y+u)\|\, dy\bigg)^2\BE\bigg[\bigg(\int \|x\|^q\, \tilde{K}_0^\ast(dx)\bigg)^2\bigg]\nonumber\\
            &\quad\ \ \leq c_4^2(q+1)^{2d+2} \varepsilon_0^{2q}\BE\bigg[\bigg(\int \|x\|^q\, \tilde{K}_0^\ast(dx)\bigg)^2\bigg].
        \end{align*}
        Applying this bound to the RHS of~\eqref{e: abs conv bound} together with another application of the Cauchy-Schwarz inequality yields
        \begin{align*}
            &\sum_{q=0}^{\infty} \frac{1}{q!}\sqrt{\BE\bigg[\bigg(\iint \|f^{(q)}(y)\| \|x\|^q\, |K^\ast|(y,dx)\, |\Phi|(dy)\bigg)^2\bigg]}\nonumber\\
            &\quad \ \ \leq \sum_{q=0}^{\infty} \frac{1}{q!}c_4(q+1)^{d+1} \varepsilon_0^{q} \sqrt{\BE\bigg[\bigg(\int \|x\|^q\, \tilde{K}_0^\ast(dx)\bigg)^2\bigg]}\nonumber\\
            &\quad \ \ = c_4\sum_{q=0}^{\infty}\frac{1}{q+1} \sqrt{\BE\bigg[\bigg(\int (q+1)^{d+2}\frac{ (\varepsilon_0\|x\|)^q}{q!}\, \tilde{K}_0^\ast(dx)\bigg)^2\bigg]}\nonumber\\
            &\quad \ \ \leq c_4\sqrt{\sum_{q=0}^{\infty}\frac{1}{(q+1)^2}} \sqrt{\BE\bigg[\sum_{q=0}^\infty\bigg(\int (q+1)^{d+2}\frac{ (\varepsilon_0\|x\|)^q}{q!}\, \tilde{K}_0^\ast(dx)\bigg)^2\bigg]}.
        \end{align*}
        Here, we can bound the sum of squares by the square of the sum, and obtain
        \begin{align*}
            &\sum_{q=0}^{\infty} \frac{1}{q!}\sqrt{\BE\bigg[\bigg(\iint \|f^{(q)}(y)\| \|x\|^q\, |K^\ast|(y,dx)\, |\Phi|(dy)\bigg)^2\bigg]}\nonumber\\
            &\quad \ \ \leq c_4\sqrt{\sum_{q=0}^{\infty}\frac{1}{(q+1)^2}} \sqrt{\BE\bigg[\sum_{q=0}^\infty\bigg(\int (q+1)^{d+2}\frac{ (\varepsilon_0\|x\|)^q}{q!}\, \tilde{K}_0^\ast(dx)\bigg)^2\bigg]}\nonumber\\
            &\quad \ \ \leq c_4\sqrt{\sum_{q=0}^{\infty}\frac{1}{(q+1)^2}} \sqrt{\BE\bigg[\bigg(\int \sum_{q=0}^\infty(q+1)^{d+2}\frac{ (\varepsilon_0\|x\|)^q}{q!}\, \tilde{K}_0^\ast(dx)\bigg)^2\bigg]}\nonumber\\
            &\quad \ \ \leq c_4c_5\sqrt{\sum_{q=0}^{\infty}\frac{1}{(q+1)^2}} \sqrt{\BE\bigg[\bigg(\int e^{\varepsilon\|x\|}\, \tilde{K}_0^\ast(dx)\bigg)^2\bigg]}\nonumber\\
            &\quad\ \ < \infty,
        \end{align*}
        where we find the constant $c_5>0$ by the fact that $\varepsilon_0< \varepsilon$ and since the exponential function asymptotically dominates any polynomial. This convergence was what was left to show and concludes the proof.
    \end{proof}
\end{theorem}

If one would not only consider invariant random complex-valued measures but also tempered distributions like in~\cite{Björklund_Byléhn_2025}, one could even move the differentiation operator from the test function to the invariant tempered distribution. Then one could, e.g., replace~\eqref{e: decomposition exponential moment} by an equation of the form
\begin{equation}
    K\Phi(f) = \sum_{q=0}^\infty \frac{1}{q!}\tilde{\Psi}_q(f).
\end{equation}
The next theorem is a slight improvement of Theorem~\ref{t:main theorem}, and just like it, it is derived from the previous Theorem~\ref{t:main theorem formula}. It not only allows us to ensure $p$-uniformity of the destination, and therefore the order of the asymptotic variance, but also allows us to obtain the coefficient of the leading term. Similarly, one could also derive a central limit theorem, since the destination $K\Phi$ follows a central limit theorem if and only if the dominant term of the expansion $\Psi_{q_0}$ follows a central limit theorem.

\begin{theorem}\label{t: main theorem rate}
Suppose that $\Phi$ is a locally square-integrable invariant random complex measure, and let $K$ be an invariant transport kernel. We distinguish the following two cases. 
\begin{enumerate}[label=(\roman*)] 
    \item Let $p\in[-d,\infty)$, and assume that either Condition~\ref{c:condition square-integrable general with p} or Condition~\ref{c:condition square-integrable Markov with p} holds. For $q\in\N_0$ with $q<\max(\frac{d+p}{2}, 1)$, define $\Psi_q$ like in Theorem~\ref{t:main theorem}. Assume that there is a $q_0<\max(\frac{d+p}{2}, 1)$ such that $\Psi_{q_0}$ is $(p-2q_0)$-uniform, and else that $\Psi_q$ is beyond \linebreak $(p-2q_0)$-uniform for all $q<\max(\frac{d+p}{2}, 1)$ with $q\neq q_0$. Further, assume that $f$ is compactly supported and Fourier-smooth with exponent $2(d+1)+\lfloor\max(p,0)\rfloor+\vartheta$ for some $\vartheta>0$. Then, if either limit exists,
    \begin{equation}\label{e: exact variance asymptotic}
    \lim_{r\to\infty} \frac{\BV[K\Phi(f_r)]}{r^{d-p}} = \lim_{r\to\infty} \frac{\BV\big[\Psi_{q_0}\big(f_r^{(q_0)}\big)\big]}{(q_0!)^2 r^{d-p}},
    \end{equation}
    $K\Phi$ is $p$-uniform, and if the limit is not $0$, then $K\Phi$ is solely $p$-uniform.
    
    \item Let $\varepsilon>0, q_0\in\N_0$, assume that Condition~\ref{c:condition square-integrable stealthy} holds, and assume that $\Psi_q$ is $\infty$-uniform with radius $\varepsilon$ for $q\in\N_0\setminus\{q_0\}$. Further, suppose that $f\in L^1(\R^d, \C)$ such that \linebreak $\supp(\hat{f})\subseteq \overline{B_\varepsilon}$. Then,
        \begin{equation}\label{e: exact variance stealthy}
            \BV\big[K\Phi(f)\big] = \frac{1}{q_0!^2} \BV\big[\Psi_{q_0}\big(f^{(q_0)}\big)\big].
        \end{equation}
    Hence, if $\Psi_{q_0}$ is $\infty$-uniform with radius $\varepsilon_0\in(0,\varepsilon)$, then $K\Phi$ also is $\infty$-uniform with radius $\varepsilon_0$, and if the above variance is not $0$ for a sequence $(f_n)_{n\in\N}$ such that $f_n\in L^1(\R^d, \C)$ and $\supp(\hat{f_n})\subseteq B_{\varepsilon_0+\frac{1}{n}}$ for $n\in\N$, then $K\Phi$ is $\infty$-uniform with maximal radius $\varepsilon_0$.
\end{enumerate}
\begin{proof}
We start with the first part and let $p\in[-d, \infty)$, and $q_0\in\N_0$ with $q_0<\max(\tfrac{d+p}{2},1)$. Further, we assume that either Condition~\ref{c:condition square-integrable general with p} or Condition~\ref{c:condition square-integrable Markov with p} holds, and that $\Psi_{q_0}$ is \linebreak $(p-2q_0)$-uniform, and else $\Psi_q$ is beyond $(p-2q)$-uniform for all $q<\max(\frac{d+p}{2}, 1)$ with \linebreak $q\neq q_0$. Finally, we suppose that $f$ is compactly supported and Fourier-smooth with exponent $2(d+1)+\lfloor\max(p,0)\rfloor+\vartheta$ for some $\vartheta>0$, and define $f_r:=f(\frac{\cdot}{r})$ for $r>0$. Note that Lemma~\ref{l: fourier smooth differentiation} yields that $f$ is $\lfloor \tfrac{d+\max(p,0)}{2}+1\rfloor$-times continuously differentiable and that the derivatives are sufficiently Fourier-smooth for the following conclusions. By Theorem~\ref{t:main theorem formula}, we know that, with convergence in $L^2$,
\begin{equation}
    K\Phi(f_r) = \sum_{q=0}^{\big\lfloor\tfrac{d+p}{2}\big\rfloor}\frac{1}{q!} \Psi_q\big(f_r^{(q)}\big) + o\Big(r^{\tfrac{d-p}{2}}\Big),
\end{equation}
where the last summand can be dropped if $\tfrac{d+p}{2}\in\N$. Then, using the Cauchy-Schwarz inequality like in the proof of Theorem~\ref{t:main theorem}, we can show that
\begin{equation}
    K\Phi(f_r) = \frac{1}{q_0!} \Psi_{q_0}\big(f_r^{(q_0)}\big) + o\Big(r^{\tfrac{d-p}{2}}\Big),
\end{equation}
which implies~\eqref{e: exact variance asymptotic} if either limit exists. Further, $K\Phi$ is $p$-uniform by an application of Theorem~\ref{t:main theorem}. Moreover, the fact that a non-zero limit implies that $K\Phi$ is not beyond $p$-uniform is a direct implication from Theorem~\ref{t:asymptotic variance test function}. Actually, divergence would yield the same result as a non-zero limit.

In the second part, we alternatively let $\varepsilon>0, q_0\in\N_0$, assume that Condition~\ref{c:condition square-integrable stealthy} holds, and assume that $\Psi_q$ is $\infty$-uniform with radius $\varepsilon$ for $q\in\N_0\setminus\{q_0\}$. Further, we suppose that $f\in L^1(\R^d, \C)$ such that $\hat{f}\in\cC_c^{d+1}$ and $\supp(\hat{f})\subseteq B_\varepsilon$. For more general $f$, the assertion then follows from a density argument, as~\eqref{e: exact variance stealthy} does not involve a limit. Proceeding with an application of Theorem~\ref{t:main theorem formula}, we obtain that
\begin{equation}
    K\Phi(f) = \sum_{q=0}^{\infty}\frac{1}{q!} \Psi_q\big(f^{(q)}\big),
\end{equation}
where we can apply that, by Theorem~\ref{t: stealthy characterization} and the $\infty$-uniformity assumption for $q\neq q_0$, the summands on the RHS are deterministic for $q\neq q_0$. Hence,~\eqref{e: exact variance stealthy} holds. Further, if for some $n\in\N$, we have $\supp(\hat{f})\subseteq \overline{B_{\varepsilon_0+\frac{1}{n}}}$, then
\begin{equation*}
    \BV\big[K\Phi(f)\big] = (2\pi)^{-d}\int |\hat{f}(k)|^2\, \hat\beta_{K\Phi}(dk) \leq (2\pi)^{-d}\|\hat{f}\|_\infty^2 \hat\beta(B_{\varepsilon_0+\frac{1}{n}}).
\end{equation*}
Thus if the variance on the left-most side is not $0$, $\hat\beta(B_{\varepsilon_0+\frac{1}{n}})$ cannot be $0$ either, and if one finds such a function $f$ for every $n\in\N$, then $K\Phi$ cannot be $\infty$-uniform with radius $\varepsilon_1$ for any $\varepsilon_1>\varepsilon_0$, which concludes the proof.
\end{proof}
\end{theorem}

Under the conditions of the preceding theorem, one can directly analyze the asymptotic behavior of $K\Phi$ by relating it to the leading order term of the approximation, which comes from some $\Psi_{q_0}$. For $q_0=0$, in particular when $p\leq-d+2$, one can even derive that $K\Phi$ is (beyond) $p$-uniform iff $\Psi_{q_0}$ is (beyond) $p$-uniform. However, for higher $q_0$, derivatives get involved. Since derivatives always integrate to $0$, there can be some cancellation such that $K\Phi$ actually is \linebreak $\tilde{p}$-uniform for a higher $\tilde{p}$ than $\Psi_{q_0}$. Additionally, if $d\geq2$, $\Psi_{q_0}$ is tensor-valued and the components can be correlated such that the combined variance on the RHS of~\eqref{e: exact variance asymptotic} is of lower order than the variances of the individual components of $\Psi_{q_0}$. However, if $d=1$, and if $\Psi_{q_0}$ is (pseudo-) ergodic, then Proposition~\ref{p:asymptotic variance test function zero integral} yields that, for every $p\in(-d, \infty)$, $K\Phi$ is (beyond) $p$-uniform iff $\Psi_{q_0}$ is (beyond $p$-uniform. For $\infty$-uniformity, the same problems apply when $q_0\geq 1$ and $d\geq 2$, but still, if $q_0=0$, or if $d=1$ and $\Psi_{q_0}$ is (pseudo-) ergodic, then $K\Phi$ is $\infty$-uniform with radius $\varepsilon_0$ iff $\Psi_{q_0}$ is $\infty$-uniform with radius $\varepsilon_0$.

Having finished the presentation of the central results of this paper, we proceed with giving two approaches for checking the assumptions of these theorems. We do not elaborate on the Conditions~\ref{c:condition square-integrable general with p} and~\ref{c:condition square-integrable stealthy}, as the simplifications are analogous to those made in Proposition~\ref{p:independent square-integrable} and Remark~\ref{r:independent square-integrable}. The first approach for checking the remaining assumptions works under a certain degree of decorrelation and is only applicable to $p$-uniformity with $p\leq 2$. It can be compared to an approach in~\cite{Klatt_Last_Lotz_Yogeshwaran_2025}, where a certain kind of $\beta$-mixing was assumed. None of the approaches is strictly stronger than the other, but the approach here yields $p$-uniformity and not only hyperuniformity, and while we have an additional moment condition here, the decorrelation assumption strictly is weaker, which can be seen in Proposition~\ref{p: alpha mixing condition}. There, it is also shown that our condition is strictly weaker than the one used in~\cite{Flimmel_2025}.

\begin{proposition}\label{p:decorrelation setting}
    Let $p\in[-d, 2]$. Suppose that $\Phi$ is a locally absolutely square-integrable invariant random measure, and suppose that $K$ is an invariant transport kernel. Assume that Condition~\ref{c:condition square-integrable general with p} or Condition~\ref{c:condition square-integrable Markov with p} is fulfilled. Further, assume that 
    \begin{align}\label{e: decorrelation assumption}
        \int \Bigg\| \BE_{0,y}^{\Phi}&\bigg[\int x^{\otimes q}\, K^\ast(y,dx)\times \overline{\int x^{\otimes q}\, K^\ast(0,dx)}\bigg] \nonumber\\
        &- \BE_0^{\Phi}\bigg[\int x^{\otimes q}\, K^\ast(0,dx)\bigg]\times \overline{\BE_0^{\Phi}\bigg[\int x^{\otimes q}\, K^\ast(0,dx)\bigg]} \Bigg\| \|y\|^{p-2q} \, \alpha_{\Phi}(dy) < \infty
    \end{align}
    for $q\in\N_0$ with $q<\max(\frac{d+p}{2}, 1)$, and where $\times$ denotes the component-wise product. If $\Phi$ is $p$-uniform and $p\leq 0$, then $K\Phi$ is also $p$-uniform. For $p<0$ the same holds for beyond $p$-uniformity. Further, if $K_0(\R^d)$ is deterministic, e.g., if $K$ is a probability kernel, $\Phi$ is $p$-uniform, and $p\leq 2$, then $K\Phi$ is also $p$-uniform. For $p<2$, the same holds for beyond $p$-uniformity.
\end{proposition}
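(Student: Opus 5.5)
The plan is to reduce everything to Theorem~\ref{t:main theorem}(i). Since $p\le 2$, we have $\max(\tfrac{d+p}{2},1)\le \tfrac{d}{2}+1$, but the relevant range of $q$ in the decorrelation assumption~\eqref{e: decorrelation assumption} is only the $q$ with $q<\max(\tfrac{d+p}{2},1)$. For each such $q$ I must show that $\Psi_q$ is (beyond) $(p-2q)$-uniform. When $q$ is so large that $p-2q\le -d$, the claim is trivial or follows from pseudo-ergodicity arguments, so only $q\in\{0,1\}$ genuinely matter; and for $q=1$ this happens only when $p>0$.

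The main tool is a covariance computation for $\Psi_q$. Write $M_q(y):=\int x^{\otimes q}K^\ast(y,dx)$, so that $\Psi_q=M_q\Phi$. By the refined Campbell theorem~\eqref{e: refined Campbell 2}, applied componentwise exactly as in~\eqref{e: beta transport formula}, I obtain
\begin{equation*}
\beta_{\Psi_q}(dy)=\Big(\BE_{0,y}^{\Phi}[M_q(y)\times\overline{M_q(0)}]-m_q\times\overline{m_q}\Big)\,\alpha_\Phi(dy)+m_q\times\overline{m_q}\,\beta_\Phi(dy),
\end{equation*}
with $m_q:=\BE_0^\Phi[M_q(0)]$. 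Local square integrability of $\Psi_q$ is guaranteed by the moment conditions, as noted after Theorem~\ref{t:main theorem}. Call the first summand $D_q$; assumption~\eqref{e: decorrelation assumption} says exactly that $\int\|y\|^{p-2q}\,|D_q|(dy)<\infty$.

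I then treat the two summands separately and combine them via Proposition~\ref{p: uniform linear combination}. More precisely, I bound $\BV[\Psi_q(g_r)]$ by the sum of the two bilinear pieces, where $g=f^{(q)}$ or $f$. For the $D_q$-part I mimic the last paragraph of the proof of the integrability proposition preceding Proposition~\ref{p:mixing condition uniformity}. When $p-2q\le 0$, I use $|g_r\star g_r(y)|\le c\,r^d\min(1,(\|y\|/r)^{p-2q})$, since $g_r\star g_r$ is bounded by $c r^d$ and $\|y\|^{p-2q}\ge r^{p-2q}$ on $\|y\|\le r$ when the exponent is nonpositive. This gives $r^{-(d-(p-2q))}\int |g_r\star g_r|\,d|D_q|\to 0$ by dominated convergence, hence beyond $(p-2q)$-uniformity via Theorem~\ref{t:asymptotic variance test function}.

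The delicate case is $q=0$ with $0<p\le 2$. For this I use that $D_0$ has a finite $p$-th moment, and, when $K_0(\R^d)$ is deterministic, that $M_0\equiv m_0$ is constant, so $D_0=0$ outright. The second summand $m_q\times\overline{m_q}\,\beta_\Phi$ inherits the uniformity of $\Phi$, giving $(p-2q)$-uniformity for $q=0$. For $q=1$ it is $(p-2)$-uniform, since $\Phi$ is $p$-uniform. Moreover, for $q\ge1$ derivatives kill the atom at $0$, and I expect to use Proposition~\ref{p:asymptotic variance test function 0} to dispose of any non-pseudo-ergodic contribution.

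The case split in the statement matches this. If $p\le 0$, only $q=0$ (and possibly $q\ge1$ with $p-2q<-d$ or at most $0$) occurs, and $D_0$ has a finite moment of nonpositive order, giving beyond-uniformity for $p<0$. For $p=0$ I only get $0$-uniformity, since a finite total variation measure $D_0$ yields an $O(1)$ spectral density. If $0<p\le2$, the $q=0$ term requires $D_0$ to be $p$-uniform, which fails in general; this is where deterministic $K_0(\R^d)$ enters to make $D_0=0$, and $q=1$ is handled by the nonpositive exponent $p-2\le0$. The strict inequality $p<2$ is needed for the beyond version since then $p-2<0$.

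I expect the main obstacle to be making the bound for $|g_r\star g_r|$ against $\|y\|^{p-2q}$ rigorous uniformly for the derivative test functions. The endpoint $p-2q=0$, where only $O$ rather than $o$ holds, also needs care, as does checking that the Palm expectations in the covariance formula are well defined for complex tensor-valued $M_q$ under Condition~\ref{c:condition square-integrable general with p}.
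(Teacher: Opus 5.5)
Your proposal is correct and follows essentially the paper's route. You reduce to Theorem~\ref{t:main theorem} and split $\beta_{\Psi_q}$ via the refined Campbell formula into two parts: the decorrelation part, which you control by dominated convergence against $\|y\|^{p-2q}$ whenever $p-2q\le 0$ (this is what Proposition~\ref{p: uniformity with random field density} packages), and the part inherited from $\beta_\Phi$; for $0<p\le 2$ you use deterministic $K_0(\R^d)$ to make $\Psi_0$ a constant multiple of $\Phi$.

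One slip: your opening claim that only $q\in\{0,1\}$ matter is false. The constraint $q<\frac{d+p}{2}$ forces $p-2q>-d$, so for example with $d=10$, $p=0$ the indices $q=2,3,4$ are genuinely relevant. Your general argument for $p-2q\le 0$ already covers all $q\ge 1$, so nothing is actually lost.
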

\begin{remark}
    In the setting of  Proposition~\ref{p:decorrelation setting}, if $K$ and $\Phi$ are independent, then~\eqref{e: decorrelation assumption} simplifies to
    \begin{equation}
        \int \bigg\| \BC\bigg[\int x^{\otimes q}\, K^\ast(y,dx), \int x^{\otimes q}\, K^\ast(0,dx)\bigg]\bigg\| \|y\|^{p-2q} \, \alpha_{\Phi}(dy) < \infty,
    \end{equation}
    where the covariance is taken component-wise.
    In this case, the assertions even hold if $\Phi$ is allowed to be complex-valued.
\end{remark}
    \begin{proof}[Proof of Proposition~\ref{p:decorrelation setting}.]
        As Condition~\ref{c:condition square-integrable general with p} or Condition~\ref{c:condition square-integrable Markov with p} are fulfilled, it remains to check $(p-2q)$-uniformity of $\Psi_q$ for $q\in\N_0, q < \max(\tfrac{d+p}{2}, 1)$ to apply Theorem~\ref{t:main theorem}. Hence, for $q\in\N_0, q < \max(\tfrac{d+p}{2}, 1)$, we show that $Z_{q}\cdot\Phi$ is $(p-2q)$-uniform, where
        \begin{equation}
            Z_{q}(y) := \int x^{\otimes q} \, K^\ast(y, dx),\quad y\in\R^d.
        \end{equation}
        For $d\geq2, q\geq1$, note that the invariant complex random measure $Z_q\cdot \Phi$ is tensor valued, but by the definition of $p$-uniformity, it suffices to analyze its components.
        If $p\leq 0$ or if $q\geq 1$, we have $p-2q\leq 0$, and $(p-2q)$-uniformity can be derived from Proposition~\ref{p: uniformity with random field density} with an application of~\eqref{e: decorrelation assumption}. Further, if $K_0(\R^d)$ is deterministic, then $Z_{0}$ is also deterministic, and $Z_0\cdot\Phi$ inherits $p$-uniformity from $\Phi$ by Proposition~\ref{p: uniform linear combination}. The assertions concerning beyond $p$-uniformity follow similarly, which concludes the proof.
    \end{proof}

As is the case with independent and identically distributed (iid) perturbations (see Example~\ref{ex: general point cluster}), spatial decorrelation of the transport alone does not preserve $p$-uniformity for $p>2$. The following approach uses a different technique to circumvent this limitation. While it is a lot more restrictive that just decorrelation, it allows for the construction of many interesting examples in Subsections~\ref{ss: pp from invariant partitions} and~\ref{ss: pp from averaging sets}.

\begin{proposition}\label{p:moment setting}
    Let $p\in[-d, \infty)$. Suppose that $\Phi$ is a locally absolutely square-integrable invariant random complex measure, and suppose that $K,L$ are invariant transport kernels. Assume that $K,\Phi$ and $L,\Phi$ either both fulfill Condition~\ref{c:condition square-integrable general with p} or both fulfill Condition~\ref{c:condition square-integrable Markov with p}.
    Further, assume that 
    \begin{equation}\label{e:markov difference 0}
        \int x^{\otimes q}\, (K_y-L_y)(dx) = 0,\quad q\in\N_0, q<\max(\tfrac{d+p}{2}, 1), y\in\R^d.
    \end{equation}
    Then $K\Phi$ is (beyond) $p$-uniform iff $L\Phi$ is (beyond) $p$-uniform.

    In particular, if $L$ is deterministic, e.g., $L_0:=\BE_0^\Phi[K_0]$ when $\Phi$ is nonnegative, then $K\Phi$ is (beyond) $p$-uniform if $\Phi$ is (beyond) $p$-uniform. The converse holds if $L_0(\R^d)\neq 0$.
    \begin{proof}
        By assumption, the invariant transport kernel $\tilde{K}:=K-L$, together with $\Phi$, fulfills Condition~\ref{c:condition square-integrable general with p} or Condition~\ref{c:condition square-integrable Markov with p}. Further, using the Cauchy-Schwarz inequality
        \begin{equation}
            \BV[K\Phi(f)] = \BV[\tilde{K}\Phi(f) + L\Phi(f)] \leq 2 \BV[\tilde{K}\Phi(f)] + 2 \BV[L\Phi(f)]
        \end{equation}
        for some bounded $f:\R^d\to\C$ with compact support, and vice versa. Hence, if $\tilde{K}\Phi$ is (beyond) $p$-uniform, then $K\Phi$ is (beyond) $p$-uniform iff $L\Phi$ is (beyond) $p$-uniform. We define $\Psi_q$ as in~\eqref{e:Psi_q definition} with respect to $\tilde{K}$ and $\Phi$. Then, by~\eqref{e:markov difference 0}, $\Psi_q=0$ for $q\in\N_0, q < \max(\frac{d+p}{2}, 1)$, whereby $\Psi_q$ trivially is $\infty$-uniform. Hence, an application of Theorem~\ref{t:main theorem} concludes the first part of the proof. The second part then follows from the first, as if $L$ is deterministic, then $L\Phi$ inherits its $p$-uniformity from $\Phi$, and if further $L_0(\R^d)\neq0$, then $\Phi$ also inherits $p$-uniformity from $L\Phi$. See~\cite[Lemma 3.9]{Klatt_Last_Lotz_Yogeshwaran_2025} for the case that $L_0$ is a probability measure, which can be directly generalized to the case that $L_0(\R^d)\neq0$.
    \end{proof}
\end{proposition}

In Subsection~\ref{ss: pp from invariant partitions}, we appliy this comparison approach to a non-deterministic $L$ which fulfills $L\Phi=\lambda_d$. Hence, $L\Phi$ is trivially $\infty$-uniform. On the other hand, in Subsection~\ref{ss: pp from averaging sets} we choose a deterministic $L$. There, the conditions are even improved by combining the approach from Proposition~\ref{p:moment setting} with the decorrelation approach from Proposition~\ref{p:decorrelation setting}.

We conclude this subsection by showing that the Conditions~\ref{c:condition square-integrable general with p} and~\ref{c:condition square-integrable Markov with p} we assume in this subsection and, in particular, the main Theorem~\ref{t:main theorem} are basically sharp. To simplify we only consider the case $p\leq0$, i.e., show that Conditions~\ref{c:condition square-integrable general} and~\ref{c:condition square-integrable Markov} are sharp. For the more general case, one needs to replace the power of $d$ by $d+p$ in some places.

In the first remark, we show that the strongly log-dominating function $\rho$ is necessary for Condition~\ref{c:condition square-integrable general} to be sufficient in Theorem~\ref{t:main square-integrable} and that it cannot be dropped like in the more specialized  Condition~\ref{c:condition square-integrable Markov}. 

\begin{remark}\label{r:square-integrable sharpness}
In Theorem~\ref{t:main square-integrable}, if $K$ is not both of bounded total variation and independent of $\Phi$, then a variation of Condition~\ref{c:condition square-integrable general} does not ensure local square integrability of $K\Phi$ if one replaces the strongly log-dominating function $\rho:[0,\infty)\to[0,\infty)$ by one which satisfies
\begin{equation}\label{e:not strongly log-dominating}
    \int_0^\infty \frac{1}{r\rho(r) + 1} \, dr = \infty.
\end{equation}
While the weakened condition ensures the local square integrability of $\Psi_q$ in Theorem~\ref{t:main theorem} for $q\leq\frac{d}{2}$, it does not allow for the control of the remainder term in the approximation from Theorem~\ref{t:main theorem formula} which is used. Note that we even show that the slightly stronger condition 
\begin{equation}
    \BE\bigg[\int_{B_\varepsilon}|K|(y, \R^d)\,|\Phi|(dy) \int_{B_\varepsilon} \int \|x\|^d\rho(\|x\|)+1\, |K^\ast|(y, dx)\, |\Phi|(dy)\bigg] < \infty,
\end{equation}
for some $\varepsilon>0$,
does note suffice with respect to such a $\rho$.

We give an example for $K$ and $\Phi$, for which $K\Phi$ is not locally square-integrable, the variation of Condition~\ref{c:condition square-integrable general with p} is fulfilled for a function $\rho:[0,\infty)\to[0,\infty)$ that satisfies~\eqref{e:not strongly log-dominating}, and where $K$ does not have bounded total variation but is independent of $\Phi$. To get the other counterexample, one can simply choose $\tilde{K}(x):=\frac{K(x)}{K(x,\R^d)}$, $x\in\R^d$, and $\tilde{\Phi}(dx):= K(x, \R^d) \cdot \Phi(dx)$. Note that~\eqref{e:condition fulfilled} implies that this $\tilde\Phi$ is locally absolutely square-integrable.

Let $R$ be a nonnegative random variable and choose $U$ such that, conditioned on $R$,\linebreak $U\sim\cU([0,R)^d)$. Now choose $\Phi:=\lambda_d$ and 
\begin{equation}
    K(y):= \frac{ R^{\frac{d}{2}}}{\|R\lfloor \frac{y-U}{R}\rfloor+U-y\|^d\rho(\|R\lfloor \frac{y-U}{R}\rfloor+U-y\|)+1}\delta_{R\lfloor \frac{y-U}{R}\rfloor+U}, \quad y\in\R^d. 
\end{equation}
Then $K$ and $\Phi$ are both invariant under a suitable shift that fulfills $R(\theta_z\omega)=R(\omega)$, \linebreak $U(\theta_z\omega) = R(\omega)\lfloor \tfrac{U(\omega)-z}{R(\omega)}\rfloor, z\in\R^d, \omega\in\Omega$. Further, by construction, $K\Phi$ is concentrated on the randomly stretched lattice $R\Z^d+U$ and has a spatially constant but random density
\begin{equation}
    R^{\frac{d}{2}} \underbrace{\int_{[0,R)^d} \frac{1}{\|y\|^d\rho(\|y\|)+1} \, dy}_{M:=} =  R^{\frac{d}{2}} M
\end{equation}
with respect to it. From now onward, we choose $R$ such that $R\geq 1$, $\BE[M]<\infty$ and $\BE[M^2]=\infty$. This choice is possible since
\begin{equation*}
   \int_{[0,\infty)^d} \frac{1}{\|y\|^d\rho(\|y\|)+1} \, dy = \frac{d\kappa_d}{2^d}\int_0^\infty \frac{r^{d-1}}{r^d\rho(r)+1} \, dr \geq \frac{d\kappa_d}{2^{d+1}}\int_1^\infty \frac{1}{r(\rho(r)+1)} \, dr = \infty.
\end{equation*}
Now, using $R\geq 1$, we can calculate that
\begin{equation}
    \alpha_{K\Phi}(\{0\}) = \BE\bigg[\int_{[0,1)^d} K\Phi(\{y\}) \, K\Phi(dy) \bigg] = \BE\bigg[\frac{(R^\frac{d}{2} M)^2}{R^d}\bigg] = \BE[M^2] = \infty,
\end{equation}
which implies that $K\Phi$ is not locally square-integrable. Additionally, as $\Phi=\lambda_d$, it is deterministic and hence trivially independent of $K$. It remains to check the variant of Condition~\ref{c:condition square-integrable general}. By choice of $K$, we have
\begin{equation*}
    \int \|x\|^d\rho(\|x\|) + 1 \, |K^\ast|(y, dx) = R^\frac{d}{2}, \quad y\in\R^d.
\end{equation*}
Thus, for any $\varepsilon>0$,
\begin{align}
    \BE[|\Psi_{d, \rho}(B_\varepsilon)|^2] &\leq \BE\bigg[ \int_{B_\varepsilon}\,  |K^\ast|(y,\R^d)\, |\Phi|(dy) \int_{B_\varepsilon} \int \|x\|^d\rho(\|x\|) + 1\, |K^\ast|(y,dx) \, |\Phi|(dy)  \bigg] \nonumber\\ \label{e:condition fulfilled}
    &= \lambda_d(B_\varepsilon)^2\BE\bigg[\frac{R^\frac{d}{2} M}{R^d} R^\frac{d}{2} \bigg] = \lambda_d(B_\varepsilon)^2\BE[M] < \infty.
\end{align}
\end{remark}

Coming to the second Condition~\ref{c:condition square-integrable Markov}, the following remark essentially shows that the test function $x\mapsto\|x\|^d$ from the condition cannot be lowered in a meaningful way. This counterexample can also be found in~\cite[Proposition 4.1]{Dereudre_Flimmel_Huesmann_Leblé_2024}.

\begin{remark}\label{r:ind square-integrable sharpness}
In Theorem~\ref{t:main square-integrable}, if $K$ has bounded total variation and is independent of $\Phi$, then Condition~\ref{c:condition square-integrable Markov} is sharp in the sense that there exists sufficiently large class of probability kernels such that $K\lambda_d$ is locally square-integrable iff Condition~\ref{c:condition square-integrable Markov} holds.

Let $R$ be a nonnegative random variable and choose $U$ such that, conditioned on $R$,\linebreak $U\sim\cU([0,R)^d)$. Now choose $K(y):= \delta_{R\lfloor \frac{y-U}{R}\rfloor + U}$ for $y\in\R^d$. Like in the previous remark, the shift can be defined such that $K$ is invariant. Then we get that
\begin{equation}
    \BE\bigg[\int \|x\|^d \, K(0, dx)\bigg] = \BE[\|R\lfloor \tfrac{y-U}{R}\rfloor + U\|^d] = \BE\bigg[\frac{1}{R^d}\int_{[0,R)^d} \|x\|^d \, dx\bigg] = \frac{\kappa_d c_d}{2^{d+1}} \BE[R^d]
\end{equation}
for some dimensional constant $c_d$ with $1\leq c_d \leq d^d$. Additionally, by construction $K\lambda_d$ is concentrated on the randomly stretched lattice $R\Z^d+U$ and has the spatially constant but random density $R^d$ with respect to it. Hence, one can calculate that 
\begin{equation*}
    \kappa_d\BE[R^d\I\{R>2\}] \leq \BE[K\lambda_d(B_1)^2] \leq \kappa_d\BE[R^d] + 4^d.
\end{equation*}
This shows that both local square integrability of $K\lambda_d$ and Condition~\ref{c:condition square-integrable Markov} are equivalent to $\BE[R^d]<\infty$.
\end{remark}

In the case that $K$ is a displacement kernel independent of $\Phi$, Condition~\ref{c:condition square-integrable Markov} is equivalent to the condition
\begin{equation}\label{e:condition square-integrable general wrong?}
    \BE\bigg[\bigg(\int \|x\|^\frac{d}{2} + 1 \, K_0(dx)\bigg)^2\bigg]<\infty,
\end{equation}
which is, up to the missing $\rho$, the same condition as one could derive from Condition~\ref{c:condition square-integrable general}; see Remark~\ref{r:independent square-integrable}.
Still, for a general probability kernel $K$ independent of $\Phi$, it remains open if the weaker condition~\eqref{e:condition square-integrable general wrong?} is sufficient for local square integrability.

\subsection{Persistence of \textit{p}-uniformity under spatial interpolation}\label{ss:Persistence of uniformity under spatial interpolation}

It is clear that the assumptions made in Theorem~\ref{t:main theorem} are sufficient but not necessary. However, if one restricts the class of transports a little further, the same assumptions guarantee that the transport $K$ of the source $\Phi$ also fulfills a much stronger property regarding a spatial inter- and even extrapolation of the source $\Phi$ and destination $K\Phi$. In this setting, the assumptions come a lot closer to being necessary.

Here, we always assume that $\Phi$ is a locally square-integrable invariant complex random measure, and that $K$ is an invariant probability kernel. Further, we assume that almost surely, for $\Phi$-almost all $x\in\R^d$, $K_x$ has bounded support, i.e.,
\begin{equation}
    \BP(\Phi(\{x\in\R^d:\diam(\supp(K_x))=\infty)=0)=1.
\end{equation}
Importantly, the bound on the support does not need to be uniform, whereby almost all practical examples of transports, like most that we construct in Section~\ref{s:applications}, fulfill this condition. If we drop this assumption, the following results about $p$-uniformity still hold, but the interpretation as a continuous interpolation only holds in a weak sense.

For $\gamma\in\R\setminus\{0\}$, we define the transport kernels $K^{(\gamma)}$ by
\begin{equation}
    K^{(\gamma)\ast}_x(B) := K_x^\ast(\tfrac{1}{\gamma}B), \quad x\in\R^d, B\in\cB^d,
\end{equation}
and $K^{(0)\ast}:=\delta_0$, whereby $K^{(1)}=K$. The assumptions made guarantee that almost surely for $\Phi$-almost every $x\in\R^d$, the mass of $K_x^{(\gamma)}$ flows continuously with respect to $\gamma\in\R$. Hence, the random measures $K^{(\gamma)}\Phi$ continuously interpolate between $\Phi$ and $K\Phi$ for $\gamma\in[0,1]$, and extrapolate for $\gamma\in\R\setminus[0,1]$. It turns out that these modified transports also preserve\linebreak $p$-uniformity.

\begin{theorem}
    Let $p\in[-d, \infty]$ and $\varepsilon>0$. Assume that $K$ and $\Phi$ fulfill the general assumptions in this subsection and the assumptions made in Theorem~\ref{t:main theorem}. In particular, suppose that $K$ is a probability kernel and that $\Phi$ is (beyond) $p$-uniform. Then $K^{(\gamma)}\Phi$ is (beyond) $p$-uniform for every $\gamma\in\R$. In the case $p=\infty$,  $K^{(\gamma)}\Phi$ is $\infty$-uniform with radius $\varepsilon$ for every $\gamma\in\R$ such that Condition~\ref{c:condition square-integrable stealthy} is fulfilled with respect to $\tilde\varepsilon:=|\gamma|\varepsilon$, in particular, for any $\gamma\in[-1,1]$.
    \begin{proof}
        All assertions are basically a consequence of the fact that
        \begin{equation}\label{e: K gamma conversion}
            \int g(x)\, K^{(\gamma)\ast}(y, dx) = \int g(\gamma x)\, K^\ast(y, dx) 
        \end{equation}
        for $\gamma\in\R, y\in\R^d$ and a suitable function $g:\R^d\to\C$. It implies that Condition~\ref{c:condition square-integrable general with p} or~\ref{c:condition square-integrable Markov with p} holds with respect to $K^{(\gamma)}$ if it does with respect to $K$, where one also has to apply Lemma~\ref{l:stronlgy log-dominating modification} to deal with $\rho$. In Condition~\ref{c:condition square-integrable stealthy}, there is only a slight change of the exponent. It is also implied that $\Psi_q$ only differs by a factor of $\gamma^q$ if one defines it with respect to $K^{(\gamma)}$ instead of $K$, which does not affect $(p-2q)$-uniformity or the radius if $p=\infty$. In combination, this allows for an application of Theorem~\ref{t:main theorem} with $K$ replaced by $K^{(\gamma)}$ to conclude the proof.
    \end{proof}
\end{theorem}
Actually, in the case that $p=\infty$, we can drop the assumption that Condition~\ref{c:condition square-integrable stealthy} is fulfilled with respect to $\tilde\varepsilon:=|\gamma|\varepsilon$. Therefore, the assertion always holds for every $\gamma\in\R$. This fact relies on a slight generalization of Theorem~\ref{t:main theorem}, which we do not prove here. For details, see the comment right after the proof of Theorem~\ref{t:main theorem}.

Next, we see that preservation of $p$-uniformity not only under transport but also under spatial inter- or extrapolation comes very close to making the assumptions in Theorem~\ref{t:main theorem} necessary. One could say that one gets as close as one could reasonably expect.

\begin{theorem}
    Let $p\in[-d, \infty)$, assume that $K$ and $\Phi$ fulfill the general assumptions in this subsection, and assume that Condition~\ref{c:condition square-integrable general with p} or Condition~\ref{c:condition square-integrable Markov with p} holds. Further, define\linebreak $N:=\max(\lceil\frac{d+p}{2}\rceil, 1)$, and assume that there are $\gamma_1,..., \gamma_N\in\R$ with $\gamma_n\neq\gamma_m$ for\linebreak $n,m\in\{1,...,N\},n\neq m$ such that $K^{(\gamma_n)}\Phi$ is $p$-uniform for every $n\in\{1,..,N\}$. Then $K^{(\gamma)}\Phi$ is $p$-uniform for every $\gamma\in\R$, and as $r\to \infty$,
    \begin{equation}\label{e: Psi_q limit var}
        \frac{\BV\big[\Psi_q(f^{(q)}(\frac{\cdot}{r}))\big]}{r^d} = O\big(r^{-(p-2q)}\big), \quad q\in\N_0, q<\max(\tfrac{d+p}{2}, 1),
    \end{equation}
    with $\Psi_q$ defined as in Theorem~\ref{t:main theorem} and $f$ chosen as in the first part of Theorem~\ref{t:main theorem formula}. The same holds with respect to beyond $p$-uniformity, where $O$ gets replaced by $o$ in~\eqref{e: Psi_q limit var}.

    Alternatively, let $\varepsilon>0$, and assume that Condition~\ref{c:condition square-integrable stealthy} is fulfilled. Further, assume that there is a sequence $(\gamma_n)_{n\in\N}$ in $[-1, 1]\setminus\{0\}$ such that $\gamma_n\to 0$ as $n\to\infty$, and such that $K^{(\gamma_n)}\Phi$ is $\infty$-uniform with radius $\varepsilon$ for every $n\in\N$. Then $K^{(\gamma)}\Phi$ is $\infty$-uniform with radius $\varepsilon$ for every $\gamma\in\R$ such that Condition~\ref{c:condition square-integrable stealthy} is fulfilled with respect to $\tilde\varepsilon:=|\gamma|\varepsilon$, in particular $\gamma\in[-1,1]$. Additionally, $\Psi_q(f^{(q)})$ is deterministic for every $q\in\N_0$ with $f$ chosen as in the second part of Theorem~\ref{t:main theorem formula}.
    \begin{proof}
        Let $p\in[-d, \infty)$, and assume that Condition~\ref{c:condition square-integrable general with p} or Condition~\ref{c:condition square-integrable Markov with p} holds. Then, by Theorem~\ref{t:main theorem formula},~\eqref{e: K gamma conversion}, and with convergence in $L^2$, we have that
        \begin{equation}\label{e: K gamma f_r decomposition}
        K^{(\gamma)}\Phi(f_r) = \sum_{q=0}^{N-1}\frac{\gamma^q}{q!} \Psi_q\big(f_r^{(q)}\big) + o\Big(r^{\tfrac{d-p}{2}}\Big),\quad \gamma\in\R,
    \end{equation}
    as $r\to\infty$. The remainder term also fulfills some regularity with respect to $\gamma$, but we do not need it here. Hence, if $K^{(\gamma_n)}\Phi$ is $p$-uniform for every $n\in\{1,..,N\}$ for some $\gamma_n\in\R$, then
    \begin{equation}
        \begin{pmatrix}
            1 & \gamma_1 & \gamma_1^2 & \cdots & \gamma_1^{N-1}\\
            \vdots&\vdots&\vdots&&\vdots\\
            1 & \gamma_N & \gamma_N^2 & \cdots & \gamma_N^{N-1}
        \end{pmatrix}
        \begin{pmatrix}
            \Psi_0(f_r)\\
            \Psi_1(f_r')\\
            \vdots\\
            \frac{1}{(N-1)!}\Psi_{N-1}\big(f_r^{(N-1)}\big)
        \end{pmatrix}
        = O\Big(r^{\tfrac{d-p}{2}}\Big)
    \end{equation}
    as $r\to\infty$. Since the matrix is a Vandermonde matrix, and therefore invertible, as the $\gamma_n$ are assumed to be pairwise different for $n\in\{1,..., N\}$ (see, e.g.,~\cite[Example 0.9.11]{Horn_Johnson_1985}), this relation also implies that
    \begin{equation}
        \Psi_{n}\big(f_r^{(n)}\big) = O\Big(r^{\tfrac{d-p}{2}}\Big), \quad n\in\{0,...,N-1\}.
    \end{equation}
    Together with~\eqref{e: K gamma f_r decomposition} and Theorem~\ref{t:asymptotic variance test function}, we therefore obtain that $K^{(\gamma)}\Phi$ is $p$-uniform for every $\gamma\in\R$. Moreover, Remark~\ref{r: f_r formulas} yields~\eqref{e: Psi_q limit var}. Under the assumption of beyond $p$-uniformity, the same proof yields beyond $p$-uniformity of $K^{(\gamma)}\Phi$ for $\gamma\in\R$, and $O$ simply gets replaced by $o$, which concludes the first part of the proof.

    Now, instead let $\tilde{\varepsilon}\geq\varepsilon>0$, and assume that Condition~\ref{c:condition square-integrable stealthy} holds with respect to $\tilde\varepsilon$. Further, suppose that $f$ is chosen as in the second part of Theorem~\ref{t:main theorem formula}. Then, by Theorem~\ref{t:main theorem formula} and~\eqref{e: K gamma conversion}, with convergence in $L^2$,
    \begin{equation}\label{e: K gamma sequence}
        K^{(\gamma)}\Phi(f) = \sum_{q=0}^{\infty}\frac{\gamma^q}{q!} \Psi_q\big(f^{(q)}\big),\quad \gamma\in \big[-\tfrac{\tilde\varepsilon}{\varepsilon}, \tfrac{\tilde\varepsilon}{\varepsilon}\big].
    \end{equation}
    Now, assume that there is a sequence $(\gamma_n)_{n\in\N}$ in $[-1, 1]\setminus\{0\}$ such that $\gamma_n\to 0$ as $n\to\infty$, and such that $K^{(\gamma_n)}\Phi$ is $\infty$-uniform with radius $\varepsilon$ for every $n\in\N$. Then, 
    \begin{equation*}
        \frac{1}{m!}\Psi_m(f^{(m)}) = \gamma_n^{-m}\bigg(K^{(\gamma_n)}\Phi(f) - \sum_{q=0}^{m-1} \frac{\gamma_n^q}{q!}\Psi_q(f^{(q)}) \bigg) + \sum_{q=m+1}^\infty \frac{\gamma_n^{q-m}}{q!}\Psi_q(f^{(q)}), \quad m\in\N_0.
    \end{equation*}
    The Variance of the sum on the right converges to $0$ as $n\to\infty$, as $\gamma_n\to 0$, and because it was shown in the proof of Theorem~\ref{t:main theorem formula} that the sequence converges absolutely in $L^2$. Further, Theorem~\ref{t: stealthy characterization} implies that $K^{(\gamma_n)}\Phi(f)$ is deterministic for every $n\in\N$. An induction over $m\in\N_0$ then also yields that $\Psi_m(f^{(m)})$ also is deterministic for every $m\in\N_0$. The remaining part now follows from Theorem~\ref{t: stealthy characterization} and~\eqref{e: K gamma sequence}.
    \end{proof}
\end{theorem}

In the special case that $d=1$ and that $K$ and $\Phi$ are ergodic, these implications are actually equivalent to the $(p-2q)$-uniformity assumptions made in Theorem~\ref{t:main theorem}; see Proposition~\ref{p:asymptotic variance test function zero integral}. For higher dimensions, the picture is more complicated. In conclusion, the assumptions made in Theorem~\ref{t:main theorem} seem to almost be necessary for a transport to preserve $p$-uniformity from the source to the destination and along the way. For an even closer relation in both directions for $d\geq 2$, one probably needs to more directly analyze the second-moment properties of invariant tensor-valued random complex measures.  

\section{Applications}\label{s:applications}
\subsection{Point processes from invariant partitions}\label{ss: pp from invariant partitions}

In this subsection, we apply our theorems to random partitions of space, We thus obtain point processes that are $p$-uniform for arbitrarily high $p$. We begin by formally defining what we mean by a random partition of space.

\begin{definition}
Suppose that $\Phi$ is an invariant locally square-integrable simple point process and suppose that $\tau$ is an invariant allocation. We then call $(\Phi, \tau)$ an \textit{invariant random partition} if $\tau(x)\in\Phi$ for all $x\in\R^d$. Further, for $x\in\Phi$, we call $C(x):=\tau^{-1}(x)$ a \textit{cell} of $(\Phi,\tau)$. Finally, $V_0:= C(\tau(0))$ is called the \textit{zero-cell}.

Additionally, if $\lambda_d(C(x))=\gamma^{-1}$ for all $x\in\Phi$, where $\gamma$ is the intensity of $\Phi$, then we call $(\Phi,\tau)$ a \textit{random fair partition}; see~\cite{Klatt_Last_Lotz_Yogeshwaran_2025}.
\end{definition}

The underlying principle for the following theorem, from which all examples in this subsection are derived, was already discovered in physics in~\cite{Gabrielli_Joyce_Torquato_2008}. Our contribution is that we can apply our results from Section~\ref{s:main results} to control the effect of correlations in the partition. Therefore, we can formulate the statement rigorously and in greater generality, which allows for applications that were probably not in view when the idea was first stated. 

\begin{theorem}\label{t: partition persistence}
Let $p\in[-d,\infty)$. Suppose that $(\Phi,\tau)$ is an invariant random partition, and suppose that $\Psi, \Gamma$ are invariant locally square-integrable random measures. Further, assume that, for some $\vartheta>0$,
\begin{equation}\label{e:partition integrability}
    \BE_0^\Phi\bigg[\Phi(B_\vartheta)\bigg(\int_{C(0)} \sqrt{\|x\|^{d+\max(p,0)}\rho(\|x\|)+1}\,(\Psi+\Gamma)(dx)\bigg)^2\bigg] <\infty,
\end{equation}
and that
\begin{equation}\label{e:partition difference 0}
    \int_{V_0} x^{\otimes q}\, (\Psi-\Gamma)(dx) = 0,\quad q\in\N_0,q<\max(\tfrac{d+p}{2},1).
\end{equation}
Then $\Psi$ is (beyond) $p$-uniform iff $\Gamma$ is (beyond) $p$-uniform.
\begin{proof}
    At its core, this Theorem is derived from Proposition~\ref{p:moment setting}. For this matter, we define the transport kernels $K,L$ as
    \begin{equation}
        K(x, B) := \Psi(B\cap C(x)), \quad L(x, B) := \Gamma(B\cap C(x)),\quad x\in\R^d, B\in\cB^d.
    \end{equation}
    Then we can derive from~\eqref{e:partition difference 0} that~\eqref{e:markov difference 0} from Proposition~\ref{p:moment setting} holds. Further, the proof that~\eqref{e:partition integrability} implies Condition~\ref{c:condition square-integrable general with p} with respect to $K,\Phi$ and $L,\Phi$ is analogous to the proof of Proposition~\ref{p:independent square-integrable}. Hence, the assertion is implied by Proposition~\ref{p:moment setting}.
\end{proof}
\end{theorem}

\begin{remark}\label{r: partition persistence}
In the setting of Theorem~\ref{t: partition persistence}, if $(\Phi,\tau)$ is a random fair partition and $\Gamma=\gamma\lambda_d$ for some $\gamma>0$, one can replace condition~\eqref{e:partition integrability} by 
\begin{equation}\label{e:partition integrability markov}
    \BE\bigg[\int_{V_0}\|x\|^{d+\max(p,0)}\, (\Psi+\lambda_d)(dx)\bigg] < \infty.
\end{equation}

The main change in the proof of Theorem~\ref{t: partition persistence} is that we replace the source $\Phi$ by $\gamma\lambda_d$ and define $K,L$ instead by
\begin{equation}
    K(x, B) := \frac{\Psi(B\cap C(\tau(x)))}{\lambda_d(C(\tau(x)))}, \quad L(x, B) := \frac{\lambda_d(B\cap C(\tau(x)))}{\lambda_d(C(\tau(x)))},\quad x\in\R^d, B\in\cB^d.
\end{equation}
Then $L$ is a probability Kernel, and by~\eqref{e:partition difference 0} for $q=0$, $K$ is also a probability kernel. Both are trivially independent of the source $\gamma\lambda_d$. Therefore, we can use Condition~\ref{c:condition square-integrable Markov with p} instead of Condition~\ref{c:condition square-integrable general with p}. We derive it from~\eqref{e:partition integrability markov} using the fact that, in a fair partition, the zero-cell $V_0$ behaves under $\BP$ just like the typical cell, i.e., $C(0)$ under $\BP_0^\Phi$. 

\end{remark}

In the following, we construct point processes which are (beyond) $p$-uniform, where a high $p$ is of particular interest. Practically, we start with a random fair partition. Then we construct a point process by placing an equal number of points in each cell such that they satisfy the moment condition~\eqref{e:partition difference 0}, where $\Psi$ is the point process and $\Gamma$ is a multiple of $\lambda_d$. This condition is equivalent to the points, together with equal weights, forming a quadrature formula of degree $\max(\lceil\frac{d+p}{2}\rceil-1, 0)$ for their respective cell. Hence, the existence of such quadrature formulae is necessary to construct these point processes. While the existence of regular quadrature formulas can be easily derived from Carathéodory's theorem, the restriction to equal-weight quadrature formulas, which are also called averaging sets, makes the problem a lot harder. Still, the construction of averaging sets of rank $0$ or $1$ (or even $2$) is always possible, which gives rise to the two following examples. The first allows for the arbitrary placement of one point per cell.

These methods are also applicable to non-fair random partitions if one passes down the volume of the cells to the mass placed within; see also~\cite{Kim_Torquato_2019,Klatt_Last_Lotz_Yogeshwaran_2025}. However, one thus obtains a random measure and generally not a point process.

\begin{example}\label{ex:fair tiling any point}
Suppose that $(\Phi, \tau)$ is a random fair partition which satisfies
\begin{equation}
    \BE[\diam(V_0)^{\max(2,d)}] < \infty.
\end{equation}
Further, suppose that $F:\cB_b^d\to \bM^1(\R^d)$ is measurable and satisfies
\begin{align}
    F(B)(B)&= 1,\\
    F(B+x)&= F(B)(\cdot-x),\quad x\in\R^d, B\in\cB_b^d.
\end{align}
Now, choose $(Y_x)_{x\in\Phi}$ such that, conditioned on $(\Phi,\tau)$, $Y_x\sim F(C(x))$ for $x\in\Phi$ and $\{Y_x:x\in\Phi\}$ are independent. Then the point process
\begin{equation}
    \Psi:=\sum_{x\in\Phi} \delta_{Y_x}
\end{equation}
is beyond $(2-d)$-uniform.

This assertion follows from Theorem~\ref{t: partition persistence} and Remark~\ref{r: partition persistence} with $p:=2-d$ and $\gamma:=\lambda_d(V_0)^{-1}$. To this end, it is essential that the cell size $\lambda_d(V_0)$ is deterministic since the partition is fair.
\end{example}
For $d=1$, the preceding example allows for the construction of point processes which are beyond $1$-uniform, and for $d=2$, beyond $0$-uniform. However, Proposition~\ref{p: hyperfluctuating points in lattice} shows that the bound is sharp in general. Hence, for $d\geq 3$ it is possible that $\Psi$ is not $0$-uniform; see also~\cite{Dereudre_Flimmel_Huesmann_Leblé_2024}. Still, there is a simple way to construct hyperuniform point processes in dimensions $3$ and $4$ as shown in the following example, which restricts itself to placing the points in the centroids.

\begin{example}\label{ex:fair tiling centroid}
Suppose that $(\Phi, \tau)$ is a random fair partition which satisfies
\begin{equation}
    \BE[\diam(V_0)^{\max(4,d)}] < \infty.
\end{equation}
Now, choose 
\begin{equation}
    Y_x := \frac{1}{\lambda_d(C(x))} \int_{C(x)} z \, dz,\quad x\in\Phi.
\end{equation}
Then the point process
\begin{equation}
    \Psi:=\sum_{x\in\Phi} \delta_{Y_x}
\end{equation}
is beyond $(4-d)$-uniform.

The argument is analogous to that in Example~\ref{ex:fair tiling any point} with the alternate choice $p:=4-d$.
\end{example}

We can apply the preceding two examples to the so-called pinwheel tiling; see~\cite{Radin_1994} for the construction procedure. 

\begin{example}
    The pinwheel tiling is a fair partition of $\R^2$. If we apply Example~\ref{ex:fair tiling any point}, we can see that the arbitrary placement of one point in each cell results in a beyond $0$-uniform, i.e., hyperuniform, point process. This result supports the findings in~\cite{Sgrignuoli_Dal_Negro_2021}. Moreover, if the points are each placed independently and uniformly in the cells like in~\cite{Gabrielli_Jancovici_Joyce_Lebowitz_Pietronero_Sylos_Labini_2003}, we can even combine techniques with the hyperuniformerer from~\cite[Example 4.7]{Klatt_Last_Lotz_Yogeshwaran_2025} to obtain that the constructed point process is solely $2$-uniform, in particular, class I hyperuniform. Finally, the placement in the centroids even yields a beyond $2$-uniform point process by Example~\ref{ex:fair tiling centroid}. The maximal $p$ for which the centroids are $p$-uniform has not been found yet.
\end{example}

To guarantee that the procedure produces point processes which are beyond $0$-uniform in any dimension or to guarantee $p$-uniformity for higher $p$, one needs more points per cell. Proposition~\ref{p: hyperfluctuating centroids} shows that, even in a random fair partition with highly regular cells, the centroids can form a point process which is not $0$-uniform for $d\geq5$. This extension necessitates a slightly higher degree of regularity on the side of the random fair partition to guarantee the existence of a point process which fits the moment condition~\eqref{e:partition difference 0}. In particular, we require the volume of the convex hull of the cells to be uniformly bounded. If the cells of a fair partition are convex, they immediately satisfy this condition, and as $\lambda_d(\conv(V_0))\leq \kappa_d\diam(V_0)^d$, cells with uniformly bounded diameter also suffice. In Proposition~\ref{p:averaging set counterexample}, it can be seen why a restriction of this kind is necessary.

\begin{example}\label{ex:fair tiling high degree}
Let $C> 0, n,m\in\N$. Suppose that $(\Phi,\tau)$ is a random fair partition such that $\lambda_d(\partial V_0)=0$ and $\lambda_d(\conv(V_0))\leq C$ almost surely, and that
\begin{equation}\label{e:fair tiling high degree moment condition}
    \BE[\diam(V_0)^{\max(2m,d)}] < \infty.
\end{equation}
Let $\gamma$ be the intensity of $\Phi$, and define the set which includes all possible cells
\begin{equation}
    A_{\gamma, C}:=\{B\in\cB_b^d: \lambda_d(\partial B)=0, \lambda_d(B)=\gamma^{-1}, \lambda_d(\conv(V_0))\leq C\}.
\end{equation}
Further, suppose that $F:A_{\gamma, C}\to \bM^1(\R^{nd})$ is measurable and satisfies
\begin{align}\label{e: moment fixing}
    F(B)\bigg(\bigg\{(x_1,...x_n)\in B^n&:\frac{1}{n}\sum_{k=1}^n x_k^{\otimes q} = \gamma\int_B z^{\otimes q}\, dz, q\in\{0,...,m-1\}\bigg\}\bigg)= 1,\\
    F(B+x)&= F(B)(\cdot-x),\quad x\in\R^d, B\in A_{\gamma, C},
\end{align}
where $(y_1,...,y_n)-x := (y_1-x,...,y_n-x)$ for $ x,y_1,...,y_n\in\R^d$.

Now, chose $(Y_{x,k})_{x\in\Phi, k\in\{1,...,n\}}$ such that, conditioned on $(\Phi,\tau)$, $(Y_{x,1},... Y_{x,n})\sim F(C(x))$ for $x\in\Phi$ and $\{(Y_{x,1},... Y_{x,n}):x\in\Phi\}$ are independent. Then the point process
\begin{equation}
    \Psi:=\sum_{x\in\Phi}\sum_{k=1}^n\delta_{Y_{x,k}}
\end{equation}
is beyond $(2m-d)$-uniform. Note that Proposition~\ref{p:averaging set bound improved} guarantees that for any $d\in\N, C>0, m\in\N$, there exists an $N\in\N$ such that for all $n\geq N$, there exists a map $F$ that satisfies the above assumptions. Also, we can recover Examples~\ref{ex:fair tiling any point} and~\ref{ex:fair tiling centroid} with slightly stricter assumptions as the special cases $p=1$ and $p=2$ with $n=1$.

Again, the argument is analogous to that in Example~\ref{ex:fair tiling any point} with the alternate choice of\linebreak $p:=2m-d$.
\end{example}

Now that we have shown how to construct $p$-uniform point processes with an arbitrarily high $p$ from invariant random fair partitions, we shift our focus to the construction of the partitions themselves. One class of invariant fair partitions is obtained from lattices and their corresponding lattice tilings. We follow an approach closely connected to this one in Subsection~\ref{ss: pp from averaging sets}. However, lattice tilings are not isotropic, which is another property we are interested in. If one artificially makes them isotropic by randomly rotating them, they are no longer ergodic, which we also want them to be. Another class consists of fair tilings that can be generated from matching algorithms. This class actually contains isotropic and ergodic examples. A discussion of multiple options can be found in~\cite{Klatt_Last_Lotz_Yogeshwaran_2025}. However, the classical stable matching rarely yields fair tilings which fulfill the moment condition~\eqref{e:fair tiling high degree moment condition}; see~\cite{Hoffman_Holroyd_Peres_2005}. While there are alternatives like gravitational allocation for $d=3$~\cite{Chatterjee_Peled_Peres_Romik_2010}, which are regular enough for Example~\ref{ex:fair tiling any point} and~\ref{ex:fair tiling centroid}, their cell shapes may not satisfy the stricter assumptions of Example~\ref{ex:fair tiling high degree}. Finally, complete matchings are usually hard to simulate at a truly large scale, since most of the corresponding algorithms exhibit superlinear complexity.

In the following, we introduce such an invariant, isotropic, and ergodic random fair partition with convex cells in any dimension that has a finite approximation that can be simulated in linear time. However, a rigorous proof for the existence is beyond the scope of this paper, as it requires distinct techniques, and in our opinion, deserves its own proper investigation.

Our method is closely connected to the so-called \textit{STIT tessellations} (\textbf{st}able with respect to \textbf{it}erations). These arise from Markov chains on the space of partitions with convex cells, where in every step the cells are randomly and independently divided by hyperplanes and then scaled up to preserve the scale of the original partition. See~\cite{Nagel_Weiss_2003, Nagel_Weiss_2005, Nagel_Thäle_Weiß_2026} for further details. Our construction differs in that the Markov chain is constructed on the space of fair partitions with convex cells, i.e., in every step every cell is split exactly once into two parts of equal volume. We can also allow for the distribution of the split direction to depend on the cell shape, which was not allowed in this freedom in the original STIT models. Although the models are not a subclass of the STIT models and more akin to a variant of the underlying idea, we call these models the \textit{fair STIT} models to emphasize their similarity.

\begin{definition}[Splitting distribution]
    Let $\cK^d_1:=\{B\in\cB^d: B \textnormal{ convex}, \lambda_d(B)=1\}$ and 
    \begin{equation}
        \cF^d_1:=\{(\phi,\tau): (\phi,\tau) \textnormal{ is a fair partition of } \R^d, C(x)\in\cK^d_1 \text{ for } x\in\phi\}.
    \end{equation}
    Suppose that $F$ is a probability kernel from $\cK^d_1$ to $\partial B_1$ which is only shape dependent, i.e.,
    \begin{equation}
        F(B+x) = F(B),\quad B\in\cK^d_1, x\in\R^d.
    \end{equation}
    Then $F$ is called a \textit{splitting distribution}. 
\end{definition}

\begin{remark}\label{r: fair STIT}
    While we determine the splitting direction randomly according to a splitting distribution $F$, the location of the split is then uniquely determined since the newly created parts are required to have equal volume.
    Concretely, the measurable map $f:\cK^d_1\times\partial B_1\to \R$ with
    \begin{equation}
        \lambda_d(\{x\in B: \langle x,u\rangle \geq f(B,u)\}) = \lambda_d(\{x\in B: \langle x,u\rangle < f(B,u)\}) = \frac{1}{2},\quad B\in\cK^d_1, u\in\partial B_1,
    \end{equation}
    can be used to determine the location given a cell and splitting direction.
    Hence, we can define a probability kernel $\tilde{F}$ from $\cK^d_1$ to $\cK^d_1\times\cK^d_1$ by
    \begin{align}
        \tilde{F}(B) := \int \I\big\{\big(&\{x\in \sqrt[d]{2}B: \langle x,u\rangle \geq \sqrt[d]{2}f(B,u)\},\nonumber\\
        &\{x\in \sqrt[d]{2}B: \langle x,u\rangle < \sqrt[d]{2}f(B,u)\}\big)\in \cdot\big\} \, F(B,du),\quad B\in\cK_1^d.
    \end{align}
    It captures the distribution of the two newly created cells after splitting and rescaling.
    With a cell-wise application of $\tilde{F}$, we can extend this kernel to a probability kernel $P_F$ from $\cF^d_1$ to $\cF_1^d$. 
\end{remark}

\begin{definition}[Fair STIT]
    Suppose that $F$ is a splitting distribution and $P_F$ the corresponding probability kernel from $\cF^d_1$ to $\cF_1^d$ constructed in Remark~\ref{r: fair STIT}. Then we call a random fair partition $(\Phi,\tau)$ a \textit{fair STIT with splitting distribution} $F$ if it is invariant with respect to the probability kernel $P_F$.  To allow for the possibility of a unique invariant distribution, we also formally assume that
    \begin{equation}
        x = \int_{C(x)} z\, dz,\quad x\in\Phi.
    \end{equation}
\end{definition}

Note that not every splitting distribution gives rise to a fair STIT. On the other hand, the distribution of the fair STIT may not be unique if one exists. Also note that $\cF_1^1$ only contains the lattice tiling and shifted versions of it. However, $\cF_1^d$ is a lot richer for $d\geq2$.

In practice, we focus on the following splitting distribution, which is shape-independent and uniform.

\begin{example}[Fair STIT with a uniform splitting distribution]\label{ex: fair STIT}
    Let $F$ be the shape-independent splitting distribution $\cU(\partial B_1)$. Then we call a fair STIT with respect to $F$ a \textit{fair STIT with a uniform splitting distribution}. As stated before, we do not show here that such a fair STIT exists and is unique in distribution. Still, to help the reader better understand the object, we provide a detailed description of how to construct a step in the Markov chain.
    
    Let $(\phi,\tau)\in \cF_1^d$ be the current state. We denote the cells of $(\phi,\tau)$ by $(C(x))_{x\in\phi}$. Let $(U_x)_{x\in\phi}$ be a sequence of iid random vectors distributed like $\cU(\partial B_1)$, i.e., random unit vectors. These are the normal vectors of the hyperplanes, by which we split the cells. In a second step, we need to find the location of the planes such that they split the cells into parts of equal volume. For each $x\in\phi$ there is a unique random variable $Y_x\in\R$ which only depends on $U_x$ (and $C(x)$) such that $\lambda_d(\{z\in C(x):\langle z, U_x\rangle \geq Y_x)=\frac{1}{2}$. Hence, we independently split the cells in random directions into two equally sized parts. To preserve the volume, we scale up the whole process and obtain the random fair partition $(\tilde\Phi, \tilde\tau)$ as the next step of our Markov chain from
    \begin{align}
        X_{x,1} &:= \int z \I\{z\in \sqrt[d]{2}C(x), \langle z, U_x\rangle \geq \sqrt[d]{2}Y_x\}\, dz,\quad x\in\phi,\\
        X_{x,2} &:= \int z \I\{z\in \sqrt[d]{2}C(x), \langle z, U_x\rangle < \sqrt[d]{2}Y_x\}\, dz,\quad x\in\phi,\\
        \tilde{\Phi}&:=\sum_{x\in\phi} \delta_{X_{x,1}} + \delta_{X_{x,2}},\\
        \tilde\tau(z) &:= \begin{cases}
            X_{\tau(z), 1} : \langle z, U_{\tau(z)}\rangle \geq \sqrt[d]{2}Y_{\tau(z)},\\
             X_{\tau(z), 2} : \langle z, U_{\tau(z)}\rangle < \sqrt[d]{2}Y_{\tau(z)},
        \end{cases}\quad z\in\R^d.
    \end{align}

    Let us come back to the analysis of the fair STIT.
    One can expect that it is ergodic and isotropic from our construction with independent and isotropic splitting directions. It is harder to show that the cells satisfy~\eqref{e:fair tiling high degree moment condition}. We leave these questions open for now.

    In practice, we can just start with a fair partition and simulate multiple steps of the Markov chain and assume that the result is, in distribution, close to a supposed limit distribution. This approach has the additional advantage that it is easy to simulate at a large scale. As every cell is divided independently, the number of operations we need to obtain a snapshot with $2^n$ cells is proportional to $2^n$, $n\in\N_0$, i.e., the algorithm operates in linear time. Additionally, the algorithm can be parallelized perfectly.
\end{example}

We have implemented this algorithm in dimensions $2$ and $3$ and simulated samples of the fair STIT with a uniform splitting distribution with up to $2^{30}\approx 10^9$ cells. Then, for different $n,m\in\N$, we sampled $n$ points in each cell such that the first $m$ moments of the points and cells coincide as in \eqref{e: moment fixing}. Hence, by Example \ref{ex:fair tiling high degree}, the points form a point process that is beyond $(2m-d)$-uniform. The empirical structure factor of these samples is consistent with this theoretical result, as can be seen in Figure~\ref{fig: empirical structure factor}; indeed, our estimated exponent exceeds the lower bound as discussed in the caption of Figure~\ref{fig: empirical structure factor}. With samples containing $10^9$ points, we match the largest previously reported samples of non-crystalline hyperuniform point processes in two dimensions and surpass them in three dimensions; see~\cite{Shih_Casiulis_Martiniani_2024}. For further details about the simulation study, see Appendix~\ref{s: Simulation study}.

\begin{figure}[p]
    \centering
    \includegraphics[width=0.6574\textwidth]{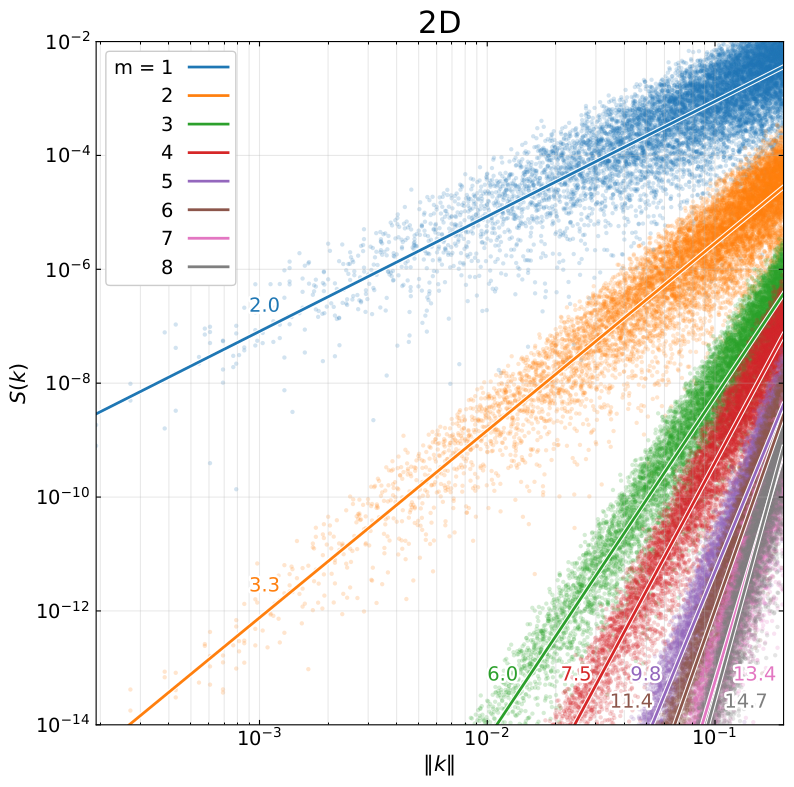}
    \hfill
    \includegraphics[width=0.3326\textwidth]{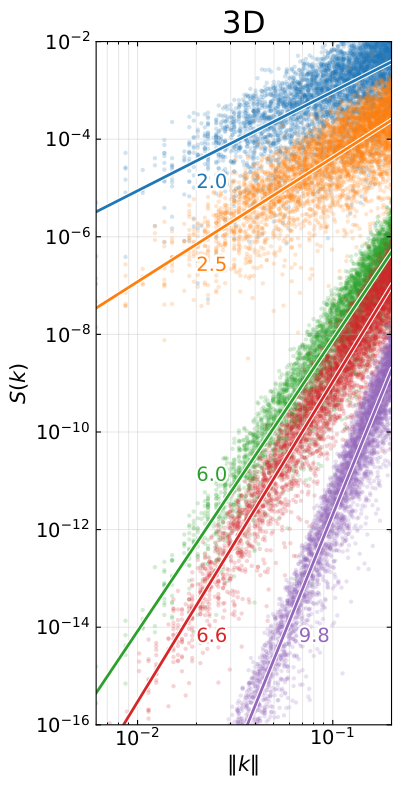}
    \caption{Empirical structure factors of the simulated point processes in dimensions $2$ (left) and $3$ (right) for different values of the number of fixed moments $m$. 
    Each sample contains about $10^9$ points; for the precise parameters including the number of points per cell $n$, see Table~\ref{tab: simulation} in Appendix~\ref{s: Simulation study}. Both scales are logarithmic, so a power-law with exponent $p$ in the structure factor corresponds to a linear law in this plot with slope $p$. To obtain estimates for $p$, we performed a linear regression for each sample and indicate the slope next to the line in the corresponding color. Note that these estimates are rough, especially for large $m$. Still, the estimated values of $p$ all exceed our lower bound of $2m-d$. Further, the estimates seem to be very close to $2m$ for odd $m$ but significantly smaller for even $m$, especially in dimension $3$. This observation indicates long-range correlations in the even moments of the cells of the fair STIT and fast decorrelation of the odd moments; see Theorem~\ref{t: main theorem rate}. Such a behavior could be explained by long-range correlations in the degree and direction of elongation of the cells.}
    \label{fig: empirical structure factor}
\end{figure}
\clearpage

\subsection{Point processes from moment-preserving clusters}\label{ss: pp from averaging sets}

The following constructions of point processes are also based on placing clusters of points, but no fair partitions are involved here. Instead, we base the construction on a progenitor point process which is $\tilde{p}$-uniform for a very high $\tilde{p}$, e.g., $\tilde{p}=\infty$. We then decrease $\tilde{p}$ by a controlled amount. While the previous construction yields nicer point processes for practical purposes, we can also derive sole $p$-uniformity here. A similar construction was done in~\cite{Lachièze-Rey_2025a}.

\begin{theorem}\label{t: independent avsets}
    Let $m \in \N_0$, and let $\mu$ be a random finite signed measure on $\R^d$. Suppose that $\Phi$ is a locally square-integrable invariant point process that is $2m$-uniform, and suppose that $K$ is a transport kernel such that $(K_x^\ast)_{x\in\Phi}$, conditioned on $\Phi$, is a sequence of independent random finite measures distributed like $\mu$. Further, assume that
    \begin{align}
        \BE\bigg[\bigg(&\int \|x\|^m+1\, |\mu|(dx)\bigg)^2\bigg] < \infty, \label{e: ind avsets moment}\\
        &\int x^{\otimes q}\, \mu(dx) \textnormal{ is deterministic for } q\in\N_0, q<m,\label{e: ind avsets fit}\\
        &\int x^{\otimes m}\, \mu(dx) \textnormal{ is not deterministic}.\label{e: ind avsets nonfit}
    \end{align}
    Then $K\Phi$ is a locally square-integrable invariant random measure and solely $2m$-uniform.

    \begin{proof}
        If $\mu$ was a random probability measure,~\cite[Theorem 5.13]{Klatt_Last_Lotz_Yogeshwaran_2025} would yield that $K\Phi$ is a locally square-integrable random measure and that
        \begin{equation}\label{e: spectral measure cluster process}
            \hat\beta_{K\Phi} = \big|\BE[\hat\mu]\big|^2 \cdot \hat\beta_\Phi + \alpha_\Phi(\{0\}) \BV[\hat\mu]\cdot \lambda_d.
        \end{equation}
        Similarly, this formula was also derived in \cite[Proposition 5]{Lachièze-Rey_2025a}, where $\mu$ is assumed to be a random finite counting measure. Here, we show this formula in the slightly more general setting that $\mu$ is a random finite signed measure.
        
        First, since $\Phi$ and $K$ are invariant, $K\Phi$ is also invariant. Further, it is locally square-integrable because of independence and $\BE\big[\|\mu\|^2\big]<\infty$, as
        \begin{align}
            \BE\big[|K\Phi(B_1)|^2\big] &= \iint \BE_{0,y}^\Phi\big[K(0, B_1-x), \overline{K(y, B_1-x)}\big]\,dx \,\alpha_\Phi(dy)\nonumber\\
            &= \iint \BE\big[\mu(B_1-x)\big]\BE\big[\overline{\mu(B_1-x-y)}\big]\,dx \,\alpha_\Phi(dy)\nonumber\\
            &\qquad + \alpha_\Phi(\{0\})\int\BV[\mu(B_1-x)]\,dx\nonumber\\
            &= \BE\big[|L\Phi(B_1)|^2\big] + \alpha_\Phi(\{0\})\int\BV[\mu(B_1-x)]\,dx\nonumber\\
            &< \infty,\label{e:alpha independent avsets}
        \end{align}
        where $L$ is an invariant deterministic transport kernel from $\R^d$ to $\R^d$ with $L^\ast(y)=\BE[\mu], y\in\R^d$. The fact that $L\Phi$ is locally square-integrable can be derived analogously to~\cite[Lemma 3.8]{Klatt_Last_Lotz_Yogeshwaran_2025}. Additionally, we can derive from~\eqref{e:alpha independent avsets} and $\BE[L\Phi]=\BE[K\Phi]$ that
        \begin{equation}
            \beta_{K\Phi} = \beta_{L\Phi} + \alpha_\Phi(\{0\}) (\BE[\mu\star\mu]-\BE[\mu]\star\BE[\mu]).
        \end{equation}
        Hence,
        \begin{align}
            \hat\beta_{K\Phi} &= \hat\beta_{L\Phi} + \alpha_\Phi(\{0\}) \BV[\hat\mu]\cdot \lambda_d\nonumber\\
            &= \big|\BE[\hat\mu]\big|^2 \cdot \hat\beta_\Phi + \alpha_\Phi(\{0\}) \BV[\hat\mu]\cdot \lambda_d, \label{e:bartlett spectral ind avset}
        \end{align}
        which proves~\eqref{e: spectral measure cluster process} in this more general setting. Further,
        \begin{equation}
            \hat\mu(k) = \int e^{-i\langle k,x\rangle}\, \mu(dx) = \sum_{q=0}^m \frac{(-ik)^{\otimes q}}{q!} \int x^{\otimes q} \, \mu(dx) + \int g(\langle k ,x\rangle)\, \mu(dx), \quad k\in\R^d,
        \end{equation}
        where
        \begin{equation}
            g(z) := e^{-iz} - \sum_{q=0}^m \frac{(-iz)^q}{q!} = \sum_{q=m+1}^\infty \frac{(-iz)^q}{q!},\quad z\in\R.
        \end{equation}
        Thus, there is some $c>0$ such that
        \begin{equation*}
            |g(z)| \leq c\min(|z|^{m}, |z|^{m+1}),\quad z\in\R.
        \end{equation*}
        This bound allows us to apply~\eqref{e: ind avsets moment} and the dominated convergence theorem to obtain that, for $k\in\R^d$,
        \begin{equation*}
            \frac{1}{\|k\|^{2m}}\BE\bigg[\bigg|\int g(\langle k,x\rangle)\, \mu(dx)\bigg|^2\bigg] = \BE\bigg[\bigg|\int \underbrace{\frac{g(\langle k,x\rangle)}{\|k\|^m}}_{|\cdot|\leq c \min(\|x\|^m, \|k\|\|x\|^{m+1})}\, \mu(dx)\bigg|^2\bigg] \xrightarrow{k\to 0} 0.
        \end{equation*}
        Hence, by~\eqref{e: ind avsets fit}, as $k\to 0$,
        \begin{equation}\label{e: ind avset var formula}
            \BV[\hat\mu(k)] = \frac{\|k\|^{2m}}{(m!)^2} \BV\bigg[\int \langle \tfrac{k}{\|k\|},x\rangle^m\, \mu(dx)\bigg] + o(\|k\|^{2m}).
        \end{equation}
        Applying~\eqref{e: ind avsets moment}, we can derive an upper bound for the asymptotics of $\BV[\hat\mu(k)]$ as $k\to 0$ by
        \begin{equation}
            \BV[\hat\mu(k)] \leq \frac{\|k\|^{2m}}{(m!)^2} \BE\bigg[\bigg|\int \|x\|^m\, |\mu|(dx)\bigg|^2\bigg] + o(\|k\|^{2m}) = O(\|k\|^{2m}).
        \end{equation}
        In conjunction with the assumption that $\Phi$ is $2m$-uniform and~\eqref{e:bartlett spectral ind avset}, this bound yields that $K\Phi$ is $2m$-uniform. It remains to show that it is not beyond $2m$-uniform. By~\eqref{e:bartlett spectral ind avset}, this is established if we can show that
        \begin{equation}
            \frac{1}{\varepsilon^{d}}\int_{B_\varepsilon} \BV[\hat\mu(k)]\, dk \neq o(\varepsilon^{2m})
        \end{equation}
        as $\varepsilon\to 0$. To this end, we leverage~\eqref{e: ind avset var formula}. By~\eqref{e: ind avsets nonfit} and the multilinear polarization identity, see~\cite{Mazur_Orlicz_1934}, there is an $s\in \partial B_1$ such that
        \begin{equation*}
            \BV\bigg[\int \langle s,x\rangle^m\, \mu(dx)\bigg] > 0.
        \end{equation*}
        As this function is continuous in $s$ by the dominated convergence theorem and~\eqref{e: ind avsets moment},
        \begin{equation*}
            \int_{\partial B_1} \BV\bigg[\int \langle s,x\rangle^m\, \mu(dx)\bigg] \, \sigma_d(ds) >0, 
        \end{equation*}
        where $\sigma_d$ is the unique rotation invariant probability measure on $\partial B_1$. Therefore, for $\varepsilon>0$,~\eqref{e: ind avset var formula} with an insertion of polar coordinates yields
        \begin{equation*}
            \liminf_{\varepsilon\to 0}\frac{1}{\varepsilon^{d+2m}}\int_{B_\varepsilon} \BV[\hat\mu(k)]\, dk
            = \frac{d\kappa_d}{(m!)^2(d+2m)}\int_{\partial B_1} \BV\bigg[\int \langle s,x\rangle^m\, \mu(dx)\bigg]\, \sigma_d(ds)
            >0,
        \end{equation*}
        which concludes the proof.
    \end{proof}
\end{theorem}

The following theorem is analogous to the previous theorem but is concerned with $\infty$-uniformity.

\begin{theorem}
    Let $\mu$ be a random finite measure on $\R^d$. Suppose that $\Phi$ is a locally square-integrable invariant point process that is $\infty$-uniform with radius $\varepsilon>0$, and that $K$ is a transport kernel such that $(K_x^\ast)_{x\in\Phi}$, conditioned on $\Phi$, is a sequence of independent random finite measures distributed like $\mu$. Further, assume that
    \begin{equation}
        \hat\mu(k) \textnormal{ is deterministic for all } k\in B_\varepsilon.
    \end{equation}
    Then $K\Phi$ is a locally square-integrable invariant random measure and $\infty$-uniform with radius $\varepsilon$.
    \begin{proof}
        The assertion follows directly from~\eqref{e: spectral measure cluster process} derived in the proof of Theorem~\ref{t: independent avsets}.
    \end{proof}
\end{theorem}

With this theorem, we can construct the following interesting example.
\begin{example}
    Suppose that $\Phi$ is a locally square-integrable invariant point process which is $\infty$-uniform with radius $\varepsilon>0$. Choose $f:\R^d\to(0,\infty)$ such that $\hat{f}$ has support on $B_\vartheta$ with $\vartheta\in(0,\varepsilon)$. Further, let $(\Phi_x)_{x\in\Phi}$, conditioned on $\Phi$, be a sequence of independent point processes distributed like $\Phi$. Then 
    \begin{equation}
        \Psi := \sum_{x\in\Phi}\sum_{y\in\Phi_x} f(y-x) \delta_y = \int \big(f(\cdot-x) \cdot\Phi_x\big)\, \Phi(dx)
    \end{equation}
    is a locally square-integrable invariant discrete random measure which is $\infty$-uniform with radius $\varepsilon-\vartheta$ and has atoms which are everywhere dense. Further, if $\Phi$ is ergodic then $\Psi$ also is ergodic.

    In particular, if $\Phi$ is the invariant lattice, this construction results in a discrete random measure, where in each cell of the original lattice, the locations of the new atoms are independent and uniformly distributed over the cell. However, for different cells the weights are highly correlated, which guarantees that $\infty$-uniform is preserved.

    By convoluting this discrete random measure with a suitable function, one can also construct an invariant random measure which is $\infty$-uniform while having a (smooth) density with respect to the Lebesgue measure. Another variant can be obtained by replacing the invariant lattice by the $\infty$-uniform point processes constructed in~\cite{Kurasov_Sarnak_2020, Olevskii_Ulanovskii_2020, Alon_Kummer_Kurasov_Vinzant_2025, Björklund_2026}.
\end{example}

Let us return from this small excursion about discrete random measures and nonnegative random fields to our main problem at hand concerning the construction of $p$-uniform point processes with high $p$. Theorem~\ref{t: independent avsets} gives us a clear direction and the next step is to find a suitable $\mu$ that is is of the form $\mu=\sum_{k=1}^n \delta_{X_k}$ for some $n\in\N$ and random vectors $X_1,...,X_n$ in $\R^d$, which guarantees that $K\Phi$ is a point process. Then~\eqref{e: ind avsets fit} clearly implies that $X_1,...,X_n$ almost surely form an averaging set of degree $m-1$ of some measure $\nu$, and~\eqref{e: ind avsets nonfit} implies that they do not almost surely form an averaging set of degree $m$ of any measure. See the comment after Remark~\ref{r: partition persistence} for a short explanation and Appendix~\ref{s: averaging sets} for a proper introduction of averaging sets.

\begin{example}\label{ex: general point cluster}
    Let $n,m \in \N$ and suppose that $\Phi$ is a locally square-integrable invariant point process which is $2m$-uniform. Suppose that $X_1,...,X_n$ almost surely form an averaging set of degree $m-1$ of some measure $\nu$, do not almost surely form an averaging set of degree $m$ of any measure, and that $\BE[X_k^{2m}]< \infty, k\in\{1,...,n\}$. Further, let $\big((X_{x,1},...,X_{x,n})_{x\in\Phi}\big)$, conditioned on $\Phi$, be an independent sequence with elements distributed like $(X_1,...,X_n)$.
    Then
    \begin{equation}
        \Psi :=\sum_{x\in\Phi}\sum_{k=1}^n \delta_{x+X_{x,k}}
    \end{equation}
    is a locally square-integrable invariant point process which is solely $2m$-uniform by Theorem~\ref{t: independent avsets}.

    Note that we recover that an iid-perturbed point process cannot be beyond $2$-uniform if the perturbation is not deterministic and has finite variance. This relation also holds without the assumption of finite variance; see~\cite[Corollary 2.1]{Lachièze-Rey_2025b}. The case that the unperturbed point process itself is beyond $2$-uniform is covered by choosing $m=1$, and otherwise it follows directly from~\eqref{e: spectral measure cluster process}.
\end{example}

A well-known averaging set is that of the nodes of the Chebyshev-Gauss quadrature of degree $2n-1$ for $n\in\N$:
\begin{equation}
    x_k = \cos\Big(\frac{2k-1}{2n}\pi\Big),\quad k\in\{1,...,n\}.
\end{equation}
They form an averaging set of degree $2n-1$ of the measure $\nu$ with the density
\begin{equation}
    f(x) = \I_{(-1,1)}(x) \frac{1}{\sqrt{1-x^2}}, x\in\R,
\end{equation}
with respect to the Lebesgue measure; see, e.g.,~\cite[Section 25]{Abramowitz_Stegun_1965}.

If we introduce an additional parameter $u\in\R$, which we can randomize as in the following example, we reduce the degree of the averaging set for almost all $u\in\R$ by $n$. However, in turn, we obtain a whole class of averaging sets of $\nu$ of the same degree.

\begin{example}
    Assume that $d=1$. Let $n \in \N$ and suppose that $\Phi$ is a locally square-integrable invariant point process which is $2n$-uniform. Further, suppose that $(U_x)_{x\in\Phi}$, conditioned on $\Phi$ is a sequence of iid random variables with distribution $\cU([0,2\pi))$. 
    Let
    \begin{equation}
        X_{x,k} := \cos\Big(2\pi\frac{k}{n} +U_x\Big),\quad x\in\Phi, k\in\{1,...,n\}.
    \end{equation}
    Then
    \begin{equation}
        \Psi :=\sum_{x\in\Phi}\sum_{k=1}^n \delta_{x+X_{x,k}}
    \end{equation}
    is a locally square-integrable invariant point process which is solely $2n$-uniform.

    This assertion follows from Example~\ref{ex: general point cluster} in conjunction with Theorem~\ref{t:averaging set trigonometric polynomial} and the fact that $\cos$ is a trigonometric polynomial of degree $1$.
\end{example}

While we only perturb the points of $\Phi$ in one spatial direction in the preceding example, we can generalize this concept to higher dimensions. There, the measure $\nu$ we approximate is an integral along a curve which is parameterized by trigonometric polynomials. For a definition of trigonometric polynomials; see Definition~\ref{d:trigonometric polynomials}.

\begin{example}\label{ex: trigonometric point cluster on line}
    Let $n \in \N$, and let $p_1,...,p_d$ be real-valued trigonometric polynomials with degrees $m_1,...,m_d\in\N_0$, and suppose that $\Phi$ is a locally square-integrable invariant point process which is $2\big\lfloor \frac{n}{\max(m_1,...,m_d,1)}\big\rfloor$-uniform. Further, suppose that $(U_x)_{x\in\Phi}$, conditioned on $\Phi$ is a sequence of iid random variables with distribution $\cU([0,2\pi))$. 
    Let
    \begin{equation}
        X_{x,k} := \begin{pmatrix}
            p_1\big(2\pi\frac{k}{n} +U_x\big)\\
            \vdots\\
            p_d\big(2\pi\frac{k}{n} +U_x\big)
        \end{pmatrix},\quad x\in\Phi, k\in\{1,...,n\}.
    \end{equation}
    Then
    \begin{equation}
        \Psi :=\sum_{x\in\Phi}\sum_{k=1}^n \delta_{x+X_{x,k}}
    \end{equation}
    is a locally square-integrable invariant point process which is $2\big\lfloor \frac{n}{\max(m_1,...,m_d,1)}\big\rfloor$-uniform, but not beyond $2\big\lfloor \frac{n}{\max(\textnormal{gcf}(m_1,n),...,\textnormal{gcf}(m_d,n))}\big\rfloor$-uniform, where $\textnormal{gcf}$ yields the greatest common factor of two integers.

    The argument is analogous to that in the previous example, but Theorem~\ref{t:averaging set multivariate trigonometric polynomial} is applied in its multivariate form this time.
\end{example}

The following special case of this example was already discussed in~\cite{Lachièze-Rey_2025a} for the case that $n$ is prime. We show that $n$ does not need to be prime for the assertion to hold.

\begin{example}
    Assume that $d=2$. Let $n \in \N$, and suppose that $\Phi$ is a locally square-integrable invariant point process which is $2n$-uniform. Further, suppose that $(U_x)_{x\in\Phi}$, conditioned on $\Phi$, is a sequence of iid random variables with distribution $\cU([0,2\pi))$. 
    Let
    \begin{equation}
        X_{x,k} := \begin{pmatrix}
            \sin\big(2\pi\frac{k}{n} +U_x\big)\\
            \cos\big(2\pi\frac{k}{n} +U_x\big)
        \end{pmatrix},\quad x\in\Phi, k\in\{1,...,n\}.
    \end{equation}
    Then
    \begin{equation}
        \Psi :=\sum_{x\in\Phi}\sum_{k=1}^n \delta_{x+X_{x,k}}
    \end{equation}
    is a locally square-integrable invariant point process which is solely $2n$-uniform.
    
    This assertion follows with the choice of $d:=2,p_1:=\sin, p_2:=\cos$ in Example~\ref{ex: trigonometric point cluster on line}, and since $\sin,\cos$ are trigonometric polynomials of degree $1$.
\end{example}

One can generalize this construction from placing points on a curve to placing them on a manifold with a higher dimension, which is parameterized by a multivariate trigonometric polynomial. We do not put a general statement here and instead refer the reader to Appendix~\ref{ss: Averaging sets of trigonometric polynomials} for the theoretical background. Instead, we present the following example, where the manifold is the surface of the unit ball.

\begin{example}\label{ex: trigonometric hyperspheres}
    Assume $d\geq 2$. Let $n \in \N$ and suppose that $\Phi$ is a locally square-integrable invariant point process which is $2n$-uniform. Further, suppose that $(U^{(x)})_{x\in\Phi}$, conditioned on $\Phi$, is a sequence of iid random vectors with distribution $\cU([0,2\pi)^{d-1})$. 
    Let
    \begin{equation}
        X_{x,k} := \begin{pmatrix}
            \sin\big(2\pi\frac{k_1}{n} +U^{(x)}_1\big)\\
            \cos\big(2\pi\frac{k_1}{n} +U^{(x)}_1\big)\sin\big(2\pi\frac{k_2}{n} +U^{(x)}_2\big)\\
            \vdots\hspace{4mm}\\
            \cos\big(2\pi\frac{k_1}{n} +U^{(x)}_1\big)\cdots \cos\big(2\pi\frac{k_{d-1}}{n} +U^{(x)}_{d-1}\big)
        \end{pmatrix},\quad x\in\Phi, k\in\{1,...,n\}^{d-1}.
    \end{equation}
    Then
    \begin{equation}
        \Psi :=\sum_{x\in\Phi}\sum_{k\in\{1,...,n\}^{d-1}} \delta_{x+X_{x,k}}
    \end{equation}
    is a locally square-integrable invariant point process which is solely $2n$-uniform.

    The argument is similar to those in the preceding examples, but instead of Theorem~\ref{t:averaging set trigonometric polynomial}, we leverage Theorem~\ref{t:averaging set multivariate trigonometric polynomial} and Proposition~\ref{p:averaging set multivariate trigonometric polynomial degree 1}.
\end{example}

However, the measure on the surface of the unit ball, which is approximated in the previous example, is not uniform for $d\geq 3$, whereby the constructed point process cannot be isotropic in this case. Averaging sets for the uniform distribution are called spherical designs in the literature; see, e.g.,~\cite{Seymour_Zaslavsky_1984}. It is known that spherical designs exist in any dimension $d\in\N$, and of any degree $m\in\N$, and that the minimum number of points needed grows like $m^{d-1}$ as $m\to\infty$; see~\cite{Bondarenko_Radchenko_Viazovska_2013}. In this sense, they are as efficient as the trigonometric parametrizations from Example~\ref{ex: trigonometric hyperspheres} with the added advantage that their class is rotation invariant. There also exist explicit constructions of spherical designs of arbitrarily high degree (see, e.g.,~\cite{Xiang_2022}), but the known constructions are not asymptotically efficient in terms of size in relation to degree for dimensions $d \geq 3$.

\begin{example}
    Let $n,m \in \N$. Suppose that $\Phi$ is a locally square-integrable invariant point process which is $2m$-uniform, and that $\{z_1,...,z_n\}\subset \partial B_1$ is a spherical design of degree $m-1$ but not of degree $m$. Further, suppose that $(A_x)_{x\in\Phi}$, conditioned on $\Phi$, is a sequence of iid random matrices with distribution $\cU(SO_d)$.
    Let
    \begin{equation}
        X_{x,k} := A_xz_k,\quad x\in\Phi, k\in\{1,...,n\}.
    \end{equation}
    Then
    \begin{equation}
        \Psi :=\sum_{x\in\Phi}\sum_{k=1}^n \delta_{x+X_{x,k}}
    \end{equation}
    is a locally square-integrable invariant point process which is solely $2m$-uniform. Further, if $\Phi$ is isotropic, then $\Psi$ is also isotropic. Note that such an isotropic $\Phi$ is constructed in Example~\ref{ex:fair tiling high degree}. 
    This example is a special case of Example~\ref{ex: general point cluster}.
\end{example}

\subsection{Persistence of \textit{p}-uniformity under displacements} \label{ss: Persistence of uniformity under displacements}

In this subsection, we analyze the effect of the application of a displacement field on $p$-uniformity of a random measure. Displacement fields are $\R^d$-valued random fields on $\R^d$. Sometimes they are the product of a disintegration of a transport. Otherwise, in particular when we suppose that the displacement field and the source are independent, we only consider random fields with c\`adl\`ag-paths for measurability reasons. As the random fields are multivariate, we reserve the index for the components of the random field, and we access the individual random vectors only using the argument. We refer to Subsection~\ref{ss: random complex measures} for a precise introduction.

In the following, unless explicitly stated otherwise, we always assume that $Z$ is an invariant $\R^d$-valued random field on $\R^d$, and that $\Phi$ is an invariant locally square-integrable random measure. We then define the invariant random measure $\Psi$ by
\begin{equation}\label{e: definition Psi displacement}
    \Psi(B) := \int \I\{x+Z(x)\in B\}\, \Phi(dx).
\end{equation}
Hence, $\Psi = K\Phi$, where $K$ is a probability kernel defined by $K(x):=\delta_{x+Z(x)}, x\in\R^d$.

We start with simple specializations of Theorems~\ref{t:main square-integrable} and~\ref{t:main theorem} to displacement fields.

\begin{theorem}
    Suppose that $Z$ is an invariant $\R^d$-valued random field on $\R^d$, that $\Phi$ is an invariant locally square-integrable random measure, and that $\Psi$ is defined as in~\eqref{e: definition Psi displacement}. Further, either assume that for some $\vartheta>0$ and strongly log-dominating function $\rho$,
    \begin{equation}\label{e:condition square-integrable RF}
        \BE_0^\Phi[\Phi(B_\vartheta) \|Z(0)\|^d \rho(\|Z(0\|))] <\infty,
    \end{equation}
    or assume that $Z$ and $\Phi$ are independent and that
    \begin{equation}
        \BE[\|Z(0)\|^d] < \infty.
    \end{equation}
    Then $\Psi$ is also locally square-integrable.
    \begin{proof}
        The assertion is a simple specialization of Theorem~\ref{t:main square-integrable} to the special case of displacement kernels with an application of Proposition~\ref{p:independent square-integrable} to obtain Condition~\ref{c:condition square-integrable general} from~\eqref{e:condition square-integrable RF}.
    \end{proof}
\end{theorem}

\begin{theorem}\label{t: main theorem displacement}
    Let $p\in[-d,\infty)$. Suppose that $Z$ is an invariant $\R^d$-valued random field on $\R^d$, that $\Phi$ is an invariant locally square-integrable random measure, and that $\Psi$ is defined as in~\eqref{e: definition Psi displacement}. Either assume that for some $\vartheta>0$ and strongly log-dominating function $\rho$,
    \begin{equation}\label{e: random field moment condition}
        \BE_0^\Phi[\Phi(B_\vartheta) \|Z(0)\|^{d+\max(p,0)} \rho(\|Z(0\|))] <\infty,
    \end{equation}
    or assume that $Z$ and $\Phi$ are independent and that
    \begin{equation}\label{e: random field moment condition independent}
        \BE[\|Z(0)\|^{d+\max(p,0)}] < \infty.
    \end{equation}
    Further, assume that $Z^q\cdot\Phi$ is (beyond) $(p-2q)$-uniform for $q\in\N_0$ with $q<\max(\frac{d+p}{2},1)$. Then $\Psi$ is (beyond) $p$-uniform.
    Alternatively, let $\varepsilon>0$ and assume that 
    \begin{equation}
        \BE_0^\Phi\big[\Phi(B_\vartheta) e^{2\varepsilon Z(0)}\big] < \infty,
    \end{equation}
    for some $\vartheta>0$, and that $Z^q\cdot\Phi$ is $\infty$-uniform with radius $\varepsilon$ for $q\in\N_0$. Then $\Psi$ also is $\infty$-uniform with radius $\varepsilon$.
    \begin{proof}
        The assertion is a simple specialization of Theorem~\ref{t:main theorem} to the special case of displacement kernels, where Condition~\ref{c:condition square-integrable general with p} can be obtained from~\eqref{e: random field moment condition} analogous to Proposition~\ref{p:independent square-integrable}.
    \end{proof}
\end{theorem}

In the assumptions of the preceding theorem, we can see that $p$-uniformity of the displaced random measure $\Psi$, up to a moment condition, is directly related to $p$-uniformity of $Z^q\cdot\Phi$ for specific $q\in\N_0$. If we are only interested in $p$-uniformity for $p\leq2$, we can leverage the following proposition to obtain criteria that are easy to check.

\begin{proposition}\label{p: uniformity with random field density}
Suppose that $\Phi$ is an invariant complex random measure that is locally absolutely square-integrable. Further, suppose that $Z$ is an invariant $\C$-valued random field on $\R^d$ such that
\begin{equation}
    \int \BE_{0,y}^{|\Phi|}[|Z(y)Z(0)|] \, \alpha_{|\Phi|}(dy) < \infty.
\end{equation}
Then $Z\cdot\Phi$ is locally absolutely square-integrable. Further, if $\Phi$ is nonnegative and $p$-uniform with $p\leq0$ and
\begin{equation}\label{e:condition hyperfluctuating random field}
    \int \big|\BE_{0,y}^{\Phi}[Z(y)\overline{Z(0)}] - |\BE_{0}^{\Phi}[Z(0)]|^2\big| (1+\|y\|)^{p} \, \alpha_{\Phi}(dy) < \infty,
\end{equation}
then $Z\cdot\Phi$ is also $p$-uniform. If $p<0$, beyond $p$-uniformity also transfers.
\end{proposition}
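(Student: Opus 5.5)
Both parts reduce, via the second-order refined Campbell theorem~\eqref{e: refined Campbell 2}, to integrals against the reduced second moment measure. We then compare the covariance measure of $Z\cdot\Phi$ with a constant multiple of $\beta_\Phi$ plus a remainder that is controlled by the integrability assumption.

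\emph{Local square integrability.} We bound $|Z\cdot\Phi|(B_1)\le\int_{B_1}|Z(x)|\,|\Phi|(dx)$ and apply~\eqref{e: refined Campbell 2} to $|\Phi|$:
\begin{equation*}
\BE\big[|Z\cdot\Phi|(B_1)^2\big]\le\int(\I_{B_1}\star\I_{B_1})(y)\,\BE_{0,y}^{|\Phi|}[|Z(y)Z(0)|]\,\alpha_{|\Phi|}(dy)\le\kappa_d\int\BE_{0,y}^{|\Phi|}[|Z(y)Z(0)|]\,\alpha_{|\Phi|}(dy)<\infty.
\end{equation*}
The invariance of $Z\cdot\Phi$ follows from the joint invariance of $Z$ and $\Phi$.

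\emph{Covariance decomposition.} Now let $\Phi$ be nonnegative and set $\mu:=\BE_0^\Phi[Z(0)]$. By the two refined Campbell theorems, the intensity of $Z\cdot\Phi$ is $\gamma\mu$, and
\begin{equation*}
\alpha_{Z\cdot\Phi}(dy)=\BE_{0,y}^\Phi[Z(y)\overline{Z(0)}]\,\alpha_\Phi(dy).
\end{equation*}
Subtracting $|\gamma\mu|^2\lambda_d$ gives
\begin{equation*}
\beta_{Z\cdot\Phi}=|\mu|^2\beta_\Phi+\nu,\qquad \nu(dy):=\big(\BE_{0,y}^\Phi[Z(y)\overline{Z(0)}]-|\mu|^2\big)\,\alpha_\Phi(dy).
\end{equation*}
By~\eqref{e:condition hyperfluctuating random field} with $p\le0$, the signed measure $\nu$ satisfies $\int(1+\|y\|)^p\,|\nu|(dy)<\infty$, and $\nu=\beta_{Z\cdot\Phi}-|\mu|^2\beta_\Phi$ is a difference of translation-bounded measures.

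\emph{Uniformity via Theorem~\ref{t:asymptotic variance test function}.} Take a smooth compactly supported $f\ge0$ with $\lambda_d(f)\ne0$. Then
\begin{equation*}
\BV[Z\cdot\Phi(f_r)]=|\mu|^2\BV[\Phi(f_r)]+\nu(f_r\star f_r).
\end{equation*}
The first term is $O(r^{d-p})$, or $o(r^{d-p})$ under beyond $p$-uniformity, by Theorem~\ref{t:asymptotic variance test function}. For the second term, $|(f_r\star f_r)(y)|\le c\,r^d\,\I\{\|y\|\le cr\}$, so
\begin{equation*}
r^{-(d-p)}|\nu(f_r\star f_r)|\le c\int r^{p}\,\I\{\|y\|\le cr\}\,|\nu|(dy)\le c'\int(1+\|y\|)^p\,\I\{\|y\|\le cr\}\,|\nu|(dy),
\end{equation*}
where the last step uses $r^p\le c''(1+\|y\|)^p$ on $\{\|y\|\le cr\}$ (valid since $p\le0$ and $r\ge1$). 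This is bounded, which gives $p$-uniformity by the converse direction of Theorem~\ref{t:asymptotic variance test function}.

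\emph{Beyond $p$-uniformity for $p<0$.} Here the integrand $r^p\,\I\{\|y\|\le cr\}\,|\nu|(dy)$ is dominated by $(1+\|y\|)^p$, which is $|\nu|$-integrable, and it tends to $0$ pointwise as $r\to\infty$ because $r^p\to0$. Dominated convergence then gives $o(1)$.

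\emph{Main obstacle.} The step needing most care is justifying the Palm-form expression for $\alpha_{Z\cdot\Phi}$, in particular exchanging integrals for the complex-valued field. The first integrability hypothesis gives exactly the absolute convergence needed for the complex version of~\eqref{e: refined Campbell 2}. For $p=0$ only an $O$-bound comes out, which matches the statement's restriction of beyond-uniformity to $p<0$.
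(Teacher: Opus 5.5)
Your proposal is correct and follows the paper's own argument. The paper also writes $\beta_{Z\cdot\Phi}=|\BE_0^\Phi[Z(0)]|^2\beta_\Phi+\nu$ using the Palm second-moment (transport) formula, handles the $\beta_\Phi$-part by the $p$-uniformity of $\Phi$, and controls $\nu$ through the weighted integrability condition via the real-space variance and dominated convergence; it likewise gets local absolute square integrability by applying the same formula to $|Z|\cdot|\Phi|$.
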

\begin{remark}\label{r: uniformity with random field density}
In the setting of Proposition~\ref{p: uniformity with random field density}, if $\Phi$ and $Z$ are also independent, then one can replace $\BE_{0,y}^{|\Phi|}$, $\BE_{0,y}^{\Phi}$, and $\BE_{0}^{\Phi}$ by $\BE$, which simplifies the second condition to
\begin{equation}
    \int \big|\BC[Z(y),Z(0)]\big| (1+\|y\|)^{p} \, \alpha_{\Phi}(dy) < \infty.
\end{equation}
This condition even works in the case that $\Phi$ is complex-valued.
\end{remark}
\begin{proof}[Proof of Proposition~\ref{p: uniformity with random field density}.]
Note that if $\Phi$ is nonnegative,~\eqref{e: alpha transport formula} and~\eqref{e: beta transport formula} yield
\begin{align}
    \alpha_{Z\cdot\Phi}(dy) &= \BE_{0,y}^{\Phi}[Z(y)\overline{Z(0)}]\, \alpha_\Phi(dy)\label{e:alpha RF formula},\\
    \beta_{Z\cdot\Phi}(dy) &= \big(\BE_{0,y}^{\Phi}[Z(y)\overline{Z(0)}]-\big|\BE_0^\Phi[Z(0)]\big|^2\big)\, \alpha_\Phi(dy) + \big|\BE_0^\Phi[Z(0)]\big|^2\, \beta_\Phi(dy).\label{e:beta RF formula}
\end{align}
Now the assertion on local absolute square-integrality is a consequence of~\eqref{e:alpha RF formula} applied to $|Z|\cdot|\Phi|$. For the result regarding $p$-uniformity, it suffices to consider~\eqref{e:beta RF formula}. The second term is unproblematic because $\Phi$ is $p$-uniform. The first term can be handled with, or at least as in, the last part of Proposition~\ref{p:mixing condition uniformity}.
\end{proof}

In this way, we can obtain the following proposition from Theorem~\ref{t: main theorem displacement}.

\begin{proposition}\label{p: random field low uniformity correlations}
    Let $p\in[-d, 2]$. Suppose that the moment condition~\eqref{e: random field moment condition} is satisfied, and that for $q\in\N$ with $q<\frac{d+p}{2}$,
    \begin{equation}\label{e: random field low uniformity condition}
        \int \frac{\big\|\BE_{0,y}^\Phi[Z(y)^{\otimes q}\times Z(0)^{\otimes q}] - \BE_{0}^\Phi[Z(0)^{\otimes q}]\times \BE_{0}^\Phi[Z(0)^{\otimes q}]\big\|}{1 + \|y\|^{2q-p}}\,\alpha_\Phi(dy) < \infty,
    \end{equation}
    where $\times$ denotes the component-wise product. 
    Then $\Psi$ is $p$-uniform if $\Phi$ is. If $p<2$, then beyond $p$-uniformity also transfers.

    Further, if $Z$ and $\Phi$ are independent, we can replace the moment condition by~\eqref{e: random field moment condition independent} and simplify~\eqref{e: random field low uniformity condition} to
    \begin{equation}\label{e:  random field low uniformity condition independent}
        \int \frac{\big\|\BC[Z(y)^{\otimes q}, Z(0)^{\otimes q}]\big\|}{1 + \|y\|^{2q-p}}\,\alpha_\Phi(dy) < \infty
    \end{equation}
    for $q\in\N$ with $q<\frac{d+p}{2}$, where the covariance is taken component-wise.
    \begin{proof}
        The assertions directly follow from Theorem~\ref{t: main theorem displacement} with an application of Proposition~\ref{p: uniformity with random field density} and Remark~\ref{r: uniformity with random field density} to obtain $(p-2q)$-uniformity of the components of $Z^q\cdot\Phi$ for the required $q\in\N$.
    \end{proof}
\end{proposition}

\begin{remark}
    Condition~\eqref{e:  random field low uniformity condition independent} from Proposition~\ref{p: random field low uniformity correlations} holds if there is a strongly log-dominating function $\rho$ and a $c>0$ such that
    \begin{equation}
        \big\|\BC[Z(y)^{\otimes q}, Z(0)^{\otimes q}]\big\| \leq c\|y\|^{-(d+p)+2q}\rho(\|y\|)^{-1}, \quad y\in \R^d\setminus\{0\}.
    \end{equation}
\end{remark}

Using commonly known bounds of the covariance in terms of mixing coefficients, which were also applied in~\cite{Flimmel_2025}, one can replace the conditions on the covariance in the preceding proposition by $\alpha$-mixing-type conditions. In that case, the moment assumptions have to be stronger as well.

\begin{proposition}\label{p: alpha mixing condition}
    Let $p\in[-d, 2]$. Suppose that there are $\gamma> d+\max(p,0), c>0$ such that
    \begin{align}
        \BE_{0}^{\Phi}[\|Z(0)\|^{\gamma}] &< \infty,\label{e: mixing moment 1}\\
        \BE_{0,y}^{\Phi}[\|Z(0)\|^{\gamma}] &\leq c,\quad y\in\R^d\label{e: mixing moment 2}\\
        \int \frac{\kappa(y)^{\frac{\gamma-2q}{\gamma}}}{1 + \|y\|^{2q-p}}\, \alpha_\Phi(dy) &< \infty,\quad q\in\N, q<\tfrac{d+p}{2}, \label{e:mixing condition}
    \end{align}
    where
    \begin{equation}
        \kappa(y) := \sup_{B_1,B_2\in\cB^d} \big|\BP_{0,y}^\Phi(Z(y)\in B_1, Z(0)\in B_2) - \BP_0^\Phi(Z(y)\in B_1)\BP_0^\Phi(Z(0)\in B_2)\big|,\quad y\in\R^d.
    \end{equation}
    Then $\Psi$ is $p$-uniform if $\Phi$ is. If $p<2$, then beyond $p$-uniformity also transfers.
\end{proposition}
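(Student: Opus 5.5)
We reduce Proposition~\ref{p: alpha mixing condition} to Proposition~\ref{p: random field low uniformity correlations}. Two things need checking. First, the moment condition~\eqref{e: random field moment condition}. Second, the decorrelation condition~\eqref{e: random field low uniformity condition} for every $q\in\N$ with $q<\frac{d+p}{2}$. Since $p\le 2$, such $q$ satisfy $2q<d+2\le \gamma+2$. The relevant exponent pairs are therefore controlled by the assumed $\gamma$-th moments.

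\textbf{Step 1 (moment condition).} Choose $\rho(r):=r^{\gamma-d-\max(p,0)}$. This is strongly log-dominating because $\gamma>d+\max(p,0)$. We then need $\BE_0^\Phi[\Phi(B_\vartheta)\|Z(0)\|^{\gamma}]<\infty$. By the refined Campbell theorem~\eqref{e: refined Campbell 2}, this quantity equals, up to the factor $\gamma_\Phi^{-1}$,
\[
\int_{B_\vartheta}\BE_{0,y}^\Phi[\|Z(0)\|^\gamma]\,\alpha_\Phi(dy)\le c\,\alpha_\Phi(B_\vartheta)<\infty,
\]
where the inequality uses~\eqref{e: mixing moment 2} and local finiteness of $\alpha_\Phi$. (If the Palm form of~\eqref{e: mixing moment 2} refers to $Z(0)$ under $\BP_{0,y}^\Phi$, it also controls $Z(y)$ by the symmetry $y\mapsto -y$ of second-order Palm measures combined with invariance.)

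\textbf{Step 2 (covariance via mixing).} Fix a component index and write $U:=Z(y)^{\otimes q}_{j}$ and $V:=Z(0)^{\otimes q}_{j'}$. We apply Davydov's covariance inequality for $\alpha$-mixing to the joint law of $(Z(y),Z(0))$ under $\BP_{0,y}^\Phi$, compared with the product of the marginals. With exponents $s=t=\gamma/q$, it gives
\[
|\BE_{0,y}^\Phi[UV]-\BE UV_{\mathrm{prod}}|\le C\,\kappa(y)^{1-2q/\gamma}\,\|U\|_{\gamma/q}\|V\|_{\gamma/q}.
\]
The $L^{\gamma/q}$ norms are bounded by~\eqref{e: mixing moment 1} and~\eqref{e: mixing moment 2}. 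The product-of-marginals term must still be matched with $\BE_0^\Phi[Z(0)^{\otimes q}]\times\BE_0^\Phi[Z(0)^{\otimes q}]$. The marginal laws of $Z(y)$ and $Z(0)$ under $\BP_{0,y}^\Phi$ need not equal the first-order Palm law, and this is the main obstacle. Here we read $\kappa$ as comparing with $\BP_0^\Phi$-marginals, as its definition suggests. Davydov's proof then goes through verbatim when the reference measure is any product measure with the stated moment bounds. The standard proof uses truncation at level $M$ and Hölder's inequality, and only needs the total-variation-type bound $\kappa$ plus moments under both measures. That yields exactly the factor $\kappa(y)^{(\gamma-2q)/\gamma}$.

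\textbf{Step 3.} Integrating against $(1+\|y\|^{2q-p})^{-1}\alpha_\Phi(dy)$ and using~\eqref{e:mixing condition} gives~\eqref{e: random field low uniformity condition}. Proposition~\ref{p: random field low uniformity correlations} then yields $p$-uniformity of $\Psi$, and beyond $p$-uniformity for $p<2$.
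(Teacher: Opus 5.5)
Your proposal is correct and follows the paper's route. The paper likewise derives~\eqref{e: random field moment condition} from~\eqref{e: mixing moment 2} and proves the bound $(2+\tilde c)\,\kappa(y)^{(\gamma-2q)/\gamma}$ by the truncation-and-optimize argument behind Davydov's inequality, truncating at $r=\kappa(y)^{-q/\gamma}$ and centering with $\BE_0^\Phi[\tilde Z(0)]^2$ (which implicitly uses the same reading of $\kappa$ you adopt), before invoking Proposition~\ref{p: random field low uniformity correlations}. One small fix: the needed $\gamma>2q$, i.e.\ a positive exponent $1-2q/\gamma$, comes from $2q<d+p\le d+\max(p,0)<\gamma$, not from your chain $2q<d+2\le\gamma+2$, which only yields $2q<\gamma+2$.
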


\begin{remark}
    In Proposition~\ref{p: alpha mixing condition}, if $Z$ and $\Phi$ are independent, then $\kappa(y)$ is simply the \linebreak $\alpha$-mixing-coefficient of $Z(y)$ and $Z(0)$, $y\in\R^d$, and the two moment conditions simplify to
    \begin{equation}\label{e: mixing moment 3}
        \BE[\|Z(0)\|^{\gamma}] < \infty
    \end{equation}
    with only $\gamma\geq d+\max(p,0)$ being required.
    
    Further, the mixing condition~\eqref{e:mixing condition} holds if $d+p\leq 2$, or if $\gamma>d+p$ and
    \begin{equation}\label{e:mixing condition simplified 1}
        \int \kappa(y)^{\frac{(\gamma-2)(d+\vartheta)}{\gamma(d+p-2+\vartheta)}}\, \alpha_\Phi(dy) < \infty
    \end{equation}
    for some $\vartheta>0$, or if there are $\vartheta,\tilde{c}>0$ such that
    \begin{equation}\label{e:mixing condition simplified 2}
       \kappa(y) \leq \tilde{c}\|y\|^{-(d+p-2)\frac{\gamma}{\gamma-2}-\vartheta}, \quad y\in \R^d\setminus\{0\}.
    \end{equation}
    Condition~\ref{e:mixing condition simplified 1} implies condition~\ref{e:mixing condition} by the Hölder inequality, where all cases but $q=1$ drop out. The fact that condition~\ref{e:mixing condition simplified 2} implies condition~\ref{e:mixing condition simplified 1} can be seen by insertion.
    Note that the exponent in condition~\ref{e:mixing condition simplified 1} is always greater than $1$ if $p=0$ and $\vartheta$ is sufficiently small.
\end{remark}
    \begin{proof}[Proof of Proposition~\ref{p: alpha mixing condition}.]
        Let $\gamma> d+\max(p,0), c>0$ and assume that~\eqref{e: mixing moment 1},~\eqref{e: mixing moment 2}, and~\eqref{e:mixing condition} hold. Choose $\tilde{Z}:=Z_1^{q_1}\cdots Z_d^{q_d}$ for some $q\in\N_0^d$ with $\|q\|_1\in[1,\frac{d+p}{2})$. Let $r<0$ and define $\tilde{Z}^{(1)}:= \tilde{Z}\I\{|\tilde{Z}|\leq r\}$, $\tilde{Z}^{(2)}:= \tilde{Z}\I\{|\tilde{Z}|>r\}$, whereby $\tilde{Z}=\tilde{Z}^{(1)}+\tilde{Z}^{(2)}$. Then we can employ the triangle inequality and the fact that $(\tilde{Z}^{(2)})^a\leq (\tilde{Z}^{(2)})^{a+b}r^{-b}$, $a,b>0$, to obtain that there is a $\tilde{c}>0$ such that
        \begin{align*}
            \big|\BE_{0,y}^\Phi[\tilde{Z}(y)\tilde{Z}(0)]-\BE_0^\Phi[\tilde{Z}(0)]^2\big| &\leq \big|\BE_{0,y}^\Phi[\tilde{Z}^{(1)}(y)\tilde{Z}^{(1)}(0)]-\BE_0^\Phi[\tilde{Z}^{(1)}(0)]^2\big| \nonumber\\
            &\qquad  + \Big(\big|\BE_{0,y}^\Phi[\tilde{Z}^{(2)}(y)]\big|+\big|\BE_{0,y}^\Phi[\tilde{Z}^{(2)}(0)]\big|+2\big|\BE_{0}^\Phi[\tilde{Z}^{(2)}(0)]\big|\Big)r\nonumber\\
            &\qquad  + \big|\BE_{0,y}^\Phi[\tilde{Z}^{(2)}(y)^2]\big|+\big|\BE_{0,y}^\Phi[\tilde{Z}^{(2)}(0)^2]\big|+\big|\BE_{0}^\Phi[\tilde{Z}^{(2)}(0)]\big|^2\nonumber\\
            &\leq 2\kappa(y)r^2 + \tilde{c}r^{2-\frac{\gamma}{q}},\quad y\in\R^d.
        \end{align*}
        If $\kappa(y)>0$, choosing $r:=\kappa(y)^{-\frac{q}{\gamma}}$ yields that 
        \begin{equation*}
            \big|\BE_{0,y}^\Phi[\tilde{Z}(y)\tilde{Z}(0)]-\BE_0^\Phi[\tilde{Z}(0)]^2\big| \leq (2+\tilde{c}) \kappa(y)^{\frac{\gamma-2q}{\gamma}}.
        \end{equation*}
        If $\kappa(y)=0$, the bound is trivial. It remains to apply Proposition~\ref{p: random field low uniformity correlations} to obtain the main part of the assertion. Note that the moment condition~\eqref{e: random field moment condition} follows from~\eqref{e: mixing moment 2}.
    \end{proof}

For $p=0$ and with independence between $Z$ and $\Phi$, similar results have been derived before. However, our assumptions are basically the weakest. In~\cite[Corollary 1]{Flimmel_2025}, even if the perturbations are uniformly bounded, it is required that $\kappa(y)$ decays like $\|y\|^{-d-\vartheta}$ as $y\to\infty$ for some $\vartheta>0$. Here we only require a decay like $\|y\|^{-d+2-\vartheta}$ with some specific $\vartheta\in(0,2)$ depending on $\gamma$. This fact makes our result also more efficient than the one obtained in~\cite[Theorem  5.1]{Klatt_Last_Lotz_Yogeshwaran_2025} up to the basic moment condition~\eqref{e: mixing moment 3}. We can leverage this increased efficiency to obtain $p$-uniformity for some $p\in(0, 2]$ under the same assumptions that would simply yield beyond $0$-uniformity applying the previously derived theorems.

For a Gaussian displacement field independent of the source, the criterion from Proposition~\ref{p: random field low uniformity correlations} can be simplified differently to a condition on the decay of only the first-order correlations. Again, for the case $p=0$, similar conditions have been derived before, but the following are the most general. In~\cite[Theorem 2]{Flimmel_2025} and~\cite[Example 5.2]{Klatt_Last_Lotz_Yogeshwaran_2025}, it is required that the decay of the bound on the RHS of~\eqref{e: GRF covariance condition decay} is smaller by a factor of $\|y\|^{-2}$. The result we provide is basically sharp in the sense that for each $p\in[-d,2)$ there is a GRF (Gaussian random field) $Z$ such that $\Psi$ (with $\Phi:=\lambda_d$) is solely $p$-uniform and
\begin{equation*}
    \big\|\BC[Z(y), Z(0)]\big\| = O(\|y\|^{-(d+p)+2})
\end{equation*}
as $\|y\|\to\infty$. for $p=2$, the counterexample is not even $2$-uniform.

\begin{proposition}\label{p: gaussian mixing}
    Let $p\in[-d, 2]$. Assume that $Z$ is a Gaussian random field and assume that $Z$ and $\Phi$ are independent. Further, assume that
    \begin{equation}\label{e: GRF covariance condition}
        \int \frac{\big\|\BC[Z(y), Z(0)]\big\|}{1 + \|y\|^{2-p}}\,\alpha_\Phi(dy) < \infty,
    \end{equation}
    where the covariance is the full covariance matrix.
    Then $\Psi$ is $p$-uniform if $\Phi$ is, and if $p<2$, then beyond $p$-uniformity also transfers.
\end{proposition}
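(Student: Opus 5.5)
We reduce the statement to Proposition~\ref{p: random field low uniformity correlations} in its independent form. Since $p\le 2$, the relevant $q\in\N$ satisfy $1\le q<\frac{d+p}{2}$. We need two inputs: the moment condition~\eqref{e: random field moment condition independent}, and condition~\eqref{e:  random field low uniformity condition independent} for each such $q$.

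The moment condition is immediate. $Z(0)$ is Gaussian and therefore has moments of every order. We assume, as the setting implicitly requires, that $Z$ has finite second moments; Gaussianity then gives finiteness of all absolute moments of $\|Z(0)\|$, including the $(d+\max(p,0))$-th.

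The main work is to bound the covariance of the tensor powers by the first-order covariance. Write $\mu:=\BE[Z(0)]$ and $C(y):=\BC[Z(y),Z(0)]$; the latter is bounded, since $\|C(y)\|\le \operatorname{tr}\BC[Z(0),Z(0)]$ by Cauchy--Schwarz. For fixed multi-indices, the component $\BC\big[Z(y)^{\otimes q}_{i_1\dots i_q},Z(0)^{\otimes q}_{j_1\dots j_q}\big]$ is the covariance of two degree-$q$ monomials in the jointly Gaussian vector $(Z(y),Z(0))$. Expand each monomial around the mean, $Z=\mu+\tilde Z$, and apply Isserlis'/Wick's theorem to the centered parts. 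The full expectation of the product is a sum over pairings. Pairings that contain no edge between a factor at $y$ and a factor at $0$ factorize exactly into the product of the separate expectations, so they cancel against $\BE[\cdot]\BE[\cdot]$. Every surviving pairing therefore contains at least one cross edge, that is, a factor $C(y)_{ab}$. All remaining factors are bounded by a constant depending only on $q$, $\mu$ and $\BC[Z(0),Z(0)]$, because each further factor is either a bounded cross-covariance entry, a same-point covariance, or a mean. Hence
\begin{equation*}
  \big\|\BC[Z(y)^{\otimes q},Z(0)^{\otimes q}]\big\|\le c_q\,\|C(y)\|,\quad y\in\R^d .
\end{equation*}
The mean terms need care: they are what stop us from getting the better bound $\|C(y)\|^q$ and force us to stop at one cross factor. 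That bound is enough here.

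Inserting this bound, \eqref{e:  random field low uniformity condition independent} reduces to
\begin{equation*}
  \int \frac{\|C(y)\|}{1+\|y\|^{2q-p}}\,\alpha_\Phi(dy)<\infty .
\end{equation*}
For $q\ge1$ we have $1+\|y\|^{2q-p}\ge c\,(1+\|y\|^{2-p})$: when $2q-p\ge 2-p$ this holds for $\|y\|\ge1$, and for $\|y\|<1$ both sides are bounded above and below. So the integral is dominated by the hypothesis~\eqref{e: GRF covariance condition}. Proposition~\ref{p: random field low uniformity correlations} then yields that $\Psi$ is $p$-uniform if $\Phi$ is, and that beyond $p$-uniformity transfers when $p<2$.

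The only nontrivial step is the Wick-expansion bound; everything else is bookkeeping.
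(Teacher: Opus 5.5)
Your proposal is correct and is essentially the paper's proof: you reduce to Proposition~\ref{p: random field low uniformity correlations} and use Isserlis' theorem to bound each higher-order covariance by a constant times $\|\BC[Z(y),Z(0)]\|$, because every pairing that survives the subtraction of $\BE[\cdot]\BE[\cdot]$ contains a cross factor. You also make explicit what the paper leaves implicit: the moment condition, the weight comparison $1+\|y\|^{2q-p}\ge\tfrac12(1+\|y\|^{2-p})$, and the non-centered case, which the paper's direct use of Isserlis tacitly treats as centered. Only your side remark is off: even for centered $Z$, same-point pairings already rule out a bound of order $\|C(y)\|^q$ when $q\ge3$, but this plays no role in the argument.
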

\begin{remark}
    In Proposition~\ref{p: gaussian mixing},~\eqref{e: GRF covariance condition} is implied by the existence of a strongly log-dominating function $\rho$ and a constant $c>0$ such that
    \begin{equation}\label{e: GRF covariance condition decay}
        \big\|\BC[Z(y), Z(0)]\big\| \leq c\|y\|^{-(d+p)+2}\rho(\|y\|)^{-1}, \quad y\in \R^d\setminus\{0\}.
    \end{equation}
\end{remark}
    \begin{proof}[Proof of Proposition~\ref{p: gaussian mixing}.]
        By Proposition~\ref{p: random field low uniformity correlations}, it suffices to show that there is a $c>0$ such that 
        \begin{equation}\label{e: gauss higher covariance bound}
            |\BC[Z_{j_1}(0)\cdots Z_{j_q}(0), Z_{j_1}(y)\cdots Z_{j_q}(y)]| \leq c \|\BC(Z(0), Z(y))\|, \quad y\in\R^d,
        \end{equation}
        for all $j\in\N_0^q, q\in\N, q<\tfrac{d+p}{2}$. Therefore, let $j\in\N_0^q, q\in\N, q<\tfrac{d+p}{2}$. Then, by Isserlis theorem, see~\cite{ISSERLIS_1918}, 
        \begin{equation*}
            \BC[Z_{j_1}(0)\cdots Z_{j_q}(0), Z_{j_1}(y)\cdots Z_{j_q}(y)] = \sum_{p\in P_{q}} \prod_{\{(k_1,l_1), (k_2, l_2)\}\in p} \BC[Z_{j_{k_1}}(l_1 y), Z_{j_{k_2}}(l_2 y)],
        \end{equation*}
        where $P_{q}$ is the set of partitions of $\{1,...,q\}\times\{0,1\}$ into pairs such that every partition contains at least one pair $\{(k_1,l_1), (k_2, l_2)\}$ with $l_1\neq l_2$. Since every summand contains at least one factor where $Z$ is evaluated one in $0$ and once in $y$, and since the remaining factors can then be bounded using the Cauchy-Schwarz inequality, there is a constant $c>0$ which depends on $q$ and $\|\BV[Z(0)]\|$ such that~\eqref{e: gauss higher covariance bound} holds. Finally, as there are only finitely many $q\in\N$ with respect to which the inequality has to hold, the constant $c>0$ can also be chosen independently of $q$.
    \end{proof}

From now on, we focus on the case that $Z$ and $\Phi$ are independent. In this case, we can directly calculate the Bartlett spectral measure of $Z\cdot \Phi$.

\begin{proposition}\label{p: ind random field convolution formula}
    Let $Z$ be a $\C$-valued random field on $\R^d$. Suppose that it has a finite second moment, and define $C:\R^d\to [0,\infty), y\mapsto \BC[Z(y), Z(0)]$. Further, assume that $Z$ and $\Phi$ are independent. Then, as $Z$ and $\Phi$ are independent,
    \begin{align}
        \beta_{Z\cdot\Phi} &= C \cdot \alpha_\Phi + \big|\BE[Z(0)]\big|^2\beta_\Phi,\\
        \hat\beta_{Z\cdot\Phi} &= (2\pi)^{-d}\hat{C} \ast \hat\alpha_\Phi + \big|\BE[Z(0)]\big|^2\hat\beta_\Phi.
    \end{align}
    \begin{proof}
        The formulas are a direct consequence of the more general formula~\eqref{e: beta transport formula}. We choose $K_x:= Z(x) \delta_x, x\in\R^d$ and use that $\Phi$ and $K$ are independent in this case.
    \end{proof}
\end{proposition}
Note that the second summand is $0$ in both formulas if $Z$ is centered.

Proposition~\ref{p: ind random field convolution formula} also shows how rare it is that $Z\cdot \Phi$ is beyond $0$-uniform, whereby Theorem~\ref{t: main theorem displacement} only yields that that $\Psi$ is $p$-uniform for some $p\leq2$ in most cases. We illustrate this fact for $d=1$ with the following proposition under very mild assumptions. The picture is similar for $d\geq2$, but the edge cases are harder to rule out neatly.

\begin{proposition}\label{p: d=1 p=2 limit example}
    Let $d=1$. Suppose that $Z$ is not deterministic, that it has a finite second moment, and define $C:\R\to [0,\infty), y\mapsto \BC[Z(0), Z(y)]$. Further, assume that $Z$ and $\Phi$ are independent, and that $\Phi$ is (pseudo-) ergodic and not $\infty$-uniform. Finally, either assume that there is an $a>0$ such that
    \begin{equation}\label{e:counterexample p=2 random field 1}
        \int e^{a|x|} |C(x)|\, dx <\infty,
    \end{equation}
    or assume that the following integral exists for some $n\in\N_0$, and that
    \begin{equation}\label{e:counterexample p=2 random field 2}
        \int x^{2n} C(x)\, dx \neq 0.
    \end{equation}
    Then $Z\cdot \Phi$ is not beyond $0$-uniform, and $\Psi$ is not beyond $2$-uniform.

    \begin{proof}
        First, we notice that~\eqref{e:counterexample p=2 random field 1} implies~\eqref{e:counterexample p=2 random field 2}, as $Z$ is not deterministic, and therefore $C\neq 0$. In turn,~\eqref{e:counterexample p=2 random field 2} implies that $\hat{C}$ is integrable and continuous, and that there is some $\varepsilon_0>0$ such that $\hat{C}(k)>0, k\in B_{2\varepsilon_0}\setminus\{0\}$. Further, because $\Phi$ is (pseudo-) ergodic and not\linebreak $\infty$-uniform, $\hat\beta_\Phi(B_{\varepsilon_0}\setminus\{0\}) = \hat\beta_\Phi(B_{\varepsilon_0})>0$, where we used Proposition~\ref{p:pseudo ergodic}. Hence, there is also an $\varepsilon_1\in(0,\tfrac{\varepsilon_0}{2})$ such that $\hat\beta_\Phi(B_{\varepsilon_0}\setminus B_{2\varepsilon_1})>0$. By continuity of $\hat{C}$, there is a $c>0$ such that $\hat{C}(k)\geq c, k\in B_{\varepsilon_0+\varepsilon_1}\setminus B_{\varepsilon_1}$. In total, we obtain that 
        \begin{equation*}
            (\hat{C}\ast\hat\alpha_\Phi)(k) = \int \hat{C}(k-s) \,\hat\alpha_\Phi(ds) \geq c\hat\beta_\Phi(B_{\varepsilon_0}\setminus B_{2\varepsilon_1}) >0, \quad k\in B_{\varepsilon_1}.
        \end{equation*}
        By the formula for $\hat\beta_{Z\cdot\Phi}$ from Proposition~\ref{p: ind random field convolution formula}, we can conclude that $\varepsilon^{-d}\hat\beta_{Z\cdot\Phi}(B_\varepsilon)$ is bounded from below, as $\varepsilon\to0$. Hence, $Z\cdot\Phi$ is not beyond $0$-uniform.
        
        For the final part of the proof, we can assume that $Z$ is centered, i.e., $\BE[Z(0)]=0$, as centering $Z$ does not change the distribution of $\Psi$. Let $f\in\cC_c^\infty(\R, [0,\infty))$ with $f\neq0$, and recall Definition~\ref{d: f_r}. From the more precise statement in Theorem~\ref{t:main theorem formula} compared to Theorem~\ref{t: main theorem displacement}, we know that, as $r\to\infty$,
        \begin{align}\label{e: covariance equation p=2 counterexample random field}
            \BV[\Psi(f_r)] &= \BV[\Phi(f_r) + (Z\cdot\Phi)(f_r')] + o(r^{-1})\nonumber\\
            &= \BV[\Phi(f_r)] + 2\BC[\Phi(f_r), (Z\cdot\Phi)(f_r')]  + \BV[(Z\cdot\Phi)(f_r')] + o(r^{-1}).
        \end{align}
        From the first part of the proof and Proposition~\ref{p:asymptotic variance test function zero integral} we know that\linebreak $\BV[(Z\cdot\Phi)(f_r')]=r^{-2}\BV[(Z\cdot\Phi)(f'(\tfrac{\cdot}{r}))]\neq o(r^{-1})$. Hence, if $\Psi$ was beyond $2$-uniform, whereby $\BV[\Psi(f_r)]=o(r^{-1})$ by Theorem~\ref{t:asymptotic variance test function}, the covariance term on the right-most side of~\eqref{e: covariance equation p=2 counterexample random field} would need to cancel the part which is not $o(r^{-1})$ since a variance cannot be negative. However, since $Z$ is centered and independent of $\Phi$, we know that the intensity of $Z\cdot\Phi$ is $0$, and
        \begin{align}
            \BC[\Phi(f_r), (Z\cdot\Phi)(f_r')] = \BE[\Phi(f_r) (Z\cdot\Phi)(f_r')]\nonumber
            =\BE[Z(0)]\BE[\Phi(f_r) \Phi(f_r')] = 0,\quad r>0.
        \end{align}
        Actually, one can even show that both factors on the right-most side are $0$.
        Thus, the cancellation is impossible, and $\Psi$ cannot be beyond $2$-uniform.
    \end{proof}
\end{proposition}

Let us now come back to the Gaussian case. Then we can construct examples such that $\Psi$ is $4$-uniform. However, at least if $d=1$, we can rigorously show that it is impossible that $\Psi$ is beyond $4$-uniform under very mild assumptions. In the physics literature, it was already established in~\cite{Gabrielli_2004} that $4$-uniformity is usually the limit in this case, but the precise conditions were not derived yet.

\begin{proposition}
    Let $d=1$. Suppose that $Z$ is a Gaussian random field, and define \linebreak $C:\R\to [0,\infty), y\mapsto \BC[Z(0), Z(y)]$. Assume that $\Phi\neq0$, and that $Z$ and $\Phi$ are independent. Further, assume that $\hat{C}$ is not purely discrete, which, e.g., is the case if $Z$ is ergodic and nondeterministic, or if $\limsup_{x\to\infty}|C(x)|<C(0)$. Then $Z^2\cdot\Phi$ is not beyond $0$-uniform and $\Psi$ is not beyond $4$-uniform.
    
    More precisely, if $Z$ is ergodic, $\Phi$ is solely $p_0$-uniform, $Z\cdot\Phi$ solely $p_1$-uniform, and $Z^2\cdot\Phi$ solely $p_2$-uniform, then $\Psi$ is solely $\min(p_0, p_1+2, p_2+4)$-uniform. We always have $p_2\geq 0$, and most of the time also $p_1\geq0$; see Proposition~\ref{p: d=1 p=2 limit example}. However, if, e.g., $\Phi = \lambda_1$ or $\Phi=\Z+U, U\sim\cU([0,1))$, then $p_1<0$ is possible, and we can even choose $Z$ such that $p_1$ exactly matches any value below $0$ while keeping $p_2=0$. In this way, $p$-uniformity for any $p\in[-1, 4]$ is obtainable exactly with an appropriate choice of $Z$ and $\Phi$.

    \begin{proof}
        Before we start with the main part of the proof, we show that $\hat{C}$ is not purely atomic if $Z$ is either ergodic and nondeterministic, or if $\limsup_{x\to\infty}|C(x)|<C(0)$. In both cases, we can assume that $Z$ is centered without loss of generality. In the first case, we can derive that $Z^2$ must be ergodic as well, and hence by Proposition~\ref{p:pseudo ergodic} and Isserlis theorem,
        \begin{equation*}
            0 = \hat\beta_{Z^2\cdot\lambda_1}(\{0\}) = \pi^{-1} (\hat{C}\ast\hat{C})(\{0\}) = \pi^{-1}\sum_{x\in\R} \hat{C}(\{x\})^2;
        \end{equation*}
        see~\cite{ISSERLIS_1918}.
        As $Z$ is nondeterministic, $\hat{C}\neq0$, whereby it must have a non-trivial non-atomic part. In the second case, we can leverage the fact that $\hat{C}$ is a finite measure, and if it were purely discrete, it would be almost periodic. Thereby, for every $\varepsilon>0$, there is an $l>0$ such that $|C(x+l)-C(x)|<\varepsilon, x\in\R$; see~\cite{Bohr_1925}. Note, however, that there is no equivalence, as the $\limsup$ can be equal to $C(0)$, even if $\hat{C}$ has no discrete component. This can happen with specific singular continuous measures, e.g., if $\hat{C}$ comes from $C(x)=\prod_{n=1}^\infty\cos(2^{-n^2}x)^2, x\in\R$.
        
        Let us now come to the main part of the proof. Choose $\mu\neq0$ such that it has bounded support and such that it is dominated by the non-atomic part of $\hat{C}$. From Proposition~\ref{p: ind random field convolution formula}, we can derive that
        \begin{equation}\label{e: beta Z^2Phi inequality}
            \hat{\beta}_{Z^2\cdot\Phi} \geq 2^{-1}\pi^{-2}\hat\alpha_\Phi(\{0\})(\hat{C}\ast\hat{C})\geq 2^{-1}\pi^{-2}\hat\alpha_\Phi(\{0\}) (\mu\ast\mu).
        \end{equation}
        Choose $\tilde{Z}$ as an invariant centered GRF with correlation function $\tilde{C}$ such that $\hat{\tilde{C}}=\mu$. Since $\Phi\neq0$, whereby $\hat\alpha_\Phi(\{0\})>0$, we can conclude from~\eqref{e: beta Z^2Phi inequality} that $Z^2\cdot\Phi$ is at most as $p$-uniform as $\tilde{Z}^2$. We obtain that the random field $\tilde{Z}^2$ is not beyond $0$-uniform from Theorem~\ref{t:asymptotic variance test function} since \linebreak $\BC[\tilde{Z}(0)^2, \tilde{Z}(y)^2]= 2 \tilde{C}(y)^2, y\in\R$ by Isserlis formula, see~\cite{ISSERLIS_1918}, and therefore
        \begin{equation*}
            \frac{\BV[(\tilde{Z}^2\cdot\lambda_1)(B_r)]}{r^d} = 2\int \max(1-\tfrac{|y|}{r}, 0) \tilde{C}(y)^2 \, dy \xrightarrow{r\to\infty} 2\int \tilde{C}(y)^2\, dy>0.
        \end{equation*} 
        We continue with the analysis of $\Psi$. Let $f\in \cC_c^\infty(\R, [0,\infty))$ with $f\neq 0$, and recall Definition~\ref{d: f_r}. If we apply the more precise Theorem~\ref{t:main theorem formula} instead of Theorem~\ref{t: main theorem displacement}, we obtain that, as $r\to\infty$,
        \begin{equation*}
            \BV[\Psi(f_r)] = \BV\big[\Phi(f_r) + (Z\cdot\Phi)(f_r') + (Z^2\cdot\Phi)(f_r'')\big] + o(r^{-3}).
        \end{equation*}
        As in Proposition~\ref{p: d=1 p=2 limit example}, it remains to analyze the covariances. 
        Using the independence of $Z$ and $\Phi$ and partial integration, we obtain that, as $r\to\infty$,
        \begin{align*}
            \BC\big[\Phi(f_r),(Z^2\cdot\Phi)(f_r'')\big] &= \int (f_r\star f_r'')(y) \BE[Z(y)^2]\, \beta_\Phi(dy)\nonumber\\
            &= -C(0)\BV[\Phi(f_r')]\nonumber\\
            &= O(\BV[\Phi(f_r)]r^{-2}), 
        \end{align*}
        applying Theorem~\ref{t:asymptotic variance test function} and Remark~\ref{r: f_r formulas} in the last step. Similarly, partial integration yields that $f_r\star f_r'$ and $f_r'\star f_r''$ are point symmetric around zero, whereby 
        \begin{equation*}
            \BC\big[\Phi(f_r),(Z\cdot\Phi)(f_r')\big] = \BC\big[(Z\cdot\Phi)(f_r'),(Z^2\cdot\Phi)(f_r'')\big] = 0,\quad r>0.
        \end{equation*}
        Hence, as $r\to\infty$,
        \begin{equation}\label{e: gaussian pert variance equation}
            \BV[\Psi(f_r)] = \BV[\Phi(f_r)] + \BV[(Z\cdot\Phi)(f_r')] + \BV[(Z^2\cdot\Phi)(f_r'')] + o(r^{-3}+\BV[\Phi(f_r)]).
        \end{equation}
        From this equation, we can obtain that $\Psi$ is not beyond $4$-uniform, as for $r>0$, by~\eqref{e: beta Z^2Phi inequality},
        \begin{align*}
            r^3\BV[(Z^2\cdot\Phi)(f_r'')] &= (2\pi)^{-d}\int |rk|^4|\hat{f}(rk)^2|\, \hat\beta_{Z^2\cdot\Phi}(dk)\nonumber\\
            &\geq (2\pi)^{-d}\hat\alpha_\Phi(\{0\}) \int |rk|^4|\hat{f}(rk)^2|\, (\mu\star\mu)(dk)\nonumber\\
            &= \hat\alpha_\Phi(\{0\}) \BV[(\tilde{Z}^2\cdot\lambda_1)(f''(\tfrac{\cdot}{r}))],
        \end{align*}
        and $\tilde{Z}^2$ is pseudo-ergodic, whereby Proposition~\ref{p:asymptotic variance test function zero integral} applies.

        Now, we can come to the last part of the proof, where we assume that $Z$ is ergodic, that $\Phi$ is solely $p_0$-uniform, $Z\cdot\Phi$ solely $p_1$-uniform, and $Z^2\cdot\Phi$ solely $p_2$-uniform. The fact that $\Psi$ is $\min(p_0, p_1+2, p_2+4)$-uniform directly follows from~\eqref{e: gaussian pert variance equation} and Remark~\ref{r: f_r formulas}. It remains to show that $\Psi$ is not beyond $\min(p_0, p_1+2, p_2+4)$-uniform. However, this fact also follows from~\eqref{e: gaussian pert variance equation}. If $p_0=-1$, it is clear that $\Psi$ is also solely $-1$-uniform, and if $p_0>-1$, then $\Psi$ is pseudo-ergodic. This property also transfers to $Z\cdot\Phi$ and $Z^2\cdot\Phi$ by Proposition~\ref{p: ind random field convolution formula}, as $Z$ is assumed to be ergodic, whereby $\hat{C}$ and $\hat{C}\ast\hat{C}$ do not have any atoms, and as $\BC[Z(0)^2, Z(y)^2]=2 C(y)^2 + 4 \BE[Z(0)]^2C(y)$ by Isserlis formula; see~\cite{ISSERLIS_1918}. Hence, the assertion follows with Proposition~\ref{p:asymptotic variance test function zero integral}.
    \end{proof}
\end{proposition}

The last two propositions suggest that Theorem~\ref{t: main theorem displacement}, at least in the independent case, is only applicable up to $4$-uniformity in most cases. While we were able to construct many examples for the more general transport kernels in subsections~\ref{ss: pp from invariant partitions} and~\ref{ss: pp from averaging sets}, we are not aware of any interesting examples such that $Z^q\cdot\lambda_d$ is $p$-uniform for high $p$ for all $q\in\{0,...,n\}$ and $n\geq2$.

\subsection{Lattices at positive temperature}\label{ss: Lattices at positive temperature}

As it has been shown in both physicists (see, e.g.,~\cite[Section V]{Kim_Torquato_2018}) and mathematics (see~\cite{Ruelle_1970, Dereudre_Flimmel_2024}), compressible particle systems are not hyperuniform, i.e., beyond $0$-uniform, at positive temperature. Physicists often model crystals at positive temperature as a lattices with displacements, known as phonons. Here, we extended the results from a mathematical perspective. 

We consider a point process and view the positive temperature state as a random displacement of some zero-temperature state. If these displacements were independent of the initial state and each other, like it is assumed in the so-called Einstein model~\cite{Einstein_1907}, the temperature could only affect $2$-uniformity; see Example~\ref{ex: general point cluster}. However, in the newer Debye model~\cite{Debye_1912}, the displacements are allowed to be correlated, which changes the picture.

More precisely, we consider a model from~\cite[Section V]{Kim_Torquato_2018}, where the lattice is modeled as a point process with springs along the lattice edges. There, it was already shown that this system, unlike the perfect lattice, is not beyond $0$-uniform. However, we add our results to their technique to make the proof mathematically tight. To this end, we consider only the case $d\geq 3$, as in lower dimensions one can no longer model a thermalized crystal as a stationarily displaced perfect crystal with a typical displacement $Z$ that satisfies $\BE[\|Z(0)\|^d] < \infty$; see, e.g.,~\cite[Section 137]{Landau_Lifshitz_2013}. This integrability is a fundamental assumption to make our techniques applicable; see Condition~\ref{c:condition square-integrable Markov}. On the other hand, the techniques used here should apply not only to the lattice model we consider but also to other Debye solids for $d\geq3$. In this context, the central assumption is that the dispersion relation is linear in $0$ asymptotically.

Coming back to the example at hand, in~\cite[Section V]{Kim_Torquato_2018} it was derived that if \linebreak $\Phi:=\Z^d+U, U\sim\cU([0,1)^d),$ is the stationary lattice, the transport to its positive temperature state can be facilitated by a transport kernel $K$ of the form
\begin{equation}
    K^\ast(x) := \delta_{Z(\lfloor x-U\rfloor)}, \quad x\in\R^d,
\end{equation}
where $Z$ is an $\R^d$-valued centered stationary random field on $\Z^d$ with uncorrelated components, where each component has the same correlation function $C$ that satisfies 
\begin{equation}\label{e: C thermalized crystal}
    \hat{C}(k) = \bigg(4\kappa\sum_{j=1}^d \sin\big(\tfrac{k_j}{2}\big)^2\bigg)^{-1},\quad k\in [0,2\pi)^d\setminus\{0\},
\end{equation}
where $\kappa>0$ is the spring constant used. To allow for the calculation of the correlation of the second and higher moments of $Z$, which we need for $d\geq 5$, we additionally assume that $Z$ is a Gaussian random field. Recall Definition~\ref{d: f_r}.

\begin{proposition}
    Under the assumptions previously made, $K\Phi$, i.e., the lattice at positive temperature, is solely $0$-uniform. In particular, for $f\in \cC_c^\infty$,
    \begin{equation}
        \frac{\BV[K\Phi(f_r)]}{r^d} \xrightarrow{r\to\infty} \frac{\|f\|_2}{\kappa} > 0.
    \end{equation}
    If $\|f\|_2=1$, the above limit corresponds to the structure factor in $0$.
    \begin{proof}
        Since $Z$ is a Gaussian random field, Condition~\ref{c:condition square-integrable stealthy} is fulfilled for any $\varepsilon>0$. Therefore, $\Psi_q$, as defined in Theorem~\ref{t:main theorem}, is well defined for any $q\in\N_0$. For $q=0$, the analysis is simple since $K$ is a probability kernel and therefore $\Psi_0=\Phi$, whereby it is $\infty$-uniform. We continue with $q=1$, which turns out to contribute the leading order term in the approximation. Since the $d$ components of the vector-valued random measure are uncorrelated, it suffices to analyze the individual components, which we call $\Psi_{1,j}$ for $l\in\{1,...,d\}$. From the assumptions and Proposition~\ref{p: ind random field convolution formula}, we directly obtain that
        \begin{equation}
            \hat\beta_{\Psi_{1,l}}(dk) = \bigg(4\kappa\sum_{j=1}^d \sin\big(\tfrac{k_j}{2}\big)^2\bigg)^{-1}\, dk.
        \end{equation}
        Note that, as $k\to 0$,
        \begin{equation}\label{e: C thermalized crystal asym}
            4\kappa\sum_{j=1}^d \sin\big(\tfrac{k_j}{2}\big)^2 = \kappa \|k\|^2 + O(\|k\|^4).
        \end{equation}
        Suppose that $f\in\cC_c^\infty$. Then, $f$ is differentiable, and as $r\to\infty$,
        \begin{align*}
            \frac{\BV[\Psi_{1,l}(\partial_l f_r)]}{r^{d}} &= (2\pi)^{-d}r^d\int |k_l|^2|\hat{f}(rk)|^2\, \hat\beta_{\Psi_{1,1}}(dk)\nonumber\\
            &= (2\pi)^{-d}r^d\int \frac{|k_l|^2|\hat{f}(rk)|^2}{4\kappa\sum_{j=1}^d \sin\big(\tfrac{k_j}{2}\big)^2}\, dk\nonumber\\
            &= (2\pi)^{-d}r^d\int \frac{|k_l|^2|\hat{f}(rk)|^2}{\kappa\|k\|^2}\, dk + o(1)\nonumber\\
            &= (2\pi)^{-d}\int \frac{|k_l|^2|\hat{f}(k)|^2}{\kappa\|k\|^2}\, dk + o(1),
        \end{align*}
        where we use in the third step that
        \begin{equation*}
            \int_{[0,2\pi)^d} \frac{1}{\sum_{j=1}^d \sin\big(\tfrac{k_j}{2}\big)^2}\, dk < \infty,
        \end{equation*}
        as $d\geq 3$.
        Hence,
        \begin{equation*}
            \frac{\BV\big[\Psi_{1}(f_r')\big]}{r^{d}} = \sum_{l=1}^d\frac{\BV[\Psi_{1,l}(\partial_l f_r)]}{r^{d}} \xrightarrow{r\to\infty} \frac{\|f\|_2}{\kappa},
        \end{equation*}
        since by Plancherel's theorem $(2\pi)^{-d}\|\hat{f}\|_2 = \|f\|_2$. It remains to show that $\Psi_q$ is beyond \linebreak $(-2q)$-uniform for $2 \leq q < \frac{d}{2}$. Then the assertion follows from an application of Theorem~\ref{t: main theorem rate}. As there is nothing to show for $d\leq 4$, let us assume $d\geq 5$, and choose $q\geq2$, $q_1, ..., q_d\in\{0,...,q\}$ such that $q_1+...+q_q=q$. Since $Z$ is Gaussian and centered, one can calculate using Isserlis formula, see~\cite{ISSERLIS_1918}, that there are coefficients $a_0,..., a_{\lceil\frac{q}{2}\rceil-1}\geq 0$ such that
        \begin{equation*}
            \BC[Z_1(0)^{q_1}\cdots Z_d(0)^{q_d}, Z_1(y)^{q_1}\cdots Z_d(y)^{q_d}] = \sum_{m=0}^{\lceil\frac{q}{2}\rceil-1} a_m C(y)^{q-2m}, \quad y\in\Z^d.
        \end{equation*}
        Hence by Proposition~\ref{p: ind random field convolution formula}, the $(q_1,...,q_d)$-component $\Psi_{q;q_1,...,q_d}$ of $\Psi_q$ satisfies
        \begin{equation}
            \hat\beta_{\Psi_{q;q_1,...,q_d}}(dk) = \sum_{m=0}^{\lceil\frac{q}{2}\rceil-1} a_m (2\pi)^{-(q-2m-1)d}\hat{C}^{\ast(q-2m)}(k), \quad k\in(-\pi, \pi)^d,
        \end{equation}
        where $\hat{C}^{\ast(q-2m)}$ is the $(q-2m)$-times convolution on the $d$-torus of $\hat{C}$ with itself. In~\eqref{e: C thermalized crystal}, one can see that $\hat{C}$ has exactly one pole and is otherwise bounded. This pole lies in $0$ and has degree $2$ by~\eqref{e: C thermalized crystal asym}. Therefore, and because $d\geq 5$, the function $\hat{C}$ is square-integrable and $\hat{C}^{\ast m}$ is bounded for $m\geq 2$. For even $q$, this fact directly implies that $\Psi_{q;q_1,...,q_d}$ is $0$-uniform. For odd $q$, one can add what was shown in the case $q=1$ to establish that $\Psi_{q;q_1,...,q_d}$ is \linebreak$(-2)$-uniform. In every case, since $q_1,...,q_d$ were arbitrary, $\Psi_q$ is $(-2)$-uniform, and therefore also beyond $(-2q)$-uniform, which concludes the proof.
    \end{proof}
\end{proposition}

\section{Prospects}\label{s: prospects}

In this paper, we have focused on conditions under which $p$-uniformity is preserved under transport, and have then applied these results to answer open questions. A further question is whether $p$-uniform random measures are close to the perfectly uniform Lebesgue measure in terms of some transport cost. If one considers a transport as having low cost whenever it fulfills the conditions of Theorem~\ref{t:main theorem}, then the question is trivial by the following proposition.

\begin{proposition}\label{p:main theorem inversion}
Let $p\in[-d,\infty]$. Suppose that $\Phi$ is an invariant locally absolutely square-integrable random complex measure which is (beyond) $p$-uniform. Then there exists an invariant transport kernel $K$ such that $\Phi = K\lambda_d$, and such that $K$ and $\lambda_d$ satisfy the conditions of Theorem~\ref{t:main theorem}.
\begin{proof}
Such a transport kernel $K$ is explicitly constructed in Appendix~\ref{ss:Reduction to Lebesgue}. There, it is named $L$, and we also do so in this proof. As in~\eqref{e:L}, we choose it as
\begin{equation}\label{e:L2}
    L(x,B) := \kappa_d^{-1}\Phi(B\cap(B_1+x)), \quad x\in\R^d, B\in\cB^d.
\end{equation}
By Lemma~\ref{l: simplification to lebesgue}, it is invariant and $\Phi=L\lambda_d$. It also trivially fulfills Condition~\ref{c:condition square-integrable stealthy} with respect to $\Phi$ and any $\varepsilon>0$. To apply Theorem~\ref{t:main theorem}, it remains to analyze the components of $\Psi_q$ for $q\in\N_0$ as defined in~\eqref{e:Psi_q definition}. Let $q\in\N_0$ and suppose that $q_1,...,q_d\in\{0,...,q\}$ such that $q_1+\dots +q_d=q$. Then,
\begin{equation*}
    \int x_1^{q_1}\cdots x_d^{q_d}\, L^\ast(y, dx) = \frac{1}{\kappa_d}\int_{B_1+y}(x_1-y_1)^{q_1}\cdots(x_d-y_d)^{q_d} \, \Phi(dx).
\end{equation*}
Therefore, for $f\in\cC_c^\infty$ the $(q_1,...,q_d)$-component $\Psi_{q;q_1,...,q_d}$ of $\Psi_q$ satisfies
\begin{align*}
    \Psi_{q;q_1,...,q_d}(f) &= \int g(y) \int x^q\, L^\ast(y, dx) \, dx\nonumber\\
    &= \frac{1}{\kappa_d}\iint_{B_1+x} f(y) (x_1-y_1)^{q_1}\cdots(x_d-y_d)^{q_d} \, dy \, \Phi(dx).
\end{align*}
The inner integral is a convolution of $f$ with a bounded function with bounded support, and therefore is in $\cC_c^\infty$ as a function of $x$. Now, an application of Theorem~\ref{t:asymptotic variance test function} and Theorem~\ref{t: stealthy characterization} yields that if $\Phi$ is (beyond) $p$-uniform (with some radius), then $\Psi_{q;q_1,...,q_d}$ also is, which concludes the proof.
\end{proof}
\end{proposition}

Therefore, Theorem~\ref{t:main theorem} and Proposition~\ref{p:main theorem inversion} together yield a characterization of $p$-uniformity in terms of an invariant transport with source $\lambda_d$.

\begin{theorem}
    Let $\Phi$ be a random complex measure. Then $\Phi$ is invariant, locally absolutely square-integrable, and (beyond) $p$-uniform iff there exists an invariant transport kernel $K$ such that $\Phi = K\lambda_d$, and such that $K$ and $\lambda_d$ satisfy the conditions of Theorem~\ref{t:main theorem}.
\end{theorem}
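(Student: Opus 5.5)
The plan is to prove the two directions separately. Both are essentially assembled from results already established: Theorem~\ref{t:main theorem} gives one direction, and Proposition~\ref{p:main theorem inversion} gives the other.

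For the ``if'' direction, suppose there is an invariant transport kernel $K$ with $\Phi=K\lambda_d$ such that $K$ and $\lambda_d$ satisfy the conditions of Theorem~\ref{t:main theorem}.
\begin{itemize}
\item \emph{Invariance.} Since $\lambda_d$ is deterministic and invariant, and invariance is compatible with transport, $\Phi=K\lambda_d$ is invariant.
\item \emph{Local absolute square-integrability.} Condition~\ref{c:condition square-integrable general with p} (or Condition~\ref{c:condition square-integrable Markov with p}, or Condition~\ref{c:condition square-integrable stealthy}) implies the corresponding condition with exponent $d$. This holds because $\|x\|^{d+\max(p,0)}\rho(\|x\|)+1$ dominates $\|x\|^d\tilde\rho(\|x\|)+1$ for a suitable strongly log-dominating $\tilde\rho$, and $e^{\varepsilon\|x\|}$ dominates any such polynomial weight. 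Theorem~\ref{t:main square-integrable} then gives that $K\lambda_d$ is locally absolutely square-integrable.
\item \emph{Uniformity.} Theorem~\ref{t:main theorem}, applied with source $\lambda_d$, directly yields that $\Phi$ is (beyond) $p$-uniform, or $\infty$-uniform with radius $\varepsilon$ in case (ii).
\end{itemize}

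For the ``only if'' direction, assume $\Phi$ is invariant, locally absolutely square-integrable and (beyond) $p$-uniform. Proposition~\ref{p:main theorem inversion} constructs exactly the required kernel, namely $L$ from~\eqref{e:L2}, with $\Phi=L\lambda_d$ and all hypotheses of Theorem~\ref{t:main theorem} verified.
\begin{itemize}
\item \emph{Integrability conditions.} These hold because $L^\ast(y,\cdot)$ is supported in $\overline{B_1}$, so every moment weight is bounded on its support. Local square-integrability of $\Phi$ then transfers to the weighted measures $\Psi_{d+\max(p,0),\rho}$ and $\Psi_{\infty,\varepsilon}$.
\item \emph{Uniformity of the $\Psi_q$.} Each required $\Psi_q$ is (beyond) $(p-2q)$-uniform. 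This follows because $\Psi_q(f)$ equals $\Phi$ applied to a smooth, compactly supported convolution of $f$. Hence the real-space characterization in Theorem~\ref{t:asymptotic variance test function} and Theorem~\ref{t: stealthy characterization} transfers $p$-uniformity, and a fortiori $(p-2q)$-uniformity by Remark~\ref{r: uniformity order}.
\end{itemize}

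The one point needing care is the ``(beyond)'' bookkeeping and the case $p=\infty$. In the forward direction, the stealthy case must use Condition~\ref{c:condition square-integrable stealthy} with a matching radius. In the inverse direction, one must check that the convolved test function $x\mapsto\kappa_d^{-1}\int_{B_1+x} f(y)(x-y)^{q}\,dy$ is Fourier-smooth with a large enough exponent. It is smooth and compactly supported when $f\in\cC_c^\infty$, so this holds. For $p=\infty$, its Fourier transform is $\hat f$ times an entire function, so the support stays in $B_\varepsilon$ and $\infty$-uniformity with radius $\varepsilon$ transfers. I expect no genuine obstacle beyond stating these matchings explicitly.
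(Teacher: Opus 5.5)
Your proposal is correct and is essentially the paper's argument: the paper obtains this characterization simply by combining Theorem~\ref{t:main theorem} for the ``if'' direction with Proposition~\ref{p:main theorem inversion} for the ``only if'' direction, and your explicit use of Theorem~\ref{t:main square-integrable} for local square integrability is a harmless addition. One precision in your sketch of the $\Psi_q$-step: $\Psi_q(f_r)=\Phi(f_r\ast g_q)$ with the \emph{unscaled} kernel $g_q(z)=\kappa_d^{-1}\I_{B_1}(z)\,z^{\otimes q}$, so this is not $\Phi$ applied to the $r$-dilate of the fixed function $f\ast g_q$, and checking Fourier-smoothness of $f\ast g_q$ is not the right test; instead use, for each component $\alpha=(q_1,\dots,q_d)$, the identity $\hat\beta_{\Psi_{q;\alpha}}=|\hat g_{q;\alpha}|^2\,\hat\beta_\Phi\le\hat\beta_\Phi$ (since $\|g_{q;\alpha}\|_1\le 1$), which transfers (beyond) $p$-uniformity and $\infty$-uniformity with radius $\varepsilon$ at once.
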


This characterization of $p$-uniformity shows that the conditions in Theorem~\ref{t:main theorem} are well-chosen. Still, it is not that powerful since the transport constructed in Proposition~\ref{p:main theorem inversion} is not mass-preserving and does not allow for the construction of a spatial interpolation like in Subsection~\ref{ss:Persistence of uniformity under spatial interpolation}. If, instead, we restrict the kernel $K$ to be a probability kernel, then our moment conditions on the transport are closely connected to the Wasserstein distance; see Definition~\ref{d: Wasserstein distance}. In~\cite{Lachièze-Rey_Yogeshwaran_2024, Butez_Dallaporta_García-Zelada_2024, Huesmann_Leblé_2026}, it was shown that for $d=2$, up to edge cases, all hyperuniform random measures have a finite $2$-Wasserstein distance to the Lebesgue measure. However, for $d\neq2$ the picture is different. For $d=1$, $p$-uniformity is not sufficient for a finite $1$-Wasserstein distance to the Lebesgue measure if $p<1$; see~\cite{Elboim_Spinka_Yakir_2025}. For $d\geq3$, the converse fails since not even a $\infty$-Wasserstein distance to the Lebesgue measure is close to implying hyperuniformity; see~\cite[Theorem 1]{Dereudre_Flimmel_Huesmann_Leblé_2024} or Proposition~\ref{p: hyperfluctuating points in lattice}. For this reason, we propose the following distance to (almost) characterize $p$-uniformity. It involves strong restrictions on higher-order moments if the parameter is large. We restrict ourselves to the distance to the Lebesgue measure because the sharpness of Proposition~\ref{p: wasserstein conditions general} shows that only very regular invariant random measures are compatible with a condition on a Wasserstein-type distance.

\begin{definition}
    Let $p\geq 1$. Suppose that $\Phi$ is an invariant random (nonnegative) measure with finite intensity $\gamma$. Then 
    \begin{align}
        d_p(\Phi, \gamma\lambda_d) := \inf\bigg\{\BE\bigg[\int\|x\|^p\, &(K_0+L_0)(dx)\bigg] : \nonumber\\ 
        &L,K \text{ invariant probability kernels}, \nonumber\\
        &\Phi = \gamma K\lambda_d, \lambda_d = L\lambda_d,\nonumber\\
        &\int x^{\otimes q}\, (K_0-L_0)(dx)=0\text{ for } q\in\N, q\leq\frac{p}{2}-1\bigg\}.
    \end{align}
    Being even more restrictive, one could also define
    \begin{align}
        \tilde{d}_p(\Phi, \gamma\lambda_d) := \inf\bigg\{\BE\bigg[\int\|x\|^p\, &K_0(dx)\bigg] : \nonumber\\ 
        &K \text{ invariant probability kernel}, \Phi = \gamma K\lambda_d, \nonumber\\
        &\int x^{\otimes q}\, K_0(dx)\text{ deterministic for } q\in\N, q\leq\frac{p}{2}-1\bigg\}.
    \end{align}
\end{definition}

It is easy to prove that $d_p(\Phi, \gamma\lambda_d)\leq 2\tilde{d}_p(\Phi, \gamma\lambda_d)$.
Using this or a similar definition, we expect a conjecture of the following kind to hold:

\begin{conjecture}
    Let $p\geq -d+2$. Suppose that $\Phi$ is an invariant locally absolutely square-integrable random (nonnegative) measure with intensity $\gamma$. Then $d_{d+p}(\Phi, \gamma\lambda_d)<\infty$ if $\Phi$ is 
    $(p+\vartheta)$-uniform for some $\vartheta>0$.
\end{conjecture}

Compared to the preceding Proposition~\ref{p:main theorem inversion}, this conjecture is no complete inversion of Theorem~\ref{t:main theorem}, as $d_{d+p}(\Phi, \gamma\lambda_d)<\infty$ only implies that $\Phi$ is beyond $(2\lfloor \frac{d+p}{2}\rfloor-d)$-uniform for $p\geq0$, and for $p<0$ not even local square integrability of $\Phi$ is guaranteed. However, a complete inversion is actually not possible under these restrictions.

Let us now motivate this conjecture. For $d=1$, it was shown in~\cite{Elboim_Spinka_Yakir_2025} that $p$-uniformity does not suffice to conclude that $d_{d+p}(\Phi, \gamma\lambda_d)<\infty$ for $p<1$. On the other hand, in~\cite{Butez_Dallaporta_García-Zelada_2024} it was shown that $(-d+2+\vartheta)$-uniformity implies $d_{2}(\Phi, \gamma\lambda_d)<\infty$ for $d\geq 2, \vartheta>0$. For $d=1$, their bound involves a logarithmic term that one might be able to avoid via a smoother test function, which would extend the applicability of their result to $d=1$. For the special case of $d=2$, the same was also shown in~\cite{Lachièze-Rey_Yogeshwaran_2024, Huesmann_Leblé_2026}. For higher $d+p$, there are no results available in the literature so far, since a distance of our type, to the best of our knowledge, has not been considered yet, and a classical Wasserstein distance no longer suffices at that point, as discussed above. Still, we can prove a weaker version of the conjecture, where we assume that $\Phi$ is not only $p$-uniform for some $p<\infty$ but also $\infty$-uniform.

\begin{theorem}\label{t:main theorem stealthy markov inversion}
Suppose that $\Psi$ is an invariant locally absolutely square-integrable random measure of intensity $\gamma$ which is $\infty$-uniform. Then there exist invariant transport kernels $K,L$ such that $K,L$ are probability kernels, $\Psi = \gamma K\lambda_d$, $\lambda_d=L\lambda_d$, and
\begin{align}
    \BE\bigg[\int \|x\|^q\, (K_0+L_0)(dx)\bigg] &<\infty,\\
    \int x^{\otimes q}\, (K_0-L_0)(dx) &= 0,\quad q\in\N.
\end{align}
We can even choose $L$ to be deterministic, which implies that $\int x^q\, K_0(dx), q\in\N$ is also deterministic. Hence, $d_{p}(\Phi, \gamma\lambda_d)\leq 2\tilde{d}_{p}(\Phi, \gamma\lambda_d)<\infty$ for all $p\in[1,\infty)$.
\begin{proof}
Let $\varepsilon>0$ such that $\Phi$ is $\infty$-uniform with radius $\varepsilon$. Further, we can assume that $\gamma>0$, as the case $\gamma=0$ is trivial.
First, we choose $g:\R^d\to\C$ such that $\hat{g}\in\cC^\infty$ and $\hat{g}(k)=0$ for $k\in\R^d$ with $\|k\|\geq \frac{\varepsilon}{2}$. Then we chose $f:= \lambda_d(|g|^2)^{-1}|g|^2$, where we note that the Paley-Wiener theorem implies that $\lambda_d(|g|^2)<\infty$; see, e.g.,~\cite[Chapter III Section 4]{Stein_Weiss_1971}. By construction, $f$ is nonnegative and satisfies $\hat{f}\in\cC^\infty$ with $\hat{f}(k)=0$ for $k\in\R^d$ with $\|k\|\geq \varepsilon$, and $\lambda_d(f)=1$. 
Now we can define the transport kernels $K,L$ by
\begin{align}
    K(x,B) &:= \gamma^{-1} \int_B f(x-y) \, \Psi(dy), \\
    L(x,B) &:= \int_B f(x-y) \, dy, \quad x\in\R^d, B\in\cB^d.
\end{align}
We can see that $K$ is invariant, as
\begin{align*}
    K(\theta_z,x,B) &= \gamma^{-1} \int_B f(x-y) \, \Psi(\theta_z,dy) \nonumber\\
    &= \gamma^{-1} \int_{B+z} f(x+z-y) \, \Psi(dy) \nonumber\\
    &= K(x+z,B+z),\quad x,z\in\R^d, B\in\cB^d.
\end{align*}
The invariance of $L$ follows in the same way. Further, we can calculate that
\begin{equation*}
    \BE\bigg[\int \|x\|^q\, K_0(dx)\bigg]= \gamma^{-1} \BE\bigg[\int\|x\|^q f(x)\, \Psi(dx)\bigg] = \int\|x\|^q f(x)\, dx < \infty,
\end{equation*}
where we used that $f(x)$ decays faster than any polynomial as $\|x\|\to\infty$ by the Paley-wiener theorem; see, e.g.,~\cite[Chapter III Section 4]{Stein_Weiss_1971}. The same holds for $L_0$ instead of $K_0$. Additionally, we have that
\begin{equation*}
    \BV\bigg[\int x^{\otimes q}\, K_0(dx)\bigg] =\gamma^{-2} \BV\bigg[\int \underbrace{x^{\otimes q} f(x)}_{f_{q}(x):=}\, \Psi(dx)\bigg] = \gamma^{-2}\int |\hat{f}_{q}(k)|^2\, \hat\beta_\Psi(dk)
    =0,
\end{equation*}
for $q\in\N_0$,where we used that $\hat{f}_{q}=(\hat{f})^{(q)}$, and that $\hat{f}(k)=0$ for $k\in\R^d$ with $\|k\|\geq \varepsilon$. Combined with the fact that 
\begin{equation*}
    \BE\bigg[\int x^{\otimes q}\, K_0(dx)\bigg] =\gamma^{-1} \BE\bigg[\int x^{\otimes q} f(x)\, \Psi(dx)\bigg]= \int x^{\otimes q} f(x)\, dx = \int x^{\otimes q}\, L_0(dx),
\end{equation*}
for $q\in\N_0$,
this equation yields that
\begin{equation*}
    \int x^{\otimes q}\, K_0(dx) = \int x^{\otimes q}\, L_0(dx),\quad q\in\N_0,
\end{equation*}
where both sides are deterministic and for $q=0$ also equal to $1$.
Finally, Fubini's theorem yields
\begin{equation*}
    \gamma K\lambda_d(B) = \iint_B f(y-x) \, \Psi(dy)\, dx = \Psi(B),
\end{equation*}
and in the same way $\lambda_d=L\lambda_d$, which concludes the proof.
\end{proof}
\end{theorem}

In particular, we obtain that every invariant locally absolutely square-integrable random measure which is $\infty$-uniform has a finite $p$-Wasserstein distance to the Lebesgue measure for any $p\geq 1$. If the random measure is a point process, we can derive from this fact that it is an $L^p$-perturbed lattice for any $p\geq1$; see~\cite{Huesmann_2016}. However, at least for $d\geq3$, this fact is not surprising, since even the Poisson process exhibits this property there; see~\cite{Huesmann_Sturm_2013}.

Analogous transports can also be constructed if one replaces $\gamma\lambda_d$ by the invariant lattice $\gamma^{-1}(\Z^d+U)$ with $U\sim\cU([0,1)^d)$, where $\gamma^{-1}$ is not a density but a stretching factor. In this case, we can choose $L$ to depend only on $U$.

\ifsplit

    %

\else


    \appendix

\appendix
\section{Auxiliary results}
In this section, we provide results which are fundamental to the proofs of the main theorems from Section~\ref{s:main results}.
\subsection{Taylor approximation}
The following two results provide the two ways we approximate or represent our test functions. The first purely relies on the classical Taylor approximation. Recall Definition~\ref{d: f_r} and that $f^{(q)}$ denotes the (possibly tensor-valued) $q$-th derivative of $f\in\cC^q$ for $q\in\N_0$.
\begin{lemma}\label{l:taylor}
Let $\gamma\geq 0$. Suppose that $f\in C^{\lceil \gamma \rceil}(\R^d)$ with $f(x) = 0$ for all $x\in B_1^c$. Then there are mappings $R: (0,\infty) \times \R^d \times  \R^d \to \C$ and $g: (0,\infty) \times \R^d \to [0,1]$ such that for all $r>0,x,y\in\R^d$,
\begin{align}
    f_r(y+x) &= \sum_{q=0}^{\lfloor \gamma \rfloor} \frac{1}{q!} f_r^{(q)}(y) x^{\otimes q} + R(r,y,x),\label{e:taylor1}\\
    |R(r,y,x)| &\leq \frac{2\|f^{(\lfloor \gamma \rfloor)}\|_\infty + \|f^{(\lceil \gamma \rceil)}\|_\infty}{\lfloor\gamma\rfloor!}\frac{\|x\|^\gamma}{r^\gamma} g(r, x),\label{e:taylor2}\\
    g(r, x) &\xrightarrow{r\to\infty} 0, \label{e:taylor4}
\end{align}
and for all $r>0,x\in\R^d,y\in B_r^c$
\begin{equation}
    R(r,y,x) = f_r(y+x).\label{e:taylor3}
\end{equation}
\begin{proof}
First of all, one can define $R$ simply by
\begin{equation}\label{e:taylor11}
    R(r,y,x) := f_r(y+x) - \sum_{q=0}^{\lfloor \gamma \rfloor} \frac{1}{q!} f_r^{(q)}(y) x^{\otimes q},\quad r>0, x,y\in\R^d.
\end{equation}
Then~\eqref{e:taylor1} is fulfilled and~\eqref{e:taylor3} directly follows from the fact that $f(x) = 0$ for all $x\in B_1^c$ and therefore also $f^{(q)}(x) = 0$ for all $x\in B_1^c, q\in\N_0$.
If $\gamma\notin \N_0$, i.e., $\lceil\gamma\rceil = \lfloor \gamma\rfloor + 1$, by Taylor's theorem and Remark~\ref{r: f_r formulas}, we know that
\begin{equation*}
    R(r,y,x) = \frac{1}{(\lceil \gamma \rceil-1)! r^{\lceil \gamma \rceil}} \int_0^1 (1-t)^{\lceil \gamma \rceil-1} f^{(\lceil \gamma \rceil)}(\tfrac{tx+y}{r}) x^{\otimes \lceil \gamma \rceil} \ dt,\quad r>0,x,y\in\R^d,
\end{equation*}
where we performed a Taylor-approximation of degree $\lfloor \gamma \rfloor$. This formula directly gives rise to the bound
\begin{equation}\label{e:taylor21}
    |R(r,y,x)| \leq \frac{\|f^{(\lceil \gamma \rceil)}\|_\infty}{\lceil \gamma \rceil!} \frac{\|x\|^{\lceil \gamma \rceil}}{r^{\lceil \gamma \rceil}}.
\end{equation}
similarly, if $\lfloor \gamma \rfloor \geq 1$, a Taylor-approximation of degree $\lfloor \gamma \rfloor - 1$ yields
\begin{equation}
    R(r,y,x) = \frac{1}{(\lfloor \gamma \rfloor-1)! r^{\lfloor \gamma \rfloor}} \int_0^1 (1-t)^{\lfloor \gamma \rfloor-1} f^{(\lfloor \gamma \rfloor)}(\tfrac{tx+y}{r}) x^{\otimes \lfloor \gamma \rfloor} \ dt - \frac{1}{\lfloor \gamma \rfloor! r^{\lfloor \gamma \rfloor}} f^{(\lfloor \gamma \rfloor)}(\tfrac{y}{r}),
\end{equation}
from which we can derive
\begin{equation}\label{e:taylor22}
    |R(r,y,x)| \leq \frac{1}{\lfloor \gamma\rfloor!} \frac{\|x\|^{\lfloor \gamma\rfloor}}{r^{\lfloor \gamma\rfloor}} \sup\big\{ \|f^{(\lfloor \gamma \rfloor)}(y)-f^{(\lfloor \gamma \rfloor)}(z) \| : y,z\in\R^d, \|y-z\|\leq \tfrac{\|x\|}{r}\big\},
\end{equation}
for $r>0,x,y\in\R^d$.
The supremum is finite and goes to $0$ as $r\to\infty$ for every $x\in\R^d$, since $f^{(\lfloor\gamma\rfloor)}$ is continuous and compactly supported and therefore also uniformly continuous.
For $\lfloor \gamma \rfloor = 0$, this bound is trivially obtained from~\eqref{e:taylor11}. A combination of~\eqref{e:taylor21} and~\eqref{e:taylor22} yields 
\begin{equation*}
    |R(r,y,x)| \leq \frac{2\|f^{(\lfloor \gamma \rfloor)}\|_\infty + \|f^{(\lceil \gamma \rceil)}\|_\infty}{\lfloor\gamma\rfloor!}\frac{\|x\|^\gamma}{r^\gamma} g(r, x),
\end{equation*}
with $g$, for $r>0, x\in\R^d$, defined as
\begin{align*}
    g(r, x) := \min\bigg(&\frac{\|x\|^{\lceil\gamma\rceil-\gamma}}{r^{\lceil\gamma\rceil-\gamma}}, \nonumber\\
    &\frac{r^{\gamma-\lfloor\gamma\rfloor}}{\|x\|^{\gamma-\lfloor\gamma\rfloor}}\sup\bigg\{ \frac{\|f^{(\lfloor \gamma \rfloor)}(y)-f^{(\lfloor \gamma \rfloor)}(z)\|}{2\|f^{(\lfloor \gamma \rfloor)}\|_\infty} : y,z\in\R^d, \|y-z\|\leq \tfrac{\|x\|}{r}\bigg\}\bigg),
\end{align*}
whereby it fulfills~\eqref{e:taylor4}.
\end{proof}
\end{lemma}

The following lemma is also connected to Taylor's theorem, but the main point of interest, which is~\eqref{e:derivative l1 bound}, is a consequence of a Paley-Wiener-type argument.

\begin{lemma}\label{l: taylor series}
    Let $\varepsilon>0$. Suppose that $f\in L^1(\R^d, \C)$ is chosen such that $\hat{f}\in\cC_c^{d+1}$ with $\hat{f}(k)=0$ for all $k\in B_\varepsilon^c$. Then
    \begin{equation}\label{e: taylor series f}
        f(y+ x) = \sum_{q=0}^\infty \frac{1}{q!} f^{(q)}(y) x^{\otimes q}, \quad x,y\in\R^d,
    \end{equation}
    and there is a $c>0$ such that
    \begin{equation}\label{e:derivative l1 bound}
        \int\sup_{u\in B_1} \|f^{(q)}(x+u)\|\, dx \leq c (q+1)^{d+1}\varepsilon^q,\quad q\in\N_0.
    \end{equation}
    \begin{proof}
        The first part of the assertion, i.e., that~\eqref{e: taylor series f} holds, directly follows from the Paley-Wiener theorem, as it implies that $f$ is an entire function; see, e.g.,~\cite[Chapter III Section 4]{Stein_Weiss_1971}. Additionally, we use a Paley-Wiener-type argument in the following to show that there exists a $c>0$ such that 
        \begin{equation}
            \|f^{(q)}(x)\| \leq c (q+1)^{d+1}\varepsilon^q\frac{1}{\max(\|x\|^{d+1}, 1)}.
        \end{equation}
        Then the second part follows as well.
        
        Let $\varepsilon>0$, suppose that $f\in L^1(\R^d, \C)$ such that $\hat{f}\in\cC^{d+1}_c$ with $\hat{f}(k)=0$ for all $k\in B_\varepsilon^c$, and define $f_\varepsilon:=f(\tfrac{\cdot}{\varepsilon})$. We obtain that $\hat{f_\varepsilon}(k)=\varepsilon^d \hat{f}(\varepsilon k), k\in\R^d$, and hence that $\hat{f_\varepsilon}(k)=0$ for all $k\in B_1^c$. Further, $f^{(q)}(x) = \varepsilon^{q} f_\varepsilon^{(q)}(\varepsilon x)$ for $q\in\N_0, x\in\R^d$, and
        \begin{equation*}
            \frac{1}{\max(\varepsilon^{d+1}\|x\|^{d+1}, 1)} \leq \max(\varepsilon^{-(d+1)}, 1)\frac{1}{\max(\|x\|^{d+1}, 1)}, \quad x\in\R^d.
        \end{equation*}
        Since $\max(\varepsilon^{-(d+1)}, 1)$ can be a part of the constant, this transformation allows us to assume $\varepsilon=1$ without loss of generality. Now let $q\in\N_0$, $i\in\{1,...,d\}$, and $x\in\R^d$ with $x_i\neq0$. Then, using Plancherel's theorem and $d+1$ times partial integration,
        \begin{align*}
            \|f^{(q)}(x)\| &= \frac{1}{(2\pi)^d}\bigg\|\int \widehat{f^{(q)}}(k) e^{-i\langle k, x\rangle}\, dk\bigg\|\nonumber\\
            &= \frac{1}{(2\pi)^d}\bigg\|\int_{B_1} k^{\otimes q} \hat{f}(k) e^{-i\langle k, x\rangle}\, dk\bigg\|\nonumber\\
            &= \frac{1}{(2\pi)^d|x_i|^{d+1}} \bigg\|\int_{B_1} \partial_i^{d+1}\big(s\mapsto s^{\otimes q} \hat{f}(s)\big)(k) e^{-i\langle k, x\rangle}\, dk\bigg\|\nonumber\\
            &\leq \frac{\kappa_d}{(2\pi)^d|x_i|^{d+1}} \sup_{k\in B_1}\bigg\|\partial_i^{d+1}\big(s\mapsto s^{\otimes q} \hat{f}(s)\big)(k)\bigg\|.
        \end{align*}
        Further, for $k\in B_1$, we have
        \begin{align*}
            \bigg\|\partial_i^{d+1}\big(s\mapsto s^{\otimes q} \hat{f}(s)\big)(k)\bigg\| &= \bigg\|\sum_{n=0}^{d+1} \binom{d+1}{n}\partial_i^{n}\big(s\mapsto s^{\otimes q}\big)(k) \partial_i^{d+1-n}\hat{f}(k)\bigg\|\nonumber\\
            &= \bigg\|\sum_{n=0}^{\min(d+1, q)} \binom{d+1}{n}\frac{q!}{(q-n)!}\big(e_i^{\otimes n}\otimes k^{\otimes (q-n)}\big) \partial_i^{d+1-n}\hat{f}(k)\bigg\|\nonumber\\
            &\leq (q+1)^{d+1} \sum_{n=0}^{d+1} \binom{d+1}{n} \bigg\|\partial_i^{d+1-n}\hat{f}(k)\bigg\|.
        \end{align*}
        The fact that $\hat{f}\in\cC^{d+1}$ implies that there is a $c_1>0$ independent of $i$ and $k$ such that
        \begin{equation*}
            \sum_{n=0}^{d+1} \binom{d+1}{n} \bigg\|\partial_i^{d+1-n}\hat{f}(k)\bigg\| \leq c_1, \quad k\in B_1.
        \end{equation*}
        Therefore, for $x\in\R^d\setminus\{0\}$,
        \begin{equation*}
            \|f^{(q)}(x)\| \leq \frac{\kappa_d c_1(q+1)^{d+1}}{(2\pi)^d\|x\|_\infty^{d+1}} \leq \frac{\sqrt{d}\kappa_d c_1(q+1)^{d+1}}{(2\pi)^d\|x\|^{d+1}}.
        \end{equation*}
        It remains to find a bound for small $x$, which is much simpler, as
        \begin{equation*}
            \|f^{(q)}(x)\| = \frac{1}{(2\pi)^d}\bigg\|\int_{B_1} k^{\otimes q} \hat{f}(k) e^{-i\langle k, x\rangle}\, dk\bigg\|\leq \frac{\kappa_d}{(2\pi)^d}\sup_{k\in B_1} \|\hat{f}(k)\| =: c_2 < \infty.
        \end{equation*}
        Combined, we obtain that
        \begin{align*}
            \|f^{(q)}(x)\| \leq \bigg(c_2 + \frac{\sqrt{d}\kappa_d c_1}{(2\pi)^d}\bigg)(q+1)^{d+1} \frac{1}{\max(\|x\|^{d+1}, 1)} ,\quad, x\in\R^d,
        \end{align*}
        which is what was left to show.
    \end{proof}
\end{lemma}

\subsection{Reduction to Lebesgue}\label{ss:Reduction to Lebesgue}
In this subsection, we show why one can sometimes assume that the source of the transport is $\lambda_d$ without loss of generality. This fact is of particular importance if the transport kernel and the source are allowed to be dependent, as the dependence drops out with this simplification.
In the following, we always suppose that $\Phi$ is a locally square-integrable invariant random complex measure and that $K$ is an invariant transport kernel. Further, we define the random transport kernel $L$ by
\begin{equation}\label{e:L}
    L(x,B) := \kappa_d^{-1}\Phi(B\cap(B_1+x)), \quad x\in\R^d, B\in\cB^d.
\end{equation}
and define $\tilde{K}:=KL$, i.e.,
\begin{equation}\label{e:tilde K}
    \tilde{K}(x, B) = \int K(z, B)\, L(x, dz), \quad x\in\R^d, B\in\cB^d.
\end{equation}
We can use $\tilde{K}$ to simplify many problems from Section~\ref{s:main results} to the case that $\Phi=\lambda_d$. The way becomes obvious through the following lemma.

\begin{lemma}\label{l: simplification to lebesgue}
    The transport kernels $L$ and $\tilde{K}$ are invariant, $\Phi = L\lambda_d$, and therefore also $K\Phi=\tilde{K}\lambda_d$.
    \begin{proof}
        The transport kernel $L$ is invariant, as for $x,z\in\R^d, B\in\cB^d$,
        \begin{equation*}
            L(\theta_z, x, B) = \kappa_d^{-1} \Phi(\theta_z, B\cap (B_1+x)) = \kappa_d^{-1} \Phi((B+z)\cap (B_1+x+z)) = L(x+z, B+z). 
        \end{equation*}
        Then the invariance of $\tilde{K}$ directly follows from the fact that $\tilde{K}=KL$, and that $K$, $L$ are invariant.
        Further, $\Phi=L\lambda_d$ by construction, as
        \begin{equation}
            L\lambda_d(B) = \int L(x,B)\, dx \nonumber\\
            = \kappa_d^{-1} \iint_B \I\{y\in B_1+x\} \,\Phi(dy)\, dx \nonumber\\
            = \Phi(B), \quad B\in\cB^d.
        \end{equation}
        The last equality holds by $K\Phi = KL\lambda_d=\tilde{K}\lambda_d$.
    \end{proof}
\end{lemma}

To make this simplification useful, we now show that the integrability conditions transfer from $K, \Phi$ to $\tilde{K}, \lambda_d$.
\begin{lemma}\label{l:condition reduction lebesgue}
    Suppose that $f:\R^d\to [0,\infty)$ is measurable, and define $\Psi_f$ by
    \begin{equation}
        \Psi_f(dy) := \bigg(\int f(x)\, |K^\ast|(y, dx)\bigg)\, |\Phi|(dy),\quad y\in\R^d.
    \end{equation}
    Then 
    \begin{equation}
        \BE\bigg[\bigg(\int f(x)\, |\tilde{K}_0|(dx)\bigg)^2\bigg] < \infty
    \end{equation}
    if $\Psi$ is locally absolutely square-integrable. The converse is true if $K$ and $\Phi$ are nonnegative, as then
    \begin{equation}
        \Psi_f(B_1) = \kappa_d\int f(x) \tilde{K}_0(dx).
    \end{equation}
    Moreover, the following statements hold.
    \begin{enumerate}[label=(\roman*)]
        \item If Condition~\ref{c:condition square-integrable general with p} holds with respect to $p\in[-d, \infty)$ and a strongly log-dominating function $\rho$, then
        \begin{equation}
            \BE\bigg[\bigg(\int \sqrt{\|x\|^{d+\max(p,0)}\rho(\|x\|)+1}\, |\tilde{K}_0|(dx)\bigg)^2\bigg] < \infty.
        \end{equation}
        \item If Condition~\ref{c:condition square-integrable Markov with p} holds with respect to $p\in[-d, \infty)$, then
        \begin{equation}
            \BE\bigg[\bigg(\int \sqrt{\|x\|^{d+\max(p,0)}+1}\, |\tilde{K}_0|(dx)\bigg)^2\bigg] < \infty.
        \end{equation}
        \item If Condition~\ref{c:condition square-integrable stealthy} holds with respect to $\varepsilon>0$, then
        \begin{equation}
            \BE\bigg[\bigg(\int e^{\varepsilon\|x\|}\, |\tilde{K}_0|(dx)\bigg)^2\bigg] < \infty.
        \end{equation}
    \end{enumerate}
    \begin{proof}
        Let $f:\R^d\to[0,\infty)$ be measurable. By construction, we have $|\tilde{K}_0|\leq |K||L_0|= \kappa_d^{-1}|K||\I_{B_1}\cdot\Phi|$. Therefore,
        \begin{equation*}
            \int f(x) |\tilde{K}_0|(dx) \leq \kappa_d^{-1}\int_{B_1}\int f(x) |K|(y, dx)\, |\Phi|(dy) = \kappa_d^{-1} \Psi_f(B_1).
        \end{equation*}
        The inequality becomes an equality if $K$ and $\Phi$ are nonnegative. The remaining assertions follow with an appropriate choice of $f$.
    \end{proof}
\end{lemma}

\subsection{Square integrability}

The main focus of this subsection is to bound a certain remainder term which comes up if one applies the Taylor approximation from Lemma~\ref{l:taylor}. However, we first show a small lemma that allows us to assume that strongly log-dominating functions do not grow too quickly, which makes sense, as they are supposed to be slowly increasing functions.
\begin{lemma}\label{l:stronlgy log-dominating modification}
Let $\rho$ be a strongly log-dominating function. Then there exists a strongly log-dominating function $\tilde\rho\leq \rho$ such that 
\begin{equation}\label{e:subexponential growth}
    \tilde\rho(2r) \leq 4 \tilde\rho(r), \quad r\geq0.
\end{equation}
\begin{proof}
Define $a_0:= \rho(0)$ and iteratively define
\begin{equation*}
    a_n := \min(4a_{n-1}, \rho(2^n-1)), \quad n\in\N.
\end{equation*}
Now, let $\tilde\rho:[0,\infty)\to[0,\infty)$ and choose
\begin{equation*}
    \tilde\rho(r) := a_{\lfloor\log_2(r+1)\rfloor},\quad r\geq 0.
\end{equation*}
Then, by construction and because $\rho$ is increasing, $\tilde\rho$ is increasing and
\begin{equation*}
    \tilde\rho(r) = a_{\lfloor\log_2(r+1)\rfloor} \leq \rho(2^{\lfloor\log_2(r+1)\rfloor}-1) \leq \rho(r), \quad r\geq0.
\end{equation*}
Additionally,~\eqref{e:subexponential growth} holds, as
\begin{equation*}
     \tilde\rho(2r) = a_{\lfloor\log_2(2r+1)\rfloor} \leq a_{\lfloor\log_2(r+1)\rfloor+1} \leq 4 a_{\lfloor\log_2(r+1)\rfloor} = 4 \tilde\rho(r), \quad r\geq0.
\end{equation*}
It remains to show that $\tilde\rho$ fulfills~\eqref{e:strongly log-dominating}. We have that
\begin{equation}\label{e:series bound}
    \int_0^\infty \frac{1}{r\tilde\rho(r)+1} \, dr \leq \sum_{n=0}^\infty \frac{2^n}{na_n + 1}.
\end{equation}
Choose $N_0\in\N$ such that $(n+1)a_n \geq 2$ for $n\geq N_0$. This is possible, as $a_n\to\infty$ as $n\to\infty$ because $\rho(r)\to\infty$ as $r\to\infty$. Note that this implies $4(n+1)a_n+1\geq 3((n+1)a_n+1)$ for $n\geq N_0$. Hence, for $N\geq N_0$,
\begin{align*}
    \sum_{n=0}^N \frac{2^n}{na_n + 1} &\leq \sum_{n=1}^N \frac{2^n}{4na_{n-1} + 1} + \sum_{n=0}^N \frac{2^n}{n\rho(2^n-1) + 1}\nonumber\\
    &\leq \sum_{n=N_0+1}^N \frac{2^n}{4na_{n-1} + 1} + \underbrace{ \sum_{n=1}^{N_0} \frac{2^n}{4na_{n-1} + 1} + 1 + 2\int_0^\infty \frac{1}{r\rho(r)+1}\, dr}_{=: C < \infty}\nonumber\\
    &\leq \frac{2}{3} \sum_{n=N_0+1}^N \frac{2^{n-1}}{na_{n-1} + 1} + C\nonumber\\
    &\leq \frac{2}{3} \sum_{n=0}^N \frac{2^n}{na_n + 1} + C.
\end{align*}
Therefore, the RHS of~\eqref{e:series bound} is bounded by $3C<\infty$, which concludes the proof.
\end{proof}
\end{lemma}
The first version deals with the case that one assumes Condition~\ref{c:condition square-integrable general with p}.
\begin{lemma}\label{l:remainder2case1}
Suppose that $\Phi$ is an invariant locally absolutely square-integrable random measure, and that $K$ is an invariant transport kernel. Let $p\geq 0$ and assume that Condition~\ref{c:condition square-integrable general with p} is satisfied.
Then $K\Phi$ is locally absolutely square-integrable. Further, for $r>0$,
\begin{equation}\label{e:remainder2case1}
    \lim_{r\to\infty} \frac{1}{r^{d-p}} \BE\bigg[\bigg(\int_{B_{2r}^c} |K|(y, B_r) \, |\Phi|(dy) \bigg)^2\bigg] = 0. 
\end{equation}
\begin{proof}
First of all, by Lemma~\ref{l:stronlgy log-dominating modification}, without loss of generality, we can assume that $\rho$ satisfies
\begin{equation}
    \rho(2r) \leq 4 \rho(r), \quad r\geq 0.
\end{equation}
Further, by Lemmas~\ref{l: simplification to lebesgue} and~\ref{l:condition reduction lebesgue}, there is an invariant nonnegative transport kernel $L$ such that $L(y)$ has support on $\overline{B_1}+y$ for $y\in\R^d$, that $|\Phi|=L\lambda_d$, and that the invariant nonnegative transport kernel $\tilde{K}:=|K|L$ satisfies
\begin{equation}\label{e: tilde K integrability}
    \BE\bigg[\bigg(\int \sqrt{\|x\|^{d+p}\rho(\|x\|)+1}\, \tilde{K}_0(dx)\bigg)^2\bigg] < \infty.
\end{equation}
Let $r>2$. Then,
\begin{align*}
    \int_{B_{2r}^c} |K|(y, B_r) \, |\Phi|(dy) &= \iint_{B_{2r}^c} |K|(z, B_r)\, L(y, dz) \, dy\nonumber\\
    &\leq \int_{B_{2r-1}^c}\int |K|(z, B_r) \, L(y, dz) \, dy\nonumber\\
    &= \int_{B_{2r-1}^c} \tilde{K}(y, B_r)\, dy,
\end{align*}
which simplifies the problem. Now this inequality and a two-times application of the Cauchy-Schwarz inequality yield
\begin{align}
    \BE\bigg[\bigg(\int_{B_{2r}^c} |K|(y, B_r) \, |\Phi|(dy) \bigg)^2\bigg] &\leq \BE\bigg[\bigg(\int_{B_{2r-1}^c} \tilde{K}(y, B_r) \, dy \bigg)^2\bigg] \nonumber\\
    &= \int_{B_{2r-1}^c}\int_{B_{2r-1}^c} \BE\big[\tilde{K}(y, B_r)\tilde{K}(z, B_r)\big] \,dy\,dz \nonumber\\
    &\leq \int_{B_{2r-1}^c}\int_{B_{2r-1}^c} \sqrt{\BE\big[\tilde{K}(y, B_r)^2\big]\BE\big[\tilde{K}(z, B_r)^2\big]} \,dy\,dz \nonumber\\
    &= \bigg(\int_{B_{2r-1}^c}\sqrt{\BE\big[\tilde{K}(y, B_r)^2\big]} \,dy\bigg)^2\nonumber\\
    &= \Bigg(\int_{B_{2r-1}^c}\frac{\sqrt{(\|y\|^{d+p}\rho(\|y\|)+1)\BE\big[\tilde{K}(y, B_r)^2\big]}}{\sqrt{\|y\|^{d+p}\rho(\|y\|)+1}} \,dy\Bigg)^2 \nonumber\\
    &\leq \int_{B_{2r-1}^c} \frac{1}{\|y\|^{d+p}\rho(\|y\|)+1} \, dy\nonumber\\
    &\quad \ \ \times\int_{B_{2r-1}^c} (\|y\|^{d+p}\rho(\|y\|)+1) \BE\big[\tilde{K}(y, B_r)^2\big] \, dy.
\end{align}
For the first factor, we get
\begin{align*}
    \int_{B_{2r-1}^c}\frac{1}{\|y\|^{d+p}\rho(\|y\|)+1} \, dy &= d\kappa_d \int_{2r-1}^\infty \frac{s^{d-1}}{s^{d+p}\rho(s)+1} \, ds \nonumber\\
    &\leq \frac{d\kappa_d}{2^p r^p} \underbrace{\int_{2r-1}^\infty \frac{1}{s\rho(s)+s^{-(d+p-1)}}\, ds}_{a(r):=}.
\end{align*}
Because $\rho$ is strongly log-dominating, we can derive that $a(r)\to 0$ as $r\to\infty$ by the theorem of dominated convergence.
The second factor can be bounded by
\begin{align*}
    &\int_{B_{2r-1}^c}(\|y\|^{d+p}\rho(\|y\|)+1)\BE\big[\tilde{K}(y, B_r)^2\big] \, dy\nonumber\\
    &\quad\ \ = \int_{B_{2r-1}^c}(\|y\|^{d+p}\rho(\|y\|)+1)\BE\big[\tilde{K}_0(B_r-y)^2\big] \, dy\nonumber\\
    &\quad\ \ = \BE\bigg[\iiint(\|y\|^{d+p}\rho(\|y\|)+1)\,  \I\{y\in B_{2r-1}^c, x, z\in B_r-y\}\, \tilde{K}_0(dx)\, \tilde{K}_0(dz)\, dy\bigg]\nonumber\\
    &\quad\ \ \leq \BE\bigg[\iiint(\|y\|^{d+p}\rho(\|y\|)+1)\,  \I\{y\in B_{3\min(\|x\|, \|z\|)}\cap (B_r - x) \cap (B_r-z)\}\nonumber\\
    &\hspace{12cm}\tilde{K}_0(dx)\, \tilde{K}_0(dz)\, dy\bigg]\nonumber\\
    &\quad\ \ \leq \BE\bigg[\iint\Big(\big(3\min(\|x\|, \|z\|)\big)^{d+p}\rho\big(3\min(\|x\|, \|z\|)\big)+1\Big) \lambda_d\big((B_r - x) \cap (B_r-z)\big)\nonumber\\
    &\hspace{12cm}\phantom{\, dy} \tilde{K}_0(dx)\, \tilde{K}_0(dz)\bigg]\nonumber\\
    &\quad\ \ \leq \kappa_d 3^{d+p+3}r^d\BE\bigg[\iint\min\Big(\|x\|^{d+p}\rho(\|x\|)+1, \|z\|^{d+p}\rho(\|z\|)+1\Big) \,\tilde{K}_0(dx)\, \tilde{K}_0(dz)\bigg]\nonumber\\
    &\quad\ \ \leq \kappa_d 3^{d+p+3}r^d\BE\bigg[\bigg(\int\sqrt{\|x\|^{d+p}\rho(\|x\|)+1} \,\tilde{K}_0(dx)\bigg)^2\bigg]
\end{align*}
In combination, we obtain that~\eqref{e:remainder2case1} holds, as
\begin{align*}
    &\frac{1}{r^{d-p}} \BE\bigg[\bigg(\int_{B_{2r}^c} |K|(y, B_r) \, |\Phi|(dy) \bigg)^2\bigg]\nonumber\\
    &\quad\ \ \leq d\kappa_d^2 3^{d+p+3} \underbrace{\BE\bigg[\bigg(\int\sqrt{\|x\|^{d+p}\rho(\|x\|)+1} \,\tilde{K}_0(dx)\bigg)^2\bigg]}_{<\infty \text{ by }~\eqref{e: tilde K integrability}} a(r)\nonumber\\
    &\quad \ \ \xrightarrow{r\to\infty} 0.
\end{align*}
Finally, we can conclude that $K\lambda_d$ is locally absolutely square-integrable, as with a similar argument regarding the support of $L$, we see that
\begin{align*}
    \BE\bigg[\bigg(\int_{B_{2}} |K|(y, B_1) \, |\Phi|(dy) \bigg)^2\bigg] &\leq \BE\bigg[\bigg(\int_{B_{3}} \tilde{K}(y, B_1) \, dy \bigg)^2\bigg]\nonumber\\
    &\leq \kappa_d^2 3^{2d} \BE\big[\tilde{K}_0(\R^d)^2\big]\nonumber\\
    &< \infty.
\end{align*}
\end{proof}
\end{lemma}

The second version deals with the case that one assumes Condition~\ref{c:condition square-integrable Markov with p} and leverages a proof which is fundamentally different from the previous and a generalization of a proof in~\cite{Dereudre_Flimmel_Huesmann_Leblé_2024}.

\begin{lemma}\label{l:remainder2case2}
Suppose that $\Phi$ is a locally absolutely square-integrable invariant random complex measure, and suppose that $K$ is an invariant transport kernel. Let $p\geq0$ and assume that Condition~\ref{c:condition square-integrable Markov with p} is satisfied. Then $K\Phi$ is locally absolutely square-integrable. Further, for $r>0$,
\begin{equation}\label{e:remainder2case2}
    \lim_{r\to\infty} \frac{1}{r^{d-p}} \BE\bigg[\bigg(\int_{B_{2r}^c} |K|(y, B_r) \, |\Phi|(dy) \bigg)^2\bigg] = 0. 
\end{equation}
\begin{proof}
As $K$ has bounded total variation, there is a $b>0$ such that $|K|(y, \R^d)\leq b, y\in\R^d$. 
For $r>0$,
\begin{align}
    \BE\bigg[\bigg(\int_{B_{2r}^c} |K|(y, B_r) \, |\Phi|(dy) \bigg)^2\bigg] &= \BE\bigg[\int_{B_{2r}^c}\int_{B_{2r}^c} |K|(z, B_r)|K|(y, B_r) \, |\Phi|(dz)\, |\Phi|(dy) \bigg]\nonumber\\
    &\leq 2 \BE\bigg[\int_{B_{2r}^c}\int_{B_{\|y\|}} |K|(z, B_r)|K|(y, B_r) \, |\Phi|(dz)\, |\Phi|(dy) \bigg]\nonumber\\
    &\leq 2b \BE\bigg[\int_{B_{2r}^c} |K|(y, B_r) |\Phi|(B_{\|y\|})\, |\Phi|(dy) \bigg]\nonumber\\
    &= 2b \BE\bigg[\int_{B_{2r}^c} \int_{B_r-y} |\Phi|(B_{\|y\|}) \,|K_0|(dx) \, |\Phi|(dy) \bigg]\nonumber\\
    &\leq 2b \BE\bigg[\int_{B_{r}^c} \int_{B_r-x} |\Phi|(B_{\|y\|})\, |\Phi|(dy)\,|K_0|(dx) \bigg]\nonumber\\
    &\leq 2b \BE\bigg[\int_{B_{r}^c} \BE\big[|\Phi|(B_{2\|x\|}) |\Phi|(B_r-x)\big]\,|K_0|(dx) \bigg],
\end{align}
where we first utilized symmetry and then twice the independence of $K$ and $\Phi$. Further, for $r\geq1,x\in B_r^c$,
\begin{align*}
    \BE\big[|\Phi|(B_{2\|x\|}) |\Phi|(B_r-x)\big] &\leq c \bigg(\frac{2\|x\|}{r}\bigg)^d \BE\big[|\Phi|(B_r)^2\big]\nonumber\\
    &\leq c^3 \bigg(\frac{2\|x\|}{r}\bigg)^d r^{2d}\BE\big[|\Phi|(B_1)^2\big]\nonumber\\
    &=  c^3 2^d \BE\big[|\Phi|(B_1)^2\big] r^d \|x\|^d,
\end{align*}
where $c\geq 2$ is a dimensional constant that is determined by how efficiently one can cover a larger sphere with multiple smaller spheres. In combination, we obtain,
\begin{align*}
    \frac{1}{r^{d-p}} \BE\bigg[\bigg(\int_{B_{2r}^c} K(y, B_r) \, |\Phi|(dy) \bigg)^2\bigg] &\leq 2^{d+1} b c^3 \BE\big[|\Phi|(B_1)^2\big] \BE\bigg[\int_{B_{r}^c} r^p\|x\|^d \,|K_0|(dx) \bigg]\nonumber\\
    &\leq 2^{d+1} b c^3 \BE\big[|\Phi|(B_1)^2\big] \BE\bigg[\int_{B_{r}^c} \|x\|^{d+p} \,|K_0|(dx) \bigg]\nonumber\\
    &\xrightarrow{r\to\infty} 0,
\end{align*}
where the last convergence is ensured by the local absolute square integrability of $\Phi$, Condition~\ref{c:condition square-integrable Markov with p}, and the dominated convergence theorem. As
\begin{equation*}
    \BE\bigg[\bigg(\int_{B_1}|K|(y, B_1) \, |\Phi|(dy)\bigg)^2\bigg] \leq b^2\BE\big[|\Phi|(B_1)^2\big] < \infty,
\end{equation*}
we can also conclude that $K\Phi$ is locally absolutely square-integrable.
\end{proof}
\end{lemma}

\section{Fourier-smoothness}\label{s: Fourier-smoothness}

We recall that by Definition~\ref{d: fourier smoooth} a function $f\in L^1(\R^d, \C)$ is called \textit{Fourier-smooth with exponent} $p\in[-d,\infty)$ if there is a $c>0$ such that
\begin{equation}\label{e: fourier smooth 12}
    |\hat{f}(k)| \leq c \frac{1}{(1+\|k\|)^{\frac{d+p}{2}}},\quad k\in\R^d,
\end{equation}
and if either $p>0$ or, as $r\to\infty$,
\begin{equation}\label{e: fourier smooth 22}
    \int_{B_2^c} \sup_{s\in B_1} |\hat{f}(r(k+s))|^2\, dk = O(r^{-(d+p)}).
\end{equation}
Note that~\eqref{e: fourier smooth 22} follows directly from~\eqref{e: fourier smooth 12} if $p>0$.
This definition is an extension of the definition from~\cite{Björklund_Hartnick_2024}, where it was introduced for indicator functions with exponents limited to $(0, 1]$. While the extension to non-indicator functions is trivial, the non-positive exponents require the additional assumption of~\eqref{e: fourier smooth 22} to make sure that the assertions of Theorem~\ref{t:asymptotic variance test function} hold also in this case. There may be a more optimal formulation of the additional condition, but in the following, we see that ours at least allows for the correct classification of the important $\I_{[0,1)^d}$.

\begin{examples}\label{ex: fourier smooth}
    The indicator function of the unit ball $\I_{B_1}$ is Fourier-smooth with exponent $1$ and not any higher exponent; see~\cite[Remark 3.5]{Björklund_Hartnick_2024} and~\cite[Theorem 2.16]{Iosevich_Liflyand_2014}. The indicator function of the unit cube $\I_{[0,1)^d}$ is Fourier-smooth with exponent $-d+2$ and not any higher exponent by Lemma~\ref{l: fourier smooth multiplication}. Hence, the latter cannot be used to reliably detect hyperuniformity in general; see Theorem~\ref{t:asymptotic variance test function} and also~\cite{Kim_Torquato_2017}. Further, Schwartz functions are Fourier-smooth with any exponent. In particular, these include the functions in $C_c^\infty(\R^d, \C)$.
\end{examples}

To obtain the Fourier-smoothness of $\I_{[0,1)^d}$, we show the following more general lemma that covers all functions that are the product of one-dimensional functions.

\begin{lemma}\label{l: fourier smooth multiplication}
    Let $f\in L^1(\R, \C)$ be Fourier-smooth with exponent $p\in[-1, \infty)$, and define $g:\R^d\to\C$ by
    \begin{equation}
        g(x) := \prod_{j=1}^d f(x_j), \quad x\in\R^d.
    \end{equation}
    Then $g$ is Fourier-smooth with exponent $p-(d-1)$.
    \begin{proof}
        Since $f$ is Fourier-smooth with exponent $p$, there is a $c>0$ such that
        \begin{equation*}
            |\hat{f}(k)| \leq c \frac{1}{(1+ |k|)^\frac{1+p}{2}}, \quad k\in\R.
        \end{equation*}
        Therefore,
        \begin{equation*}
            |\hat{g}(k)| \leq c^d \prod_{j=1}^d \frac{1}{(1+|k_j|)^\frac{1+p}{2}} \leq c^d\frac{1}{(1+\|k\|_\infty)^\frac{1+p}{2}} \leq c^d\frac{1}{(1+\|k\|)^\frac{d+p-(d-1)}{2}}, \quad k\in\R^d,
        \end{equation*}
        which shows the first part of the asserted Fourier-smoothness of $g$.

        For the second part, we can calculate that, for $r>0$,
        \begin{align*}
            \int_{B_2^c} \sup_{s\in B_1} |\hat{g}(r(k+s))|^2\, dk &= \int_{B_2^c} \sup_{s\in B_1} \prod_{j=1}^d|\hat{f}(r(k_j+s_j))|^2\, dk\nonumber\\
            &\leq d \int_{B_2^c} \I\{k_d\leq 2\} \sup_{s\in B_1} \prod_{j=1}^d|\hat{f}(r(k_j+s_j))|^2\, dk\nonumber\\
            &\quad \ \ + \bigg(\int_{(-2,2)^c}\sup_{s\in(-1,1)}|\hat{f}(r(k+s))|^2\, dk\bigg)^d\nonumber\\
            &\leq 4cd \int_{B_2^c}  \sup_{s\in B_1} \prod_{j=1}^{d-1}|\hat{f}(r(k_j+s_j))|^2\, dk\nonumber\\
            &\quad \ \ + \bigg(\int_{(-2,2)^c}\sup_{s\in(-1,1)}|\hat{f}(r(k+s))|^2\, dk\bigg)^d.
        \end{align*}
        Note that $B_2$ in the second-to-last integral does not refer to the ball in $\R^d$ but in $\R^{d-1}$. By the assumption that $f$ is Fourier-smooth with exponent $p$, the second summand of the bound is $O(r^{-(d+dp)})$ as $r\to\infty$, and since $p\geq-1$, this fact also implies that it is $O(r^{-(d+p-(d-1))})$. For the first summand of the bound, we can use induction to see that it also is \linebreak $O(r^{-(1+p)})=O(r^{-(d+p-(d-1))})$, which concludes the proof.
    \end{proof}
\end{lemma}

A simple way to obtain functions that are Fourier smooth with high exponents is provided by the following lemma.

\begin{lemma}
    suppose that $f,g\in L^1(\R^d, \C)$ are Fourier-smooth with exponents $p,q\in[-d,\infty)$. Then $f\ast g$ is Fourier-smooth with exponent $d+p+q$.
    \begin{proof}
        Let $c$ be the joint constant such that~\eqref{e: fourier smooth 12} holds for $f,g$ with their respective constants.
        Then the first part of the Fourier-smoothness of $f\ast g$ is obtained by
        \begin{equation*}
            |\widehat{(f\ast g)}(k)| = |\hat{f}(k)\hat{g}(k)| \leq c^2\frac{1}{(1+\|k\|)^\frac{d+(d+p+q)}{2}},\quad k\in\R^d. 
        \end{equation*}
        The second part follows from
        \begin{align*}
            \int_{B_2^c} \sup_{s\in B_1} |\widehat{(f\ast g)}(r(k+s))|^2\, dk &\leq \int_{B_2^c} \sup_{s\in B_1} |\hat{f}(r(k+s))|^2|\hat{g}(r(k+s))|^2\, dk\nonumber\\
            &\leq c\int_{B_2^c} \frac{\sup_{s\in B_1} |\hat{f}(r(k+s))|^2}{(1+r(\|k\|-1))^{d+q}}\, dk\nonumber\\
            &\leq cr^{-(d+q)}\int_{B_2^c} \sup_{s\in B_1} |\hat{f}(r(k+s))|^2\, dk\nonumber\\
            &= O(r^{-(d+d+p+q)})
        \end{align*}
        as $r\to\infty$.
    \end{proof}
\end{lemma}

Finally, Fourier-smoothness with a high exponent implies differentiability, and the derivative also inherits some of the Fourier-smoothness.
\begin{lemma}\label{l: fourier smooth differentiation}
    Suppose that $f\in L^1(\R^d, \C)$ is Fourier-smooth with exponent $p\in (d, \infty)$. Then $f$ is $\lceil \tfrac{p-d}{2}-1\rceil$-times continuously differentiable and every component of the $q$-th derivative $f^{(q)}$ is Fourier-smooth with exponent $p-2q$ for $q\in\{0,..., \lceil \tfrac{p-d}{2}-1\rceil\}$.
    \begin{proof}
        Since $f$ is Fourier-smooth with exponent $p>d$, we know that, by~\eqref{e: fourier smooth 12},
        \begin{equation*}
            (1+\|k\|)^{ \big\lceil\tfrac{p-d}{2}-1\big\rceil}|\hat{f}(k)|\leq c \frac{(1+\|k\|)^{ \big\lceil\tfrac{p-d}{2}-1\big\rceil}} {(1+\|k\|)^{\frac{d+p}{2}}} = c \frac{1}{(1+\|k\|)^{d+\big(\tfrac{d+p}{2}-\big\lceil\tfrac{p+d}{2}-1\big\rceil\big)}}, \quad k\in\R^d,
        \end{equation*}
        for some $c>0$. Since the exponent in the denominator of the right-most side is strictly greater than $d$, this fact implies that $k\mapsto (1+\|k\|)^{ \big\lceil\tfrac{p-d}{2}-1\big\rceil}|\hat{f}(k)|$ is integrable, whereby $f$ is \linebreak $\lceil \tfrac{p-d}{2}-1\big\rceil$-times continuously differentiable. The Fourier-smoothness of the components of $f^{(q)}$ then is a direct consequence of the fact that
        \begin{equation*}
            \|\widehat{f^{(q)}}(k)\|=\|k^{\otimes q}\hat{f}(k)\|\leq \|k\|^q |\hat{f}(k)|, \quad k\in\R^d.
        \end{equation*}
    \end{proof}
\end{lemma}

\section{Averaging sets}\label{s: averaging sets}

Averaging sets are a special class of quadrature formula where the weights are all equal. The term was coined in~\cite{Seymour_Zaslavsky_1984} as a generalization of so-called spherical $t$-designs, which are averaging sets for the uniform surface integral on a sphere; see also~\cite{Delsarte_Goethals_Seidel_1977, Bondarenko_Radchenko_Viazovska_2013}. Our main contribution here lies in Appendix~\ref{ss: Averaging sets of sets}, where we show that for fixed dimension and degree, there is a size such that there exist averaging sets of that size of all convex sets; see Definition~\ref{d: av set of set} for the properties of such an averaging set. This fact is relevant to ensure the existence of the point processes constructed in Subsection~\ref{ss: pp from invariant partitions}. Then in Appendix~\ref{ss: Averaging sets of trigonometric polynomials}, we generalize Chebyshev–Gauss quadrature to approximate image measures of arbitrary trigonometric polynomials. This technique is applied in Subsection~\ref{ss: pp from averaging sets}. There, we also use spherical formulas, but the relevant results about them have already been provided in~\cite{Bondarenko_Radchenko_Viazovska_2013}, so we do not explicitly introduce them here.

\begin{definition}
    Let $n\in\N$, $\mu\neq0$ be a finite measure on $\R^d$, $J$ be an index set, \linebreak $\{f_j:j\in J\}\subseteq L^1(\mu)$, and $x_1,...,x_n\in\R^d$. We say that $x_1,...,x_n$ form an \textit{averaging set} of $\mu$ and $\{f_j:j\in J\}$ of size $n$ if
    \begin{equation}
        \frac{1}{\mu(\R^d)}\int f_j(x)\, \mu(dx) = \frac{1}{n} \sum_{k=1}^n f_j(x_k),\quad j\in J.
    \end{equation}
    If we do not say otherwise, we require that $x_1,...,x_n$ are pairwise different. Then we also say that $\{x_1,...,x_n\}$ is an averaging set of $\mu$ and $\{f_j:j\in J\}$ of size $n$.
\end{definition}

\begin{definition}
    Let $n\in\N, p\in\N_0$, $\mu\neq0$ be a finite measure on $\R^d$, and $x_1,...,x_n\in\R^d$. We say that $x_1,...,x_n$ form an averaging set of $\mu$ of order $p$ and size $n$ if they form an averaging set of $\mu$ and the polynomials of degree up to $p$, i.e.,
    \begin{equation}
        \frac{1}{\mu(\R^d)}\int x^{\otimes q}\, \mu(dx) = \frac{1}{n} \sum_{k=1}^n x_k^{\otimes q},\quad q\in\{0,...,p\}.
    \end{equation}
    Implicitly, we assume that the polynomials of degree up to $p$ are integrable with respect to $\mu$.
\end{definition}

Before we continue, we make a quick detour to the complex plane, where averaging sets with size equal to the degree exist for all measures and degrees by the Girard-Newton formulas.

\begin{theorem}\label{t: complex averaging sets}
    Let $p\in\N$, and suppose that $\mu$ is a finite measure on $\C$ for which the polynomials up to degree $n$ are integrable. Then there exist $z_1,...,z_{p}\in\C^d$ which form a complex averaging set of $\mu$ of order $p$ and size $p$. These are unique up to permutation.
    \begin{proof}
        The assertion is directly implied by the upcoming Lemma~\ref{l: Newton's identities}.
    \end{proof}
\end{theorem}

Note that, $z_1,...,z_{p}$ are not necessarily pairwise different, and if $\mu$ is concentrated on $\R$, there is no guarantee that $z_1,...,z_p\in\R$.

\begin{lemma}\label{l: Newton's identities}
    The mapping
    \begin{equation}
        F:\C^d\to\C^d, z\mapsto \begin{pmatrix}
            z_1+...+z_d\\
            z_1^2+...+z_d^2\\
            \vdots\\
            z_1^d+...+z_d^d
        \end{pmatrix}, \quad z\in\C^d,
    \end{equation}
    is surjective. Additionally, up to permutation of the arguments, it is injective.
    \begin{proof}
        By the Girard-Newton formulae, see, e.g.,~\cite{Mead_1992}, we know that there is a bijective map $f:\C^d\to\C^d$ such that
        \begin{equation*}
            f(p(z)) = e(z), \quad z\in\C^d,
        \end{equation*}
        where $p,e:\C^d\to \C^d$ with components
        \begin{align*}
            p_j(z) &= \sum_{k=1}^d z_k^j,\\
            e_j(z) &= \sum_{\substack{k\in\{1,...,d\}^j\\k_i\neq k_l\textnormal{ for } i\neq l}} z_{k_1}\cdots z_{k_j},\quad z\in\C^d, j\in\{1,...,d\}.
        \end{align*}
        Hence, $F(z) = m$ is equivalent to $e(z) = f(m)$ for $z,m\in\C^d$. Here, we can apply that
        \begin{equation*}
            \prod_{k=1}^d (x-z_k) = x^d + \sum_{k=1}^d (-1)^k e_k(z)x^{d-k}, \quad x\in\C, z\in\C^d.
        \end{equation*}
        Thus, for $m\in\C^d$, the relation $e(z_1,...,z_d) = f(m)$ is fulfilled iff $z_1,...,z_d\in\C$ are the zeros of the polynomial
        \begin{equation*}
            x^d + \sum_{k=1}^d (-1)^k f_k(m)x^{d-k},\quad x\in\C,
        \end{equation*}
        by a comparison of coefficients. The assertion then follows from the existence of all roots of polynomials in $\C$ and their uniqueness.
    \end{proof}
\end{lemma}

To conclude the excursion, we give the following examples, where we calculate the optimal complex averaging sets from Theorem~\ref{t: complex averaging sets}. In general, one can derive exact formulas for up to $4$ points, and if the measure is more regular, e.g., real and symmetric, up to $9$ points. Additionally, for measures on the real line, all configurations up to $3$ points are also real. This no longer holds for $4$ points, as one can see in Proposition~\ref{p:averaging set counterexample}.

\begin{examples}
    Suppose that $\mu$ is the measure with unit intensity on the unit disk in $\C$. Then the optimal complex averaging set from Theorem~\ref{t: complex averaging sets} just contains $0$ multiple times.

    If instead, we suppose that $\mu$ is the measure with support on $[-1,1]$ and the density \linebreak $x\mapsto\frac{1}{\sqrt{1-x^2}}, x\in(-1,1)$, then we recover the nodes from Chebyshev-Gauss quadrature by uniqueness of the solution up to permutation.

    Finally, if we suppose that $\mu$ is the Lebesgue measure restricted to $[0,1]$ like in classical quadrature, then for any number of points up to $7$, and additionally for $9$ points, with the assistance of computer algebra software, we showed that the points of the optimal averaging sets are all real. For $8$ points, we showed that the solution also involves complex numbers which do not lie on the real line. For more points, we can no longer utilize closed-form solutions for the zeros of symmetric polynomials. However, one can calculate approximate solutions with high enough accuracy to ensure that they are not purely real, which we did for many more. What we can observe suggests that there do not exist any purely real solutions beyond $9$ points.
\end{examples}

\subsection{Averaging sets of sets}\label{ss: Averaging sets of sets}

In this subsection, we no longer consider averaging sets of arbitrary measures, but in particular of restrictions of the Lebesgue measure. 

\begin{definition}\label{d: av set of set}
    Let $n\in\N, p\in\N_0$, $B\in\cB^d$ with $\lambda_d(B)\in(0,\infty)$, and $x_1,...,x_n\in\R^d$. We say that $x_1,...,x_n$ form an \textit{averaging set} of $B$ of order $p$ and size $n$ if they form an averaging set of $\lambda_d|_B$ of order $p$ and size $n$, i.e.,
    \begin{equation}
        \frac{1}{\lambda_d(B)}\int_B x^{\otimes q}\, dx = \frac{1}{n} \sum_{k=1}^n x_k^{\otimes q},\quad q\in\{0,...,p\}.
    \end{equation}
\end{definition}

The following lemma shows that these averaging sets are preserved under bijective linear maps and translations.

\begin{lemma}\label{l:averaging set linear map}
    Let $B\in\cB^d$ with $\lambda_d(B)\in(0,\infty)$, $n\in\N$, $p\in\N_0$, and $F:\R^d\to\R^d$ be an affine transformation. Suppose that  $x_1,...,x_n\in\R^d$ form an averaging set of $B$ of degree $p$. Then $F(x_1),...,F(x_n)$ form an averaging set of $F(B)$ of degree $p$.
    \begin{proof}
        There is an invertible matrix $A\in\R^{d\times d}$ and a vector $b\in\R^d$ such that \linebreak$F(x) = Ax+b, x\in\R^d$. Consequently, 
        \begin{equation*}
            \frac{1}{\lambda_d(F(B))}\int_{F(B)} y^{\otimes q}\, dy = \frac{|\det(A)|}{\lambda_d(F(B))}\int_B (Ax+b)^{\otimes q} \, dx = \frac{1}{\lambda_d(B)}\int_B (Ax+b)^{\otimes q} \, dx
        \end{equation*}
        for $q\in\{0,...,p\}$.
        As every component of $x\mapsto (Ax+b)^{\otimes q}$ is a polynomial of degree $q$, the assertion follows from the fact that $x_1,...,x_n$ form an averaging set of $B$ of degree $p$, since
        \begin{equation*}
            \frac{1}{\lambda_d(B)}\int_B (Ax+b)^{\otimes q} \, dx = \frac{1}{n} \sum_{k=1}^n (Ax_k+b)^{\otimes q} = \frac{1}{n} \sum_{k=1}^n F(x_k)^{\otimes q},\quad q\in\{0,...,p\}.
        \end{equation*}
    \end{proof}
\end{lemma}

It was shown in~\cite[Corollary 2]{Seymour_Zaslavsky_1984} that for any $p\in\N_0$ and bounded, connected, open set $B\subseteq \R^d$, there is an $N\in\N$ such that for any $n\geq N$, there is an averaging set $x_1,...,x_n$ of $B$ of degree $p$ and size $n$. However, for our purposes in Subsection~\ref{ss: pp from invariant partitions} we need to control how $N$ depends on $B$. We provide such an extension in the following theorem.

\begin{theorem}\label{t:averaging set bound}
For all $d\in\N, C>0$, and $p\in\N_0$, there exists an $N\in\N$ such that for any $n\geq N$, $B\in\cB^d_b$ with $\lambda_d(\partial B)=0$ and $\frac{\diam(B)^d}{\lambda_d(B)}\leq C$, there exists an averaging set $x_1,...,x_n$ of $B$ of degree $p$ and size $n$, where $x_1,...,x_n\in B$ and $x_1,...,x_n$ are pairwise different.
\begin{proof}
Let $n\in \N, B\subset \R^d$ be an open set such that and $\frac{\diam(B)^d}{\lambda_d(B)}\leq C$. The assertion also follows for sets with a boundary that is a null set because a null set does not contribute to the integral. Further, without loss of generality, we can assume that $\lambda_d(B)=1$ and $0\in B$, as we can otherwise linearly scale it up or down to change the volume to $1$ and translate it to move $0$ into the interior. This scaling does not change $\frac{\diam(B)^d}{\lambda_d(B)}$ and preserves averaging sets by Lemma~\ref{l:averaging set linear map}. As a final simplification, we assume that $C\geq1$, whereby $\diam(B)\leq C$ and $B\subseteq B_C$.

To construct the averaging set of $B$ of degree $p$ we are looking for, we need to find pairwise different points $x_1,...,x_n\in B$ such that
\begin{equation}
    \frac{1}{n} \sum_{k=1}^n x_k^{\otimes q} = \int_B z^{\otimes q} \, dz,\quad q\in\{0,...,p\},
\end{equation}
for a suitable $n\in\N$.
Because the Lebesgue measure is translation invariant, without loss of generality, we can assume $0\in B$.
If we decompose the tensor-valued equations into multiple real-valued equations, then we obtain equations of the form
\begin{equation}
    \frac{1}{n} \sum_{k=1}^n f_q(x_k) = \int_B f_q(z) \, dz,\quad q\in\{0,...,\tilde{p}\},
\end{equation}
where $\tilde{p} = \binom{p+d}{d}$ and $f_q\in\cC(B, \R)$ are multivariate monomials with coefficient $1$ and therefore also $\|f_q\|_\infty\leq C^p$ for $q\in\{1,...,\tilde{p}\}$. We additionally specify that $f_0=1$. Now, we define the following scalar product:
\begin{equation}
    \langle f, g \rangle_B := \int_B f(x)g(x)\, dx,\quad f,g\in\cC(B,\R).
\end{equation}
As $\{f_q : q\in\{0,...,\tilde{p}\}\}$ are linearly independent with respect to this scalar product, we can construct an orthonormal base with respect to it. First, we can simply define $\tilde{f}_0:=1$. Then, we iteratively define
\begin{align}
    g_q &:=f_q - \sum_{k=0}^{q-1} \langle f_q, \tilde{f}_k\rangle_B \tilde{f}_k\\
    \tilde{f}_q &:= \frac{1}{\sqrt{\langle g_q, g_q\rangle_B}}g_q, \quad q\in\{1,...,\tilde{p}\}.
\end{align}

For the following arguments, it is important that $\|\tilde{f}_q\|_\infty\leq c_q$, where the constant $c_q>0$ can only depend on $d,a,C$, and $p$. We show this by induction over $q\in\{0,...,\tilde{p}\}$. For $q=0$, the assertion holds. Now, assume that the assertion holds for all $k<q$, i.e., there are $c_k>0$ only dependent on $d,C$, and $p$ such that $\|\tilde{f}_k\|_\infty\leq c_k$ for $k<q$. Then,
\begin{equation*}
    \|g_q\|_\infty \leq \|f_q\|_\infty + \sum_{k=0}^{q-1} |\langle f_q, \tilde{f}_k\rangle_B| \|\tilde{f}_k\|_\infty \leq C^p + C^{\frac{p}2{}} \sum_{k=0}^{q-1} c_k.
\end{equation*}
As the upper bound only depends on $d,C$, and $p$, it suffices to find a positive lower bound for $\langle g_q, g_q\rangle_B$ only dependent on $d,C$, and $p$ to get the bound on $\|\tilde{f}_q\|_\infty$ we are looking for. To this end, we again note that $f_q$ is a monomial with a coefficient of $1$. The polynomial $g_q$ keeps this coefficient of $1$ in front of the monomial it inherits from $f_q$ because $f_k$ is a different monomial for $k\neq q$. Hence,
\begin{equation*}
    \langle g_q,g_q\rangle_B \geq \inf \int_{A_h} h(x)^2\, dx,
\end{equation*}
where the infimum is taken over all polynomials $h$ on $B_C$ such that the coefficient in front of the monomial from $f_q$ is $1$, and where we choose $A_h:=\{h^2\leq Q_h^{-1}(1)\}\cap B_C$ with \linebreak $Q_h(y):=\lambda_d(\{h^2\leq y\}\cap B_C)$ for $y\in[0,\infty)$. $Q_h$ is invertible since these polynomials are continuous and never constant, and for non-constant polynomials, all level sets are null sets with respect to Lebesgue measure. The set $A_h$ can be seen as a worst-case scenario for $B$ with respect to every polynomial $h$. Now, we choose a sequence $(h_m)_{m\in\N}$ of these polynomials such that
\begin{equation}\label{e:avset hm convergence}
    \int_{A_{h_m}} h_m(x)^2\, dx \xrightarrow{m\to\infty} \inf \int_{A_h} h(x)^2\, dx.
\end{equation}
Because $A_{h_m}$ is compact, $A_{h_m}\subseteq B_C$, and $\lambda_d(A_{h_m})=1$ for $m\in\N$, we know that $(A_{h_m})_{m\in\N}$ contains a subsequence that converges to some compact $\tilde{B}\subseteq B_C$ with $\lambda_d(\tilde{B})=1$ in the Hausdorff metric. Hence, without loss of generality $A_{h_m}\to \tilde{B}$ as $m\to\infty$ in the Hausdorff metric and therefore also $\I_{A_{h_m}}\to \I_{\tilde{B}}$ as $m\to\infty$ in $L^1(B_C)$. It is also easy to derive that $h_m$ itself must be a bounded sequence in the space of multivariate polynomials on $B_C$ with degree up to the degree of $f_q$, as otherwise it would have a subsequence that diverges almost everywhere, which contradicts~\eqref{e:avset hm convergence}. Thus, there exists a subsequence of $(h_m)_{m\in\N}$ and a polynomial $\tilde{h}$ such that $h_m\to \tilde{h}$ as $m\to\infty$ in $L^\infty(B_C)$. Because $h_m$ has the coefficient $1$ in front of the monomial from $f_q$ for all $m\in\N$, $\tilde{h}$ inherits this property and particularly is not the zero polynomial. Therefore,
\begin{equation*}
    \inf \int_{B_h} h(x)^2\, dx \xleftarrow{m\to\infty} \int_{B_{h_m}} h_m(x)^2\, dx \xrightarrow{m\to\infty} \int_{\tilde{B}} \tilde{h}(x)^2\, dx >0.
\end{equation*}
As the right limit only depends on $d,C$, and $p$, the induction is complete, and we have shown that there exist $c_q>0$ that only depend on $d,C$, and $p$ such that $\|\tilde{f}_q\|_\infty\leq c_q$ for all $q\in\{0,...,\tilde{p}\}$. From now on, let $c:=\max\{c_q:q\in\{0,...,\tilde{p}\}\}$.

For the next step, choose $\varepsilon_{i,j}$ for $i\in\{1,...,\tilde{p}\}, j\in\{1,...,2^{\tilde{p}}\}$ such that
\begin{equation}
    \big\{(\varepsilon_{1,j},...,\varepsilon_{\tilde{p},j}) : j\in\{1,...,2^{\tilde{p}}\}\big\} = \bigg\{(a_1,...,a_{\tilde{p}}):a_1,...,a_{\tilde{p}}\in\Big\{-\frac{1}{2\tilde{p}c},\frac{1}{2\tilde{p}c}\Big\}\bigg\}.
\end{equation}
Then define 
\begin{align}
    l_j &:= \sum_{i=1}^{\tilde{p}} \varepsilon_{i,j} \tilde{f}_i,\\
    \tilde{l}_j &:= 1 + l_j - \int_B l_j(x)\, dx,\quad j\in\{1,...,2^{\tilde{p}}\}.\label{e:avset 1 integral l}
\end{align}
By construction, we obtain
\begin{align*}
    \|l_j\|_\infty &\leq \frac{\tilde{p}c}{2\tilde{p}c} = \frac{1}{2},\\
    \Big\|\tilde{l}_j-1\Big\|_\infty &\leq 2\|l_j\|_\infty \leq 1,\quad j\in\{1,...,\tilde{p}\}.
\end{align*}
Hence, $\tilde{l}_j\in\cC(B, [0,2])$, and from definition~\eqref{e:avset 1 integral l}, we can directly derive that\linebreak$\int_B \tilde{l}_j(x)\, dx=1$ for $j\in\{1,...,\tilde{p}\}$. Additionally, we can see that
\begin{align*}
    \int_B\tilde{l}_j(x) \tilde{f}_q(x)\, dx &= \sum_{i=1}^{\tilde{p}} \varepsilon_{i,j} \langle \tilde{f}_i, \tilde{f}_q\rangle_B + \bigg( 1 - \int_B l_j(x)\, dx\bigg) \langle\tilde{f}_0, \tilde{f}_q\rangle_B \nonumber\\
    &= \varepsilon_{q,j},\quad j\in\{1,...,2^{\tilde{p}}\},q\in\{1,...,\tilde{p}\}.
\end{align*}
This fact allows us to conclude that
\begin{equation*}
    (\varepsilon_{1, j}, ..., \varepsilon_{\tilde{p}, j}) \in \conv\Big(\Big\{\big(\tilde{f}_1(x), ..., \tilde{f}_{\tilde{p}}(x)\big):x\in B\Big\}\Big), \quad j\in\{1,...,2^{\tilde{p}}\}.
\end{equation*}
Consequently, there exist $y_{1,j},...,y_{\tilde{p}+1,j}\in B$ for all $j\in\{1,...,2^{\tilde{p}}\}$ such that
\begin{equation*}
    (\varepsilon_{1, j}, ..., \varepsilon_{\tilde{p}, j}) \in \conv\Big(\Big\{\big(\tilde{f}_1(y_{q,j}), ..., \tilde{f}_{\tilde{p}}(y_{q,j})\big): q\in\{1,...,\tilde{p}+1\}\}\Big\}\Big)
\end{equation*}
by Carathéodory's theorem; see~\cite{Carathéodory_1911}.
In total, we can conclude that
\begin{equation}\label{e:avset centroid pos convhull distance}
    \Big[-\frac{1}{2\tilde{p}c},\frac{1}{2\tilde{p}c}\Big]^{\tilde{p}}\subseteq \conv\Big(\Big\{\big(\tilde{f}_1(y_{q,j}), ..., \tilde{f}_{\tilde{p}}(y_{q,j})\big): q\in\{1,...,\tilde{p}+1\}, j\in\{1,..., 2^{\tilde{p}}\}\Big\}\Big).
\end{equation}
Now, we can almost apply~\cite[Proposition 7.1]{Seymour_Zaslavsky_1984}. However, it requires finding $\tilde{p}+1$ points such that $0$ is in the interior of the convex hull which is spanned by the evaluation of $(\tilde{f}_1, ..., \tilde{f}_{\tilde{p}})$ at these points. However, we have $2^{\tilde{p}}(\tilde{p}+1)$ points and cannot, at least not directly from~\eqref{e:avset centroid pos convhull distance}, guarantee that a subset of $\tilde{p}+1$ points fulfills this property with a uniform bound on the distance of $0$ from the boundary of the generated convex hull.

Still, by Carathéodory's theorem and~\eqref{e:avset centroid pos convhull distance}, the convex hulls of all subsets of \linebreak $\big\{\big(\tilde{f}_1(y_{q,j}), ..., \tilde{f}_{\tilde{p}}(y_{q,j})\big): q\in\{1,...,\tilde{p}+1\}, j\in\{1,..., 2^{\tilde{p}}\}\}$ with $\tilde{p}+1$ elements form a covering of $\big[-\frac{1}{2\tilde{p}c},\frac{1}{2\tilde{p}c}\big]^{\tilde{p}}$. Hence, there is at least one choice such that the volume of the convex hull intersected with the box is at least $\frac{1}{(\tilde{p}c)^{\tilde{p}}\binom{2^{\tilde{p}}(\tilde{p}+1)}{\tilde{p}+1}}$. We denote these chosen $y_{q,j}$'s by $\tilde{y}_1, ..., \tilde{y}_{\tilde{p}+1}$. Because $\conv(\{\tilde{y}_1, ..., \tilde{y}_{\tilde{p}+1}\})\cap \big[-\frac{1}{2\tilde{p}c},\frac{1}{2\tilde{p}c}\big]^{\tilde{p}}$ is also convex, we can therefore derive from~\cite{Sewell_2024} that the intersection must contain a ball of radius $r:=\frac{1}{2\tilde{p}^2c\binom{2^{\tilde{p}}(\tilde{p}+1)}{\tilde{p}+1}}$. We denote the center of this ball by $x$. As $-x\in \big[-\frac{1}{2\tilde{p}c},\frac{1}{2\tilde{p}c}\big]^{\tilde{p}}$ and by~\eqref{e:avset centroid pos convhull distance}, we can see that there are $\gamma_{q,j}\in[0,1]$ for $q\in\{1,...,\tilde{p}+1\}, j\in\{1,..., 2^{\tilde{p}}\}$ such that $\sum_{q=1}^{\tilde{p}+1}\sum_{j=1}^{2^{\tilde{p}}} \gamma_{q,j}=1$ and 
\begin{equation*}
    -x = \sum_{q=1}^{\tilde{p}+1}\sum_{j=1}^{2^{\tilde{p}}} \gamma_{q,j} \big(\tilde{f}_1(y_{q,j}), ..., \tilde{f}_{\tilde{p}}(y_{q,j})\big).
\end{equation*}
Further, because $\|\tilde{f}_q\|_\infty\leq c$ for $q\in\{1,...,\tilde{p}\}$, we can change all of the $\gamma_{q,j}$'s by amounts of up to $\frac{r}{3\sqrt{\tilde{p}}c(\tilde{p}+1) 2^{\tilde{p}}}$ without moving the linear combination away from $-x$ by more than $\frac{r}{3}$. Hence, there are $\tilde\gamma_{q,j}\in[0,1]\cap\Q$ for $q\in\{1,...,\tilde{p}+1\}, j\in\{1,..., 2^{\tilde{p}}\}$ such that $\sum_{q=1}^{\tilde{p}+1}\sum_{j=1}^{2^{\tilde{p}}} \tilde\gamma_{q,j}=1$ and 
\begin{equation*}
    \sum_{q=1}^{\tilde{p}+1}\sum_{j=1}^{2^{\tilde{p}}} \tilde\gamma_{q,j} \big(\tilde{f}_1(y_{q,j}), ..., \tilde{f}_{\tilde{p}}(y_{q,j})\big)\in -x + B_{\frac{r}{3}},
\end{equation*}
where the $\tilde\gamma_{q,j}$'s can be chosen to have any common denominator greater than $N_1:=\big\lceil\frac{3\sqrt{\tilde{p}}c(\tilde{p}+1) 2^{\tilde{p}}}{r}\big\rceil$. Using that $B$ is open and that $\tilde{f}_q$ is continuous for $q\in\{1,...,\tilde{p}\}$, we can conclude that, for any $n>N_1$, we can find different $z_1,...,z_n\in B$ such that
\begin{equation}
    \tilde{x}:= -\frac{1}{n}\sum_{k=1}^n \big(\tilde{f}_1(z_k), ..., \tilde{f}_{\tilde{p}}(z_k)\big) \in x + B_{\frac{r}{2}}.
\end{equation}
As 
\begin{equation*}
    \tilde{x} + B_{\frac{r}{2}} \subseteq x + B_r \subseteq \conv\Big(\Big\{\big(\tilde{f}_1(y_k),...,\tilde{f}_{\tilde{p}}(y_k)\big):k\in\{1,...,\tilde{p}+1\}\Big\}\Big),
\end{equation*}
we can now apply Theorem~\cite[Proposition 7.1]{Seymour_Zaslavsky_1984} to find the remaining $\tilde{z}_1,...,\tilde{z}_n\in B$ such that
\begin{equation}
    \tilde{x}= \frac{1}{n}\sum_{k=1}^n \big(\tilde{f}_1(\tilde{z}_k), ..., \tilde{f}_{\tilde{p}}(\tilde{z}_k)\big)
\end{equation}
for any $n>N_2:=\big\lceil\frac{4\tilde{p}^\frac{3}{2}c}{r}\big\rceil$. Note the fact that $B$ is not assumed to be connected, which is formally required in~\cite[Proposition 7.1]{Seymour_Zaslavsky_1984}. However, we can extend the polynomials to the set $B_C$, which is connected. To not change the underlying problem, we consider averaging sets on $B_C$ with respect to the measure $\I_B\cdot \lambda_d|_{B_C}$. This extension guarantees that one can connect any finite set of points by a simple path, and~\eqref{e:avset centroid pos convhull distance} guarantees that $(\tilde{f}_1,..,\tilde{f}_{\tilde{p}})(B)$ does not lie a lower-dimensional affine subspace of $\R^{\tilde{p}}$. Finally, because $B$ is open and because the argument in~\cite[Section 6]{Seymour_Zaslavsky_1984} allows us to find the remaining points arbitrarily close to $y_1,...,y_{\tilde{p}+1}$, we can still guarantee that the points $\tilde{z}_1,..., \tilde{z}_n$ lie not only in $B_C$ but also in $B$. In total, we obtain that for any $n\in\N, 2n> N:=2 \max(N_1,N_2)$, there are pairwise distinct $z_1,...,z_n,\tilde{z}_1,...,\tilde{z}_n\in B$ such that
\begin{equation}
    \frac{1}{2n}\sum_{k=1}^n \big(\tilde{f}_1(\tilde{z}_k), ..., \tilde{f}_{\tilde{p}}(\tilde{z}_k)\big) + \big(\tilde{f}_1(z_k), ..., \tilde{f}_{\tilde{p}}(z_k)\big) = \frac{1}{2}(\tilde{x}-\tilde{x}) = 0 = \int_B \big(\tilde{f}_1(z), ..., \tilde{f}_{\tilde{p}}(z)\big)\, dz.
\end{equation}
Further, $\tilde{f}_0=1$, and we have
\begin{equation}
    \frac{1}{2n}\sum_{k=1}^n \tilde{f}_0(\tilde{z}_k) + \tilde{f}_0(z_k) = 1 = \int_B \tilde{f}_0(z)\, dz.
\end{equation}
Hence, $z_1,...,z_n,\tilde{z}_1,...,\tilde{z}_n$ form an averaging set for the functions $\tilde{f}_0,\tilde{f_1},...,\tilde{f}_{\tilde{p}}$, and because these form a basis of the polynomials on $B$ with degree less than $p$, it is also an averaging set of degree $p$. For 
a collection of $2n+1>N$ points, one needs to replace the point $-\tilde{x}$ by $-\frac{n}{n+1}\tilde{x}$, and has to use one more point to fit $-\frac{n}{n+1}\tilde{x}$ than to fit $\tilde{x}$. Then the additional factor cancels out.
\end{proof}
\end{theorem}

We should add that one can drop the assumption that $\lambda_d(\partial B)=0$ if one does not require the points of the averaging set to lie in $B$.

\begin{remark}
    Theorem~\ref{t:averaging set bound} handles the existence of an averaging set of some set $B$, degree $p$, and size $n$. To use such an averaging set in a construction like in Example~\ref{ex:fair tiling high degree}, we also need to find the averaging set measurably in terms of $B$. One option is to try random initializations and use gradient descent to possibly converge to the solution. Since we showed that the solution exists, and because the squares of polynomials are locally convex around their zeros, there is a random but finite number of re-initializations after which a solution can be found in the limit of the gradient descent. Similarly, one can apply Newton's method instead of gradient descent. This option is also the one we picked for the simulations in Subsection~\ref{ss: pp from invariant partitions}.
\end{remark}

In the proof of the preceding Theorem~\ref{t:averaging set bound}, after some simplifications, one first finds a quadrature formula by Carathéodory's theorem, and then replaces it by an averaging set with points close to the original nodes, applying~\cite[Proposition 7.1]{Seymour_Zaslavsky_1984}. An alternative way is to start with an approximate averaging set right away, e.g., by using iid points. If one then uses a base of the polynomials that has orthonormal derivatives, one can make sure that this approximate averaging set lies in a neighborhood of an exact averaging set. In this way, one can also find bounds for the minimum number of points of the averaging set. This technique was applied in~\cite{Wagner_Volkmann_1991, Bondarenko_Viazovska_2008, Bondarenko_Radchenko_Viazovska_2013}. In~\cite{Bondarenko_Radchenko_Viazovska_2013}, the authors even derived the best known bounds with respect to the dimension for averaging sets on the surface of spheres, i.e., spherical designs. However, if one applies this technique to our setting, the assumptions on $B$ have to be a lot stricter to guarantee that the points of the resulting averaging set actually lie in $B$. On the other hand, one can expect the bound to be a lot better.

In the following proposition, we show that one can weaken the assumptions in Theorem~\ref{t:averaging set bound} by bounding the volume of the convex hull of the set instead of the diameter.

\begin{proposition}\label{p:averaging set bound improved}
    For all $d\in\N, C>0, p\in\N_0$, there exists an $N\in\N$ such that for all $n\geq N$, $B\in\cB^d_b$ such that $\lambda_d(\partial B)=0$, $\frac{\lambda_d(\conv(B))}{\lambda_d(B)}\leq C$, there exists an averaging set $x_1,...,x_n$ of $B$ of degree $p$ and size $n$, where $x_1,...,x_n\in B$ and $x_1,...,x_n$ are pairwise different.
\end{proposition}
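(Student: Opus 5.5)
The plan is to reduce Proposition~\ref{p:averaging set bound improved} to Theorem~\ref{t:averaging set bound} by means of an affine normalization, using the fact that averaging sets are preserved under affine maps (Lemma~\ref{l:averaging set linear map}). The point is that a small volume ratio $\lambda_d(\conv(B))/\lambda_d(B)$ does not bound $\diam(B)^d/\lambda_d(B)$ directly, since $B$ may be long and thin. It does so after an affine map that puts $\conv(B)$ into a well-rounded position. The final $N$ should be the one from Theorem~\ref{t:averaging set bound} for some constant $C'$ that depends only on $d$ and $C$.

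\emph{Normalization.} Let $K:=\overline{\conv(B)}$. This is a convex body, and it has nonempty interior because $\lambda_d(B)>0$. By John's theorem there is an affine bijection $F$ such that $B_1\subseteq F(K)\subseteq B_d$. Then $\diam(F(B))\le\diam(F(K))\le 2d$. The ratio of volumes is invariant under affine maps, and $\conv(F(B))=F(\conv(B))$, so
\[
\frac{\diam(F(B))^d}{\lambda_d(F(B))}\le \frac{(2d)^d}{\lambda_d(F(B))}=\frac{(2d)^d}{\lambda_d(F(K))}\cdot\frac{\lambda_d(\conv(B))}{\lambda_d(B)}\le \frac{(2d)^d}{\kappa_d}\,C=:C'.
\]
Here I used $\lambda_d(K)=\lambda_d(\conv(B))$, which holds because the boundary of a convex set is a Lebesgue null set. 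Since $F$ is affine and invertible, we also have $\lambda_d(\partial F(B))=\lambda_d(F(\partial B))=0$, and $F(B)$ is bounded.

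\emph{Application.} Apply Theorem~\ref{t:averaging set bound} with $d$, $C'$ and $p$ to obtain $N$. For each $n\ge N$ this gives pairwise distinct points $y_1,\dots,y_n\in F(B)$ that form an averaging set of $F(B)$ of degree $p$. Lemma~\ref{l:averaging set linear map}, applied to $F^{-1}$, shows that $x_k:=F^{-1}(y_k)$ form an averaging set of $F^{-1}(F(B))=B$ of degree $p$. These points lie in $B$, and they are pairwise distinct because $F^{-1}$ is injective.

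\emph{Main obstacle.} I expect the only delicate part to be the use of John's theorem, together with the bookkeeping that makes $N$ depend only on $d$, $C$ and $p$ and not on $B$. The normalization handles this, since $C'$ is explicit. If one prefers to avoid John's theorem, a cruder normalization also works: take an affine image of a maximal-volume simplex inscribed in $K$. This yields a containment $F(K)\subseteq$ a ball of radius polynomial in $d$ while $\lambda_d(F(K))$ stays bounded below, and that is sufficient for the same argument.
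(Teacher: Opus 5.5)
Your proposal is correct and follows essentially the paper's route: you normalize $B$ by an affine map coming from John's theorem, bound $\diam(F(B))^d/\lambda_d(F(B))$ by $C(2d)^d/\kappa_d$, apply Theorem~\ref{t:averaging set bound}, and pull the averaging set back with Lemma~\ref{l:averaging set linear map}. The only cosmetic difference is which side of John's theorem is used. The paper takes the outer L\"owner--John ellipsoid $E\supseteq B$ with $\frac{1}{d}E\subseteq\conv(B)$ and a volume-preserving affine map sending $E$ to a ball, whereas you use the inner John position $B_1\subseteq F(K)\subseteq B_d$; both lead to the same constant.
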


\begin{remark}
    Proposition~\ref{p:averaging set bound improved} implies that for every $d\in\N, p\in\N_0$, there is an $N\in\N$ such that for all $n\geq N$ and convex $B\in\cB^d_b$ there exists an averaging set $x_1,...,x_n$ of $B$ of degree $p$ and size $n$, where $x_1,...,x_n\in B$ and $x_1,...,x_n$ are pairwise different.
\end{remark}
    
\begin{proof}[Proof of Proposition~\ref{p:averaging set bound improved}]
    Let $C>0$ and suppose that $B\in\cB^d$ with $\frac{\lambda_d(\conv(B))}{\lambda_d(B)}\leq C$. Then there is a minimal volume ellipsoid $E\in\cB^d_b$ such that $B\subset E$. It is called the outer Löwner–John ellipsoid and also satisfies $\frac{1}{d}E\subset \conv(B)$; see, e.g.,~\cite{Henk_2012}. Hence, $\lambda_d(E)\leq d^d\lambda_d(\conv(B))\leq Cd^d\lambda_d(B)$. Therefore, there is a volume-preserving affine transformation $F:\R^d\to\R^d$ such that $F(B)\subseteq F(E)\subseteq B_r$ with $r = \sqrt[d]{\frac{C\lambda_d(B)}{\kappa_d}}d$. Thus, $\frac{\diam(F(B))^d}{\lambda_d(F(B))} \leq \frac{C(2d)^d \lambda_d(B)}{\kappa_d \lambda_d(F(B))} = \frac{C(2d)^d}{\kappa_d}$. Now, the application of Theorem~\ref{t:averaging set bound} yields the desired averaging set of $F(B)$, which can be mapped to an averaging set of $B$ by $F^{-1}$ by Lemma~\ref{l:averaging set linear map}.
\end{proof}

Finally, we show that one cannot find averaging sets of equal order and uniformly bounded size for all bounded sets. One needs to make some regularity assumption, as we do in Theorem~\ref{t:averaging set bound} and Proposition~\ref{p:averaging set bound improved}.

\begin{proposition}\label{p:averaging set counterexample}
    Define $B_{m, p}:=[0, 1-m^{-p}]\cup [m, m+m^{-p}]$ for $m\in\N, p>0$. Then, for every $n\in\N$, there is an $m\in\N$ such that there exists no averaging set of $B_{m, 1}$ of size less than $n$ and degree $1$ if we require the points to lie in $B_m$. If we allow the points of the averaging set to lie in $\R$, then for every $n\in\N$, there is an $m\in\N$ such that there exists no averaging set of $B_{m, 2}$ of size less than $n$ and degree $3$. Hence, in the previous Proposition~\ref{p:averaging set bound improved}, we cannot drop the assumption that $\frac{\lambda_d(\conv(B))}{\lambda_d(B)}\leq C$ for some $C>0$. The same holds for $\frac{\diam(B)^d}{\lambda_d(B)}\leq C$ in Theorem~\ref{t:averaging set bound}.
    \begin{proof}
        Let $n,m\in\N$, and $x_1,...,x_n\in B_{m,1}$. Assume that $x_1,...,x_n$ form an averaging set of $B_{m,1}$ of degree $1$. Then,
        \begin{equation*}
            \frac{1}{n} \sum_{k=1}^n x_k = \int_{B_{m,1}} x\, dx = \frac{3}{2}-\frac{1}{m}+\frac{1}{m^2} > 1-\frac{1}{m}.
        \end{equation*}
        Hence, $x_k\in[m,m+\frac{1}{m}]$ for at least one $k\in\{1,...,n\}$. This fact, together with $x_k\geq0$ for $k\in\{1,...,n\}$ and the previous bound, implies that
        \begin{equation*}
            \frac{3}{2} \geq\frac{1}{n} \sum_{k=1}^n x_k \geq \frac{m}{n}.
        \end{equation*}
        In total, we obtain that $n\geq \frac{2}{3}m$, which proves the first assertion. 

        Now, we instead assume that $x_1,...,x_n$ form an averaging set of $B_{m,2}$ of degree $3$, and that $x_1,...,x_n\in\R$ are not restricted to $B$. Then
        \begin{equation*}
            \frac{1}{n} \sum_{k=1}^n x_k^3 = \int_{B_{m,2}} x^3\, dx = m + \frac{1}{4} + \frac{1}{2m^2} + \frac{3}{2m^4} +\frac{1}{m^5} - \frac{1}{m^6} +\frac{1}{2m^8} \geq m.
        \end{equation*}
        Hence, $|x_k|\geq \sqrt[3]{m}$ for at least one $k\in\{1,...,n\}$. Therefore,
        \begin{equation*}
            \frac{m^\frac{2}{3}}{n} \leq \frac{1}{n} \sum_{k=1}^n x_k^2 = \int_{B_{m,2}} x^2\, dx = \frac{4}{3} - \frac{1}{m^2} +\frac{1}{m^3} + \frac{1}{m^4} \leq \frac{7}{3}.
        \end{equation*}
        In total, we obtain that $n\geq \frac{3}{7}m^\frac{2}{3}$, which proves the second assertion.
    \end{proof}
\end{proposition}

If one considers the optimal complex averaging set for $B_{m,2}$ from Theorem~\ref{t: complex averaging sets}, one can see that for symmetry reasons, it must have the form $a_m, b_m+ic_m, b_m-ic_m$ with $a_m,b_m,c_m\in\R$ for $m\in\N$. As $m\to\infty$, we can further calculate that $b_m\to-\infty$ with $|c_m|\sim \sqrt{3}|b_m|$ and $a_m\sim -2b_m$. Hence, while $\lambda_1(B_{m,2})$ is constant, the three points move further and further away from each other, and two of them also further away from the real line. This could be a heuristic explanation for the effect that, on the real line, one needs more and more points to form an averaging set.

\subsection{Averaging sets of trigonometric polynomials}\label{ss: Averaging sets of trigonometric polynomials}

\begin{definition}\label{d:trigonometric polynomials}
    A function $p:\R\to\C$ is called a \textit{trigonometric polynomial} of degree $m\in\N_0$ if there are $a_{-m},...,a_m\in\C$ such that $a_{-m}\neq 0$ or $a_m\neq 0$, and
    \begin{equation}
        p(x) = \sum_{k=-m}^m a_k e^{ikx},\quad x\in\R.
    \end{equation}
    Similarly, a function $p:\R^l\to\C$ is called a \textit{multivariate trigonometric polynomial} of degree $(m_1,...,m_l)\in\N_0^l$ if there are $a_{k_1,...,k_l}\in\C$, $k_1,...,k_l\in\Z$ with $|k_j|\leq m_j$ for $j\in\{1,...,l\}$, such that
    \begin{equation}
        p(x_1,...,x_l) = \sum_{\substack{k_1,...,k_l\in\Z,\\|k_j|\leq m_j \textnormal{ for } j\in\{1,...,l\}}} a_{k_1,...,k_l} e^{ik_1x_1+...+ik_lx_l},\quad x\in\R^l,
    \end{equation}
     and such that, for all $j\in\{1,...,l\}$, there are $k_1,...,k_l\in\Z$ with $a_{k_1,...,k_l}\neq 0$ and $|k_j|=m_j$.
\end{definition}

\begin{definition}
    Let $n,l\in\N, m\in\N_0$, $p_1,...,p_d:\R^l\to\R$ be real-valued (multivariate) trigonometric polynomials, and $x_1,...,x_n\in\R^d$. We say that $x_1,...,x_n$ form an \textit{averaging set} of $(p_1,...,p_d)$ of order $m$ and size $n$ if they form an averaging set of
    \begin{equation}
        (p_1,...,p_d)(\lambda_l|_{[0,2\pi)^l}) = \int_{[0,2\pi)^l} \I\{(p_1(x),...,p_d(x))\in\cdot\}\, dx
    \end{equation}
    of order $m$ and size $n$, i.e.,
    \begin{equation}
        \frac{1}{(2\pi)^l}\int_{[0,2\pi)^l} (p_1(x),...,p_d(x))^{\otimes q}\, dx = \frac{1}{n} \sum_{k=1}^n x_k^{\otimes q},\quad q\in\{0,...,m\}.
    \end{equation}
\end{definition}

Here, we do not require the points to be pairwise different in general, and use the set notation with a slight abuse of notation, as the averaging sets could be multisets at times. Further, similarly to how the averaging set $x_1,...,x_n$ of some set $B\in\cB_b^d$ was usually required to be a subset of $B$ in the previous subsection, here the averaging sets always lie in the image of the trigonometric polynomials. 

\begin{proposition}\label{p:trig poly avset}
    Let $m\in\N_0$, and suppose that $p$ is a trigonometric polynomial of degree $m$. Then, for all $n\geq m+1$,
    \begin{equation}\label{e:trig poly avset fit}
        \frac{1}{2\pi}\int_0^{2\pi} p(x)\, dx = \frac{1}{n} \sum_{j=1}^n p(2\pi j/n+u),\quad u\in\R,
    \end{equation}
    but if $m\in\N$,
    \begin{equation}\label{e:trig poly avset nonfit}
        u\mapsto \frac{1}{m} \sum_{j=1}^m p(2\pi j/m+u), \quad u\in\R,
    \end{equation}
    is not constant.

\begin{proof}
    Let $m\in\N_0$, $n\in\N$, and $u\in\R$. We show that~\eqref{e:trig poly avset fit} holds for a trigonometric monomial $p(x)=e^{imx}$. The general case then follows from symmetry and linearity. Also, for $m=0$, both sides are equal to $1$, whereby the assertion holds. Hence, without loss of generality, we can assume $m\in\N$. In this case,
    \begin{equation*}
        \int_0^{2\pi} e^{imx}\, dx=0,
    \end{equation*}
    and what remains to show is that
    \begin{equation*}
        \sum_{j=1}^n e^{im(2\pi j/n+u)} = 0.
    \end{equation*}
    Further, as
    \begin{equation*}
        \sum_{j=1}^n e^{im(2\pi j/n+u)} = e^{imu} \sum_{j=1}^n e^{2\pi i jm/n},
    \end{equation*}
    it suffices to show that
    \begin{equation*}
        \sum_{j=1}^n e^{2\pi i jm/n} = 0.
    \end{equation*}
    Now let $l\in\N$ be the greatest common factor of $n$ and $m$. Then,
    \begin{equation*}
        \sum_{j=1}^n e^{2\pi i jm/n} = \sum_{k=0}^{l-1}\sum_{j=1}^{n/l} e^{2\pi i (kn/l+j)m/n} = \sum_{k=0}^{l-1} \underbrace{e^{2\pi ikm/l}}_{=1}\sum_{j=1}^{n/l} e^{2\pi ijm/n} = l\sum_{j=1}^{n/l} e^{2\pi ij(m/l)/(n/l)}.
    \end{equation*}
    Hence, it suffices to show the assertion for $n,m$ coprime, as dividing by a common factor keeps the property that $n\geq m+1\geq 2$. In that case, we can reorder the sum and perform an index shift to obtain
    \begin{equation*}
        \sum_{j=1}^n e^{2\pi i jm/n} = \sum_{j=1}^n e^{2\pi i j/n} = e^{2\pi i/n} \sum_{j=1}^n e^{2\pi i j/n}.
    \end{equation*}
    This relation concludes the first part of the proof, as $e^{2\pi i/n}\neq 1$, since $n\geq2$, whereby the right equation can only hold if both sides are $0$.

    For the second part, let $p$ be a general trigonometric polynomial with coefficients \linebreak $a_{-m},...,a_m\in\C$. Now, we can apply~\eqref{e:trig poly avset fit} to the different trigonometric monomials of $p$ to obtain
    \begin{equation*}
        \frac{1}{m} \sum_{j=1}^m p(2\pi j/m+u) = a_0 + \frac{a_{-m}}{m} \sum_{j=1}^m e^{-2\pi ij-imu} + \frac{a_m}{m} \sum_{j=1}^m e^{2\pi ij+imu} = a_0 + a_{-m} e^{-imu} + a_m e^{imu}.
    \end{equation*}
    This equation yields the second assertion, as $u\mapsto e^{-imu}$ and $u\mapsto e^{imu}$ are linearly independent and $a_{-m}\neq0$ or $a_m\neq0$.
\end{proof}
\end{proposition}
Also note that, in the case that $p$ is real valued, the application of the intermediate value theorem to the last equation in the proof yields that there exist specific $u\in\R$ such that $\{2\pi j/n+u:j\in\{1,...,m\}\}$ forms an averaging set of $p$ of order $1$. Further, a slight refinement yields that there even exists a $u\in\R$ such that $\{2\pi j/n+u:j\in\{1,...,\lceil\frac{m+1}{2}\rceil\}\}$ forms an averaging set of $p$ of order $1$. This fact is connected to the derivation of the original Chebyshev-Gauss quadrature.

\begin{theorem}\label{t:averaging set trigonometric polynomial}
    Let $n\in\N,m\in\N_0$, and suppose that $p$ is a real-valued trigonometric polynomial of degree $m$. Then 
    \begin{equation}
        \{p(2\pi j/n+u):j\in\{1,...,n\}\}
    \end{equation}
    is an averaging set of $p$ of order $\lceil\frac{n}{m}\rceil-1$ for all $u\in\R$, but not of order $\frac{n}{\textnormal{gcf}(n,m)}$ for almost all $u\in\R$.

    Further, suppose that $p_1,..., p_d$ are real-valued trigonometric polynomials of degrees \linebreak $m_1,...,m_d\in\N_0$. Then 
    \begin{equation}
        \big\{\big(p_1(2\pi j/n+u),...,p_d(2\pi j/n+u)\big):j\in\{1,...,n\}\big\}
    \end{equation}
    is an averaging set of $(p_1,...,p_d)$ of order $\lceil\frac{n}{\max(m_1,...,m_d)}\rceil-1$ for all $u\in\R$, but not of order $\frac{n}{\max(\textnormal{gcf}(n,m_1),...,\textnormal{gcf}(n,m_d))}$ for almost all $u\in\R$.
    \begin{proof}
        We show only the first assertion. The second follows in just the same way. We assume that $m\in\N$, as the assertion is trivial for $m=0$. First, we have to show that
        \begin{equation}
            \frac{1}{2\pi} \int_0^{2\pi} p(x)^k \, dx = \frac{1}{n}\sum_{j=1}^n p(2\pi j/n+u)^k,\quad k\in\{0,...,\lceil\tfrac{n}{m}\rceil-1\}, u\in\R.
        \end{equation}
        This follows from Proposition~\ref{p:trig poly avset} and the fact that $p^k$ is a trigonometric polynomial of degree $km\leq (\lceil\tfrac{n}{m}\rceil-1) m\leq n-1$. For the second part, let $l:=\frac{m}{\textnormal{gcf}(n,m)}\in\N$. Note that $p^{\frac{n}{\textnormal{gcf}(n,m)}}$ is a trigonometric polynomial of degree $\frac{n}{\textnormal{gcf}(n,m)}m=ln$. Further,
        \begin{equation*}
            \frac{1}{n}\sum_{j=1}^n e^{iln(2\pi j/n+u)} = e^{ilnu} \frac{1}{n}\sum_{j=1}^n e^{2\pi ilj} = e^{ilnu},\quad u\in\R.
        \end{equation*}
        As $p^{\frac{n}{\textnormal{gcf}(n,m)}}$ is real-valued and of degree $ln$, this monomial has non-zero coefficient. Hence,
        \begin{equation*}
            \frac{1}{n}\sum_{j=1}^n p(2\pi j/n+u)^{ln} = a e^{ilnu} + q(u), \quad u\in\R,
        \end{equation*}
        where $a\in\C\setminus\{0\}$ and $q$ is a trigonometric polynomial which is linearly independent of the monomial. Hence, all level sets of the RHS are null sets.
    \end{proof}
\end{theorem}

\begin{theorem}\label{t:averaging set multivariate trigonometric polynomial}
    Let $l,n_1,...,n_l\in\N, m_1,..,m_l\in\N_0$, and suppose that $p$ is a real-valued multivariate trigonometric polynomial of degree $(m_1,...,m_l)$. Then
    \begin{equation}
        \big\{p(2\pi j_1/n_1+u_1, ..., 2\pi j_l/n_l+u_l):j_k\in\{1,...,n_k\} \textnormal{ for } k\in\{1,...,l\}\big\}
    \end{equation}
    is an averaging set of $p$ of degree $\min_{j\in\{1,...,l\}}(\lceil\frac{n_l}{m_l}\rceil)-1$ for all $u\in\R^l$.

    Further, suppose that $p_1,...,p_d$ are real-valued multivariate trigonometric polynomials with degrees bounded by $(m_1,..., m_l)$. Then 
    \begin{equation}
        \left\{\begin{pmatrix}
            p_1(2\pi j_1/n_1+u_1, ..., 2\pi j_l/n_l+u_l)\\
            \vdots\\
            p_d(2\pi j_1/n_1+u_1, ..., 2\pi j_l/n_l+u_l)
        \end{pmatrix}:j_k\in\{1,...,n_k\} \textnormal{ for } k\in\{1,...,l\}\right\}
    \end{equation}
    is an averaging set of $(p_1,...,p_d)$ of degree $\min_{j\in\{1,...,l\}}(\lceil\frac{n_l}{m_l}\rceil)-1$ for all $u\in\R^l$.
    \begin{proof}
        Again, we only show the first assertion, as the second follows in the same way. Additionally, we only show it for a monomial with positive exponents, as the general case can be derived by symmetry and linearity afterwards. Hence, $p(x_1,...,x_l) = e^{im_1x_1+...+im_lx_l}$ for $x\in\R^l$. In this case,
        \begin{equation*}
            \frac{1}{(2\pi)^l}\int_{[0,2\pi)^l} e^{im_1x_1+...+im_lx_l}\, dx = \prod_{k=1}^l\frac{1}{2\pi}\int_0^{2\pi} e^{im_kx}\, dx,
        \end{equation*}
        and
        \begin{equation*}
            \frac{1}{n_1\cdots n_l}\sum_{\substack{j_k\in\{1,...,n_k\}\\\textnormal{for }k\in\{1,...,l\}}} e^{im_1(2\pi j_1/n_1+u_1)+...+im_l(2\pi j_l/n_l+u_l)} = \prod_{k=1}^l \frac{1}{n_k}\sum_{j=1}^{n_k} e^{im_k(2\pi j/n_k+u_k)},\quad u\in\R^l.
        \end{equation*}
        Now, we can derive the equality from the univariate case in Theorem~\ref{t:averaging set trigonometric polynomial}.
    \end{proof}
\end{theorem}

In the multivariate case, it is harder to calculate the exact degree of the averaging set, as it depends on many properties of the trigonometric polynomial(s). However, in the following case, we can still get a precise statement.

\begin{proposition}\label{p:averaging set multivariate trigonometric polynomial degree 1}
    Suppose that $l,n\in\N$, and that $p_1,...,p_d$ are real-valued trigonometric polynomials of degrees $m^{(1)},...m^{(d)}\in\{0,1\}^l$. Then
    \begin{equation}\label{e:averaging set degree 01}
        \left\{\begin{pmatrix}
            p_1(2\pi j_1/n+u_1, ..., 2\pi j_l/n+u_l)\\
            \vdots\\
            p_d(2\pi j_1/n+u_1, ..., 2\pi j_l/n+u_l)
        \end{pmatrix}:j\in\{1,...,n\}^l\right\}
    \end{equation}
    is an averaging set of $(p_1,...,p_d)$ of degree $n-1$ for all $u\in\R^l$, and not of any higher degree for almost all $u\in\R^l$ if at least one trigonometric polynomial is not constant.

    \begin{proof}
        An application of Theorem~\ref{t:averaging set multivariate trigonometric polynomial} directly yields that the set in~\eqref{e:averaging set degree 01} is an averaging set of $(p_1,...,p_d)$ of degree $m-1$.
        Now, we have to show that this does not hold for any higher degree.
        In the proof of the previous Theorem~\ref{t:averaging set multivariate trigonometric polynomial}, we saw that the problem factorizes for multivariate monomials. Here, $p_k^n$ also always contains a monomial of the form $e^{im^{(k)}_1nx_1+...+m^{(k)}_lnx_l}$ for \linebreak $k\in\{1,..,d\}$. If $p_k$ is not constant, then $m^{(k)}_j=1$ for some $j\in\{1,...,l\}$. Without loss of generality, we assume $m^{(1)}_1=1$. Then,
        \begin{equation*}
            \frac{1}{(2\pi)^l}\int_{[0,2\pi)^l} e^{im^{(k)}_1nx_1+...+m^{(k)}_lnx_l}\, dx = \prod_{k=1}^l\frac{1}{2\pi}\int_0^{2\pi} e^{im_knx}\, dx = 0,
        \end{equation*}
        and 
        \begin{equation*}
            \frac{1}{n^l}\sum_{j\in\{1,...,n\}^k} e^{im^{(1)}_1n(2\pi j_1/n+u_1)+...+im^{(1)}_ln(2\pi j_l/n+u_l)} = \prod_{k=1}^l \frac{1}{n}\sum_{j=1}^{n} e^{2\pi im^{(1)}_k j+im^{(1)}_knu_k},\quad u\in\R^l.
        \end{equation*}
        On the RHS of the second equation, the first factor is equal to $e^{inu_1 }$, and the others are either equal to $1$ or $e^{inu_k }$. In total, the product is not $0$ for almost all $u\in\R^l$, which concludes the proof.
    \end{proof}
\end{proposition}

\section{Hyperfluctuating examples}\label{s: Hyperfluctuating examples}

The goal of this section is to construct novel hyperfluctuating point processes, i.e., point processes which are not $0$-uniform. Our particular interest lies in those examples that one may expect to be hyperuniform using heuristic arguments, but which are actually hyperfluctuating instead. We explore these in Appendix~\ref{ss: Hyperfluctuating point processes from the lattice partition}. These constructions rely on the fact that there are hyperfluctuating thinnings of the stationary lattice, which we construct in Appendix~\ref{ss: Hyperfluctuating thinnings of the stationary lattice}. Further, the point processes we construct are ergodic. This restriction directly excludes all examples where the only source of hyperfluctuation is an atom of the Bartlett spectral measure in the origin. These point processes would be trivial to construct.

\subsection{Hyperfluctuating thinnings of the stationary lattice}\label{ss: Hyperfluctuating thinnings of the stationary lattice}

We first show the existence of hyperfluctuating thinnings of the stationary lattice for $d=1$ and then extend the construction to dimensions $d\geq 2$. For $d=1$, the construction relies on renewal processes. 
\begin{definition}[Renewal process]
    Suppose that $\Phi\neq0$ is a stationary point process on $\R$, and that $\BQ$ is a probability distribution on $[0,\infty)$. Further, suppose that $X_1,X_2,...$ are random variables on $[0,\infty)$ such that $X_1\leq X_2\leq\ldots$ and
    \begin{equation}
        \Phi|_{[0,\infty)} = \sum_{n=1}^\infty \delta_{X_n}.
    \end{equation}
    Then $\Phi$ is called a \textit{(stationary) renewal process} with renewal distribution $\BQ$ if $(X_{n+1}-X_n)_{n\in\N}$ is a family of independent random variables with the common distribution $\BQ$.
\end{definition}
It was first shown by J. L. Doob that such stationary renewal processes exist iff the expected renewal time is finite and not almost surely 0, i.e.,
\begin{equation}\label{e:renewal existence condition}
    0 <\int x \, \BQ(dx) < \infty,
\end{equation}
see~\cite{Doob_1948}.
Hence, from now on, we always assume that $\BQ$ is a probability distribution on $[0,\infty)$ that satisfies~\eqref{e:renewal existence condition} and that $\Phi$ is a stationary renewal process with renewal distribution $\BQ$. Further, for simplicity of notation, we let $\mu := \int x\,\BQ(dx)$. We also know that $\Phi$ has an intensity of $\frac{1}{\mu}$. For the analysis of $p$-uniformity of such renewal processes, we can rely on results by M. S. Bartlett, where he already provided a formula for the density of the part of the Bartlett spectral measure which is continuous with respect to the Lebesgue measure
\begin{equation}\label{e:renewal bartlet formula}
    g(k) = \frac{1}{\mu}\bigg(1+2\Re\bigg(\frac{\hat\BQ(k)}{1-\hat\BQ(k)}\bigg)\bigg),\quad k\in\{s\in\R:\hat\BQ(s)\neq 1\},
\end{equation}
see~\cite[Section 6]{Bartlett_1963}. In the points where this density is not defined, there may lie atoms, as in the case of the stationary lattice ($\BQ=\delta_1$). However, by Proposition~\ref{p:pseudo ergodic}, there never lies an atom at the origin since stationary renewal processes are ergodic by the independence of the lengths of the renewal intervals. In the following proposition, we see that the renewal distribution of the renewal process we are looking for cannot have a finite second moment; see also~\cite[Section 2 Theorem 5.2]{Gut_2009}.
\begin{proposition}
    Suppose that
    \begin{equation}\label{e:renewal finite second moment}
        \int x^2\,  \BQ(dx) < \infty,
    \end{equation}
    and choose $\sigma^2:= \int x^2\,  \BQ(dx)-\mu^2$.
    Then, as $k\to 0$,
    \begin{equation}
        g(k) = \frac{\sigma^2}{\mu^3} + o(1).
    \end{equation}
    Hence, $\Phi$ is solely $0$-uniform with 
    \begin{equation}
        \frac{\BV[\Phi(B_r)]}{\lambda_d(B_r)} \xrightarrow{r\to\infty} \frac{\sigma^2}{\mu^3}.
    \end{equation}
    \begin{proof}
        By Taylor's theorem and~\eqref{e:renewal finite second moment}, as $k\to 0$,
        \begin{equation*}
            \hat\BQ(k) = 1-i\mu k - \frac{1}{2}(\sigma^2+\mu^2)k^2 + o(k^2).
        \end{equation*}
        Therefore, as $k\to 0$,
        \begin{align*}
            \Re(1-\hat\BQ(k)) &= \frac{1}{2}(\sigma^2+\mu^2) k^2 + o(k^2),\\
            \Im(1-\hat\BQ(k)) &= \mu k + o(k^2).
        \end{align*}
        These can be inserted in~\eqref{e:renewal bartlet formula} to obtain, as $k\to 0$,
        \begin{align*}
            g(k) &= \frac{1}{\mu}\bigg(1+2\Re\bigg(\frac{\hat\BQ(k)}{1-\hat\BQ(k)}\bigg)\bigg) \nonumber\\
            &= \frac{1}{\mu} \bigg(2\Re\bigg(\frac{1}{1 - \hat\BQ(k)}\bigg) - 1\bigg) \nonumber\\
            &= \frac{1}{\mu}  \bigg(2\frac{\frac{1}{2}(\sigma^2+\mu^2) k^2 + o(k^2)}{\frac{1}{4}(\sigma^2+\mu^2)^2 k^4 + o(k^4) + \mu^2 k^2 + o(k^3)} - 1\bigg) \nonumber\\
            &= \frac{1}{\mu} \bigg(\frac{\sigma^2+\mu^2}{\mu^2} - 1 + o(1)\bigg) \nonumber\\
            &= \frac{\sigma^2}{\mu^3} + o(1).
        \end{align*}
    \end{proof}
\end{proposition}
However, if the second moment of the renewal distribution is infinite, we can construct stationary renewal processes which are not $0$-uniform. Because we want to construct a renewal process which can be interpreted as a thinning of the stationary lattice, we need $\BQ(\N)=1$. That is why we leverage the following class of probability distributions.
\begin{definition}[Zeta distribution]
    Let $s>1$. Suppose that
    \begin{equation}
        \BQ = \frac{1}{\zeta(s)} \sum_{n=1}^\infty \frac{1}{n^s} \delta_n,
    \end{equation}
    where
    \begin{equation}
        \zeta(s) := \sum_{n=1}^\infty \frac{1}{n^s}.
    \end{equation}
    Then $\BQ$ is a \textit{zeta distribution} with parameter $s$.
\end{definition}
It is clear that the first moment is finite but the second is not iff $s\in(2,3]$. This choice leads to the following example.
\begin{proposition}\label{p: d=1 hyperfluctuationg lattice thinning}
    Suppose that $\BQ$ is a zeta distribution with parameter $s\in(2,3)$. Then $\Phi$ is solely $(s-3)$-uniform with
    \begin{equation}\label{e: d=1 hyperfluctuationg lattice thinning variance limit}
        \frac{\BV[\Phi([0,r))]}{r^{4-s}} \xrightarrow{r\to\infty} \frac{\zeta(s)^2\Gamma(s-4)}{2\zeta(s-1)^3\Gamma(s)} > 0.
    \end{equation} 
    If $\BQ$ is a zeta distribution with parameter $s=3$, then $\Phi$ is $p$-uniform for any $p<0$, but not $0$-uniform.
    \begin{proof}
        First, suppose that $s\in(2,3)$. Then Jonqui\`ere's expansion yields that, as $k\to 0$,
        \begin{equation*}
            \hat\BQ(k) = 1 -i\frac{\zeta(s-1)}{\zeta(s)}k + \frac{\Gamma(1-s)}{\zeta(s)} (ik)^{s-1} + O(k^2); 
        \end{equation*}
        see, e.g.,~\cite[Subsection 25.12]{Olver_Lozier_Boisvert_Clark_2010} or~\cite[Section 9]{Wood_1992}. Hence, as $k\to0$,
        \begin{align*}
            \Re(1-\hat\BQ(k)) &= -\frac{\Gamma(1-s)}{\zeta(s)} \sin\Big(\frac{s\pi}{2}\Big) |k|^{s-1} + O(k^2),\\
            \Im(1-\hat\BQ(k)) &= \frac{\zeta(s-1)}{\zeta(s)}k + o(k).
        \end{align*}
        If we apply these approximations to~\eqref{e:renewal bartlet formula}, we obtain, as $k\to0$,
        \begin{align}
            g(k) &= \frac{\zeta(s)}{\zeta(s-1)}\bigg(1+2\Re\bigg(\frac{\hat\BQ(k)}{1-\hat\BQ(k)}\bigg)\bigg)\nonumber\\
            &= \frac{\zeta(s)}{\zeta(s-1)} \bigg(2\Re\bigg(\frac{1}{1 - \hat\BQ(k)}\bigg) - 1\bigg) \nonumber\\
            &= \frac{\zeta(s)}{\zeta(s-1)} \bigg(\frac{-\frac{\Gamma(1-s)}{\zeta(s)} \sin(\frac{s\pi}{2}) |k|^{s-1} + O(k^2)}{\frac{\zeta(s-1)^2}{\zeta(s)^2}k^2 + o(k^2)} - 1\bigg)\nonumber\\
            &= \underbrace{-\frac{\zeta(s)^2\Gamma(1-s)\sin(\frac{s\pi}{2})}{\zeta(s-1)^3}}_{>0} |k|^{s-3} + O(1).
        \end{align}
        Now we can calculate that
        \begin{align*}
            \lim_{r\to\infty} \frac{\BV[\Phi([0,r))]}{r^{4-s}} &= \lim_{r\to\infty} (2\pi)^{-1}r^{s-4} \int |\hat\I_{[0,r)}(k)|^2 \, \hat\beta_\Phi(dk) \nonumber\\
            &= \lim_{r\to\infty} - (2\pi)^{-1}r^{s-2} \int_0^\infty \frac{\sin(\tfrac{rk}{2})^2}{(\tfrac{rk}{2})^2} \frac{\zeta(s)^2\Gamma(1-s)\sin(\frac{s\pi}{2})}{\zeta(s-1)^3} k^{s-3} \, dk \nonumber\\
            &= -\frac{2^{s-2}\zeta(s)^2\Gamma(1-s)\sin(\frac{s\pi}{2})}{2\pi\zeta(s-1)^3} 
            \int_0^\infty \sin(k)^2 k^{s-5}\, dk\nonumber\\
            &= \frac{\zeta(s)^2}{\pi\zeta(s-1)^3} \Gamma(s-4) \Gamma(1-s)\sin(\tfrac{s\pi}{2}) \cos(\tfrac{s\pi}{2})\nonumber\\
            &= \frac{\zeta(s)^2\Gamma(s-4)}{2\zeta(s-1)^3\Gamma(s)},
        \end{align*}
        where we first used the variance formula~\eqref{e: spectral covariance formula}, then substitution, the Mellin transform for the last integral; see, e.g.,~\cite[Section 13]{Bracewell_2000}, and the gamma reflection identity and an addition theorem in the final step.
        This limit finishes the first part of the proof. Let us now assume that $s=3$. Then another formula gives the expression
        \begin{equation*}
            \hat\BQ(k) = 1 -i\frac{\zeta(3)}{\zeta(2)}k  - \frac{k^2}{2} \bigg(\frac{3}{2} - \ln(ik)\bigg) + o(k^2),
        \end{equation*}
        see~\cite[Section 9]{Wood_1992}.
        A similar calculation yields
        \begin{equation}
            g(k) = \frac{\zeta(2)}{\zeta(3)}\ln\Big(\frac{1}{|k|}\Big) + O(1),
        \end{equation}
        as $k\to 0$, which also proves this part of the assertion.
    \end{proof}
\end{proposition}
Now, we can lift these hyperfluctuating point processes to higher dimensions by constructing hyperplane intersection processes. Suppose that $\BQ$ is a zeta distribution with parameter \linebreak $s\in(2,3)$. Let $\Phi_1,...,\Phi_d$ be independent stationary renewal processes with renewal distribution $\BQ$, and define the hyperplane intersection process $\Psi$ on $\R^d$ by
\begin{equation}\label{e: d>=1 hyperfluctuationg lattice thinning}
    \Psi := \Phi_1 \otimes \cdots \otimes \Phi_d.
\end{equation}
\begin{proposition}\label{p: d>=1 hyperfluctuationg lattice thinning}
    The hyperplane intersection process $\Psi$, defined as in~\eqref{e: d>=1 hyperfluctuationg lattice thinning}, is solely \linebreak $(-d+(s-2))$-uniform with
    \begin{equation}\label{e:asymptotic variance of hyperfluctuating lattice thinning d >=1}
        \frac{\BV[\Psi([0,r)^d)]}{r^{2d-(s-2)}} \xrightarrow{r\to\infty} \frac{d\zeta(s)^2\Gamma(s-4)}{2\zeta(s-1)^{2d+1}\Gamma(s)} > 0.
    \end{equation}
    Hence, for any $ d\in\N,p\in(-d,-d+1)$, there is an $s\in(2,3)$ such that $\Psi$ is a solely $p$-uniform thinning of the stationary lattice.
    Further, the distribution of $\Psi$ is not only invariant under translation but also under all isometries of the lattice.
    \begin{proof}
        By Proposition~\ref{p: d=1 hyperfluctuationg lattice thinning}, we know that
        \begin{equation*}
        \frac{\BV[\Phi_1([0,r)])]}{r^{4-s}} \xrightarrow{r\to\infty} \frac{\zeta(s)^2\Gamma(s-4)}{2\zeta(s-1)^3\Gamma(s)} > 0.
        \end{equation*}
        As $\BE[\Phi_1([0,r)])] = \frac{1}{\zeta(s-1)}r$,
        This relation implies that
        \begin{equation*}
            \frac{\BV[\Phi_1([0,r))]}{\BE[\Phi_1([0,r))]^2} \xrightarrow{r\to\infty} 0.
        \end{equation*}
        Therefore, as $r\to\infty$,
        \begin{align*}
            \BV[\Psi([0,r)^d)] &= \BV[\Phi_1([0,r))\cdots\Phi_d([0,r))] \nonumber\\
            &= \BE\big[\Phi_1([0,r))^2\big]^d - \big(\BE[\Phi_1([0,r))]^2\big)^d \nonumber\\
            &= d \big(\BE[\Phi_1([0,r))]^2\big)^{d-1} \BV[\Phi_1([0,r))] + o(\big(\BE[\Phi_1([0,r))]^2\big)^{d-1} \BV[\Phi_1([0,r))]) \nonumber\\
            &= \underbrace{\frac{d\zeta(s)^2\Gamma(s-4)}{2\zeta(s-1)^{2d+1}\Gamma(s)}}_{>0}r^{2d-(s-2)} + o(r^{2d-(s-2)}).
        \end{align*}
        Finally, since $s-2<1$ and $\I_{[0,1)^d}$ is Fourier-smooth with exponent $d-2$ by Example~\ref{ex: fourier smooth}, we can leverage Theorem~\ref{t:asymptotic variance test function} to obtain that $\Psi$ is solely $(-d+(s-2))$-uniform.
    \end{proof}
\end{proposition}

\subsection{Hyperfluctuating point processes from the lattice partition}\label{ss: Hyperfluctuating point processes from the lattice partition}

Now we are ready to construct our first example of an ergodic point process which has exactly one point in each cell of a stationary lattice tiling, but is hyperfluctuating for $d\geq 3$. Besides the ergodicity, such an example was also given in~\cite{Dereudre_Flimmel_Huesmann_Leblé_2024}.

Let $p\in(0,1)$. Suppose that $\Phi$ is a stationary thinning of the stationary lattice $\Z^d+U$, $U\sim \cU([0,1)^d)$, which is solely $(-d+p)$-uniform. Further, assume that the distribution of $\Psi$ is invariant under isometries of the lattice. Note that such a point process was constructed in~\eqref{e: d>=1 hyperfluctuationg lattice thinning} and proven to have these properties in Proposition~\ref{p: d>=1 hyperfluctuationg lattice thinning}. Define
\begin{equation}\label{e: hyperfluctuating points in lattice}
    \Psi := \sum_{z\in\Z^d} \I\{\Phi(\{z+U\})=1\} \delta_{z+\frac{1}{2}e_1+U} + \I\{\Phi(\{z+U\})=0\} \delta_{z-\frac{1}{2}e_1+U}.
\end{equation}

\begin{proposition}\label{p: hyperfluctuating points in lattice}
    The point process $\Psi$, defined as in~\eqref{e: hyperfluctuating points in lattice}, is solely $(-d+2+p)$-uniform. This degree is negative iff $d\geq 3$.
    \begin{proof}
        Let $\tilde\Phi:=\Z^d+U$ and define the invariant transport kernel $K$ by
        \begin{equation}
            K_x := \I\{\Phi(\{x\})=1\} \delta_{x+\frac{1}{2}e_1} + \I\{\Phi(\{x\})=0\} \delta_{x-\frac{1}{2}e_1}, \quad x\in\R^d.
        \end{equation}
        Note that therefore $\Psi = K\tilde\Phi$, and
        \begin{equation}
            K^\ast_x = \delta_{(\Phi(\{x\})-\frac{1}{2})e_1}, \quad x\in\R^d.
        \end{equation}
        Now we apply Theorem~\ref{t:main theorem} with the parameter $\tilde{p}:=-d+2+p$. As $K$ is a probability kernel and the transport distance is bounded by $\frac{1}{2}$, Condition~\ref{c:condition square-integrable general with p} is obviously fulfilled. Let us now define $\Psi_q$ for $q\in\N_0$ like in~\eqref{e:Psi_q definition}, whereby
        \begin{equation}
            \Psi_q(dy) = \Big(\Big(\Phi(\{y\})-\frac{1}{2}\Big)e_1\Big)^{\otimes q} \tilde\Phi(dy).
        \end{equation}
        Hence, for $q\in2\N_0$, we have $\Psi_q = \frac{1}{2^q}e_1^{\otimes q}\tilde\Phi$, which implies that it is $\infty$-uniform. Moreover, for $q\in2\N_0+1$, we have $\Psi_q = \frac{1}{2^{q-1}}e_1^{\otimes q}\Phi-\frac{1}{2^q}e_1^{\otimes q}\tilde\Phi$, which implies that it is $(-d+p)$-uniform. In total, Theorem~\ref{t:main theorem} yields that $\Psi$ is $(-d+2+p)$-uniform, where the dominant contribution comes from $\Psi_1$. We improve this result by applying Theorem~\ref{t: main theorem rate} to show that $\Psi$ is not beyond $(-d+2+p)$-uniform. Let $0\neq f\in\cC_c^\infty(\R^d, [0,\infty))$ be isotropic and define $f_r:=f(\frac{\cdot}{r})$ for $r>0$. Now we have to show that
        \begin{equation}\label{e: Psi_1 limit condition hyperfluctuating points}
            \limsup_{r\to\infty} \frac{\BV\big[\Psi_1\big(f_r'\big)\big]}{r^{2d-(2+p)}} > 0.
        \end{equation}
        Using what we calculated so far and the variance formula~\eqref{e: spectral covariance formula}, we obtain that, as $r\to\infty$,
        \begin{align*}
            \frac{\BV\big[\Psi_1\big(f_r'\big)\big]}{r^{2d-(2+p)}} &= \frac{1}{4^{q-1}r^{2d-(2+p)}}\BV[\Phi(\partial_1 f_r)] + o(1)\nonumber\\
            &= \frac{r^{2+p}}{4^{q-1}(2\pi)^d}\int k_1^2 |\hat{f}(rk)|^2\,\hat\beta_\Phi(dk) + o(1)\nonumber\\
            &= \frac{r^{p}}{4^{q-1}d(2\pi)^d}\int \|rk\|^2 |\hat{f}(rk)|^2\,\hat\beta_\Phi(dk) + o(1),
        \end{align*}
        where we applied the invariance of $f$ and the distribution of $\Phi$ under isometries of the lattice in the last step. The function $k\mapsto \|k\|\hat{f}(k)$ is isotropic, and while we formally do not satisfy the conditions in Proposition~\ref{p:asymptotic variance test function zero integral}, we can use an argument similar to it to show that if
        \begin{equation*}
            \limsup_{r\to\infty} r^{p}\int \|rk\|^2 |\hat{f}(rk)|^2\,\hat\beta_\Phi(dk) = 0,
        \end{equation*}
        then $\Phi$ must be beyond $(-d+p)$-uniform. However, we assumed that this is not the case, whereby the $\limsup$ must be positive. This fact concludes the proof, as~\eqref{e: Psi_1 limit condition hyperfluctuating points} has been proven.
    \end{proof}
\end{proposition}

The next example may be even more surprising. There, every point of the point process lies in the centroid of a stationary random fair tiling which is highly regular in terms of its possible cell shapes. Namely, there are only two possible cell shapes which are both rectangular and even congruent. Here, we have to require the dimension to be at least $5$ to make the example hyperfluctuating. For $d=2$, it was used as an example for a hyperuniform point process in~\cite[Section B]{Gabrielli_Joyce_Torquato_2008}.

Let $p\in(0,1)$ and assume $d\geq 2$. Suppose that $\Phi$ is the point process called $\Psi$ in Proposition~\ref{p: d>=1 hyperfluctuationg lattice thinning} with parameter $s:=2+p$. Define
\begin{align}\label{e: hyperfluctuating centroids}
    \Psi := \sum_{z\in\Z^d} &\I\{\Phi(\{z+U\})=1\} \big(\delta_{z+\frac{1}{2}e_1+U} + \delta_{z-\frac{1}{2}e_1+U}\big) \nonumber\\
    &+ \I\{\Phi(\{z+U\})=0\} \big(\delta_{z+\frac{1}{2}e_2+U} + \delta_{z-\frac{1}{2}e_2+U}\big).
\end{align}
\begin{proposition}\label{p: hyperfluctuating centroids}
    The point process $\Psi$, defined as in~\eqref{e: hyperfluctuating centroids}, is solely \linebreak$(-d+4+p)$-uniform. This degree is negative iff $d\geq 5$.
\begin{proof}
    The proof is very similar to that of Proposition~\ref{p: hyperfluctuating points in lattice}. First of all, we note that $\Phi$ is $(-d+p)$-uniform, as shown in Proposition~\ref{p: d>=1 hyperfluctuationg lattice thinning}. Let $\tilde\Phi:=\Z^d+U$ and define the invariant transport kernel $K$ by
        \begin{equation}
            K_x := \I\{\Phi(\{x\})=1\} (\delta_{x+\frac{1}{2}e_1} +\delta_{x-\frac{1}{2}e_1}) + \I\{\Phi(\{x\})=0\} (\delta_{x+\frac{1}{2}e_2} +\delta_{x-\frac{1}{2}e_2}), \quad x\in\R^d.
        \end{equation}
        Note that therefore $\Psi = K\tilde\Phi$, and
        \begin{equation}
            K^\ast_x = \delta_{\frac{1}{2}e_{2-\Phi(\{x\})}} + \delta_{-\frac{1}{2}e_{2-\Phi(\{x\})}}, \quad x\in\R^d.
        \end{equation}
        Now we apply Theorem~\ref{t:main theorem} with the parameter $\tilde{p}:=-d+2+p$. As $K$ is a transport kernel with constant mass and the transport distance is bounded by $\frac{1}{2}$, Condition~\ref{c:condition square-integrable general with p} is obviously fulfilled. Let us now define $\Psi_q$ for $q\in\N_0$ like in~\eqref{e:Psi_q definition}, whereby
        \begin{equation}
            \Psi_q(dy) = \bigg(\Big(\frac{1}{2}e_{2-\Phi(\{x\})}\Big)^{\otimes q} + \Big(-\frac{1}{2}e_{2-\Phi(\{x\})}\Big)^{\otimes q}\bigg) \tilde\Phi(dy).
        \end{equation}
        Hence, for $q\in2\N_0$ we have $\Psi_q = \frac{1}{2^{q-1}}(e_1^{\otimes q}-e_2^{\otimes q})\Phi + \frac{1}{2^{q-1}}e_2^{\otimes q}\tilde\Phi$, which implies that it is \linebreak $(-d+p)$-uniform for $q\in2\N$ and even of degree $\infty$ for $q=0$. Moreover, for $q\in2\N_0+1$, we have $\Psi_q = 0$, which implies that it is $\infty$-uniform. In total, Theorem~\ref{t:main theorem} yields that $\Psi$ is \linebreak $(-d+4+p)$-uniform, where the dominant contribution comes from $\Psi_2$. We improve this result by applying Theorem~\ref{t: main theorem rate} to show that $\Psi$ is not beyond $(-d+4+p)$-uniform. Let \linebreak $0\neq f\in\cC_c^\infty(\R, [0,\infty))$ and define $\tilde{f}:\R^d\to[0,\infty), \tilde{f}(x):=f(x_1)\cdots f(x_d), x\in\R^d$. Further, define $f_r:=f(\frac{\cdot}{r})$ and $\tilde{f}_r:=\tilde{f}(\frac{\cdot}{r})$ for $r>0$. Now we have to show that
        \begin{equation}
            \limsup_{r\to\infty} \frac{\BV\big[\Psi_2\big(\tilde{f}_r''\big)\big]}{r^{2d-(4+p)}} > 0.
        \end{equation}
        Let $\Phi_1,...,\Phi_d$ be the independent renewal processes from~\eqref{e: d>=1 hyperfluctuationg lattice thinning} such that $\Phi = \Phi_1\otimes\cdots\otimes \Phi_d$. As derivatives integrate to $0$, we can calculate that, as $r\to\infty$,
        \begin{align*}
            \frac{\BV\big[\Psi_2\big(\tilde{f}_r''\big)\big]}{r^{2d-(4+p)}} &= \frac{1}{4^{q-1}r^{2d-(4+p)}}\BV\big[\Phi\big(\partial_1^2 \tilde{f}_r-\partial_2^2 \tilde{f}_r\big)\big] + o(1)\nonumber\\
            &= \frac{1}{4^{q-1}r^{2d-(4+p)}}\BE\Big[\Phi\big(\partial_1^2 \tilde{f}_r-\partial_2^2 \tilde{f}_r\big)^2\Big] + o(1)\nonumber\\ 
            &= \frac{1}{4^{q-1}r^{2d-(4+p)}}\BE\Big[\big(\Phi_1(f_r'')\Phi_2(f_r)-\Phi_1(f_r)\Phi_2(f_r'')\big)^2\Big] \BE\big[\Phi_1(f_r)^2\big]^{d-2} + o(1)\nonumber\\ 
            &= \frac{2r^p}{4^{q-1}}\Big(\BE\big[\Phi_1(f_r'')^2\big]\BE\big[\Phi_1(f_r)^2\big]-\BE\big[\Phi_1(f_r'')\Phi_1(f_r)\big]^2\Big) \frac{\BE\big[\Phi_1(f_r)^2\big]^{d-2}}{r^{2d-4}} + o(1).
        \end{align*}
        Since
        \begin{align*}
            \frac{\BE\big[\Phi_1(f_r)^2\big]^{d-2}}{r^{2d-4}} &= \frac{\big(\BV[\Phi_1(f_r)]+\BE[\Phi_1([0,1))]^2\lambda_1(f)^2r^2\big)^{d-2}}{r^{2d-4}} \nonumber\\
            &\xrightarrow{r\to\infty} \BE[\Phi_1([0,1))]^{2d-4}\lambda_1(f)^{2d-4}>0,
        \end{align*}
        we can continue with
        \begin{align*}
            &r^p \Big(\BE\big[\Phi_1(f_r'')^2\big]\BE\big[\Phi_1(f_r)^2\big]-\BE\big[\Phi_1(f_r'')\Phi_1(f_r)\big]^2\Big)\nonumber\\
            &\quad\ \ = r^p\big(\BV[\Phi_1(f''_r)]\BE[\Phi_1(f_r)]^2 + \BV[\Phi_1(f''_r)]\BV[\Phi_1(f_r)] - \BC[\Phi_1(f_r''),\Phi_1(f_r)]^2\big)\nonumber\\
            &\quad\ \ =\frac{1}{r^{4-p}}\Big(\BV[\Phi_1(f''(\tfrac{\cdot}{r}))]\BE[\Phi_1(f_r)]^2\nonumber\\
            &\qquad \qquad \quad\ \ \ + \underbrace{\BV[\Phi_1(f''(\tfrac{\cdot}{r}))]\BV[\Phi_1(f_r)] - \BC[\Phi_1(f''(\tfrac{\cdot}{r})),\Phi_1(f_r)]^2}_{= O(r^{4-2p})}\Big)\nonumber\\
            &\quad\ \ = r^p \BV[\Phi_1(f''(\tfrac{\cdot}{r}))] \underbrace{\frac{\BE[\Phi_1(f_r)]^2}{r^4}}_{= \BE[\Phi_1([0,1))]^2\lambda_1(f)^2 + o(1)} + o(1),
        \end{align*}
        as $r\to\infty$.
        For the remaining part, we can leverage Proposition~\ref{p:asymptotic variance test function zero integral}. As $\Phi_1$ is a one-dimensional ergodic point process, it implies that 
        \begin{equation*}
            \limsup_{r\to\infty} r^p \BV[\Phi_1(f''(\tfrac{\cdot}{r}))] = 0
        \end{equation*}
        can only hold if $\Phi_1$ is beyond $(-d+p)$-uniform, which is not the case by Proposition~\ref{p: d=1 hyperfluctuationg lattice thinning}. Hence, the $\limsup$ must be positive, which concludes our proof. A more precise calculation using~\eqref{e: d=1 hyperfluctuationg lattice thinning variance limit} would even yield the existence of the limit.
\end{proof}
\end{proposition}

\section{Simulation study}\label{s: Simulation study}

To demonstrate the practical applicability of the theoretical results of this paper, we implemented the proposed construction of $p$-uniform point processes from Subsection~\ref{ss: pp from invariant partitions} in dimensions $2$ and $3$. First, we implemented the algorithm from Example~\ref{ex: fair STIT} to generate large samples of the fair STIT with a uniform splitting distribution, which is a fair partition with convex cells. As the starting shape, we chose a box with equal side lengths such that, later on, we can map everything seamlessly onto the flat $d$-torus (i.e., we can use periodic boundary conditions). Directly approximating a section of a fair STIT on $\R^d$, as described in Example~\ref{ex: fair STIT}, was avoided to eliminate boundary effects when estimating the structure factor of the point process. In the second step, we choose specific values of $n,m\in\N$ and randomly sample $n$ points in each cell such that the first $m$ moments of the cell and the points coincide, i.e.,
\begin{equation}\label{e: cell points moment equation}
    \frac{1}{n} \sum_{j=1}^n X_j^{\otimes q} = \frac{1}{\lambda_d(C)} \int_C z^{\otimes q}\, dz,\quad q\in\{0,...,m-1\},
\end{equation}
where $X_1,...,X_n$ are the points and $C$ is the cell. 
To this end, we randomly initialize the points and then apply Newton's method until the points satisfy \eqref{e: cell points moment equation} up to a prescribed tolerance; see~\cite[Chapter 7]{Quarteroni_Sacco_Saleri_2006} for details on Newton's method. Only if $n=m=1$, we specifically place the point at a uniformly random location inside the cell, and if $n=1, m=2$, we take a shortcut and directly place the point at the centroid. See Table~\ref{tab: simulation} for the values of $n,m$, for which we carried out the simulations. Since the total number of points in each sample is the product of $n$ and the number of cells, which is a power of $2$ by construction, not all samples can be chosen to have exactly the same total number of points. Therefore, we chose the number of cells of each sample such that it contains between $10^9$ and $2\cdot 10^9$ points and then scaled the bounding box such that the point process has intensity $1$. 

\begin{table}[hb]
\centering
\renewcommand{\arraystretch}{1.2}
\begin{tabular}{ccccc}
\hline\hline
\multicolumn{5}{c}{\textbf{2D}} \\
\hline
\makecell{\vspace{-4mm}\\ Number of\\ fixed moments\\ $m$\\ \vspace{-4mm}} &
\makecell{\vspace{-4mm}\\ Number of\\ points per cell\\ $n$\\ \vspace{-4mm}} &
\makecell{\vspace{-4mm}\\ Number of\\ cells\\ \phantom{a}\\ \vspace{-4mm}} &
\makecell{\vspace{-4mm}\\ Approx. total\\ number of points\\ \phantom{a}\\ \vspace{-4mm}} &
\makecell{\vspace{-4mm}\\ Estimated\\ scaling exponent\\ $\hat{p}$ \\ \vspace{-4mm}}\\
\hline
1 & 1  & $2^{30}$ & $1.07 \cdot 10^{9}$ & 2.0 \\
2 & 1  & $2^{30}$ & $1.07 \cdot 10^{9}$ & 3.3 \\
3 & 3  & $2^{29}$ & $1.61 \cdot 10^{9}$ & 6.0 \\
4 & 5  & $2^{28}$ & $1.34 \cdot 10^{9}$ & 7.5 \\
5 & 9  & $2^{27}$ & $1.21 \cdot 10^{9}$ & 9.8 \\
6 & 12 & $2^{27}$ & $1.61 \cdot 10^{9}$ & 11.4 \\
7 & 19 & $2^{26}$ & $1.28 \cdot 10^{9}$ & 13.4 \\
8 & 24 & $2^{26}$ & $1.61 \cdot 10^{9}$ & 14.7 \\
\hline\hline
\end{tabular}

\vspace{0.5cm}
\begin{tabular}{ccccc}
\hline\hline
\multicolumn{5}{c}{\textbf{3D}} \\
\hline
\makecell{\vspace{-4mm}\\ Number of\\ fixed moments\\ $m$\\ \vspace{-4mm}} &
\makecell{\vspace{-4mm}\\ Number of\\ points per cell\\ $n$\\ \vspace{-4mm}} &
\makecell{\vspace{-4mm}\\ Number of\\ cells\\ \phantom{a}\\ \vspace{-4mm}} &
\makecell{\vspace{-4mm}\\ Approx. total\\ number of points\\ \phantom{a}\\ \vspace{-4mm}} &
\makecell{\vspace{-4mm}\\ Estimated\\ scaling exponent\\ $\hat{p}$ \\ \vspace{-4mm}}\\
\hline
1 & 1  & $2^{30}$ & $1.07 \cdot 10^{9}$ & 2.0 \\
2 & 1  & $2^{30}$ & $1.07 \cdot 10^{9}$ & 2.5 \\
3 & 4  & $2^{28}$ & $1.07 \cdot 10^{9}$ & 6.0 \\
4 & 7  & $2^{28}$ & $1.88 \cdot 10^{9}$ & 6.6 \\
5 & 14  & $2^{27}$ & $1.88 \cdot 10^{9}$ & 9.8 \\
\hline\hline
\end{tabular}
\caption{Overview of the parameters for which we generated and evaluated samples together with the estimated scaling exponent of the structure factor.}
\label{tab: simulation}
\end{table}\clearpage

For each point sample, we calculate the empirical structure factor via the aforementioned embedding onto the flat $d$-torus. Therefore, we define
\begin{equation}
    S_j := \frac{1}{|\Phi|} \bigg| \sum_{x\in\Phi} e^{-i\langle k_j, x\rangle}\bigg|^2 
\end{equation}
for $k_j\in\R^d$, where all components of $k_j$ are integer multiples of $u:=\frac{2\pi}{\sqrt[d]{|\Phi|}}$ (as in \cite{Klatt_Last_Henze_2022}), $\Phi$ is the point sample, $|\Phi|$ is the size of the sample, and $j$ is in the index set $J$. We choose $|J|$ to be approximately $10000$ in dimension $2$ and approximately $5000$ in dimension $3$. The values of $k_j$ are chosen (randomly) such that $\|k_j\|\in[u, 0.2]$ and such that every sub-interval of the form $[lu, (l+1)u),$ $l\in\N,$ contains the same number of $\|k_j\|$'s (where possible). Since we know that the underlying point process is isotropic and ergodic, we assume that $S_j$ does not depend, up to noise, on the randomly chosen direction of $k_j$ but only on $\|k_j\|$.

All floating-point operations are calculated at double precision. Since $S_j$ can be close to $0$ for small $\|k_j\|$, there is a threshold below which our calculation is dominated by the numerical error. 
This threshold depends on system-size, precision, and dimension. 
Here, we choose to disregard all values of the empirical structure factor, where $S_j<10^{-14}$ in dimension $2$ and where $S_j<10^{-16}$ in dimension $3$.
To avoid a bias in the estimation of the structure factor, we define a cutoff in terms of $\|k_j\|$.
To this end, we first perform a log-log-transformation, i.e., we 
consider $\log(\|k_j\|)$ and $\log(S_j)$.
Since we expect that $S(k)\approx c\|k\|^p$ close to the origin for some $c,p>0$, 
this transform can be expected to yield $\log(S_j) \approx p\log(\|k_j\|) + \log(c)$.
We smooth $\log(S_j)$ by performing local linear regressions, using a specific form of the LOWESS algorithm; see~\cite{Cleveland_1979} for details on LOWESS.
We then determine the cutoff to be the largest $\|k_j\|$, where the smoothed $\log(S_j)$ is smaller than $\log(10^{-14})$ in dimension $2$ and $\log(10^{-16})$ in dimension $3$. Under the assumption that the empirical structure factor follows an exponential distribution with the exact structure factor as an expectation, as it was shown in specific circumstances in~\cite{Klatt_Last_Henze_2022}, the linear smoothing of the logarithmic data $\log(S_j)$ via local linear regression is biased by a constant $-\gamma$, where $\gamma\approx 0.58$ is the Euler-Mascheroni constant. We compensate this bias by the appropriate choice of the cutoff parameters $10^{-14}$ and $10^{-16}$. While the cutoff thresholds were determined in terms of the smoothed data, the subsequent calculations again rely on the raw (unsmoothed) data.

The truncated log-log data $\log(S_j)$, up to noise, now seems to closely follow a linear law, which suggests that the structure factor of the point process approximately follows a power-law in the range of considered wave vectors $k$. Finally, we perform a linear regression on the truncated log-log data $\log(S_j)$ to access the slope, which corresponds to the scaling exponent $p$ of this power-law. Since the constant bias of the linear regression, which was noted above, does not influence the slope, this estimation is unbiased. The results can be found in Figure~\ref{fig: empirical structure factor} and Table~\ref{tab: simulation}. Since we evaluated the empirical structure factor only at a limited number of wave vectors, these estimates should be treated as rough, especially for large $m$, where the range of evaluation was significantly truncated. Still, the estimates fit our expectations. A more detailed discussion of the observed values of $\hat p$ can be found in the caption of Figure~\ref{fig: empirical structure factor}.

The code was written in Python and optimized using Numba for improved performance and the parallel execution of parts of the algorithm. The remaining details can be found in the code that will be made available together with the generated data in open-access repositories alongside the publication of this paper.

\fi

\section*{Acknowledgments}
The authors are very grateful to Günter Last for his guidance and helpful discussions. The authors also thank D. Yogeshwaran, Raphaël Lachièze-Rey, Daniela Flimmel, Peter Grabner, Gabriel Mastrilli, Michael Björklund, and Markus Kiderlen for the helpful scientific exchange. This work was supported by the Deutsche Forschungsgemeinschaft (DFG, German Research Foundation) through the SPP 2265, under grant number KL 3391/2-2, as well as by the Initiative and Networking Fund of the Helmholtz Association under the call Helmholtz Young Investigator Groups (VH-NG-19-34, DataMat).
This work is also supported by the Helmholtz Association Initiative and Networking Fund on the HAICORE@KIT partition.

\addcontentsline{toc}{section}{Acknowledgements}


\printbibliography[heading=bibintoc]

@article{Bartlett_1963, title={The Spectral Analysis of Point Processes}, volume={25}, ISSN={00359246}, url={http://www.jstor.org/stable/2984295}, abstractNote={The spectral analysis of stationary point processes in one dimension is developed in some detail as a statistical method of analysis. The asymptotic sampling theory previously established by the author for a class of doubly stochastic Poisson processes is shown to apply also for a class of clustering processes, the spectra of which are contrasted with those of renewal processes. The analysis is given for two illustrative examples, one an artificial Poisson process, the other of some traffic data. In addition to testing the fit of a clustering model to the latter example, the analysis of these two examples is used where possible to check the validity of the sampling theory.}, number={2}, journal={Journal of the Royal Statistical Society. Series B (Methodological)}, author={Bartlett, M. S.}, year={1963}, pages={264–296} }

@book{Berg_Forst_2012, series={Ergebnisse der Mathematik und ihrer Grenzgebiete. 2. Folge}, title={Potential Theory on Locally Compact Abelian Groups}, ISBN={9783642661280}, url={https://books.google.de/books?id=Lxv-CAAAQBAJ}, publisher={Springer Berlin Heidelberg}, author={Berg, C. and Forst, G.}, year={2012}, collection={Ergebnisse der Mathematik und ihrer Grenzgebiete. 2. Folge} }

@book{Bracewell_2000, series={Circuits and systems}, title={The Fourier Transform and Its Applications}, ISBN={9780073039381}, url={https://books.google.de/books?id=ZNQQAQAAIAAJ}, publisher={McGraw Hill}, author={Bracewell, R.N.}, year={2000}, collection={Circuits and systems} }

@article{Doob_1948, title={Renewal Theory From the Point of View of the Theory of Probability}, volume={63}, ISSN={00029947, 10886850}, url={http://www.jstor.org/stable/1990567}, DOI={10.2307/1990567}, number={3}, journal={Transactions of the American Mathematical Society}, author={Doob, J. L.}, year={1948}, pages={422–438} }

@book{Gut_2009, address={New York, NY}, series={Springer Series in Operations Research and Financial Engineering}, title={Stopped Random Walks: Limit Theorems and Applications}, ISBN={9780387878355}, url={http://dx.doi.org/10.1007/978-0-387-87835-5}, abstractNote={Limit Theorems for Stopped Random Walks -- Renewal Processes and Random Walks -- Renewal Theory for Random Walks with Positive Drift -- Generalizations and Extensions -- Functional Limit Theorems -- Perturbed Random Walks; Classical probability theory provides information about random walks after a fixed number of steps. For applications, however, it is more natural to consider random walks evaluated after a random number of steps. Stopped Random Walks: Limit Theorems and Applications shows how this theory can be used to prove limit theorems for renewal counting processes, first passage time processes, and certain two-dimensional random walks, as well as how these results may be used in a variety of applications. The present second edition offers updated content and an outlook on further results, extensions and generalizations. A new chapter introduces nonlinear renewal processes and the theory of perturbed random walks, which are modeled as random walks plus “noise”. This self-contained research monograph is motivated by numerous examples and problems. With its concise blend of material and over 300 bibliographic references, the book provides a unified and fairly complete treatment of the area. The book may be used in the classroom as part of a course on “probability theory”, “random walks” or “random walks and renewal processes”, as well as for self-study. From the reviews: “The book provides a nice synthesis of a lot of useful material.” --American Mathematical Society “...[a] clearly written book, useful for researcher and student.” --Zentralblatt MATH}, publisher={Springer New York}, author={Gut, Allan}, year={2009}, collection={Springer Series in Operations Research and Financial Engineering} }

@article{Henk_2012, title={Löwner-John ellipsoids}, journal={Documenta Mathematica}, author={Henk, Martin}, year={2012}, month=jan }

@article{Cleveland_1979, title={Robust Locally Weighted Regression and Smoothing Scatterplots}, volume={74}, url={https://doi.org/10.1080/01621459.1979.10481038}, DOI={10.1080/01621459.1979.10481038}, number={368}, journal={Journal of the American Statistical Association}, publisher={Taylor & Francis}, author={Cleveland, William S.}, year={1979}, pages={829–836} }

@book{Horn_Johnson_1985, title={Matrix Analysis}, publisher={Cambridge University Press}, author={Horn, Roger A. and Johnson, Charles R.}, year={1985} }

@book{Iosevich_Liflyand_2014, series={Mathematics and Statistics}, title={Decay of the Fourier Transform: Analytic and Geometric Aspects}, ISBN={9783034806251}, url={https://books.google.de/books?id=-CCsBAAAQBAJ}, publisher={Springer Basel}, author={Iosevich, A. and Liflyand, E.}, year={2014}, collection={Mathematics and Statistics} }

@article{ISSERLIS_1918, title={On a Formula for the Product-Moment Coefficient of any Order of a Normal Frequency Distribution in any Number of Variables}, volume={12}, ISSN={0006-3444}, url={https://doi.org/10.1093/biomet/12.1-2.134}, DOI={10.1093/biomet/12.1-2.134}, number={1–2}, journal={Biometrika}, author={Isserlis, L.}, year={1918}, month=nov, pages={134–139} }

@article{Janson_2021, title={The space D in several variables: random variables and higher moments}, volume={127}, url={https://www.mscand.dk/article/view/128971}, DOI={10.7146/math.scand.a-128971}, number={3}, journal={Mathematica Scandinavica}, author={Janson, Svante}, year={2021}, month=nov }

@book{Kallenberg_2017, title={Random Measures, Theory and Applications}, volume={77}, ISBN={978-3-319-41596-3}, DOI={10.1007/978-3-319-41598-7}, author={Kallenberg, Olav}, year={2017}, month=jan }

@book{Kallenberg_2021, series={Probability theory and stochastic modelling}, title={Foundations of Modern Probability}, ISBN={9783030618735}, url={https://books.google.de/books?id=mAVfzgEACAAJ}, publisher={Springer}, author={Kallenberg, O.}, year={2021}, collection={Probability theory and stochastic modelling} }

@book{Khoshnevisan_2002, address={New York, Berlin, Heidelberg [u.a.]}, series={Sprin\-ger monographs in mathematics}, title={Multiparameter processes: an introduction to random fields}, ISBN={0387954597}, archiveLocation={QA274.45}, url={https://zbmath.org/?q=an:1005.60005}, publisher={Springer}, author={Khoshnevisan, Davar}, year={2002}, collection={Springer monographs in mathematics} }

@book{Last_Penrose_2018, series={Institute of Mathematical Statistics Textbooks}, title={Lectures on the Poisson Process}, ISBN={9781107088016}, url={https://books.google.de/books?id=JRs3DwAAQBAJ}, publisher={Cambridge University Press}, author={Last, G. and Penrose, M.}, year={2018}, collection={Institute of Mathematical Statistics Textbooks} }

@article{Mazur_Orlicz_1934, title={Grundlegende Eigenschaften der polynomischen Operationen. Erste Mitteilung}, volume={5}, url={http://eudml.org/doc/218130}, number={1}, journal={Studia Mathematica}, author={Mazur, S. and Orlicz, W.}, year={1934}, pages={50–68}}

@article{Mead_1992, title={Newton’s Identities}, volume={99}, ISSN={00029890, 19300972}, url={http://www.jstor.org/stable/2324242}, DOI={10.2307/2324242}, number={8}, journal={The American Mathematical Monthly}, author={Mead, D. G.}, year={1992}, pages={749–751} }

@book{Olver_Lozier_Boisvert_Clark_2010, title={The NIST Handbook of Mathematical Functions}, publisher={Cambridge University Press, New York, NY}, author={Olver, Frank and Lozier, Daniel and Boisvert, Ronald and Clark, Charles}, year={2010}, month=may}

@book{Quarteroni_Sacco_Saleri_2006, series={Texts in Applied Mathematics}, title={Numerical Mathematics}, ISBN={978-3-540-34658-6}, url={https://books.google.de/books?id=31m4ahn_KfkC}, publisher={Springer Berlin Heidelberg}, author={Quarteroni, A. and Sacco, R. and Saleri, F.}, year={2006}, collection={Texts in Applied Mathematics} }

@article{Sewell_2024, title={Simple bounds for the inradius and $\varepsilon$-inner neighbourhood of a convex body}, url={https://arxiv.org/abs/2401.03593}, number={arXiv:2401.03593}, publisher={arXiv}, author={Sewell, Benedict}, year={2024}, month=feb }

@book{Stein_Weiss_1971, title={Introduction to Fourier Analysis on Euclidean Spaces (PMS-32)}, ISBN={9780691080789}, url={http://www.jstor.org/stable/j.ctt1bpm9w6}, abstractNote={The authors present a unified treatment of basic topics that arise in Fourier analysis. Their intention is to illustrate the role played by the structure of Euclidean spaces, particularly the action of translations, dilatations, and rotations, and to motivate the study of harmonic analysis on more general spaces having an analogous structure, e.g., symmetric spaces.}, publisher={Princeton University Press}, author={Stein, Elias M. and Weiss, Guido}, year={1971} }

@book{Wood_1992, address={University of Kent, Canterbury, UK}, title={The Computation of Polylogarithms}, url={http://www.cs.kent.ac.uk/pubs/1992/110}, number={15-92*}, institution={University of Kent, Computing Laboratory}, author={Wood, David}, year={1992}, month=jun, pages={182–196} }

@book{Yaglom_1987, address={New York}, title={Correlation theory of stationary and related random functions.}, url={https://zbmath.org/?q=an:0685.62077}, publisher={Springer,}, author={Yaglom, Akiva M.}, year={1987} }

@article{Stehr_Kiderlen_2020, title={Asymptotic variance of Newton–Cotes quadratures based on randomized sampling points}, volume={52}, DOI={10.1017/apr.2020.41}, number={4}, journal={Advances in Applied Probability}, author={Stehr, Mads and Kiderlen, Markus}, year={2020}, pages={1284–1307} }

@book{Abramowitz_Stegun_1965, series={Applied mathematics series}, title={Handbook of Mathematical Functions: With Formulas, Graphs, and Mathematical Tables}, ISBN={9780486612720}, url={https://books.google.de/books?id=MtU8uP7XMvoC}, publisher={Dover Publications}, author={Abramowitz, M. and Stegun, I.A.}, year={1965}, collection={Applied mathematics series} }

@article{Bondarenko_Radchenko_Viazovska_2013, title={Optimal asymptotic bounds for spherical designs}, volume={178}, ISSN={0003486X}, url={http://www.jstor.org/stable/23470798}, abstractNote={In this paper we prove the conjecture of Korevaar and Meyers: for each N ≥ c d t d , there exists a spherical t-design in the sphere S d consisting of N points, where c d is a constant depending only on d.}, number={2}, journal={Annals of Mathematics}, author={Bondarenko, Andriy and Radchenko, Danylo and Viazovska, Maryna}, year={2013}, pages={443–452} }

@article{Bondarenko_Viazovska_2008, title={Spherical Designs via Brouwer Fixed Point Theorem}, volume={24}, DOI={10.1137/080738313}, journal={SIAM Journal on Discrete Mathematics}, pages = {207-217}, author={Bondarenko, Andriy and Viazovska, Maryna}, year={2008}, month=nov }

@article{Carathéodory_1911, title={Über den variabilitätsbereich der fourier’schen konstanten von positiven harmonischen funktionen}, volume={32}, ISSN={0009-725X}, url={https://doi.org/10.1007/BF03014795}, DOI={10.1007/BF03014795}, number={1}, journal={Rendiconti del Circolo Matematico di Pa\-ler\-mo (1884-1940)}, author={Carathéodory, C.}, year={1911}, month=dec, pages={193–217} }

@article{Delsarte_Goethals_Seidel_1977, title={Spherical codes and designs}, volume={6}, ISSN={1572-9168}, url={https://doi.org/10.1007/BF03187604}, DOI={10.1007/BF03187604}, number={3}, journal={Geometriae Dedicata}, author={Delsarte, P. and Goethals, J. M. and Seidel, J. J.}, year={1977}, month=sep, pages={363–388} }

@article{Seymour_Zaslavsky_1984, title={Averaging sets: A generalization of mean values and spherical designs}, volume={52}, ISSN={0001-8708}, url={https://www.sciencedirect.com/science/article/pii/0001870884900227}, DOI={https://doi.org/10.1016/0001-8708(84)90022-7}, number={3}, journal={Advances in Mathematics}, author={Seymour, P. D. and Zaslavsky, Thomas}, year={1984}, pages={213–240} }

@article{Wagner_Volkmann_1991, title={On averaging sets}, volume={111}, ISSN={1436-5081}, url={https://doi.org/10.1007/BF01299278}, DOI={10.1007/BF01299278}, abstractNote={For any set Φ={f1,f2,...,fs} ofC3-functions on the interval [−1, 1], and for any weight functionw(x) satisfyingL1≥w(x)≥L2(1−|x|)β(L1,L2>0, β≥0) and$$int_{ - 1}^1 {w(x)dx = 1} $$, we give a constructive proof for the existence of quadrature formulas of the type$$frac{1}{n}sumlimits_{j = 1}^n {f_mu (x_j )} int_{ - 1}^1 {f_mu (x)w(x)dx} {text{    }}(mu = 1,2,{text{ }} ldots ,s)$$for sufficiently largen, −1<x1<x2<...<xn<1. Assuming the orthonormality of the derivativesf′1,f′2,...,f′s with respect to the weight functionw(x), we obtain explicit bounds for the numbern of interpolation points for which such formulas exist. As an application to combinatorics, we prove the existence ofd-dimensional sphericalt-designs of sizen for eachn>cd·t12d4,cd>0 a constant.}, number={1}, journal={Monatshefte für Mathematik}, author={Wagner, Gerold and Volkmann, Bodo}, year={1991}, month=mar, pages={69–78} }

@article{Xiang_2022, title={Explicit spherical designs}, volume={5}, DOI={10.5802/alco.213}, journal={Algebraic Combinatorics}, author={Xiang, Ziqing}, year={2022}, month=may, pages={347–369} }

@article{Bohr_1925, title={Zur Theorie der fast periodischen Funktionen}, volume={45}, ISSN={1871-2509}, url={https://doi.org/10.1007/BF02395468}, DOI={10.1007/BF02395468}, number={1}, journal={Acta Mathematica}, author={Bohr, Harald}, year={1925}, month=jul, pages={29–127} }

@article{Debye_1912, title={Zur Theorie der spezifischen Wärmen}, volume={344}, url={https://onlinelibrary.wiley.com/doi/abs/10.1002/andp.19123441404}, DOI={https://doi.org/10.1002/andp.19123441404}, number={14}, journal={Annalen der Physik}, author={Debye, P.}, year={1912}, pages={789–839} }

@article{Einstein_1907, title={Die Plancksche Theorie der Strahlung und die Theorie der spezifischen Wärme}, volume={327}, url={https://onlinelibrary.wiley.com/doi/abs/10.1002/andp.19063270110}, DOI={https://doi.org/10.1002/andp.19063270110}, number={1}, journal={Annalen der Physik}, author={Einstein, A.}, year={1907}, pages={180–190} }

@book{Landau_Lifshitz_2013, title={Statistical Physics: Volume 5}, ISBN={9780080570464}, url={https://books.google.de/books?id=VzgJN-XPTRsC}, publisher={Butterworth-Heinemann}, author={Landau, L.D. and Lifshitz, E.M.}, year={2013} }

@article{Chatterjee_Peled_Peres_Romik_2010, title={Gravitational allocation to Poisson points}, journal={Annals of mathematics}, author={Chatterjee, Sourav and Peled, Ron and Peres, Yuval and Romik, Dan}, year={2010}, pages={617–671} }

@article{Lothar_Heinrich_Hendrik_Schmidt_Volker_Schmidt_2006, title={Central limit theorems for Poisson hyperplane tessellations}, volume={16}, url={https://doi.org/10.1214/105051606000000033}, DOI={10.1214/105051606000000033}, abstractNote={We derive a central limit theorem for the number of vertices of convex polytopes induced by stationary Poisson hyperplane processes in ℝd. This result generalizes an earlier one proved by Paroux [Adv. in Appl. Probab. 30 (1998) 640–656] for intersection points of motion-invariant Poisson line processes in ℝ2. Our proof is based on Hoeffding’s decomposition of U-statistics which seems to be more efficient and adequate to tackle the higher-dimensional case than the “method of moments” used in [Adv. in Appl. Probab. 30 (1998) 640–656] to treat the case d=2. Moreover, we extend our central limit theorem in several directions. First we consider k-flat processes induced by Poisson hyperplane processes in ℝd for 0≤k≤d−1. Second we derive (asymptotic) confidence intervals for the intensities of these k-flat processes and, third, we prove multivariate central limit theorems for the d-dimensional joint vectors of numbers of k-flats and their k-volumes, respectively, in an increasing spherical region.}, number={2}, journal={The Annals of Applied Probability}, author={Lothar Heinrich and Hendrik Schmidt and Volker Schmidt}, year={2006}, month=may, pages={919–950} }

@article{Nagel_Weiss_2003, title={Limits of Sequences of Stationary Planar Tessellations}, volume={35}, ISSN={00018678}, url={http://www.jstor.org/stable/1428276}, abstractNote={In order to increase the variety of feasible models for random stationary tessellations (mosaics), two operations acting on tessellations are studied: superposition and iteration (the latter is also referred to as nesting). The superposition of two planar tessellations is the superposition of the edges of the cells of both tessellations. The iteration of tessellations means that one tessellation is chosen as a “frame” tessellation. The single cells of this “frame” are simultaneously and independently subdivided by cut-outs of tessellations of an independent and identically distributed sequence of tessellations. In the present paper, we investigate the limits for sequences of tessellations that are generated by consecutive application of superposition or iteration respectively. Sequences of (renormalised) superpositions of stationary planar tessellations converge weakly to Poisson line tessellations. For consecutive iteration the notion of stability of distributions is adapted and necessary conditions are formulated for those tessellations which may occur as limits.}, number={1}, journal={Advances in Applied Probability}, author={Nagel, Werner and Weiss, Viola}, year={2003}, pages={123–138} }

@book{Nagel_Thäle_Weiß_2026, title={Random Tessellations Generated by Division of Polytopes}, ISBN={9783112222409}, url={https://doi.org/10.1515/9783112222409}, DOI={doi:10.1515/9783112222409}, publisher={De Gruyter}, author={Nagel, Werner and Thäle, Christoph and Weiß, Viola}, year={2026} }

@article{Nagel_Weiss_2005, title={Crack STIT tessellations: characterization of stationary random tessellations stable with respect to iteration}, volume={37}, DOI={10.1239/aap/1134587744}, number={4}, journal={Advances in Applied Probability}, author={Nagel, Werner and Weiss, Viola}, year={2005}, pages={859–883} }

@article{Radin_1994, title={The Pinwheel Tilings of the Plane}, volume={139}, ISSN={0003486X, 19398980}, url={http://www.jstor.org/stable/2118575}, DOI={10.2307/2118575}, number={3}, journal={Annals of Mathematics}, author={Radin, Charles}, year={1994}, pages={661–702} }

@article{Elboim_Spinka_Yakir_2025, title={Optimal matchings of randomly perturbed lattices}, url={https://arxiv.org/abs/2506.16873}, number={arXiv:2506.16873}, publisher={arXiv}, author={Elboim, Dor and Spinka, Yinon and Yakir, Oren}, year={2025}, month=jun}

@article{Erbar_Huesmann_Jalowy_Müller_2025, title={Optimal transport of stationary point processes: Metric structure, gradient flow and convexity of the specific entropy}, volume={289}, ISSN={0022-1236}, url={https://www.sciencedirect.com/science/article/pii/S0022123625001569}, DOI={https://doi.org/10.1016/j.jfa.2025.110974}, abstractNote={We develop a theory of optimal transport for stationary random measures with a focus on stationary point processes and construct a family of distances on the set of stationary random measures. These induce a natural notion of interpolation between two stationary random measures along a shortest curve connecting them. In the setting of stationary point processes we leverage this transport distance to give a geometric interpretation for the evolution of infinite particle systems with stationary distribution. Namely, we characterise the evolution of infinitely many Brownian motions as the gradient flow of the specific relative entropy w.r.t. the Poisson point process. Further, we establish displacement convexity of the specific relative entropy along optimal interpolations of point processes and establish a stationary analogue of the HWI inequality, relating specific entropy, transport distance, and a specific relative Fisher information.}, number={4}, journal={Journal of Functional Analysis}, author={Erbar, Matthias and Huesmann, Martin and Jalowy, Jonas and Müller, Bastian}, year={2025}, pages={110974} }

@article{Hoffman_Holroyd_Peres_2005, title={A Stable Marriage of Poisson and Lebesgue}, volume={34}, url={https://api.semanticscholar.org/CorpusID:8938347}, journal={Annals of Probability}, author={Hoffman, Christopher and Holroyd, Alexander E. and Peres, Yuval}, year={2005}, pages={1241–1272} }

@article{Huesmann_2016, title={Optimal transport between random measures}, volume={52}, url={https://doi.org/10.1214/14-AIHP634}, DOI={10.1214/14-AIHP634}, abstractNote={Nous analysons le problème du transport optimal entre deux mesures aléatoires et équivariantes et démontrons des conditions qui garantissent l’existence d’une solution de type Monge. En outre, nous démontrons que l’équivariance apparaît naturellement dans ce contexte en prouvant que les couplages optimaux classiques dans des ensembles bornés convergent vers le couplage optimal dans tout l’espace. Finalement nous démontrons des conditions suffisantes pour que le coût au sens Lp soit fini en introduisant une métrique appropriée.}, number={1}, journal={Annales de l’Institut Henri Poincaré, Probabilités et Statistiques}, author={Huesmann, Martin}, year={2016}, pages={196–232} }

@article{Huesmann_Sturm_2013, title={Optimal transport from Lebesgue to Poisson}, volume={41}, url={https://doi.org/10.1214/12-AOP814}, DOI={10.1214/12-AOP814}, abstractNote={This paper is devoted to the study of couplings of the Lebesgue measure and the Poisson point process. We prove existence and uniqueness of an optimal coupling whenever the asymptotic mean transportation cost is finite. Moreover, we give precise conditions for the latter which demonstrate a sharp threshold at d=2. The cost will be defined in terms of an arbitrary increasing function of the distance. The coupling will be realized by means of a transport map (“allocation map”) which assigns to each Poisson point a set (“cell”) of Lebesgue measure 1. In the case of quadratic costs, all these cells will be convex polytopes.}, number={4}, journal={The Annals of Probability}, author={Huesmann, Martin and Sturm, Karl-Theodor}, year={2013}, pages={2426–2478} }

@article{Kantorovitch_1958, title={On the Translocation of Masses}, volume={5}, url={https://doi.org/10.1287/mnsc.5.1.1}, DOI={10.1287/mnsc.5.1.1}, abstractNote={The following paper is reproduced from a Russian journal of the character of our own Proceedings of the National Academy of Sciences, Comptes Rendus (Doklady) de I’Académie des Sciences de I’URSS, 1942, Volume XXXVII, No. 7–8. The author is one of the most distinguished of Russian mathematicians. He has made very important contributions in pure mathematics in the theory of functional analysis, and has made equally important contributions to applied mathematics in numerical analysis and the theory and practice of computation. Although his exposition in this paper is quite terse and couched in mathematical language which may be difficult for some readers of Management Science to follow, it is thought that this presentation will: (1) make available to American readers generally an important work in the field of linear programming, (2) provide an indication of the type of analytic work which has been done and is being done in connection with rational planning in Russia, (3) through the specific examples mentioned indicate the types of interpretation which the Russians have made of the abstract mathematics (for example, the potential and field interpretations adduced in this country recently by W. Prager were anticipated in this paper). It is to be noted, however, that the problem of determining an effective method of actually acquiring the solution to a specific problem is not solved in this paper. In the category of development of such methods we seem to be, currently, ahead of the Russians.—A. Charnes, Northwestern Technological Institute and The Transportation Center.}, number={1}, journal={Management Science}, author={Kantorovitch, L.}, year={1958}, pages={1–4} }

@article{Last_Thorisson_2009, title={Invariant transports of stationary random measures and mass-stationarity}, volume={37}, url={https://doi.org/10.1214/08-AOP420}, DOI={10.1214/08-AOP420}, abstractNote={We introduce and study invariant (weighted) transport-kernels balancing stationary random measures on a locally compact Abelian group. The first main result is an associated fundamental invariance property of Palm measures, derived from a generalization of Neveu’s exchange formula. The second main result is a simple sufficient and necessary criterion for the existence of balancing invariant transport-kernels. We then introduce (in a nonstationary setting) the concept of mass-stationarity with respect to a random measure, formalizing the intuitive idea that the origin is a typical location in the mass. The third main result of the paper is that a measure is a Palm measure if and only if it is mass-stationary.}, number={2}, journal={The Annals of Probability}, author={Last, Günter and Thorisson, Hermann}, year={2009}, pages={790–813} }

@book{Villani_2008, series={Grundlehren der mathematischen Wissenschaften}, title={Optimal Transport: Old and New}, ISBN={9783540710493}, url={https://books.google.de/books?id=NZXiNAEACAAJ}, publisher={Springer Berlin Heidelberg}, author={Villani, C.}, year={2008}, collection={Grundlehren der mathematischen Wissenschaften} }

@article{Batten_Stillinger_Torquato_2008, title={Classical disordered ground states: Super-ideal gases and stealth and equi-luminous materials}, volume={104}, ISSN={0021-8979, 1089-7550}, url={http://aip.scitation.org/doi/10.1063/1.2961314}, DOI={10.1063/1.2961314}, note={tex.ids= batten_classical_2008-1}, number={33}, journal={Journal of Applied Physics}, publisher={American Institute of Physics}, author={Batten, Robert D. and Stillinger, Frank H. and Torquato, Salvatore}, year={2008}, month=aug, pages={033504}}

@article{Beck_1987, title={Irregularities of distribution. I}, volume={159}, url={https://doi.org/10.1007/BF02392553}, DOI={10.1007/BF02392553}, number={none}, journal={Acta Mathematica}, author={Beck, József}, year={1987}, pages={1–49} }

@article{Björklund_Hartnick_2024, title={Hyperuniformity and non-hyperuniformity of quasicrystals}, volume={389}, ISSN={1432-1807}, url={https://doi.org/10.1007/s00208-023-02647-1}, DOI={10.1007/s00208-023-02647-1}, abstractNote={We develop a general framework to study hyperuniformity of various mathematical models of quasicrystals. Using this framework we provide examples of non-hyperuniform quasicrystals which unlike previous examples are not limit-quasiperiodic. Some of these examples are even anti-hyperuniform or have a positive asymptotic number variance. On the other hand we establish hyperuniformity for a large class of mathematical quasicrystals in Euclidean spaces of arbitrary dimension. For certain models of quasicrystals we moreover establish that hyperuniformity holds for a generic choice of the underlying parameters. For quasicrystals arising from the cut-and-project method we conclude that their hyperuniformity depends on subtle diophantine properties of the underlying lattice and window and is by no means automatic.}, number={1}, journal={Mathematische Annalen}, author={Björklund, Michael and Hartnick, Tobias}, year={2024}, month=may, pages={365–426} }

@article{Gabrielli_2004, title={Point processes and stochastic displacement fields}, volume={70}, url={https://link.aps.org/doi/10.1103/PhysRevE.70.066131}, DOI={10.1103/PhysRevE.70.066131}, abstractNote={The effect of a stochastic displacement field on a statistically independent point process is analyzed. Stochastic displacement fields can be divided into two large classes: spatially correlated and uncorrelated. For both cases exact transformation equations for the two-point correlation function and the power spectrum of the point process are found, and a detailed study of them with important paradigmatic examples is done. The results are general and in any dimension. Particular attention is devoted to the kind of large-scale correlations that can be introduced by the displacement field and to the realizability of arbitrary “superhomogeneous” point processes.}, number={66}, journal={Physical Review E}, author={Gabrielli, Andrea}, year={2004}, month=dec, pages={066131} }

@article{Gabrielli_Joyce_Torquato_2008, title={Tilings of space and superhomogeneous point processes}, volume={77}, url={https://link.aps.org/doi/10.1103/PhysRevE.77.031125}, DOI={10.1103/PhysRevE.77.031125}, abstractNote={We consider the construction of point processes from tilings, with equal-volume tiles, of d-dimensional Euclidean space Rd. We show that one can generate, with simple algorithms ascribing one or more points to each tile, point processes which are “superhomogeneous” (or “hyperuniform”)—i.e., for which the structure factor S(k) vanishes when the wave vector k tends to zero. The exponent γ characterizing the leading small-k behavior, S(k→0)∝kγ, depends in a simple manner on the nature of the correlation properties of the specific tiling and on the conservation of the mass moments of the tiles. Assigning one point to the center of mass of each tile gives the exponent γ=4 for any tiling in which the shapes and orientations of the tiles are short-range correlated. Smaller exponents in the range 4−d<γ<4 (and thus always superhomogeneous for d≤4) may be obtained in the case that the latter quantities have long-range correlations. Assigning more than one point to each tile in an appropriate way, we show that one can obtain arbitrarily higher exponents in both cases. We illustrate our results with explicit constructions using known deterministic tilings, as well as some simple stochastic tilings for which we can calculate S(k) exactly. Our results provide an explicit analytical construction of point processes with γ>4. Applications to condensed matter physics, and also to cosmology, are briefly discussed.}, number={33}, journal={Physical Review E}, author={Gabrielli, A. and Joyce, M. and Torquato, S.}, year={2008}, month=mar, pages={031125} }

@article{Ghosh_Lebowitz_2017, title={Fluctuations, large deviations and rigidity in hyperuniform systems: A brief survey}, volume={48}, url={http://link.springer.com/10.1007/s13226-017-0248-1}, DOI={10.1007/s13226-017-0248-1}, number={44}, journal={Indian Journal of Pure and Applied Mathematics}, author={Ghosh, Subhroshekhar and Lebowitz, Joel L.}, year={2017}, pages={609--631} }

@article{Ghosh_Lebowitz_2018, title={Generalized Stealthy Hyperuniform Processes: Maximal Rigidity and the Bounded Holes Conjecture}, volume={363}, ISSN={1432-0916}, url={https://doi.org/10.1007/s00220-018-3226-5}, DOI={10.1007/s00220-018-3226-5}, number={10}, journal={Duke Mathematical Journal}, author={Ghosh, Subhroshekhar and Peres, Yuval}, year={2017}, month=jul, pages={1789–1858} }

@article{Ghosh_Peres_2017, title={Rigidity and tolerance in point processes: Gaussian zeros and Ginibre eigenvalues}, volume={166}, url={https://doi.org/10.1215/00127094-2017-0002}, DOI={10.1215/00127094-2017-0002}, abstractNote={Let Π be a translation-invariant point process on the complex plane C, and let D⊂C be a bounded open set. We ask the following: What does the point configuration Πout obtained by taking the points of Π outside D tell us about the point configuration Πin of Π inside D? We show that, for the Ginibre ensemble, Πout determines the number of points in Πin. For the translation-invariant zero process of a planar Gaussian analytic function, we show that Πout determines the number as well as the center of mass of the points in Πin. Further, in both models we prove that the outside says “nothing more” about the inside, in the sense that the conditional distribution of the inside points, given the outside, is mutually absolutely continuous with respect to the Lebesgue measure on its supporting submanifold.}, number={10}, journal={Duke Mathematical Journal}, author={Ghosh, Subhroshekhar and Peres, Yuval}, year={2017}, month=jul, pages={1789–1858} }

@article{Kim_Torquato_2017, title={Effect of window shape on the detection of hyperuniformity via the local number variance}, volume={2017}, ISSN={1742-5468}, url={http://stacks.iop.org/1742-5468/2017/i=1/a=013402?key=crossref.cac505492aa6e5a6f522d023badc668e}, DOI={10.1088/1742-5468/aa4f9d}, number={11}, journal={Journal of Statistical Mechanics: Theory and Experiment}, author={Kim, Jaeuk and Torquato, Salvatore}, year={2017}, month=jan, pages={013402}}

@article{Kim_Torquato_2018, title={Effect of imperfections on the hyperuniformity of many-body systems}, volume={97}, url={https://link.aps.org/doi/10.1103/PhysRevB.97.054105}, DOI={10.1103/PhysRevB.97.054105}, abstractNote={A hyperuniform many-body system is characterized by a structure factor S(k) that vanishes in the small-wave-number limit or equivalently by a local number variance σ2N(R) associated with a spherical window of radius R that grows more slowly than Rd in the large-R limit. Thus, the hyperuniformity implies anomalous suppression of long-wavelength density fluctuations relative to those in typical disordered systems, i.e., σ2N(R)∼Rd as R→∞. Hyperuniform systems include perfect crystals, quasicrystals, and special disordered systems. Disordered hyperuniform systems are amorphous states of matter that lie between a liquid and crystal [S. Torquato et al., Phys. Rev. X 5, 021020 (2015)], and have been the subject of many recent investigations due to their novel properties. In the same way that there is no perfect crystal in practice due to the inevitable presence of imperfections, such as vacancies and dislocations, there is no “perfect” hyperuniform system, whether it is ordered or not. Thus, it is practically and theoretically important to quantitatively understand the extent to which imperfections introduced in a perfectly hyperuniform system can degrade or destroy its hyperuniformity and corresponding physical properties. This paper begins such a program by deriving explicit formulas for S(k) in the small-wave-number regime for three types of imperfections: (1) uncorrelated point defects, including vacancies and interstitials, (2) stochastic particle displacements, and (3) thermal excitations in the classical harmonic regime. We demonstrate that our results are in excellent agreement with numerical simulations. We find that “uncorrelated” vacancies or interstitials destroy hyperuniformity in proportion to the defect concentration p. We show that “uncorrelated” stochastic displacements in perfect lattices can never destroy the hyperuniformity but it can be degraded such that the perturbed lattices fall into class III hyperuniform systems, where σ2N(R)∼Rd−α as R→∞ and 0<α<1. By contrast, we demonstrate that certain “correlated” displacements can make systems nonhyperuniform. For a perfect (ground-state) crystal at zero temperature, increase in temperature T introduces such correlated displacements resulting from thermal excitations, and thus the thermalized crystal becomes nonhyperuniform, even at an arbitrarily low temperature. It is noteworthy that imperfections in disordered hyperuniform systems can be unambiguously detected. Our work provides the theoretical underpinnings to systematically study the effect of imperfections on the physical properties of hyperuniform materials.}, number={55}, journal={Physical Review B}, author={Kim, Jaeuk and Torquato, Salvatore}, year={2018}, month=feb, pages={054105} }

@article{Oğuz_Socolar_Steinhardt_Torquato_2017, title={Hyperuniformity of quasicrystals}, volume={95}, url={https://link.aps.org/doi/10.1103/PhysRevB.95.054119}, DOI={10.1103/PhysRevB.95.054119}, abstractNote={Hyperuniform systems, which include crystals, quasicrystals, and special disordered systems, have attracted considerable recent attention, but rigorous analyses of the hyperuniformity of quasicrystals have been lacking because the support of the spectral intensity is dense and discontinuous. We employ the integrated spectral intensity Z(k) to quantitatively characterize the hyperuniformity of quasicrystalline point sets generated by projection methods. The scaling of Z(k) as k tends to zero is computed for one-dimensional quasicrystals and shown to be consistent with independent calculations of the variance, σ2(R), in the number of points contained in an interval of length 2R. We find that one-dimensional quasicrystals produced by projection from a two-dimensional lattice onto a line of slope 1/τ fall into distinct classes determined by the width of the projection window. For a countable dense set of widths, Z(k)∼k4; for all others, Z(k)∼k2. This distinction suggests that measures of hyperuniformity define new classes of quasicrystals in higher dimensions as well.}, number={55}, journal={Physical Review B}, author={Oğuz, Erdal C. and Socolar, Joshua E. S. and Steinhardt, Paul J. and Torquato, Salvatore}, year={2017}, month=feb, pages={054119} }

@article{Torquato_2018, title={Hyperuniform states of matter}, volume={745}, ISSN={03701573}, url={https://linkinghub.elsevier.com/retrieve/pii/S037015731830036X}, DOI={10.1016/j.physrep.2018.03.001}, abstractNote={Hyperuniform states of matter are correlated systems that are characterized by an anomalous suppression of long-wavelength (i.e., large-length-scale) density fluctuations compared to those found in garden-variety disordered systems, such as ordinary fluids and amorphous solids. All perfect crystals, perfect quasicrystals and special disordered systems are hyperuniform. Thus, the hyperuniformity concept enables a unified framework to classify and structurally characterize crystals, quasicrystals and the exotic disordered varieties. While disordered hyperuniform systems were largely unknown in the scientific community over a decade ago, now there is a realization that such systems arise in a host of contexts across the physical, materials, chemical, mathematical, engineering, and biological sciences, including disordered ground states, glass formation, jamming, Coulomb systems, spin systems, photonic and electronic band structure, localization of waves and excitations, self-organization, fluid dynamics, number theory, stochastic point processes, integral and stochastic geometry, the immune system, and photoreceptor cells. Such unusual amorphous states can be obtained via equilibrium or nonequilibrium routes, and come in both quantum-mechanical and classical varieties. The connections of hyperuniform states of matter to many different areas of fundamental science appear to be profound and yet our theoretical understanding of these unusual systems is only in its infancy. The purpose of this review article is to introduce the reader to the theoretical foundations of hyperuniform ordered and disordered systems. Special focus will be placed on fundamental and practical aspects of the disordered kinds, including our current state of knowledge of these exotic amorphous systems as well as their formation and novel physical properties. © 2018 Elsevier B.V. All rights reserved.}, journal={Physics Reports}, author={Torquato, Salvatore}, year={2018}, month=jun, pages={1–95}}

@article{Torquato_Kim_Klatt_Car_Steinhardt_2026, title={Hyperuniformity of Weighted Particle Systems}, volume={16}, url={https://link.aps.org/doi/10.1103/fr99-qh7h}, DOI={10.1103/fr99-qh7h}, abstractNote={Hyperuniform particle arrangements are characterized by a local number variance within a spherical window of radius �� that grows more slowly than the volume of the window, i.e., ����, in ��-dimensional Euclidean space. We generalize this concept to describe the large-scale behavior of particle systems in which particles carry weights: internal degrees of freedom such as scalars (charges and masses), vectors (electric dipole moments, velocities, and torques), pseudovectors (spins and angular momenta), directors (bond orientations), tensors (quadrupole moments), or extrinsic local attributes (Voronoi-cell characteristics). The underlying hyperuniform arrangement may be ordered (crystals and quasicrystals) or disordered, the latter of which has been extensively studied for its novel properties. Our generalization extends hyperuniformity from fluctuations in particle positions to fluctuations in the spatial distribution of weights and examines how weighted fluctuations compare to their unweighted counterparts. We derive generalized weighted pair correlation functions, autocovariance functions, and spectral functions and show their relation to formulas for the local variance in weighted many-particle systems. Then, we apply our formalism to determine the hyperuniformity or nonhyperuniformity of bond-orientational ordered phases, dipolar liquid water, Voronoi-cell volumes, and certain ionic liquids in various Euclidean space dimensions. We demonstrate that hyperuniformity in the particle system does not necessarily translate to hyperuniformity of the weighted system. In fact, cases exist where a hyperuniform particle system becomes antihyperuniform when weighted and others where nonhyperuniform or antihyperuniform particle systems yield hyperuniform weighted systems. This theoretical framework provides a road map for quantifying large-scale fluctuations in weighted many-particle systems, offering a powerful tool for identifying systems with novel physical properties.}, number={1}, journal={Physical Review X}, publisher={American Physical Society}, author={Torquato, Salvatore and Kim, Jaeuk and Klatt, Michael A. and Car, Roberto and Steinhardt, Paul J.}, year={2026}, month=mar, pages={011042} }

@article{Torquato_Stillinger_2003, title={Local density fluctuations, hyperuniformity, and order metrics}, volume={68}, ISSN={1063-651X, 1095-3787}, url={http://link.aps.org/doi/10.1103/PhysRevE.68.041113}, DOI={10.1103/PhysRevE.68.041113}, note={tex.ids= TorquatoStillinger2003 number-of-pages: 25}, number={44}, journal={Physical Review E}, publisher={American Physical Society}, author={Torquato, Salvatore and Stillinger, Frank H.}, year={2003}, month=oct, pages={041113}}

@article{Torquato_Zhang_Stillinger_2015, title={Ensemble Theory for Stealthy Hyperuniform Disordered Ground States}, volume={5}, url={https://link.aps.org/doi/10.1103/PhysRevX.5.021020}, DOI={10.1103/PhysRevX.5.021020}, abstractNote={It has been shown numerically that systems of particles interacting with isotropic “stealthy” bounded long-ranged pair potentials (similar to Friedel oscillations) have classical ground states that are (counterintuitively) disordered, hyperuniform, and highly degenerate. Disordered hyperuniform systems have received attention recently because they are distinguishable exotic states of matter poised between a crystal and liquid that are endowed with novel thermodynamic and physical properties. The task of formulating an ensemble theory that yields analytical predictions for the structural characteristics and other properties of stealthy degenerate ground states in d-dimensional Euclidean space Rd is highly nontrivial because the dimensionality of the configuration space depends on the number density ρ and there is a multitude of ways of sampling the ground-state manifold, each with its own probability measure for finding a particular ground-state configuration. The purpose of this paper is to take some initial steps in this direction. Specifically, we derive general exact relations for thermodynamic properties (energy, pressure, and isothermal compressibility) that apply to any ground-state ensemble as a function of ρ in any d, and we show how disordered degenerate ground states arise as part of the ground-state manifold. We also derive exact integral conditions that both the pair correlation function g2(r) and structure factor S(k) must obey for any d. We then specialize our results to the canonical ensemble (in the zero-temperature limit) by exploiting an ansatz that stealthy states behave remarkably like “pseudo”-equilibrium hard-sphere systems in Fourier space. Our theoretical predictions for g2(r) and S(k) are in excellent agreement with computer simulations across the first three space dimensions. These results are used to obtain order metrics, local number variance, and nearest-neighbor functions across dimensions. We also derive accurate analytical formulas for the structure factor and thermal expansion coefficient for the excited states at sufficiently small temperatures for any d. The development of this theory provides new insights regarding our fundamental understanding of the nature and formation of low-temperature states of amorphous matter. Our work also offers challenges to experimentalists to synthesize stealthy ground states at the molecular level.}, note={tex.ids: torquato_ensemble_2015}, number={22}, journal={Physical Review X}, publisher={American Physical Society}, author={Torquato, S. and Zhang, G. and Stillinger, F.H.}, year={2015}, month=may, pages={021020} }

@article{Zachary_Torquato_2009, title={Hyperuniformity in point patterns and two-phase random heterogeneous media}, volume={2009}, ISSN={1742-5468}, url={https://doi.org/10.1088\%2F1742-5468\%2F2009\%2F12\%2Fp12015}, DOI={10.1088/1742-5468/2009/12/P12015}, abstractNote={Hyperuniform point patterns are characterized by vanishing infinite-wavelength density fluctuations and encompass all crystal structures, certain quasiperiodic systems, and special disordered point patterns (Torquato and Stillinger 2003 Phys. Rev. E 68 041113). This paper generalizes the notion of hyperuniformity to include also two-phase random heterogeneous media. Hyperuniform random media do not possess infinite-wavelength volume fraction fluctuations, implying that the variance in the local volume fraction in an observation window decays faster than the reciprocal window volume as the window size increases. For microstructures of impenetrable and penetrable spheres, we derive an upper bound on the asymptotic coefficient governing local volume fraction fluctuations in terms of the corresponding quantity describing the variance in the local number density (i.e., number variance). Extensive calculations of the asymptotic number variance coefficients are performed for a number of disordered (e.g., quasiperiodic tilings, classical ‘stealth’ disordered ground states, and certain determinantal point processes), quasicrystal, and ordered (e.g., Bravais and non-Bravais periodic systems) hyperuniform structures in various Euclidean space dimensions, and our results are consistent with a quantitative order metric characterizing the degree of hyperuniformity. We also present corresponding estimates for the asymptotic local volume fraction coefficients for several lattice families. Our results have interesting implications for a certain problem in number theory.}, number={1212}, journal={Journal of Statistical Mechanics: Theory and Experiment}, author={Zachary, Chase E. and Torquato, Salvatore}, year={2009}, month=dec, pages={P12015}}

@article{Alon_Kummer_Kurasov_Vinzant_2025, title={Higher dimensional Fourier quasicrystals from Lee–Yang varieties}, volume={239}, ISSN={1432-1297}, url={https://doi.org/10.1007/s00222-024-01307-8}, DOI={10.1007/s00222-024-01307-8}, abstractNote={In this paper, we construct Fourier quasicrystals with unit masses in arbitrary dimensions. This generalizes a one-dimensional construction of Kurasov and Sarnak. To do this, we employ a class of complex algebraic varieties avoiding certain regions in ${mathbb{C}}^{n}$, which generalize hypersurfaces defined by Lee–Yang polynomials. We show that these are Delone almost periodic sets that have at most finite intersection with every discrete periodic set.}, number={1}, journal={Inventiones mathematicae}, author={Alon, Lior and Kummer, Mario and Kurasov, Pavel and Vinzant, Cynthia}, year={2025}, month=jan, pages={321–376} }

@article{Brauchart_Grabner_Kusner_2019, title={Hyperuniform Point Sets on the Sphere: Deterministic Aspects}, volume={50}, ISSN={1432-0940}, url={https://doi.org/10.1007/s00365-018-9432-8}, DOI={10.1007/s00365-018-9432-8}, abstractNote={The notion of hyperuniformity originally introduced as a measure of regularity of infinite point sets in Euclidean space is generalized and extended to sequences of finite point sets on the sphere. It is shown that hyperuniformity implies uniform distribution. Furthermore, it is shown that Quasi-Monte Carlo design sequences with strength at least $$frac{d+1}{2}$$and especially sequences of spherical designs of optimal growth order are hyperuniform.}, number={1}, journal={Constructive Approximation}, author={Brauchart, Johann S. and Grabner, Peter J. and Kusner, Wöden}, year={2019}, month=aug, pages={45–61} }

@article{Gabrielli_Jancovici_Joyce_Lebowitz_Pietronero_Sylos_Labini_2003, title={Generation of primordial cosmological perturbations from statistical mechanical models}, volume={67}, pages={043506}, ISSN={1089-4918}, url={http://dx.doi.org/10.1103/PhysRevD.67.043506}, DOI={10.1103/physrevd.67.043506}, number={4}, journal={Physical Review D}, author={Gabrielli, A. and Jancovici, B. and Joyce, M. and Lebowitz, J. L. and Pietronero, L. and Sylos Labini, F.}, year={2003}, month=feb }

@article{Huesmann_Leblé_2026, title={The link between hyperuniformity, Coulomb energy and Wasserstein distance to Lebesgue for two-dimensional point processes}, volume={7}, DOI={10.2140/pmp.2026.7.123}, journal={Probability and Mathematical Physics}, author={Huesmann, Martin and Leblé, Thomas}, year={2026}, month=jan, pages={123–173} }

@article{Kim_Torquato_2019, title={New tessellation-based procedure to design perfectly hyperuniform disordered dispersions for materials discovery}, volume={168}, ISSN={13596454}, url={https://linkinghub.elsevier.com/retrieve/pii/S1359645419300412}, DOI={10.1016/j.actamat.2019.01.026}, abstractNote={Disordered hyperuniform dispersions are exotic amorphous two-phase materials characterized by an anomalous suppression of long-wavelength volume-fraction ﬂuctuations, which endows them with novel physical properties. While such unusual materials have received considerable attention, a stumbling block has been an inability to create large samples that are truly hyperuniform due to current computational and experimental limitations. To overcome such barriers, we introduce a new and simple construction procedure that guarantees perfect hyperuniformity for very large sample sizes. This methodology involves tessellating space into cells and then inserting a particle into each cell such that the local-cell particle packing fractions are identical to the global packing fraction. We analytically prove that such dispersions are perfectly hyperuniform in the inﬁnite-sample-size limit. Our methodology enables a remarkable mapping that converts a very large nonhyperuniform disordered dispersion into a perfectly hyperuniform one, which we numerically demonstrate in two and three dimensions. A similar analysis also establishes the hyperuniformity of the famous Hashin-Shtrikman multiscale dispersions, which possess optimal transport and elastic properties. Our hyperuniform designs can be readily fabricated using modern photolithographic and 3D printing technologies. The exploration of the enormous class of hyperuniform dispersions that can be designed and tuned by our tessellation-based methodology paves the way for accelerating the discovery of novel hyperuniform materials.}, journal={Acta Materialia}, author={Kim, J. and Torquato, S.}, year={2019}, month=apr, pages={143–151}}

@article{Klatt_Last_2022, title={On strongly rigid hyperfluctuating random measures}, volume={59}, ISSN={00219002, 14756072}, url={https://www.jstor.org/stable/48795200}, abstractNote={In contrast to previous belief, we provide examples of stationary ergodic random measures that are both hyperfluctuating and strongly rigid. Therefore we study hyperplane intersection processes (HIPs) that are formed by the vertices of Poisson hyperplane tessellations. These HIPs are known to be hyperfluctuating, that is, the variance of the number of points in a bounded observation window grows faster than the size of the window. Here we show that the HIPs exhibit a particularly strong rigidity property. For any bounded Borel set B, an exponentially small (bounded) stopping set suffices to reconstruct the position of all points in B and, in fact, all hyperplanes intersecting B. Therefore the random measures supported by the hyperplane intersections of arbitrary (but fixed) dimension, are also hyperfluctuating. Our examples aid the search for relations between correlations, density fluctuations, and rigidity properties.}, number={4}, journal={Journal of Applied Probability}, author={Klatt, Michael Andreas and Last, Günter}, year={2022}, pages={948–961}}

@article{Kurasov_Sarnak_2020, title={Stable polynomials and crystalline measures}, volume={61}, ISSN={0022-2488}, url={https://doi.org/10.1063/5.0012286}, DOI={10.1063/5.0012286}, abstractNote={Explicit examples of positive crystalline measures and Fourier quasicrystals are constructed using pairs of stable polynomials, answering several open questions in the area.}, number={8}, journal={Journal of Mathematical Physics}, author={Kurasov, P. and Sarnak, P.}, year={2020}, month=aug, pages={083501} }

@article{Ruelle_1970, title={Superstable interactions in classical statistical mechanics}, volume={18}, ISSN={1432-0916}, url={https://doi.org/10.1007/BF01646091}, DOI={10.1007/BF01646091}, abstractNote={We consider classical systems of particles inv dimensions. For a very large class of pair potentials (superstable lower regular potentials) it is shown that the correlation functions have bounds of the form$$varrho (x_1 ,...,x_n ) leqq xi ^n$$. Using these and further inequalities one can extend various results obtained by Dobrushin and Minlos [3] for the case of potentials which are non-integrably divergent at the origin. In particular it is shown that the pressure is a continuous function of the density. Infinite system equilibrium states are also defined and studied by analogy with the work of Dobrushin [2a] and of Lanford and Ruelle [11] for lattice gases.}, number={2}, journal={Communications in Mathematical Physics}, author={Ruelle, D.}, year={1970}, month=jun, pages={127–159} }

@article{Sgrignuoli_Dal_Negro_2021, title={Hyperuniformity and wave localization in pinwheel scattering arrays}, volume={103}, url={https://link.aps.org/doi/10.1103/PhysRevB.103.224202}, DOI={10.1103/PhysRevB.103.224202}, number={22}, journal={Physical Review B}, author={Sgrignuoli, F. and Dal Negro, L.}, year={2021}, month=jun, pages={224202} }

@article{Shih_Casiulis_Martiniani_2024, title={Fast generation of spectrally shaped disorder}, volume={110}, url={https://link.aps.org/doi/10.1103/PhysRevE.110.034122}, DOI={10.1103/PhysRevE.110.034122}, number={3}, journal={Phys. Rev. E}, author={Shih, Aaron and Casiulis, Mathias and Martiniani, Stefano}, year={2024}, month=sep, pages={034122} }

@article{Sodin_Tsirelson_2002, title={Random complex zeroes, I. Asymptotic normality}, volume={144}, DOI={10.1007/BF02984409}, abstractNote={We consider three models (elliptic, flat and hyperbolic) of Gaussian random analytic functions distinguished by invariance of their zeroes distribution. Asymptotic normality is proven for smooth functionals (linear statistics) of the set of zeroes. Introduction and the main result Zeroes of random polynomials and other analytic functions were studied by mathematicians and physicists under various assumptions on random coefficients. One class of models introduced not long ago by Bogomolny, Bohigas and Leboeuf [5, 6], Kostlan [16], and Shub and Smale [23] has a remarkably}, journal={Israel Journal of Mathematics}, author={Sodin, Mikhail and Tsirelson, Boris}, pages={125-149}, year={2002}, month=nov }

@article{Torquato_Kim_Klatt_2021, title={Local Number Fluctuations in Hyperuniform and Nonhyperuniform Systems: Higher-Order Moments and Distribution Functions}, volume={11}, url={https://link.aps.org/doi/10.1103/PhysRevX.11.021028}, DOI={10.1103/PhysRevX.11.021028}, journal={American Physical Society}, author={Torquato, Salvatore and Kim, Jaeuk and Klatt, Michael A.}, year={2021}, month=may, pages={021028–1--27} }

@article{Dereudre_Flimmel_2024,  title={Non-hyperuniformity of Gibbs point processes with short-range interactions}, volume={61}, DOI={10.1017/jpr.2024.21}, number={4}, journal={Journal of Applied Probability}, author={Dereudre, David and Flimmel, Daniela}, year={2024}, pages={1380–1406} }

@article{Klatt_Last_Yogeshwaran_2020, title={Hyperuniform and rigid stable matchings}, volume={57}, rights={© 2020 Wiley Periodicals, Inc.}, ISSN={1098-2418}, url={https://onlinelibrary.wiley.com/doi/abs/10.1002/rsa.20923}, DOI={10.1002/rsa.20923}, abstractNote={We study a stable partial matching τ of the d-dimensional lattice with a stationary determinantal point process Ψ on Rd with intensity α>1. For instance, Ψ might be a Poisson process. The matched points from Ψ form a stationary and ergodic (under lattice shifts) point process Ψτ with intensity 1 that very much resembles Ψ for α close to 1. On the other hand Ψτ is hyperuniform and number rigid, quite in contrast to a Poisson process. We deduce these properties by proving more general results for a stationary point process Ψ, whose so-called matching flower (a stopping set determining the matching partner of a lattice point) has a certain subexponential tail behavior. For hyperuniformity, we also additionally need to assume some mixing condition on Ψ. Furthermore, if Ψ is a Poisson process then Ψτ has an exponentially decreasing truncated pair correlation function.}, number={22}, journal={Random Structures \& Algorithms}, author={Klatt, Michael Andreas and Last, Günter and Yogeshwaran, D.}, year={2020}, month=apr, pages={439–473}}

@article{Soshnikov_2000, title={Determinantal random point fields}, volume={55}, ISSN={0036-0279}, url={https://doi.org/10.1070/RM2000v055n05ABEH000321}, DOI={10.1070/RM2000v055n05ABEH000321}, abstractNote={This paper contains an exposition of both recent and rather old results on determinantal random point fields. We begin with some general theorems including proofs of necessary and sufficient conditions for the existence of a determinantal random point field with Hermitian kernel and of a criterion for weak convergence of its distribution. In the second section we proceed with examples of determinantal random fields in quantum mechanics, statistical mechanics, random matrix theory, probability theory, representation theory, and ergodic theory. In connection with the theory of renewal processes, we characterize all Hermitian determinantal random point fields on  and  with independent identically distributed spacings. In the third section we study translation-invariant determinantal random point fields and prove the mixing property for arbitrary multiplicity and the absolute continuity of the spectra. In the last section we discuss proofs of the central limit theorem for the number of particles in a growing box and of the functional central limit theorem for the empirical distribution function of spacings.}, number={5}, journal={Russian Mathematical Surveys}, author={Soshnikov, A}, year={2000}, month=oct, pages={923} }

@article{Björklund_2026, title={Stealthy point processes and lattice induction}, number={arXiv:2607.25616}, publisher={arXiv}, url={https://arxiv.org/abs/2607.25616}, author={Björklund, Michael}, year={2026}, month=jul}

@article{Björklund_Byléhn_2024, title={Hyperuniformity of random measures on Euclidean and hyperbolic spaces}, url={https://arxiv.org/abs/2405.12737}, number={arXiv:2405.12737}, publisher={arXiv}, author={Björklund, Michael and Byléhn, Mattias}, year={2024}, month=may }

@article{Björklund_Byléhn_2025, title={Hyperuniformity and hyperfluctuations of random measures in commutative spaces}, url={https://arxiv.org/abs/2503.01567}, number={arXiv:2503.01567}, publisher={arXiv}, author={Björklund, Michael and Byléhn, Mattias}, year={2025}, month=mar }

@article{Butez_Dallaporta_García-Zelada_2024, title={On the Wasserstein distance between a hyperuniform point process and its mean}, url={http://arxiv.org/abs/2404.09549}, DOI={10.48550/arXiv.2404.09549}, note={arXiv:2404.09549 [math]}, number={arXiv:2404.09549}, publisher={arXiv}, author={Butez, Raphael and Dallaporta, Sandrine and García-Zelada, David}, year={2024}, month=apr }

@misc{Coste_2021, title={Order, fluctuations, rigidities}, url={https://scoste.fr/assets/survey_hyperuniformity.pdf}, author={Coste, Simon}, year={2021}, month=jul }

@article{Dereudre_Flimmel_Huesmann_Leblé_2024, title={(Non)-hyperuniformity of perturbed lattices}, url={http://arxiv.org/abs/2405.19881}, DOI={10.48550/arXiv.2405.19881}, note={arXiv:2405.19881 [math]}, number={arXiv:2405.19881}, publisher={arXiv}, author={Dereudre, David and Flimmel, Daniela and Huesmann, Martin and Leblé, Thomas}, year={2024}, month=may }

@article{Flimmel_2025, title={Fitting regular point patterns with a hyperuniform perturbed lattice}, url={https://arxiv.org/abs/2503.12179}, number={arXiv:2503.12179}, publisher={arXiv}, author={Flimmel, Daniela}, year={2025}, month=mar }

@article{Klatt_Last_Henze_2022, title={A genuine test for hyperuniformity}, rights={All rights reserved}, url={http://arxiv.org/abs/2210.12790}, DOI={10.48550/arXiv.2210.12790}, note={arXiv:2210.12790 [cond-mat, stat]}, number={arXiv:\allowbreak2210.12790}, publisher={arXiv}, author={Klatt, Michael A. and Last, Günter and Henze, Norbert}, year={2022}, month=oct}

@article{Klatt_Last_Lotz_Yogeshwaran_2025, title={Invariant transports of stationary random measures: asymptotic variance, hyperuniformity, and examples}, url={https://arxiv.org/abs/2506.05907}, number={arXiv:2506.05907}, publisher={arXiv}, author={Klatt, Michael A. and Last, Günter and Lotz, Luca and Yogeshwaran, D.}, year={2025}, month=jun }

@article{Krishnapur_Yogeshwaran_2024, title={Stationary random measures: Covariance asymptotics, variance bounds and central limit theorems}, url={https://arxiv.org/abs/2411.08848}, number={arXiv:2411.08848}, publisher={arXiv}, author={Krishnapur, Manjunath and Yogeshwaran, D.}, year={2024}, month=nov }

@article{Lachièze-Rey_2025a, title={Rigidity of random stationary measures and applications to point processes}, url={https://arxiv.org/abs/2409.18519}, number={arXiv:2409.18519}, publisher={arXiv}, author={Lachièze-Rey, Raphaël}, year={2025}, month=feb }

@article{Lachièze-Rey_2025b, title={Hyperuniform random measures, transport and rigidity}, url={https://arxiv.org/abs/2510.18392}, number={arXiv:\allowbreak2510.18392}, publisher={arXiv}, author={Lachièze-Rey, Raphaël}, year={2025}, month=oct }

@article{Lachièze-Rey_Yogeshwaran_2024, title={Hyperuniformity and optimal transport of point processes}, url={http://arxiv.org/abs/2402.13705}, DOI={10.48550/arXiv.2402.13705}, note={arXiv:2402.13705 [math-ph]}, number={arXiv:2402.13705}, publisher={arXiv}, author={Lachièze-Rey, Raphaël and Yogeshwaran, D.}, year={2024}, month=mar }

@article{Mastrilli_Błaszczyszyn_Lavancier_2024, title={Estimating the hyperuniformity exponent of point processes}, url={https://arxiv.org/abs/2407.16797}, number={arXiv:2407.16797}, publisher={arXiv}, author={Mastrilli, Gabriel and Błaszczyszyn, Bartłomiej and Lavancier, Frédéric}, year={2024}, month=jul }

@article{Olevskii_Ulanovskii_2020, title={A Simple Crystalline Measure}, url={https://arxiv.org/abs/2006.12037}, number={arXiv:2006.12037}, publisher={arXiv}, author={Olevskii, Alexander and Ulanovskii, Alexander}, year={2020}, month=jun }

@article{Roca_2023, title={Hyperuniformity and Number Rigidity of Inflation Tilings}, url={https://arxiv.org/abs/2310.20517}, number={arXiv:\allowbreak2310.20517}, publisher={arXiv}, author={Roca, Daniel}, year={2023}, month=oct }

@article{Thomassey_Lachièze-Rey_Shapira_2026, title={Regularization of a stationary point process by a stationary increments perturbation}, url={https://arxiv.org/abs/2602.19773}, number={arXiv:2602.19773}, publisher={arXiv}, author={Thomassey, Loïc and Lachièze-Rey, Raphaël and Shapira, Assaf}, year={2026}, month=feb }

@article{Uche_Stillinger_Torquato_2004, title={Constraints on collective density variables: Two dimensions}, volume={70}, url={https://link.aps.org/doi/10.1103/PhysRevE.70.046122}, DOI={10.1103/PhysRevE.70.046122}, number={4}, journal={Physical Review E}, author={Uche, Obioma U. and Stillinger, Frank H. and Torquato, Salvatore}, year={2004}, month=oct, pages={046122} }

@article{Uche_Torquato_Stillinger_2006, title={Collective coordinate control of density distributions}, volume={74}, url={https://link.aps.org/doi/10.1103/PhysRevE.74.031104}, DOI={10.1103/PhysRevE.74.031104}, number={3}, journal={Physical Review E}, author={Uche, Obioma U. and Torquato, Salvatore and Stillinger, Frank H.}, year={2006}, month=sep, pages={031104} }

@article{gabrielli_glass-like_2002, title = {Glass-like Universe: {{Real-space}} Correlation Properties of Standard Cosmological Models}, author = {Gabrielli, Andrea and Joyce, Michael and Sylos Labini, Francesco}, year = 2002, journal = {Phys. Rev. D}, volume = {65}, pages = {083523}, publisher = {American Physical Society}, doi = {10.1103/PhysRevD.65.083523}, urldate = {2023-04-18} }

@article{galliano_two-dimensional_2023, title = {Two-{{Dimensional Crystals}} Far from {{Equilibrium}}}, author = {Galliano, Leonardo and Cates, Michael E. and Berthier, Ludovic}, year = 2023, journal = {Phys. Rev. Lett.}, volume = {131}, pages = {047101}, publisher = {American Physical Society}, doi = {10.1103/PhysRevLett.131.047101}, urldate = {2024-07-03} }

@article{gorsky_engineered_2019, ids = {gorsky_engineered_2019-1}, title = {Engineered Hyperuniformity for Directional Light Extraction}, author = {Gorsky, S. and Britton, W. A. and Chen, Y. and Montaner, J. and Lenef, A. and Raukas, M. and Dal Negro, L.}, year = 2019, journal = {APL Photonics}, volume = {4}, pages = {110801}, publisher = {American Institute of Physics}, doi = {10.1063/1.5124302}, urldate = {2019-11-11}, langid = {english} }

@article{haberko_transition_2020, title = {Transition from Light Diffusion to Localization in Three-Dimensional Amorphous Dielectric Networks near the Band Edge}, author = {Haberko, Jakub and {Froufe-P{\'e}rez}, Luis S. and Scheffold, Frank}, year = 2020, journal = {Nat. Commun.}, volume = {11}, pages = {4867}, publisher = {Nature Publishing Group}, doi = {10.1038/s41467-020-18571-w}, urldate = {2021-03-01}, langid = {english} }

@article{klatt_wave_2022, title = {Wave Propagation and Band Tails of Two-Dimensional Disordered Systems in the Thermodynamic Limit}, author = {Klatt, Michael A. and Steinhardt, Paul J. and Torquato, Salvatore}, year = 2022, journal = {Proc. Natl. Acad. Sci.}, volume = {119}, pages = {e2213633119}, publisher = {Proceedings of the National Academy of Sciences}, doi = {10.1073/pnas.2213633119}, urldate = {2022-12-20} }

@article{leble_two-dimensional_2026, title = {The Two-Dimensional One-Component Plasma Is Hyperuniform}, author = {Lebl{\'e}, Thomas}, year = 2026, journal = {Duke Math. J.}, volume = {175}, pages = {763--901}, publisher = {Duke University Press}, doi = {10.1215/00127094-2025-0036}, urldate = {2026-05-16}, langid = {english} }

@article{maire_hyperuniform_2025, title = {Hyperuniform {{Interfaces}} in {{Nonequilibrium Phase Coexistence}}}, author = {Maire, Rapha{\"e}l and Galliano, Leonardo and Plati, Andrea and Berthier, Ludovic}, year = 2025, journal = {Phys. Rev. Lett.}, volume = {135}, pages = {227102}, publisher = {American Physical Society}, doi = {10.1103/4b8v-4sbh}, urldate = {2026-01-27} }

@article{rissone_long-range_2021, title = {Long-{{Range Anomalous Decay}} of the {{Correlation}} in {{Jammed Packings}}}, author = {Rissone, Paolo and Corwin, Eric I. and Parisi, Giorgio}, year = 2021, journal = {Phys. Rev. Lett.}, volume = {127}, pages = {038001}, publisher = {American Physical Society}, doi = {10.1103/PhysRevLett.127.038001}, urldate = {2021-07-29} }

@article{torquato_hyperuniformity_2016, ids = {ma_random_2017}, title = {Hyperuniformity and Its Generalizations}, author = {Torquato, Salvatore}, year = 2016, journal = {Phys. Rev. E}, volume = {94}, pages = {022122}, publisher = {American Physical Society}, doi = {10.1103/PhysRevE.94.022122}, urldate = {2020-02-10}, langid = {english} }

@article{vanoni_effective_2026, title = {Effective {{Delocalization}} in the {{One-Dimensional Anderson Model}} with {{Stealthy Disorder}}}, author = {Vanoni, Carlo and Karcher, Jonas and Rechtsman, Mikael C. and Altshuler, Boris L. and Steinhardt, Paul J. and Torquato, Salvatore}, year = 2026, journal = {Phys. Rev. Lett.}, volume = {136}, pages = {150404}, publisher = {American Physical Society}, doi = {10.1103/nl7n-bqn4}, urldate = {2026-04-27} }

@article{wilken_random_2021, title = {Random {{Close Packing}} as a {{Dynamical Phase Transition}}}, author = {Wilken, Sam and Guerra, Rodrigo E. and Levine, Dov and Chaikin, Paul M.}, year = 2021, journal = {Phys. Rev. Lett.}, volume = {127}, pages = {038002-1--6}, publisher = {American Physical Society}, doi = {10.1103/PhysRevLett.127.038002}, urldate = {2021-07-29} }

@article{yu_engineered_2021, title = {Engineered Disorder in Photonics}, author = {Yu, Sunkyu and Qiu, Cheng-Wei and Chong, Yidong and Torquato, Salvatore and Park, Namkyoo}, year = 2021, journal = {Nat. Rev. Mater.}, volume = {6}, pages = {226--243}, publisher = {Nature Publishing Group}, doi = {10.1038/s41578-020-00263-y}, urldate = {2021-04-17}, langid = {english} }

@article{zheng_disordered_2020, title = {Disordered Hyperuniformity in Two-Dimensional Amorphous Silica}, author = {Zheng, Yu and Liu, Lei and Nan, Hanqing and Shen, Zhen-Xiong and Zhang, Ge and Chen, Duyu and He, Lixin and Xu, Wenxiang and Chen, Mohan and Jiao, Yang and Zhuang, Houlong}, year = 2020, journal = {Sci. Adv.}, volume = {6}, pages = {eaba0826}, doi = {10.1126/sciadv.aba0826}, urldate = {2020-04-21}, langid = {english} }

\end{document}